\newtheorem{thm}{Theorem}[section]
\newtheorem{cor}[thm]{Corollary}
\newtheorem{lem}[thm]{Lemma}
\newtheorem{prop}[thm]{Proposition}
\newtheorem{defn}[thm]{Definition}
\newtheorem{rem}[thm]{Remark}
\newcommand{\SR}{\mathscr{S}(\mathbb{R})}
\newcommand{\SL}{\mathcal{S}_{\vc}}
\newcommand{\SLm}{\mathcal{S}'_{\vc}}
\newcommand{\lesi}{\lesssim}
\newcommand{\B}{\dot{B}}
\newcommand{\F}{\dot{F}}
\newcommand{\sL}{\sqrt{L}}
\newcommand{\dx}{d\mu(x)}
\newcommand{\dy}{d\mu(y)}
\newcommand{\dz}{d\mu(z)}
\newcommand{\supp}{\operatorname{supp}}
\newcommand{\f}{\frac}
\newcommand{\vc}{\infty}
\title[Weighted Besov and Triebel--Lizorkin spaces associated to operators]{Weighted Besov and Triebel--Lizorkin spaces associated to operators}         % Enter your title between curly braces
\author{Huy-Qui Bui}
\address{Department of Mathematics, University of Canterbury, Private Bag 4800, Christchurch 8140, New Zealand}
\email{huy-qui.bui@canterbury.ac.nz, quihuybui@gmail.com}
\author{The Anh Bui}
\address{Department of Mathematics, Macquarie University, NSW 2109,
Australia}
\email{the.bui@mq.edu.au, bt\_anh80@yahoo.com}
\author{Xuan Thinh Duong}
\address{Department of Mathematics, Macquarie University, NSW 2109,
Australia}
\email{xuan.duong@mq.edu.au}
\subjclass[2010]{42B30, 42B35, 47B38}
\begin{document}

\begin{abstract}
	Let $X$ be a space of homogeneous type and $L$ be a nonnegative self-adjoint operator on $L^2(X)$ satisfying Gaussian upper bounds on its heat kernels.
	 In this paper we develop the theory of weighted Besov spaces $\B^{\alpha,L}_{p,q,w}(X)$ and weighted Triebel--Lizorkin spaces $\F^{\alpha,L}_{p,q,w}(X)$ associated to the operator $L$ for the full range $0<p,q\le \vc$, $\alpha\in \mathbb R$ and $w$ being in the Muckenhoupt weight class $A_\vc$. Similarly to the classical case in the Euclidean setting, we prove that our new spaces satisfy important features such as continuous charaterizations in terms of square functions, atomic decompositions and the identifications with some well known function spaces such as Hardy type spaces and Sobolev type spaces. Moreover, with extra assumptions on the operator $L$, we prove that the new function spaces associated to $L$ coincide with the classical function spaces. Finally we apply our results to prove the boundedness of the fractional power of $L$ and the spectral multiplier of $L$ in our new function spaces.
\end{abstract}
\date{}

\maketitle

\tableofcontents

\section{Introduction}
Let $X$ be a space of homogeneous type, with quasi distance $d$ and
$\mu$ is a nonnegative Borel measure on $X$, which satisfies the doubling property below. \emph{In this paper, we assume that $\mu(X)=\infty.$}\\

For $x\in X$ and $r>0$ we set $B(x, r)=\{y\in X:d(x,y)<r\}$ to be the open ball with radius $r >0$ and center $x\in
X$, and $V(x,r)=\mu(B(x, r))$. The doubling
property of $\mu$ provides a constant $C>0$ such that
\begin{equation}\label{doubling1}
V(x,2r)\leq CV(x,r)
\end{equation}
for all $x\in X$ and $r>0$.\\
The doubling property (\ref{doubling1}) yields a constant $n>0$ so that
\begin{equation}\label{doubling2}
V(x,\lambda r)\leq C\lambda^nV(x,r),
\end{equation}
for all $\lambda\geq 1, x\in X$ and $r>0$; and that
\begin{equation}\label{doubling3}
V(x,r)\leq C\Big(1+\frac{d(x,y)}{r}\Big)^{\tilde n}V(y,r),
\end{equation}
for all $x,y\in X$, $r>0$ and for some $\tilde n \in [0,n]$.\\

Let $L$ be a nonnegative self-adjoint  operator on $L^2(X)$ which generates a semigroup $\{e^{-tL}\}_{t>0}$. Denote by $p_t(x,y)$ the  kernel of the semigroup $e^{-tL}$. In this paper, we assume that the kernel $p_t(x,y)$ satisfies a Gaussian upper bound, i.e., there exist two positive constants $C$ and  $c$ so that for all $x,y\in X$ and $t>0$,
\begin{equation}
\tag{GE}\label{GE}
\displaystyle |p_t(x,y)|\leq \f{C}{\mu(B(x,\sqrt{t}))}\exp\Big(-\f{d(x,y)^2}{ct}\Big).
\end{equation}

The theory of classical Besov and Triebel-Lizorkin spaces has played an essential role in the theory of function approximation and partial differential equations, and has been developed by many mathematicians on Euclidean spaces $\mathbb{R}^n$. See \cite{FJ1, FJ2, P, Tr, Tr2, Besov1, Besov2, Besov3, Qui2, Qui3, Qui4, Qui5} and the references therein. Recently, this theory has been extended to several directions including the theory of these function spaces in the general setting of a metric measure space and the theory of new function spaces associated to operators. We now list some bodies of work related to these research directions.
\begin{enumerate}[(i)]
	\item Using the existence of the approximation of identity, the authors developed the theory of Besov spaces $\B^s_{p,q}$ and Triebel-Lizorkin spaces $\F^s_{p,q}$ for a range $1\le p,q\le \vc$ and $s\in (-\theta,\theta)$ for some $\theta\in (0,1)$ on metric measure spaces with polynomial volume growths \cite{HS} and 
	on  spaces with doubling and reverse doubling measures \cite{HMY}.
	
	\item In \cite{PX} the authors introduced new Besov and Triebel-Lizorkin spaces associated to the Hermite operator with a full range of indices. Similarly to the classical results, they proved the frame decompositions for these spaces by making use of estimates of eigenvectors of the Hermite operators. Similar results was also proved for the Laguerre operator in \cite{KP2}. The theory of these function spaces was further developed in \cite{BD1, BD2} where the authors proved molecular and atomic decompositions theorems and square function characterizations for these spaces. 
	
	\item In \cite{BDY} the authors introduced the theory of Besov spaces $\B^{s,L}_{p,q}$ associated to an operator $L$ satisfying Poisson estimates on metric spaces with a measure enjoying a polynomial upper bound on volume growth. However, the indices are only for $1\le p,q\le \vc$ and $-1<s<1$. The restriction of indices
	is due to some technical reasons and the absence of a suitable space of distributions.
	
	\item Recently, under the assumption that $L$ is a nonnegative self-adjoint operator satisfying Gaussian upper bounds, H\"older continuity and Markov semigroup properties, the frame decompositions of Besov and Triebel-Lizorkin spaces associated to $L$ with full range of indices  were studied in \cite{KP, G.etal}. This theory has a wide range of applications from the setting of Lie groups to Riemannian manifolds.
\end{enumerate}
Our main aim in this paper is to conduct the theory of \textit{weighted Besov and Triebel--Lizorkin spaces} associated to a nonnegative self-adjoint operator. In contrast to \cite{KP, G.etal}, we assume the Gaussian upper bound \eqref{GE}, \textit{but do not assume H\"older continuity on the heat kernels nor the Markov properties (the conservation property)}. This allows our theory to cover a wider range of applications. In this article, in order to introduce the weighted Besov and Triebel--Lizorkin spaces we adapt ideas in \cite{P, Tr, Tr2} to make use of  spectral decompositions of nonnegative self-adjoint operators. Our development on the new theory of weighted Besov and Triebel--Lizorkin spaces can be viewed as a generalization of the classical settings in $\mathbb{R}^n$. Our techniques can be considered as a combination of classical tools such as Calder\'on reproducing formulas and new tools such kernel estimates for spectral decompositions of nonnegative self-adjoint operators. These techniques enable us to obtain some useful maximal function estimates which play an essential role in the proofs of our main results. Our point of view and approach are motivated and influenced by a large body of works in the classical setting, especially  \cite{P,FJ2,ST,CT,Qui3,Qui4,BC}. 
For convenience we list the main contributions of the article:
%\begin{enumerate}[(i)]

	$\bullet$ Section 2 recalls the Fefferman--Stein maximal inequality and gives the definition of a new class of distributions. More importantly, we prove a number of Calder\'on reproducing formulas and some maximal function estimates which play a crucial role in the proofs of the main results.
	
	$\bullet$ Section 3 gives definitions of the weighted Besov and Triebel--Lizorkin spaces. Then we prove continuous characterizations including square function characterizations via heat kernels of the new weighted Besov and Triebel--Lizorkin spaces. In particular, the characterization via non-compactly supported spectral functional calculus is a significant advance of our paper (see Theorem~\ref{thm2}). Moreover, we also study the Triebel-Lizorkin space 
	$\F_{p,q,w}^{\alpha, L}$ with $p = \infty$, which does not seem to be considered in this setting in the literature before.
	
	$\bullet$ Section 4 establishes atomic decompositions of these new spaces where we prove similar results to the classical function spaces in the Euclidean setting.
	
	$\bullet$ Section 5 derives identifications of these new spaces with well known spaces such as Hardy spaces and  Sobolev Hardy spaces associated to operators.
	
	$\bullet$ Section 6 compares our new spaces with classical Besov and Triebel-Lizorkin spaces. 
	
	$\bullet$ Section 7 proves the boundedness of singular integrals including fractional power of $L$ and spectral multipliers $m(L)$ on the new function spaces.
%\end{enumerate} 

We note  that,  after a preliminary version of the manuscript was completed,
we learned  that the atomic decompositions have  been  obtained
independently in \cite{G.etal2} by using a different approach. It is worth noticing that in comparison with \cite{G.etal2}, our assumptions on the operator $L$ and the underlying spaces $X$ are weaker than those in  \cite{G.etal2}. In our paper, $L$ is assumed to be nonnegative, self--adjoint and has a Gaussian upper bound, whereas apart from these conditions, in \cite{G.etal2} the operator $L$ is assumed to satisfy some additional conditions such as H\"older continuity estimate and Markov semigroup
property. Moreover, the reverse doubling condition and non-collapsing condition for the underlying space are additionally required in \cite{G.etal2}. 
It is worth emphasizing that our approach can be adapted to study the generalized Besov and Triebel--Lizorkin spaces as in \cite{YSY} or Besov and Triebel--Lizorkin spaces with variable exponents which are associated to operators satisfying the Gaussian upper bounds of order $m$.

Throughout the paper, we usually use $C$ and $c$ to denote positive constants that are independent of the main parameters involved but whose values may differ from line to line. We will write
$A\lesi B$ if there is a universal constant $C$ so that $A\leq CB$ and $A\sim B$ if $A\lesi B$ and $B\lesi A$.  

\section{Preliminaries, a new class of distributions and related estimates}
\subsection{Muckenhoupt weights, Fefferman-Stein inequality and some estimates on spaces of homogeneous type}
To simplify notation, we will often use $B$ for $B(x_B, r_B)$.
Also given $\lambda > 0$, we will write $\lambda B$ for the
$\lambda$-dilated ball, which is the ball with the same center as
$B$ and with radius $r_{\lambda B} = \lambda r_B$. For each ball
$B\subset X$ we set
$$
S_0(B)=4B \ \text{and} \ S_j(B) = 2^jB\backslash 2^{j-1}B \
\text{for} \ j\geq 3.
$$
For $x,y\in X$ and $r>0$, we denote $V(x\wedge y,r)=\min\{V(x,r),V(y,r)\}$ and $V(x\vee y,r)=\max\{V(x,r),V(y,r)\}$.

\medskip

A weight $w$ is a non-negative measurable and locally integrable function on $X$. Let $w$ be a weight. For
any measurable set $E \subset X$, we denote $w(E) :=\int_E w(x)\dx$ and $V(E)=\mu(E)$. We denote
$$
\fint_E h(x)\dx=\f{1}{V(E)}\int_Eh(x)\dx.
$$
For $1 \leq p \leq \infty$ let $p'$
be the conjugate exponent of $p$, i.e. $1/p + 1/p' = 1$.
\medskip

We say that a weight $w \in A_p$, $1 < p < \infty$, if the following holds true:
\begin{equation}
\label{defn Ap}
[w]_{A_p}:=\sup_{B: {\rm balls}}\Big(\fint_B w(x)\dx\Big)^{1/p}\Big(\fint_B w(x)^{-1/(p-1)}\dx\Big)^{(p-1)/p}< \vc.
\end{equation}
For $p = 1$, we say that $w \in A_1$ if there is a constant $C$ such
that for every ball $B \subset X$,
$$
\fint_B w(y)\dy \leq Cw(x) \ \text{for a.e. $x\in B$}.
$$
We set $A_\vc=\cup_{p\geq 1}A_p$.\\

The reverse H\"older classes are defined in the following way: a weight $w
\in RH_q, 1 < q < \infty$, if there is a constant $C$ such that for
any ball $B \subset X$,
$$
\Big(\fint_B w(y)^q \dy\Big)^{1/q} \leq C \fint_B w\dx.
$$
The endpoint $q = \infty$ is given by the condition: $w \in
RH_\infty$, if there is a constant $C$ such that for any ball
$B \subset X$,
$$
w(x)\leq C \fint_B w(y)\dy  \ \text{for a.e. $x\in B$}.
$$

For $w \in A_\vc$ and $0< p <\infty$, the weighted space $L^p_w(X)$
is defined by
$$\Big\{f :\int_{X} |f(x)|^p w(x)\dx < \infty\Big\}$$
with the norm
$$\|f\|_{p,w}=\Big(\int_{X} |f(x)|^p w(x)\dx\Big)^{1/p}.$$

We sum up some of the standard properties of classes of weights in \cite{ST} in the following lemma.
\begin{lem}\label{weightedlemma1}
	The following properties hold:
	\begin{enumerate}[{\rm (i)}]
		\item $A_1\subset A_p\subset A_q$ for $1< p\leq q< \infty$.
		\item If $w \in A_p, 1 < p < \vc$, then there exists $1<r < p < \vc$ such that $w \in A_r$.
		\item If $w\in A_p, 1<p<\vc$, then $w^{1-p'}\in A_{p'}$.
		\item If $w\in A_p, 1\le p<\vc$, then there exists $C>0$ so that for any ball $B$ and any measurable subset $E\subset B$ we have
		\begin{equation}
		\label{eq- doubling w}
		\f{w(B)}{w(E)}\le C\Big(\f{V(B)}{V(E)}\Big)^p.
		\end{equation}
		\item If $w \in RH_p, 1 < p < \vc$, then there exists $1<p < q < \vc$ such that $w \in RH_q$.
		\item $\cup_{1<p<\vc} A_p =\cup_{1<q<\vc}RH_q$.
	\end{enumerate}
\end{lem}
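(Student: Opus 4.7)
The plan is to establish each property by adapting the classical Muckenhoupt weight machinery to the space of homogeneous type $(X,d,\mu)$. This is possible because all the ingredients used in the Euclidean proof, namely the Vitali covering lemma, a Calder\'on--Zygmund decomposition relative to balls, and the construction of dyadic cubes, are available in this setting. For the routine parts (i), (iii), (iv), the idea is to manipulate the defining averages by H\"older's inequality. Item (i): $A_1 \subset A_p$ follows by noting that $w^{-1}(x) \le C(\fint_B w)^{-1}$ a.e.\ on $B$ when $w \in A_1$, and raising this to the $1/(p-1)$ power; the chain $A_p \subset A_q$ for $p \le q$ reduces, after substituting $\sigma = w^{-1/(p-1)}$ and applying H\"older with conjugate exponents $((q-1)/(p-1),\,(q-1)/(q-p))$, to the original $A_p$ bound. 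Item (iii) is a direct symmetry calculation: substituting $\sigma = w^{1-p'}$ into the $A_{p'}$ definition recovers the $A_p$ quantity of $w$. Item (iv): writing $\mu(E) = \int_B \mathbf{1}_E\, w^{1/p} w^{-1/p}\,d\mu$ and using H\"older with exponents $p,p'$ gives $\mu(E)^p \le w(E)\bigl(\int_B w^{-p'/p}\bigr)^{p/p'}$, which combined with the $A_p$ condition yields \eqref{eq- doubling w}.

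The main obstacle will be item (ii), the Coifman--Fefferman self-improvement property, since this is not a formal manipulation of the definition but requires a reverse H\"older inequality for $A_p$ weights. My approach would be: first prove that if $w \in A_p$ then there exists $\varepsilon > 0$ such that $\bigl(\fint_B w^{1+\varepsilon}\bigr)^{1/(1+\varepsilon)} \le C \fint_B w$ for every ball $B$. This is proved by a Calder\'on--Zygmund stopping-time argument on $X$, using a dyadic decomposition adapted to a fixed ball $B$: one stops at level $\lambda > \fint_B w$ and, using the $A_p$ condition applied to the stopped balls, obtains the good-$\lambda$ bound $w(\{x \in B : w(x) > \beta\lambda\}) \le \theta\, w(\{x \in B : w(x) > \lambda\})$ for suitable $\beta$ large and $\theta < 1$. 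Iterating this bound and integrating yields the reverse H\"older inequality. Once this is in hand, one applies the reverse H\"older to the dual weight $\sigma = w^{1-p'}$, which is itself in $A_{p'}$ by (iii), to obtain $\sigma \in RH_{1+\varepsilon'}$ for some $\varepsilon' > 0$. A short computation then shows that $w$ satisfies the $A_r$ condition with $r = 1 + (p-1)/(1+\varepsilon')$, yielding the desired $r < p$.

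For item (v), the argument is the Gehring-type self-improvement: applying the Calder\'on--Zygmund stopping-time argument to the function $w^q$, using the $RH_q$ assumption at the stopped balls, produces $\varepsilon > 0$ and $C$ with $\fint_B w^{q+\varepsilon} \le C\bigl(\fint_B w\bigr)^{q+\varepsilon}$. Finally, for item (vi), one inclusion follows by combining the reverse H\"older self-improvement (ii) with the $A_p$ definition: if $w \in A_p$ and $\sigma = w^{1-p'} \in RH_{1+\varepsilon'}$, then one deduces $w \in RH_{q}$ for some $q>1$. The converse inclusion uses the standard characterization that $w \in RH_q$ forces $w \in A_p$ for $p = 1 + q'(\text{something})$, again by a H\"older computation from the defining averages. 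Throughout, all the doubling measure theoretic tools used in the Euclidean proof carry over to $(X,d,\mu)$ via \eqref{doubling1}--\eqref{doubling3} and the dyadic cube construction, so the argument is largely a transcription of the classical one from \cite{ST}.
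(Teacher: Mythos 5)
The paper offers no proof of this lemma at all: it is stated as a summary of standard facts and attributed to \cite{ST}, so there is nothing in the paper to compare your argument with, and your proposal has to stand on its own as a reconstruction of the classical theory on spaces of homogeneous type. Most of it does. Items (i), (iii), (iv) are indeed pure H\"older/Jensen manipulations of the defining averages (for (iv) with $p=1$ you need the essential-infimum form of $A_1$ rather than H\"older, a trivial adjustment), and your route to (ii) and (v) — a Calder\'on--Zygmund stopping-time proof of the reverse H\"older inequality, then applying it to the dual weight $\sigma=w^{1-p'}\in A_{p'}$ and checking that $\sigma\in RH_{1+\varepsilon'}$ gives $w\in A_r$ with $r=1+(p-1)/(1+\varepsilon')<p$ — is the standard and correct argument, and the tools it needs (doubling, Christ's dyadic cubes, a local CZ decomposition) are available in this setting.

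The genuine gap is in item (vi), in the inclusion $\cup_{q>1}RH_q\subset\cup_{p>1}A_p$, which you dismiss as ``again a H\"older computation from the defining averages.'' It is not. From $\bigl(\fint_B w^q\bigr)^{1/q}\le C\fint_B w$, H\"older only yields the comparison $w(E)/w(B)\le C\bigl(\mu(E)/\mu(B)\bigr)^{1/q'}$ for $E\subset B$; this controls positive powers of $w$ and gives no handle whatsoever on $\fint_B w^{-1/(p-1)}$, which is what the $A_p$ condition requires — a priori $w$ could be extremely small on a large portion of $B$ without violating any algebraic consequence of $RH_q$. Ruling this out is precisely the nontrivial content of the Coifman--Fefferman equivalence: one must upgrade the comparison above to the reverse comparison $\mu(E)/\mu(B)\lesssim\bigl(w(E)/w(B)\bigr)^{\theta}$ (equivalently, run the stopping-time/good-$\lambda$ argument with the roles of $d\mu$ and $w\,d\mu$ interchanged, or prove a reverse H\"older inequality for $w^{-1}$ with respect to $w\,d\mu$), an argument of the same depth as your proof of the reverse H\"older inequality in (ii), not a formal manipulation. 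Relatedly, your statement of the forward inclusion is garbled: you do not need to pass through $\sigma$ at all, since the reverse H\"older inequality you establish at the start of (ii) applies to $w$ itself and gives $w\in RH_{1+\varepsilon}$ directly. With the converse direction of (vi) actually argued (or cited), the rest of the outline is sound.
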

For $w\in A_\vc$ we define $q_w=\inf\{q: w\in A_q\}$ and $r_w=\sup\{r: w\in RH_r\}$.

%The following simple estimate will be useful in the sequel.
%\begin{lem}
%	\label{lem2-weight}
%	Let $w\in A_p, 1<p<\vc$ and $N>np'$. Then we have
%	\[
%	\Big[\int_X \Big(1+\f{d(x,y)}{t}\Big)^{-N}w(y)^{-p'/p}\dy\Big]^{p/p'}\lesi \f{V(x,t)^p}{w(B(x,t))}
%	\]
%	for all $t>0$.
%\end{lem}
%\begin{proof}
%The proof of this lemma is simple and we omit the details.

%\end{proof}
\bigskip
Let $w\in A_\vc$ and $0<r<\vc$. The weighted Hardy--Littlewood maximal function $\mathcal{M}_{r,w}$ is defined by
$$
\mathcal{M}_{r,w} f(x)=\sup_{x\in B}\Big(\f{1}{w(B)}\int_B|f(y)|^rw(y)\dy\Big)^{1/r}
$$
where the sup is taken over all balls $B$ containing $x$. We will drop the subscripts $r$ or $w$ when either $r=1$ or $w\equiv 1$.

Let $w\in A_\vc$ and $0<r<\vc$. It is well-known that 
\begin{equation}
\label{boundedness maximal function}
\|\mathcal{M}_{r,w} f\|_{p,w}\lesi \|f\|_{p,w}
\end{equation}
for all $p>r$.

Moreover, let $0<r<\vc$ and $p>r$. Then we have
\begin{equation}
\label{boundedness maximal function 2}
\|\mathcal{M}_{r} f\|_{p,w}\lesi \|f\|_{p,w}
\end{equation}
for $w\in A_{p/r}$.

The following elementary estimates will be used frequently. See for example \cite{BDK}.
\begin{lem}\label{lem-elementary}
	Let $\epsilon >0$.
	\begin{enumerate}[{\rm (a)}]
		\item For any $p\in [1,\vc]$ we have
		$$
		\Big(\int_X\Big[\Big(1+\f{d(x,y)}{s}\Big)^{-n-\epsilon}\Big]^p\dy\Big)^{1/p}\lesi V(x,s)^{1/p},
		$$
		for all $x\in X$ and $s>0$.
		
		\item For any $f\in L^1_{\rm loc}(X)$ we have
		$$
		\int_X\f{1}{V(x\wedge y,s)}\Big(1+\f{d(x,y)}{s}\Big)^{-n-\epsilon}|f(y)|\dy\lesi \mathcal{M}f(x),
		$$
		for all $x\in X$ and $s>0$.		
	\end{enumerate}
\end{lem}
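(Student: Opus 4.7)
The plan is to prove both estimates by a standard dyadic annular decomposition of $X$ around $x$. Set $E_0 = B(x, s)$ and $E_k = B(x, 2^k s) \setminus B(x, 2^{k-1} s)$ for $k \geq 1$, so $X = \bigcup_{k \geq 0} E_k$ and on each $E_k$ with $k \geq 1$ one has $1 + d(x,y)/s \sim 2^k$. The doubling property (\ref{doubling2}) controls the volume growth via $V(x, 2^k s) \lesssim 2^{kn} V(x,s)$, and this geometric behaviour will drive every estimate.

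For part (a), the case $p = \infty$ is immediate since $(1 + d(x,y)/s)^{-n-\epsilon} \leq 1$ and $V(x,s)^{1/\infty} = 1$. For $1 \leq p < \infty$, I would estimate on each annulus
\[
\int_{E_k} \left(1 + d(x,y)/s\right)^{-p(n+\epsilon)} \dy \lesssim 2^{-kp(n+\epsilon)} V(x, 2^k s) \lesssim 2^{-k(p(n+\epsilon) - n)} V(x,s),
\]
and sum the resulting geometric series over $k \geq 0$. Convergence holds because $p(n+\epsilon) - n = n(p-1) + p\epsilon \geq \epsilon > 0$ whenever $p \geq 1$, yielding the desired bound by $V(x,s)^{1/p}$ after taking the $p$-th root.

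For part (b), the extra ingredient is a lower bound for $V(x \wedge y, s) = \min\{V(x,s), V(y,s)\}$ when $y \in E_k$. Since $d(x,y) \leq 2^k s$ forces $B(x, 2^k s) \subset B(y, 2^{k+1} s)$, doubling (\ref{doubling2}) gives both
\[
V(x, s) \gtrsim 2^{-kn} V(x, 2^k s) \quad \text{and} \quad V(y, s) \gtrsim 2^{-(k+1)n} V(y, 2^{k+1} s) \gtrsim 2^{-(k+1)n} V(x, 2^k s),
\]
whence $V(x \wedge y, s) \gtrsim 2^{-(k+1)n} V(x, 2^k s)$. Combined with $E_k \subset B(x, 2^k s)$, which yields $\int_{E_k} |f(y)| \dy \leq V(x, 2^k s) \, \mathcal{M}f(x)$, the annular estimate becomes
\[
\int_{E_k} \frac{|f(y)|}{V(x \wedge y, s)} \left(1 + d(x,y)/s\right)^{-n-\epsilon} \dy \lesssim 2^{kn} \cdot 2^{-k(n+\epsilon)} \, \mathcal{M}f(x) = 2^{-k\epsilon} \mathcal{M}f(x).
\]
Summing the geometric series in $k \geq 0$ gives the claim.

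Neither part is conceptually deep; the only subtlety worth flagging is the lower bound on $V(x \wedge y, s)$, which involves balls about different centres. The geometric containment $B(x, 2^k s) \subset B(y, 2^{k+1} s)$ reduces it cleanly to the symmetric doubling (\ref{doubling2}), so the asymmetric version (\ref{doubling3}) is not needed here.
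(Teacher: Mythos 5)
Your proof is correct, and it is the standard argument: the paper does not prove Lemma \ref{lem-elementary} at all but simply cites \cite{BDK}, and the dyadic annular decomposition around $x$ together with the doubling bound \eqref{doubling2} is exactly how such estimates are obtained there and elsewhere. The only point worth adjusting is that $d$ is a quasi-distance, so the inclusion $B(x,2^k s)\subset B(y,2^{k+1}s)$ used in part (b) should read $B(x,2^k s)\subset B(y,CA_0 2^{k}s)$ with $A_0$ the quasi-triangle constant; this only changes the implicit constants via \eqref{doubling2} and does not affect the argument, and likewise your observation that \eqref{doubling3} is not needed remains valid.
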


We recall the Fefferman-Stein vector-valued maximal inequality and its variant in \cite{GLY} and \cite{KP}. For $0<p<\vc$, $0<q\leq \vc$, $0<r<\min \{p,q\}$ and $w\in A_{p/r}$, we then have for any sequence of measurable functions $\{f_\nu\}$,
\begin{equation}\label{FSIn}
\Big\|\Big(\sum_{\nu}|\mathcal{M}_rf_\nu|^q\Big)^{1/q}\Big\|_{p,w}\lesi \Big\|\Big(\sum_{\nu}|f_\nu|^q\Big)^{1/q}\Big\|_{p,w}.
\end{equation}
For any measurable function $F: X\times \mathbb{R}_+ \to \mathbb{C}$ with respect to the product measure $d\mu\times du$ one has, for $0<p<\vc$, $0<q\leq \vc$, $w\in A_{p/r}$ and $0<r<\min \{p,q\}$,
\begin{equation}\label{FSIn1}
\Big\|\Big(\int_0^\vc \Big[\mathcal{M}_r(F(\cdot,u))(\cdot)\Big]^q\f{du}{u}\Big)^{1/q}\Big\|_{p,w}\lesi \Big\|\Big(\int_0^\vc |(F(\cdot,u))(\cdot)|^q\f{du}{u}\Big)^{1/q}\Big\|_{p,w}.
\end{equation}
The Young's inequality and \eqref{FSIn} imply the following  inequality: If $\{a_\nu\} \in \ell^{q}\cap \ell^{1}$, then 
\begin{equation}\label{YFSIn}
\Big\|\sum_{j}\Big(\sum_\nu|a_{j-\nu}\mathcal{M}_r f_\nu|^q\Big)^{1/q}\Big\|_{p,w}\lesi \Big\|\Big(\sum_{\nu}|f_\nu|^q\Big)^{1/q}\Big\|_{p,w}.
\end{equation}
%(Cf. also \cite[Proposition 2.7]{BC}.)
\bigskip

We will now recall  an important covering lemma in \cite{C}.
\begin{lem}\label{Christ'slemma} There
	exists a collection of open sets $\{Q_\tau^k\subset X: k\in
	\mathbb{Z}, \tau\in I_k\}$, where $I_k$ denotes certain (possibly
	finite) index set depending on $k$, and constants $\rho\in (0,1),
	a_0\in (0,1]$ and $\kappa_0\in (0,\vc)$ such that
	\begin{enumerate}[(i)]
		\item $\mu(X\backslash \cup_\tau Q_\tau^k)=0$ for all $k\in
		\mathbb{Z}$;
		\item if $i\geq k$, then either $Q_\tau^i \subset Q_\beta^k$ or $Q_\tau^i \cap
		Q_\beta^k=\emptyset$;
		\item for every $(k,\tau)$ and each $i<k$, there exists a unique $\tau'$
		such $Q_\tau^k\subset Q_{\tau'}^i$;
		\item the diameter ${\rm diam}\,(Q_\tau^k)\leq \kappa_0 \rho^k$;
		\item each $Q_\tau^k$ contains certain ball $B(x_{Q_\tau^k}, a_0\rho^k)$.
	\end{enumerate}
\end{lem}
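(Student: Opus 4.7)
The plan is to follow Christ's construction~\cite{C}, building dyadic-like cubes from maximal separated nets at each scale together with a tree structure linking scales hierarchically.

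First I would fix $\rho \in (0,1)$ sufficiently small relative to the quasi-metric constant $A_0$ of $d$ and, for each $k \in \mathbb{Z}$, invoke Zorn's lemma to select a maximal $\rho^k$-separated set $\{x_\tau^k\}_{\tau \in I_k} \subset X$, i.e., one with $d(x_\tau^k, x_\sigma^k) \geq \rho^k$ for $\tau \neq \sigma$ which cannot be enlarged while preserving separation. Maximality then forces $X = \bigcup_\tau B(x_\tau^k, \rho^k)$, while separation gives pairwise disjoint balls $B(x_\tau^k, \rho^k/2)$. This already supplies (iv) and a first version of (v), with $\kappa_0$ and $a_0$ depending only on $A_0$.

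Next I would impose a tree structure on $\bigsqcup_k I_k$: for each $(k,\tau)$ assign a parent $\pi(k,\tau) \in I_{k-1}$ minimising $d(x_\tau^k, x_\sigma^{k-1})$ over $\sigma \in I_{k-1}$, breaking ties by a fixed well-ordering; the level-$(k-1)$ covering property guarantees $d(x_\tau^k, x_{\pi(k,\tau)}^{k-1}) < \rho^{k-1}$, and iterating produces ancestor maps $\pi^{(j)}(k,\tau) \in I_{k-j}$. Cubes would then be defined by combining a Voronoi assignment with this tree. Letting $\tau_k(y)$ denote the nearest-centre index at level $k$ (with the same tie-breaker), call $y$ \emph{regular} if at every scale the minimum is strictly attained and $\tau_{k-1}(y) = \pi(k, \tau_k(y))$, and set $Q_\tau^k := \{y \in X : y \text{ is regular and } \tau_k(y) = \tau\}$. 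Nestedness (ii) and (iii) are then immediate from the definition, and (i) follows because the non-regular set lies in a countable union of Voronoi boundaries and parent-mismatch strata, which the doubling property of $\mu$ forces to be $\mu$-null.

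The hard part will be calibrating $\rho$ so that the inner ball $B(x_\tau^k, a_0 \rho^k)$ really does sit inside $Q_\tau^k$ after passing to regular points---that is, so that for every $y$ in this ball the closest-centre chain through all finer scales $k+j$ remains among the descendants of $\tau$. This will require $\rho$ to be small enough (in terms of $A_0$) that the geometric series bounding successive centre displacements telescopes strictly below $a_0 \rho^k$, preventing descendants of other level-$k$ centres from ``stealing'' $y$. Once this quantitative smallness is pinned down, openness of $Q_\tau^k$ (if wanted) can be arranged by passing to interiors and absorbing the removed null boundary strata into neighbours, affecting none of (i)--(v).
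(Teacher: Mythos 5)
Your opening moves (maximal $\rho^k$-separated nets via Zorn's lemma, the resulting covering and disjointness properties, and the parent map $\pi$ linking consecutive scales) do match the start of Christ's construction, which is all the paper itself relies on: the lemma is quoted from \cite{C} without proof. The genuine gap is in your definition of the cubes. Requiring a point $y$ to have, at \emph{every} scale, a strictly nearest centre whose parent is exactly the nearest centre at the next coarser scale is a condition that in general fails on a set of positive measure --- typically on a set of full measure, so that your ``regular'' set is null rather than co-null. Indeed, at each level $j$ the parent-mismatch stratum contains roughly a $\rho^j$-neighbourhood of the level-$(j-1)$ Voronoi boundaries, hence a fixed fraction of measure comparable to $\rho$; these fractions are not summable over $j$, so intersecting the consistency conditions over all $j>k$ leaves essentially nothing, and doubling gives no help. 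Moreover, Voronoi bisectors themselves can have positive measure in a space of homogeneous type (take $\mathbb{R}^2$ with the $\ell^\infty$ metric and Lebesgue measure: the bisector of two points contains an open cone). So property (i) fails for your $Q_\tau^k$, and (v) cannot be rescued by shrinking $\rho$: whether the chain through $y$ respects the tree at scale $k+j$ is a local question at scale $\rho^{k+j}$, uncorrelated with $d(y,x_\tau^k)$, so no telescoping geometric series in $\rho$ controls it --- your set may contain no ball at all.

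The coherence across scales has to be built into the construction rather than demanded pointwise. This is what \cite{C} does: the cube attached to $(k,\tau)$ is built from the tree itself, essentially as (the interior of the closure of) the union of $B(z^l_\beta,a_0\rho^l)$ over all descendants $(l,\beta)\preceq(k,\tau)$, equivalently recursively as the union of the cubes of its children; then (ii), (iii), (iv) and (v) hold by construction once $\rho$ is small relative to the quasi-metric constant. The genuinely hard step, entirely absent from your sketch, is the small-boundary estimate
$\mu\bigl(\{x\in Q^k_\tau:\ d(x,X\setminus Q^k_\tau)\le t\rho^k\}\bigr)\lesssim t^{\eta}\,\mu(Q^k_\tau)$,
proved by an iteration argument using the doubling property. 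It is this estimate that allows one to pass to open cubes and shows that the complement of their union at a fixed level, being contained in a countable union of boundaries, is $\mu$-null, i.e.\ property (i). Without it, ``passing to interiors and absorbing the removed null boundary strata into neighbours'' is not justified.
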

\begin{rem}\label{rem1-Christ}
	Since the constants $\rho$ and $a_0$ are not essential in the paper, without loss of generality, we may assume that $\rho=a_0=1/2$.  We then fix a collection of open sets in Lemma \ref{Christ'slemma} and denote this collection by $\mathcal{D}$. We call open sets in $\mathcal{D}$ the dyadic cubes in $X$ and $x_{Q_\tau^k}$ the center of the cube $Q_\tau^k \in \mathcal{D}$. We also denote 
	$$\mathcal{D}_\nu:=\{Q_\tau^{\nu+1} \in \mathcal{D} : \tau\in I_{\nu+1}\}
	$$ 
	for each $\nu\in \mathbb{Z}$. Then for $Q\in \mathcal{D}_\nu$, we have $B(x_Q, c_02^{-\nu})\subset Q\subset B(x_Q, \kappa_0 2^{-\nu})=:B_Q$, where $c_0$ is a constant independent of $Q$.
\end{rem}

\subsection{Kernel estimates}

Denote by $E_L(\lambda)$ a spectral decomposition of $L$. Then by spectral theory, for any bounded Borel funtion $F:[0,\vc)\to \mathbb{C}$ we can define
$$
F(L)=\int_0^\vc F(\lambda)dE_L(\lambda)
$$
as a bounded operator on $L^2(X)$. It is well-known that the kernel $K_{\cos(t\sqrt{L})}$ of $\cos(t\sqrt{L})$ satisfies 
\begin{equation}\label{finitepropagation}
{\rm supp}\,K_{\cos(t\sqrt{L})}\subset \{(x,y)\in X\times X:
d(x,y)\leq t\}.
\end{equation}
See for example \cite{CS}.

We have the following useful lemma. See for example \cite{HLMMY}.
\begin{lem}\label{lem:finite propagation}
	Let $\varphi\in \SR$ be an even function with {\rm supp}\,$\varphi\subset (-1, 1)$ and $\int \varphi =2\pi$. Denote by $\Phi$ the Fourier transform of $\varphi$. Then for every $k \in \mathbb{N}$, the kernel $K_{(t^2L)^k\Phi(t\sqrt{L})}$ of $(t^2L)^k\Phi(t\sqrt{L})$ satisfies 
	\begin{equation}\label{eq1-lemPsiL}
	\displaystyle
	{\rm supp}\,K_{(t^2L)^k\Phi(t\sqrt{L})}\subset \{(x,y)\in X\times X:
	d(x,y)\leq t\},
	\end{equation}
	and
	\begin{equation}\label{eq2-lemPsiL}
	|K_{(t^2L)^k\Phi(t\sqrt{L})}(x,y)|\leq \f{C}{V(x,t)}.
	\end{equation}
\end{lem}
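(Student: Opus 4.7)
The plan is to prove the two assertions together via a Fourier cosine representation of the operator $(t^2L)^k\Phi(t\sqrt L)$, combined with (a) the finite propagation speed \eqref{finitepropagation} for the support statement \eqref{eq1-lemPsiL}, and (b) the Gaussian heat kernel bound \eqref{GE} together with $L^2$-boundedness for the pointwise bound \eqref{eq2-lemPsiL}.

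For \eqref{eq1-lemPsiL}, I would first rewrite the spectral function $F(\xi):=\xi^{2k}\Phi(\xi)$ as the Fourier cosine transform of a compactly supported function. Since $\Phi = \widehat{\varphi}$ and $\varphi$ is even with $\supp\varphi\subset(-1,1)$, integration by parts $2k$ times yields
\[
\xi^{2k}\Phi(\xi)\;=\;(-1)^k\,\widehat{\varphi^{(2k)}}(\xi)\;=\;(-1)^k\int_{-1}^{1}\varphi^{(2k)}(s)\cos(s\xi)\,ds,
\]
where $\varphi^{(2k)}$ is again even and supported in $(-1,1)$. By the bounded Borel functional calculus and Fubini,
\[
(t^2L)^k\Phi(t\sqrt L)\;=\;(-1)^k\int_{-1}^{1}\varphi^{(2k)}(s)\cos(st\sqrt L)\,ds,
\]
and since \eqref{finitepropagation} gives $\supp K_{\cos(st\sqrt L)}\subset\{(x,y):d(x,y)\le |s|t\}\subset\{d(x,y)\le t\}$ for $|s|\le 1$, the claim \eqref{eq1-lemPsiL} follows immediately.

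For \eqref{eq2-lemPsiL}, the strategy is to upgrade the $L^2$-boundedness of $F(t\sqrt L)$ (which holds with norm $\|F\|_\infty<\infty$ by spectral theory, as $F$ is a bounded Schwartz-type function) to a pointwise kernel estimate using the Gaussian bound \eqref{GE}. A convenient route is to exploit the factorization $F(t\sqrt L)=e^{-t^2L/2}\,H(t\sqrt L)$ together with the support property from \eqref{eq1-lemPsiL} via an approximation scheme; more concretely, one writes $(t^2L)^k\Phi(t\sqrt L)=e^{-t^2L}\circ M_t$ for a suitable $L^2$-bounded operator $M_t$, and then uses the Gaussian pointwise bound on the heat kernel and Cauchy--Schwarz on a ball of radius $t$. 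Alternatively, and most cleanly, one invokes the standard principle (as in Sikora and Coulhon--Sikora, quoted in \cite{HLMMY}) that whenever $F$ is bounded Borel with $\hat F$ compactly supported in $(-1,1)$, Gaussian heat kernel bounds imply
\[
|K_{F(t\sqrt L)}(x,y)|\;\lesssim\;\frac{\|F\|_\infty}{V(x,t)^{1/2}V(y,t)^{1/2}}.
\]
Combined with \eqref{eq1-lemPsiL} and the doubling property \eqref{doubling2} (which gives $V(x,t)\sim V(y,t)$ whenever $d(x,y)\le t$), this yields the desired bound $|K_{(t^2L)^k\Phi(t\sqrt L)}(x,y)|\lesssim 1/V(x,t)$.

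The main obstacle is the passage from $L^2$ operator-norm control of $F(t\sqrt L)$ to the \emph{pointwise} bound on its Schwartz kernel. This step is the nontrivial ingredient: it requires that the Gaussian bound \eqref{GE} be leveraged through Davies--Gaffney type off-diagonal decay of the heat semigroup, combined with the finite propagation support \eqref{eq1-lemPsiL}, to convert an $L^2 \to L^2$ estimate into $L^2 \to L^\infty$ control on each ball $B(x,t)$. The remaining steps --- the integration by parts, the cosine formula, and the invocation of \eqref{finitepropagation} --- are routine. Since the full argument is technical and already available in the literature, in the actual write-up I would present the cosine formula in detail and refer to \cite{HLMMY} for the pointwise upgrade.
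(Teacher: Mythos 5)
Your treatment of \eqref{eq1-lemPsiL} is correct and is precisely the standard argument: integrating by parts to get $\xi^{2k}\Phi(\xi)=(-1)^k\,\widehat{\varphi^{(2k)}}(\xi)=(-1)^k\int_{-1}^{1}\varphi^{(2k)}(s)\cos(s\xi)\,ds$, passing this identity through the functional calculus, and invoking \eqref{finitepropagation}. Note that the paper itself offers no proof of this lemma at all --- it simply cites \cite{HLMMY}, where this cosine-transform argument is the one used --- so on that half you have in effect reproduced the cited proof.

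The second half, however, rests on statements that are not correct as written. First, the factorization $(t^2L)^k\Phi(t\sqrt{L})=e^{-t^2L}\circ M_t$ with $M_t$ bounded on $L^2$ is not available: by the spectral calculus $M_t$ would have to be $G(t\sqrt{L})$ with $G(\xi)=\xi^{2k}\Phi(\xi)e^{\xi^2}$, and since $\Phi$ is the Fourier transform of a nonzero compactly supported function it cannot satisfy $|\Phi(\xi)|\lesssim e^{-c\xi^2}$ (such decay would make $\varphi$ entire, contradicting compact support), so $G$ is unbounded --- already for $L=-\Delta$ there is no such bounded $M_t$; moreover, even an $L^2$-bounded $M_t$ together with \eqref{GE} would not give a pointwise kernel bound by Cauchy--Schwarz without quantitative control of $\|K_{M_t}(\cdot,y)\|_{L^2}$. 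Second, the ``standard principle'' you invoke --- that \eqref{GE} and $\supp\widehat{F}\subset(-1,1)$ alone imply $|K_{F(t\sqrt{L})}(x,y)|\lesssim \|F\|_\infty\,V(x,t)^{-1/2}V(y,t)^{-1/2}$ --- is false: take $L=-\Delta$ on $\mathbb{R}^3$ and $F(\xi)=\sin(a\xi)/\xi$ with $0<a<1$; then $\widehat{F}$ is supported in $(-1,1)$ and $\|F\|_\infty<\infty$, but $F(t\sqrt{-\Delta})$ is (a multiple of) convolution with the surface measure of the sphere of radius $at$, whose kernel is a singular measure and certainly not bounded by $C/V(x,t)$. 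The correct versions of this principle (Sikora, Coulhon--Sikora, and the form used in \cite{HLMMY}) carry a Sobolev-type norm of $F$, e.g.\ $\|F\|_{W^{s,2}}$ or $\sup_\xi|F(\xi)|(1+|\xi|)^{s}$ with $s>n/2$, on the right-hand side; boundedness plus band-limitedness is not enough. For the specific function here this is harmless, since $F(\xi)=\xi^{2k}\Phi(\xi)$ is an even Schwartz function, so \eqref{eq2-lemPsiL} follows from those results --- indeed it is exactly the content of Lemma \ref{lem1}(a) combined with $V(x\vee y,t)\ge V(x,t)$ --- but as written your key step is justified by a false statement, and it is the Schwartz decay of $F$, not the compact support of $\widehat{F}$, that drives the pointwise bound.
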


The following lemma gives some kernel estimates which play an important role in the proof of our main results.

\begin{lem}
	\label{lem1}
	\begin{enumerate}[{\rm (a)}]
		\item Let $\varphi\in \mathscr{S}(\mathbb{R})$ be an even function. Then for any $N>0$ there exists $C>0$ such that 
		\begin{equation}
		\label{eq1-lema1}
		|K_{\varphi(t\sqrt{L})}(x,y)|\leq \f{C}{V(x\vee y,t)}\Big(1+\f{d(x,y)}{t}\Big)^{-N},
		\end{equation}
		for all $t>0$ and $x,y\in X$.
		\item Let $\varphi_1, \varphi_2\in \mathscr{S}(\mathbb{R})$ be even functions. Then for any $N>0$ there exists $C>0$ such that
		\begin{equation}
		\label{eq2-lema1}
		|K_{\varphi_1(t\sqrt{L})\varphi_2(s\sqrt{L})}(x,y)|\leq C\f{1}{V(x\vee y,t)}\Big(1+\f{d(x,y)}{t}\Big)^{-N},
		\end{equation}
		for all $t\leq s<2t$ and $x,y\in X$.
		\item Let $\varphi_1, \varphi_2\in \mathscr{S}(\mathbb{R})$ be even functions with $\varphi^{(\nu)}_2(0)=0$ for $\nu=0,1,\ldots,2\ell$ for some $\ell\in\mathbb{Z}^+$. Then for any $N>0$ there exists 
		$C>0$ such that
		\begin{equation}
		\label{eq3-lema1}
		|K_{\varphi_1(t\sqrt{L})\varphi_2(s\sqrt{L})}(x,y)|\leq C\Big(\f{s}{t}\Big)^{2\ell} \f{1}{V(x\vee y,t)}\Big(1+\f{d(x,y)}{t}\Big)^{-N},
		\end{equation}
		for all $t\geq s>0$ and $x,y\in X$.
	\end{enumerate}
	
	Note that any function in $\mathscr{S}(\mathbb{R})$ with compact
	 support in $(0,\infty)$ can be extended to an even function in 
	 $\mathscr{S}(\mathbb{R})$ with derivatives of all orders vanish at $0$. Hence the results in each part (a), (b) and (c) hold for such functions. 
\end{lem}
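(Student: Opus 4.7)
The plan is to prove the core estimate (a) first and then deduce (b) and (c) by writing the composition $\varphi_1(t\sqrt{L})\varphi_2(s\sqrt{L})$ as a single spectral function of $t\sqrt{L}$.

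For (b), since $s/t\in[1,2)$, set $\Phi(\lambda) := \varphi_1(\lambda)\varphi_2((s/t)\lambda)$, so that $\varphi_1(t\sqrt{L})\varphi_2(s\sqrt{L}) = \Phi(t\sqrt{L})$. Then $\Phi$ is even Schwartz with Schwartz seminorms bounded uniformly over $s/t \in [1,2)$, and (a) applied to $\Phi$ at scale $t$ yields (b). For (c), the evenness of $\varphi_2$ combined with $\varphi_2^{(\nu)}(0) = 0$ for $\nu = 0,\dots,2\ell$ forces the factorization $\varphi_2(\lambda) = \lambda^{2(\ell+1)}g(\lambda)$ with $g$ even Schwartz. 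Consequently
$$
\varphi_1(t\sqrt{L})\varphi_2(s\sqrt{L})
= s^{2(\ell+1)}L^{\ell+1}\varphi_1(t\sqrt{L})\, g(s\sqrt{L})
= \Big(\frac{s}{t}\Big)^{2(\ell+1)} H(t\sqrt{L}),
$$
where $H(\lambda) = \lambda^{2(\ell+1)}\varphi_1(\lambda)\,g((s/t)\lambda)$ is even Schwartz with seminorms bounded uniformly for $s/t \in (0,1]$ (the rapid decay of $\lambda^{2(\ell+1)}\varphi_1(\lambda)$ dominates any mild growth of derivatives of the rescaled $g$). Applying (a) to $H$ at scale $t$ gives a bound with factor $(s/t)^{2(\ell+1)}\le (s/t)^{2\ell}$, which is what (c) claims.

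The heart is (a), for which I would use the wave-equation/Fourier-inversion approach. Since $\varphi$ is even Schwartz,
$$
\varphi(t\sqrt{L}) = \frac{1}{\pi}\int_0^\infty \widehat{\varphi}(\tau)\cos(\tau t\sqrt{L})\,d\tau,
$$
and by the finite propagation property \eqref{finitepropagation} the integrand contributes to the kernel at $(x,y)$ only for $\tau \ge d(x,y)/t$. Introduce a smooth dyadic partition of unity $\{\eta_j\}_{j\ge 0}$ on $\mathbb{R}$ with $\operatorname{supp}\eta_0 \subset (-1,1)$ and $\operatorname{supp}\eta_j \subset \{2^{j-1}<|\tau|<2^{j+1}\}$ for $j\ge 1$, and set $\varphi_j := \mathcal{F}^{-1}(\widehat\varphi\cdot\eta_j)$; each $\varphi_j$ is even with $\widehat{\varphi_j}$ supported in $|\tau|\le 2^{j+1}$. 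Rescaling so that the Fourier support lies in $(-1,1)$ and applying Lemma~\ref{lem:finite propagation} (with $k=0$) shows that $K_{\varphi_j(t\sqrt{L})}$ is supported in $\{d(x,y)\le 2^{j+1}t\}$ with $|K_{\varphi_j(t\sqrt{L})}(x,y)|\lesssim c_j/V(x,2^{j+1}t)$, where $c_j$ absorbs the relevant seminorms of $\widehat\varphi\cdot\eta_j$ and decays faster than any polynomial in $j$ because $\widehat\varphi$ is Schwartz. Summing, only those $j$ with $2^{j+1}t\gtrsim d(x,y)$ contribute; the rapid decay of $c_j$ supplies the $(1+d(x,y)/t)^{-N}$ factor, while the doubling property reduces $V(x,2^{j+1}t)^{-1}$ to $V(x,t)^{-1}$. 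Self-adjointness of $L$ gives $K(x,y)=\overline{K(y,x)}$, so the same bound holds with $V(y,t)^{-1}$; taking the better of the two yields the desired $V(x\vee y,t)^{-1}$.

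The main obstacle is in (a): extracting the optimal volume factor $V(x\vee y,t)^{-1}$ together with arbitrary polynomial decay in $d(x,y)/t$. This requires a sufficiently quantitative version of Lemma~\ref{lem:finite propagation} in which the implicit constant is explicit in the Schwartz seminorms of the symbol, together with careful bookkeeping so that the Schwartz decay of $\widehat\varphi$ translates to rapid decay in $j$ that dominates the dyadic growth of the support radii $2^{j+1}t$ after summation. Once this quantitative control is in hand, the proof reduces to a standard dyadic summation.
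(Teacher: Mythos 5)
Your proposal takes a genuinely different route from the paper, which in fact offers no argument for this lemma at all: part (a) is handled by citing \cite[Lemma 2.3]{CD} with the remark that the Euclidean proof carries over to spaces of homogeneous type, and parts (b) and (c) are referred to \cite{BDK}. Your reductions of (b) and (c) to (a) are complete and correct: in (b) the symbol $\varphi_1(\lambda)\varphi_2((s/t)\lambda)$ is even Schwartz with seminorms uniform over $s/t\in[1,2)$, and in (c) evenness forces $\varphi_2^{(2\ell+1)}(0)=0$ as well, so $\varphi_2(\lambda)=\lambda^{2\ell+2}g(\lambda)$ with $g$ even Schwartz, and you in fact get the stronger factor $(s/t)^{2\ell+2}$. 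Your sketch of (a) is the standard finite-propagation argument, and the single ingredient you leave open --- a version of Lemma \ref{lem:finite propagation} whose constant is explicit in finitely many seminorms of the symbol --- is true and follows from \eqref{GE} alone: writing $F(t\sqrt{L})=(I+t^2L)^{-m}\bigl[(I+t^2L)^{2m}F(t\sqrt{L})\bigr](I+t^2L)^{-m}$ with $m>n/4$, bounding the middle factor on $L^2$ by $\sup_\xi(1+\xi^2)^{2m}|F(\xi)|$, and using the subordination $(I+t^2L)^{-m}=\Gamma(m)^{-1}\int_0^\infty s^{m-1}e^{-s}e^{-st^2L}\,ds$ with the Gaussian bound to get $\|K_{(I+t^2L)^{-m}}(x,\cdot)\|_{L^2}\lesssim V(x,t)^{-1/2}$, one obtains $|K_{F(t\sqrt{L})}(x,y)|\lesssim \sup_\xi(1+\xi^2)^{2m}|F(\xi)|\,\bigl(V(x,t)V(y,t)\bigr)^{-1/2}$, which \eqref{doubling3} converts to the stated form up to a factor $(1+d(x,y)/t)^{\tilde n/2}$ that is absorbed in the dyadic summation; your bookkeeping claim also checks out, since the rescaled piece $\varphi_j(2^{-(j+1)}\xi)$ satisfies $\sup_\xi(1+\xi^2)^{2m}|\varphi_j(2^{-(j+1)}\xi)|\lesssim 2^{4mj-jM}$ for every $M$. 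In short, your approach buys a self-contained proof from \eqref{GE} and finite propagation speed (essentially the argument the cited references carry out), while the paper's citation buys brevity at the cost of outsourcing exactly the quantitative kernel estimate you identified.
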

\begin{proof}
	\noindent (a) The estimate \eqref{eq1-lema1} was proved in \cite[Lemma 2.3]{CD} in the particular case $X=\mathbb{R}^n$ but the proof is still valid in the case of spaces of homogeneous type.
	
	For the items (b) and (c) we refer to \cite{BDK}.
\end{proof}
\bigskip
\begin{rem}
	(i) From \eqref{doubling3}, the term $V(x\vee y, t)$ on the right hand side of estimates in Lemma \ref{lem1} can be replaced by $V(x\vee y, d(x,y))$. 
	
	(ii) We will often use the following inequality
	\[
	\Big(1+\f{d(x,y)}{t}\Big)^{-N}\Big(1+\f{d(x,z)}{t}\Big)^{-N}\leq \Big(1+\f{d(x,z)}{t}\Big)^{-N}
	\]
	for all $x,y,z\in X$ and all $t, N>0$.
	
	We may use these in the sequel without stating any reasons. 
\end{rem}

\bigskip

\subsection{A new class of distributions}
We fix $x_0\in X$ as a reference point in $X$. The class of test functions $\mathcal{S}$ associated to $L$ is defined as the set of all functions $\phi \in \cap_{m\geq 1}D(L^m)$ such that
\begin{equation}
\label{Pml norm}
\mathcal{P}_{m,\ell}(\phi)=\sup_{x\in X}(1+d(x,x_0))^m|L^\ell \phi(x)|<\vc, \ \ \forall m>0, \ell \in \mathbb{N}.
\end{equation}
It was proved in \cite{KP} that $\mathcal{S}$ is a complete locally convex space with topology generated by the family of semi-norms $\{\mathcal{P}_{m,\ell}: \, m>0, \ell \in \mathbb{N}\}$. As usual, we define  the space of distributions $\mathcal{S}'$ as the set of all continuous linear functional on $\mathcal{S}$ with the inner product defined by
\[
\langle f,\phi\rangle=f(\overline{\phi})
\]
for all $f\in \mathcal{S}'$ and $\phi\in \mathcal{S}$.

The space of distributions $\mathcal{S}'$ can be used to define the inhomogeneous Besov and Triebel-Lizorkin spaces. However, in order to study the homogeneous version of these spaces we need some modifications. 

Following \cite{G.etal} we define the space $\mathcal{S}_\vc$ as the set of all functions $\phi \in \mathcal{S}$ such that for each $k\in \mathbb{N}$ there exists $g_k\in \mathcal{S}$ so that $\phi=L^kg_k$. Note that such an $g_k$, if exists, is unique. See \cite{G.etal}.

The topology in $\mathcal{S}_\vc$ is generated by the following family of semi-norms 
\[
\mathcal{P}^*_{m,\ell,k}(\phi)=\mathcal{P}_{m,\ell}(g_k), \ \forall m>0; \ell, k\in \mathbb{N}
\]
where $\phi=L^k g_k$.

We then denote by $\mathcal{S}_\vc'$ the set of all continuous linear functionals on $\mathcal{S}_\vc$.

In order to have an insightful understanding about the distributions in $\mathcal{S}'_\vc$. We define
\[
\mathscr P_m =\{g\in\mathcal S': L^mg=0\}, m\in \mathbb{N}
\]
and set $\mathscr P =\cup_{m\in \mathbb N}\mathscr P_m$.

From Proposition 3.7 in \cite{G.etal}, we have:
\begin{prop}
	The
	following
	identification
	is
	valid $\mathcal S'/\mathscr P = \mathcal{S}_\vc'$.
\end{prop}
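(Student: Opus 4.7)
The plan is to identify $\mathcal{S}'/\mathscr{P}$ with $\mathcal{S}'_\infty$ via the restriction map $R \colon \mathcal{S}'\to \mathcal{S}'_\infty$, $R(f):=f|_{\mathcal{S}_\infty}$. The inclusion $\mathcal{S}_\infty\hookrightarrow \mathcal{S}$ is continuous (the seminorms $\mathcal{P}^*_{m,\ell,0}$ coincide with $\mathcal{P}_{m,\ell}$, so the topology on $\mathcal{S}_\infty$ is at least as fine as the subspace topology), hence $R$ is well-defined, linear and continuous. The proposition is equivalent to (i) $\ker R = \mathscr{P}$ and (ii) $R$ is surjective.

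For the containment $\mathscr{P}\subset \ker R$, adjunction is immediate: if $L^m f = 0$ in $\mathcal{S}'$ and $\phi = L^m g_m \in \mathcal{S}_\infty$ with $g_m\in\mathcal{S}$, then $\langle f,\phi\rangle = \langle L^m f, g_m\rangle = 0$. For the reverse inclusion $\ker R \subset \mathscr{P}$, I would fix an even $\psi\in C_c^\infty(\mathbb{R})$ with $\mathrm{supp}\,\psi\subset\{1/2\le |\lambda|\le 2\}$ and $\sum_{j\in\mathbb{Z}}\psi^2(2^{-j}\lambda)=1$ for $\lambda>0$. Because $\psi$ has compact support away from the origin, Lemma \ref{lem1} and the comment following it guarantee $\psi^2(2^{-j}\sqrt{L})\phi\in \mathcal{S}_\infty$ for every $\phi\in\mathcal{S}$ and $j\in\mathbb{Z}$. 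For $f\in\ker R$ this yields $\langle f, \psi^2(2^{-j}\sqrt{L})\phi\rangle = 0$ for all $j,\phi$; summing over $j$ via the Calderón reproducing formula (to be established in Section~2) leaves $f$ supported on the spectral point $\{0\}$ of $L$, so that $L^m f=0$ for some $m$, i.e. $f\in \mathscr{P}$.

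For surjectivity, given $h\in\mathcal{S}'_\infty$ I would define
\[
\langle \tilde f,\phi\rangle := \sum_{j\in\mathbb{Z}}\bigl\langle h,\, \psi^2(2^{-j}\sqrt{L})\phi\bigr\rangle, \qquad \phi\in\mathcal{S}.
\]
Each summand is meaningful since $\psi^2(2^{-j}\sqrt{L})\phi\in\mathcal{S}_\infty$, and parts (b)--(c) of Lemma \ref{lem1} provide rapid decay $\lesssim 2^{-j|s|}\mathcal{P}_{m,\ell}(\phi)$ of the individual pairings in the dyadic index $j$ (both as $j\to\pm\infty$), giving absolute convergence and continuity of $\phi\mapsto \langle\tilde f,\phi\rangle$ on $\mathcal{S}$. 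Restricting to $\phi\in\mathcal{S}_\infty$, the Calderón formula $\sum_j\psi^2(2^{-j}\sqrt{L})\phi = \phi$ now holds with no $\mathscr{P}$-remainder, because $\phi\in \mathcal{S}_\infty$ lies in the range of every power of $L$. Hence $R(\tilde f)=h$.

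The main obstacle is the converse inclusion in step (i), namely reading off $L^m f=0$ from the vanishing of $f$ on $\mathcal{S}_\infty$. This requires a careful treatment of the low-frequency end of the spectrum and of the convergence of the Littlewood--Paley resolution in $\mathcal{S}'$, for which the kernel estimates of Lemma \ref{lem1} and the standing hypothesis $\mu(X)=\infty$ are essential in excluding persistent low-frequency obstructions.
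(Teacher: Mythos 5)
Your reduction to the restriction map $R$ is the right framing, and the easy half ($\mathscr P\subset\ker R$, continuity of $\mathcal S_\vc\hookrightarrow\mathcal S$) is fine, but both places where real work is needed have genuine gaps; note also that the paper does not prove this proposition at all — it imports it from Proposition 3.7 of \cite{G.etal} — and the steps you wave through are precisely the content of that citation. For $\ker R\subset\mathscr P$ you assert that vanishing of $\langle f,\psi^2(2^{-j}\sqrt L)\phi\rangle$ for all $j$ and $\phi$ ``leaves $f$ supported on the spectral point $\{0\}$, so that $L^mf=0$ for some $m$.'' In this abstract setting there is no Fourier transform and no notion of spectral support for a general element of $\mathcal S'$, and even granting one, passing from ``support in $\{0\}$'' to $L^mf=0$ for a \emph{single finite} $m$ is a structure/finite-order theorem that must be proved (in $\mathbb R^n$ it is the theorem on distributions supported at a point). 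Moreover the summation step is unavailable: for $\phi\in\mathcal S\setminus\mathcal S_\vc$ the low-frequency tail $\sum_{j<0}\psi^2(2^{-j}\sqrt L)\phi$ need not converge in $\mathcal S$ — the gain $2^{2j}$ in \eqref{eq2-Calder1-proof} comes from writing $\phi=L^kg_k$, exactly what fails outside $\mathcal S_\vc$ — so you cannot push the sum through the pairing with $f$. Finally, the $\mathcal S'$-level Calder\'on formula with $\mathscr P$-remainder (Proposition \ref{prop-Calderon2}) is itself deduced in the paper \emph{from} the identification $\mathcal S'_\vc=\mathcal S'/\mathscr P$, so invoking ``the Calder\'on reproducing formula to be established in Section 2'' at this point is circular.

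The surjectivity argument also fails as written. Continuity of $h\in\mathcal S'_\vc$ only gives $|\langle h,g\rangle|\lesssim\max\mathcal P^*_{m,\ell,k}(g)$ over a \emph{fixed} finite family of seminorms, and for $g=\psi^2(2^{-j}\sqrt L)\phi$ with $j<0$ one has $g=L^kg_k$ with $g_k=2^{-2jk}\,\eta(2^{-j}\sqrt L)\phi$, $\eta(\xi)=\xi^{-2k}\psi^2(\xi)$; the factor $2^{-2jk}=2^{2|j|k}$ blows up as $j\to-\infty$, reflecting the nontrivial low-frequency content of a generic $\phi\in\mathcal S$. Lemma \ref{lem1}(b)--(c) control kernels of compositions of two spectral multipliers and give no decay of the single pairing $\langle h,\psi^2(2^{-j}\sqrt L)\phi\rangle$ as $j\to-\infty$. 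In fact the claimed absolute convergence is false already for $L=-\Delta$: take $\hat f=\sum_{k\ge1}2^{-k}\partial_1^N\delta_{(2^{-k},0,\dots,0)}$ with $N\ge2$, a tempered distribution of order $N$; for suitable $\psi$ and $\phi$ with $\hat\phi\equiv1$ near the origin one checks $|\langle f,\psi^2(2^{-j}\sqrt{-\Delta})\phi\rangle|\sim 2^{|j|(N-1)}$ along $j\to-\infty$, so your series defining $\tilde f$ diverges even though $h=f|_{\mathcal S_\vc}$ trivially admits an extension. A correct proof of surjectivity must proceed differently (for instance by an extension argument after relating the $\mathcal P^*$-topology on $\mathcal S_\vc$ to the topology induced from $\mathcal S$), which is essentially what the cited Proposition 3.7 of \cite{G.etal} supplies.
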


It was proved in \cite{G.etal} that with $L=-\Delta$, the Laplacian on $\mathbb{R}^n$,  the distributions in $\mathcal S'/\mathscr P = \mathcal{S}_\vc'$ are identical with the classical tempered distributions modulo polynomial.

\medskip

From Lemma \ref{lem1}, we can see that if $\varphi\in \mathscr{S}(\mathbb{R})$ with supp\,$\varphi\subset (0,\vc)$, then we have $K_{\varphi(t\sL)}(x,\cdot)\in \mathcal{S}_\vc$ and $K_{\varphi(t\sL)}(\cdot,y)\in \mathcal{S}_\vc$. Therefore, we can define
\begin{equation}\label{eq- s and s'}
\varphi(t\sL)f(x)=\langle f, K_{\varphi(t\sL)}(x,\cdot)\rangle
\end{equation}
for all $f\in \mathcal{S}'_\vc$.

The support condition supp\,$\varphi\subset (0,\vc)$ is essential to be able to define $\varphi(t\sL)f$ with $f\in \mathcal{S}'_\vc$. In general, if $\varphi\in \mathscr{S}(\mathbb{R})$, then we have $K_{\varphi(t\sL)}(x,\cdot)\in \mathcal{S}$ and $K_{\varphi(t\sL)}(\cdot,y)\in \mathcal{S}$. In this situation, it is possible to define $\varphi(t\sL)f$ when $f\in \mathcal{S}'$, but it is not clear how to define $\varphi(t\sL)f$ when $f\in \mathcal{S}'_\vc$.

\begin{lem}
	Let $f\in \mathcal S'$ and $\varphi\in \mathscr S(\mathbb{R})$ be an even function. Then there exist $m>0$ and $K>0$ such that 
	\begin{equation}\label{eq- bound of varphi and f}
	|\varphi(t\sL)f(x)|\lesi \f{(t\vee t^{-1})^m}{V(x_0,t)}(1+d(x,x_0))^K.
	\end{equation}
		The similar estimate holds true if $f\in \mathcal S'_\vc$ and $\varphi\in \mathscr S(\mathbb{R})$ supported in $[1/2,2]$.
\end{lem}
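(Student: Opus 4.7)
The plan is to unravel the definition of $\varphi(t\sqrt{L})f(x)$ via the pairing \eqref{eq- s and s'}, then to estimate the relevant seminorms of the kernel slice $y\mapsto K_{\varphi(t\sqrt{L})}(x,y)$ by feeding the Gaussian--type estimate of Lemma \ref{lem1}(a) into the definition \eqref{Pml norm}. Since $f\in \mathcal S'$ is continuous on $\mathcal S$, there exist a finite collection of indices $m_1\le m_0$ and $\ell_1\le \ell_0$ and a constant $C$ such that
\[
|\langle f,\phi\rangle|\le C\sum_{m\le m_0,\,\ell\le \ell_0}\mathcal P_{m,\ell}(\phi),\qquad \forall\,\phi\in\mathcal S.
\]
Therefore the entire task reduces to bounding $\mathcal P_{m,\ell}\bigl(K_{\varphi(t\sqrt{L})}(x,\cdot)\bigr)$ in terms of $x$, $t$ and the data.

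To bound this seminorm, I would first write $L^{\ell}K_{\varphi(t\sqrt{L})}(x,y)=t^{-2\ell}K_{\widetilde\varphi(t\sqrt L)}(x,y)$, where $\widetilde\varphi(\lambda):=\lambda^{2\ell}\varphi(\lambda)$ is still an even Schwartz function. Lemma \ref{lem1}(a) then gives, for any $N>0$,
\[
|L^{\ell}K_{\varphi(t\sqrt L)}(x,y)|\le \frac{C_N\,t^{-2\ell}}{V(x\vee y,t)}\Bigl(1+\frac{d(x,y)}{t}\Bigr)^{-N}.
\]
To replace $V(x\vee y,t)$ by $V(x_0,t)$ and $(1+d(y,x_0))^m$ by a factor depending only on $x$, I would use the triangle inequality $(1+d(y,x_0))\le(1+d(x,x_0))(1+d(x,y))$ together with the doubling consequence \eqref{doubling3}, which yields
\[
\frac{1}{V(x\vee y,t)}\le \frac{C\bigl(1+d(x,y)/t\bigr)^{\tilde n}\bigl(1+d(x,x_0)/t\bigr)^{\tilde n}}{V(x_0,t)}.
\]
Choosing $N$ larger than $m+\tilde n$ makes the $y$--factor bounded by $\max(1,t)^{m}$, while $\bigl(1+d(x,x_0)/t\bigr)^{\tilde n}\le \max(1,t^{-1})^{\tilde n}(1+d(x,x_0))^{\tilde n}$ handles the $x$--factor. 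Collecting, I obtain
\[
\mathcal P_{m,\ell}\bigl(K_{\varphi(t\sqrt L)}(x,\cdot)\bigr)\le \frac{C\,(t\vee t^{-1})^{2\ell+m+\tilde n}}{V(x_0,t)}(1+d(x,x_0))^{m+\tilde n},
\]
which, when summed over the finite collection of relevant seminorms, delivers \eqref{eq- bound of varphi and f} with $K=m_0+\tilde n$ and $m=2\ell_0+m_0+\tilde n$.

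For the $\mathcal S'_\infty$ variant, the support condition $\supp\varphi\subset[1/2,2]$ is used to rewrite $\varphi(\lambda)=\lambda^{2k}\widetilde\varphi_k(\lambda)$ with $\widetilde\varphi_k(\lambda):=\lambda^{-2k}\varphi(\lambda)\in C_c^\infty((0,\infty))$. This identifies the $g_k$ of Section 2.3 corresponding to $\phi=K_{\varphi(t\sqrt L)}(x,\cdot)$ as $g_k=t^{2k}K_{\widetilde\varphi_k(t\sqrt L)}(x,\cdot)$, so that
\[
\mathcal P^*_{m,\ell,k}\bigl(K_{\varphi(t\sqrt L)}(x,\cdot)\bigr)=t^{2k}\mathcal P_{m,\ell}\bigl(K_{\widetilde\varphi_k(t\sqrt L)}(x,\cdot)\bigr),
\]
to which the previous estimate applies verbatim, the only change being an additional harmless factor $t^{2k}\le (t\vee t^{-1})^{2k}$.

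The main obstacle is the bookkeeping of powers of $t$: one must simultaneously absorb the factor $t^{-2\ell}$ coming from $L^\ell$, the $\max(1,t)^m$ coming from $(1+d(x,y))^m$, and the $\max(1,t^{-1})^{\tilde n}$ coming from comparing $V(x,t)$ and $V(x_0,t)$, and then package all of these into a single $(t\vee t^{-1})^m$. Everything else is a routine application of Lemma \ref{lem1}(a), the triangle inequality and the doubling properties \eqref{doubling2}--\eqref{doubling3}.
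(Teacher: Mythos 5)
Your proposal is correct and follows essentially the same route as the paper: reduce $|\varphi(t\sqrt{L})f(x)|$ to finitely many seminorms $\mathcal P_{m,\ell}\bigl(K_{\varphi(t\sqrt L)}(x,\cdot)\bigr)$ by continuity of $f$, then estimate these via the kernel bounds of Lemma \ref{lem1} together with \eqref{doubling3} and the triangle inequality, and handle the $\mathcal S'_\infty$ case through the factorization $\varphi(\lambda)=\lambda^{2k}\widetilde\varphi_k(\lambda)$ and the seminorms $\mathcal P^*_{m,\ell,k}$. You have simply written out explicitly the ``simple calculations'' the paper leaves implicit, including the correct bookkeeping of the powers of $t\vee t^{-1}$.
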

\begin{proof}
	Since $\varphi(t\sL)(x,\cdot)\in \mathcal{S}$, we have
	\[
	|\varphi(t\sL)f(x)|\lesi \mathcal P_{m',\ell}(K_{\varphi(t\sL)}(x,\cdot))
	\]
	for some $m'\in \mathbb{N}$ and $\ell\ge 0$.
	
	Using the kernel estimates in Lemma \ref{lem1} and simple calculations we can find $m,K>0$ so that
	\[
	P_{m',\ell}(K_{\varphi(t\sL)}(x,\cdot))\lesi \f{(t\vee t^{-1})^m}{V(x_0,t)}(1+d(x,x_0))^m.
	\] 
	This implies \eqref{eq- bound of varphi and f}.
	
	The proof is similar when $f\in \mathcal S'_\vc$ and $\varphi\in \mathscr S(\mathbb{R})$ supported in $[1/2,2]$. We omit the details.
\end{proof}
\subsection{Calder\'on reproducing formulas}
In what follows, by a ``partition of unity'' we shall mean a function $\psi\in \mathcal{S}(\mathbb{R})$ such that $\supp\psi\subset[1/2,2]$, $\int\psi(\xi)\,\f{d\xi}{\xi}\neq 0$ and
$$\sum_{j\in \mathbb{Z}}\psi_j(\lambda)=1 \textup{ on } (0,\infty), $$
where $\psi_j(\lambda):=\psi(2^{-j}\lambda)$ for each $j\in \mathbb{Z}$.

\begin{prop}
	\label{prop-Calderon1}
	Let $\psi$ be a partition of unity. Then for any $f\in \mathcal{S}'_\vc$ we have
	\[
	f= \sum_{j\in \mathbb{Z}}\psi_j(\sL)f \ \ \text{in $\mathcal{S}'_\vc$}.
	\]
\end{prop}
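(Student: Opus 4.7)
The plan is to reduce the convergence in $\mathcal{S}'_\vc$ to a corresponding convergence in the test-function space $\mathcal{S}_\vc$ by duality, and then to establish the latter by exploiting the factorizations $\phi=L^k g_k$ available for every $\phi\in\mathcal{S}_\vc$ combined with the Gaussian-type kernel bounds of Lemma~\ref{lem1}. For $f\in\mathcal{S}'_\vc$ and $\phi\in\mathcal{S}_\vc$, the kernel representation \eqref{eq- s and s'} and the self-adjointness of the (real-valued) operator $\psi_j(\sqrt L)$ yield $\sum_{|j|\le N}\langle\psi_j(\sqrt L)f,\phi\rangle=\langle f,\Phi_N(\sqrt L)\phi\rangle$, where $\Phi_N:=\sum_{|j|\le N}\psi_j$. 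By continuity of $f$ on $\mathcal{S}_\vc$, the claim reduces to showing $\Phi_N(\sqrt L)\phi\to\phi$ in $\mathcal{S}_\vc$ as $N\to\infty$.

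By uniqueness of the factorization, if $\phi=L^k g_k$ then $g_k\in\mathcal{S}_\vc$ as well (apply the uniqueness to $L^k g_k=L^{k+j}g_{k+j}$ to obtain $g_k=L^j g_{k+j}$). Using $[L^k,\psi_j(\sqrt L)]=0$ together with the definition of $\mathcal{P}^*_{m,\ell,k}$, the problem further reduces to showing that for every $h\in\mathcal{S}_\vc$ and every $m>0$,
\[
\sup_{x\in X}(1+d(x,x_0))^m\,\bigl|[I-\Phi_N(\sqrt L)]h(x)\bigr|\longrightarrow 0\ \text{ as }\ N\to\infty.
\]
Using $\sum_{j\in\mathbb Z}\psi_j\equiv 1$ on $(0,\infty)$, one obtains the identity $1-\Phi_N(\lambda)=\hat\psi(2^N\lambda)+\tilde\psi(2^{-N}\lambda)$, where $\hat\psi(\mu):=\sum_{j\ge 1}\psi(2^j\mu)$ and $\tilde\psi(\mu):=\sum_{j\ge 1}\psi(2^{-j}\mu)$. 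The function $\hat\psi$ equals $1$ on $(0,1/4]$ and vanishes on $[1,\infty)$, so after setting $\hat\psi(0)=1$ and extending evenly it lies in $C_c^\infty(\mathbb R)$; $\tilde\psi$ is smooth on $(0,\infty)$ with $\tilde\psi\equiv 0$ on $(0,1]$ and $\tilde\psi\equiv 1$ on $[2,\infty)$.

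For the low-frequency piece, write $h=L^K h_K$ (with $K$ to be chosen large) and use the identity $\hat\psi(2^N\lambda)\lambda^{2K}=2^{-2KN}F(2^N\lambda)$ with $F(\mu):=\mu^{2K}\hat\psi(\mu)\in C_c^\infty(\mathbb R)$ (even), so that $\hat\psi(2^N\sqrt L)h=2^{-2KN}F(2^N\sqrt L)h_K$. Applying Lemma~\ref{lem1}(a) to $F$ at scale $2^{-N}$ gives a kernel estimate with explicit prefactor $2^{-2KN}$; combined with the Schwartz decay of $h_K\in\mathcal{S}$, Lemma~\ref{lem-elementary}, and the doubling bounds \eqref{doubling2}--\eqref{doubling3}, one obtains $(1+d(x,x_0))^m\bigl|\hat\psi(2^N\sqrt L)h(x)\bigr|\lesssim 2^{-2KN}\cdot 2^{Nn}$, which vanishes as $N\to\infty$ whenever $K>n/2$.

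The main obstacle is the high-frequency piece $\tilde\psi(2^{-N}\sqrt L)h$, because $\tilde\psi$ is not Schwartz and Lemma~\ref{lem1}(a) does not apply directly. I would overcome this by a dyadic decomposition: choose $\rho_0\in C_c^\infty((1/2,2))$ with $\sum_{\ell\in\mathbb Z}\rho_0(2^{-\ell}\lambda)=1$ on $(0,\infty)$; on the support of $\tilde\psi(2^{-N}\cdot)$ only scales $\ell\ge N-1$ contribute, giving
\[
\tilde\psi(2^{-N}\sqrt L)h=\sum_{\ell\ge N-1}\beta_\ell(2^{-\ell}\sqrt L)h,
\]
where each $\beta_\ell\in C_c^\infty((1/2,2))$ is fixed (in fact $\beta_\ell=\rho_0$ for $\ell\ge N+2$, while the three transition terms have uniformly controlled seminorms). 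On the support of $\beta_\ell(2^{-\ell}\cdot)$ one has $\lambda\ge 2^{\ell-1}$, so factoring $\lambda^{-2K'}$ out yields $\beta_\ell(2^{-\ell}\sqrt L)h=2^{-2K'\ell}\gamma_\ell(2^{-\ell}\sqrt L)(L^{K'}h)$ with $\gamma_\ell(\mu):=\mu^{-2K'}\beta_\ell(\mu)\in C_c^\infty((1/2,2))$. Since $L^{K'}h\in\mathcal{S}$, Lemma~\ref{lem1}(a) applied to $\gamma_\ell$ together with Lemma~\ref{lem-elementary} and the Schwartz decay of $L^{K'}h$ gives a uniform-in-$\ell$ pointwise bound $|\gamma_\ell(2^{-\ell}\sqrt L)(L^{K'}h)(x)|\lesssim (1+d(x,x_0))^{-m'}$ for any chosen $m'$. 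Summing the resulting geometric series in $\ell\ge N-1$ produces $(1+d(x,x_0))^m|\tilde\psi(2^{-N}\sqrt L)h(x)|\lesssim 2^{-2K'N}$ once $m'>m$ and $K'$ are chosen sufficiently large. The principal technical burden is the careful bookkeeping of the volume factors $1/V(\cdot,2^\ell)$ in the kernel estimates to ensure that the $2^{-2K'\ell}$ decay outweighs any polynomial losses coming from doubling, uniformly across all scales $\ell\ge N-1$.
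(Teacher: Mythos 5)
Your proposal is correct in substance and shares the paper's core ingredients (reduction by duality to convergence in $\mathcal{S}_\vc$, the factorizations $\phi=L^kg_k$, and the kernel bounds of Lemma~\ref{lem1} combined with Lemma~\ref{lem-elementary}), but it organizes the key convergence step differently. The paper estimates each dyadic piece $\psi_j(\sL)\phi$ separately, obtaining the geometric bounds \eqref{eq1-Calder1-proof}--\eqref{eq2-Calder1-proof}, and then must invoke the completeness of $\mathcal{S}_\vc$ to produce a limit $h$ and spectral theory ($L^2$-convergence) to identify $h$ with $\phi$. You instead write the remainder $I-\Phi_N(\sL)$ explicitly as $\hat\psi(2^N\sL)+\tilde\psi(2^{-N}\sL)$ (with $\hat\psi(0)=1$ so the identity holds on all of $[0,\vc)$), which builds the identification into a bounded functional-calculus identity and lets you estimate the two tails directly: the low-frequency tail in one stroke via the compactly supported function $F(\mu)=\mu^{2K}\hat\psi(\mu)$ at scale $2^N$, and the high-frequency tail by a dyadic re-expansion that is essentially the paper's $j\ge 0$ estimate. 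What your route buys is that no completeness-plus-identification argument is needed; what the paper's route buys is that the only spectral calculus used is for functions supported in $[1/2,2]$, and the uniformity of the kernel constants is immediate (in your high-frequency step one must note, as you do, that the transition functions $\gamma_\ell$ have uniformly bounded Schwartz seminorms so that the constant in Lemma~\ref{lem1} is uniform in $\ell$). One quantitative slip to fix: in the low-frequency estimate the loss from transferring the weight $(1+d(x,x_0))^m$ across the scale $2^N$ is $2^{Nm}$, not $2^{Nn}$, so the condition ``$K>n/2$'' is not the right one; you must choose $K>m/2$ (i.e.\ $K$ after the seminorm index $m$), exactly as the paper does when it takes the exponent $\ell+k+m+1$ in its $j<0$ case. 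Since each seminorm is treated separately and $h\in\mathcal{S}_\vc$ admits the factorization $h=L^Kh_K$ for every $K$, this is a harmless repair, not a structural gap.
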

\begin{proof}
	By duality it suffices to prove that for each $f\in \mathcal{S}_\vc$,
	\[
	f= \sum_{j\in \mathbb{Z}}\psi_j(\sL)f \ \ \text{in $\mathcal{S}_\vc$}.
	\]
	Indeed, since $f\in \mathcal{S}_\vc$, for each $k\in \mathbb{N}$ there exists $g_k\in \mathcal{S}$ so that $f=L^k g_k$.
	
	For $m>0$ and $\ell, k\in \mathbb{N}$ we have
	$$
	\mathcal{P}^*_{m,\ell,k}(\psi_j(\sL)f)=\sup_{x\in X}(1+d(x,x_0))^m |L^\ell \psi_j(\sL)g_k(x)|.
	$$
	If $j\geq 0$, then we have
	$$
	(1+d(x,x_0))^m |L^\ell \psi_j(\sL)f(x)|=(1+d(x,x_0))^m |2^{-2j}\tilde{\psi}_j(\sL)L^{\ell+1}g_k(x)|
	$$
	where $\tilde{\psi}(\xi)=\xi^{-2}\psi(\xi)$.
	
	This, along with Lemma \ref{lem1} and Lemma \ref{lem-elementary}, implies that
	\begin{equation}\label{eq0-Calderon1-proof}
	\begin{aligned}
	(1+d(x,x_0))^m &|L^\ell \psi_j(\sL)f(x)|\\
	&\lesi 2^{-2j}(1+d(x,x_0))^m\int_X\f{1}{V(x,2^{-j})}\Big(1+\f{d(x,y)}{2^{-j}}\Big)^{-N-m}|L^{\ell+1}g_k(y)|\dy\\
	&\lesi 2^{-2j}\int_X\f{1}{V(x,2^{-j})}\Big(1+\f{d(x,y)}{2^{-j}}\Big)^{-N}(1+d(y,x_0))^m|L^{\ell+1}g_k(y)|\dy\\
	&\lesi 2^{-2j}\mathcal{P}_{m,\ell+1}(g_k)\int_X\f{1}{V(x,2^{-j})}\Big(1+\f{d(x,y)}{2^{-j}}\Big)^{-N}\dy\\
	&\lesi 2^{-2j}\mathcal{P}^*_{m,\ell+1,k}(f)
	\end{aligned}
	\end{equation}
	as long as $N>n$.
	
	Hence,
\begin{equation}
\label{eq1-Calder1-proof}
\mathcal{P}^*_{m,\ell,k}(\psi_j(\sL)f)\lesi 2^{-2j}\mathcal{P}^*_{m,\ell+1,k}(f), \  \ \forall j\geq 0.
\end{equation}

If $j< 0$, then we have
$$
(1+d(x,x_0))^m |L^\ell \psi_j(\sL)f(x)|=2^{2j(\ell+k+m+1)}(1+d(x,x_0))^m | {\varphi}_j(\sL)g_{k+m+1}(x)|
$$
where $\varphi(\lambda)=\lambda^{2(\ell+k+m+1)}\psi(\lambda)$.

Arguing similarly to \eqref{eq0-Calderon1-proof} we obtain
\begin{equation}
\label{eq2-Calder1-proof}
\mathcal{P}^*_{m,\ell,k}(\psi_j(\sL)f)\lesi 2^{2j}\mathcal{P}^*_{m,0,k+m+1}(f), \  \ \forall j< 0.
\end{equation}
From \eqref{eq1-Calder1-proof}, \eqref{eq2-Calder1-proof} and the fact that $\mathcal{S}_\vc$ is complete, we deduce that there exists $h\in \mathcal{S}_\vc$ so that 
\[
h= \sum_{j\in \mathbb{Z}}\psi_j(\sL)f \ \ \text{in $\mathcal{S}_\vc$}.
\]
On the other hand, by spectral theory we have
\[
f= \sum_{j\in \mathbb{Z}}\psi_j(\sL)f \ \ \text{in $L^2(X)$}.
\]
Therefore, $f\equiv h$ and this concludes the proposition.
\end{proof}
\bigskip
\begin{prop}
	
	\label{prop-Calderon2}
	Let $\psi\in \mathscr{S}(\mathbb{R})$ be such that ${\rm supp \psi}\subset [1/2,2]$ and $\int \psi \f{d\xi}{\xi}\neq 0$. Then for any $f\in \mathcal{S}'_\vc$ we have
	\begin{equation}\label{Cal formula S'vc}
	f= c_\psi\int_0^\vc \psi(t\sL)f\f{dt}{t} \ \ \text{in $\mathcal{S}'_\vc$}
	\end{equation}
	where $c_\psi =\Big[\int_0^\vc \psi(t)\f{dt}{t}\Big]^{-1}$.
	
	Moreover, if $f\in \mathcal{S}'$, then there exists $\rho\in \mathscr P$ so that 
	\begin{equation}\label{Cal formula S'}
	f-\rho= c_\psi\int_0^\vc \psi(t\sL)f\f{dt}{t} \ \ \text{in $\mathcal{S}'$}.
	\end{equation}
\end{prop}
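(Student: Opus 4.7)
The plan is to mirror the proof of Proposition~\ref{prop-Calderon1}, replacing the dyadic sum over $j$ by the continuous integral $\int_0^\vc \cdot\,\f{dt}{t}$, and to pass from $\mathcal{S}'_\vc$ to $\mathcal{S}'$ via the identification $\mathcal{S}'/\mathscr P \cong \mathcal{S}'_\vc$. By duality, using that $\overline\psi$ satisfies the same hypotheses as $\psi$ and that $\psi(t\sL)^*=\overline\psi(t\sL)$, the identity \eqref{Cal formula S'vc} reduces to showing that for every $g\in\mathcal{S}_\vc$,
\[
g \;=\; c_\psi \int_0^\vc \overline\psi(t\sL)\,g\,\f{dt}{t} \quad\text{in } \mathcal{S}_\vc.
\]

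Writing $g=L^k g_k$ and using the factorisations $\psi(\lambda)=\lambda^{\pm 2N}\bigl(\lambda^{\mp 2N}\psi(\lambda)\bigr)$, in which the bracketed factor is still Schwartz with support in $[1/2,2]$, together with the kernel bounds of Lemma~\ref{lem1} and Lemma~\ref{lem-elementary}(a), I would obtain the continuous analogues of \eqref{eq1-Calder1-proof}--\eqref{eq2-Calder1-proof}:
\[
\mathcal{P}^*_{m,\ell,k}(\overline\psi(t\sL)g) \;\lesi\; \begin{cases} t^{\,2}\,\mathcal{P}^*_{m,\ell+1,k}(g), & 0<t\le 1,\\[2pt] t^{-2}\,\mathcal{P}^*_{m,0,k+m+1}(g), & t\ge 1. \end{cases}
\]
Since $\int_0^1 t\,dt+\int_1^\infty t^{-3}\,dt<\infty$, the integral converges absolutely in the Fr\'echet space $\mathcal{S}_\vc$ to some $h$. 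On the other hand, the spectral theorem applied to $\lambda\mapsto\int_0^\vc \overline\psi(t\lambda)\,\f{dt}{t}=c_\psi^{-1}$ (for $\lambda>0$) gives $c_\psi h=g$ in $L^2(X)$; since $c_\psi h$ and $g$ are both continuous functions in $\mathcal{S}_\vc$, they agree pointwise, hence in $\mathcal{S}_\vc$. Pairing this against $f\in\mathcal{S}'_\vc$ and interchanging the pairing with the Fr\'echet-convergent integral then yields \eqref{Cal formula S'vc}.

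For \eqref{Cal formula S'}, I view $f\in\mathcal{S}'$ as an element of $\mathcal{S}'/\mathscr P\cong\mathcal{S}'_\vc$ and apply the first part. To lift the identity from $\mathcal{S}'_\vc$ back to $\mathcal{S}'$, I interpret $c_\psi \int_0^\vc \psi(t\sL)f\,\f{dt}{t}$ as the functional on $\mathcal{S}$ defined by $\phi\mapsto c_\psi \int_0^\vc \langle f,\overline\psi(t\sL)\phi\rangle\,\f{dt}{t}$; the two sides of \eqref{Cal formula S'} then differ by an element $\rho\in\mathscr P$ because their pairings against every $\phi\in\mathcal{S}_\vc$ coincide by the first part, which characterises equality modulo $\mathscr P$.

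The main obstacle I anticipate is justifying the convergence of this $\phi$-integral for \emph{every} $\phi\in\mathcal{S}$, not merely for $\phi\in\mathcal{S}_\vc$. Near $t=0$ the factorisation $\overline\psi(\lambda)=\lambda^{2N}(\lambda^{-2N}\overline\psi(\lambda))$ still gives $\mathcal{P}_{m,\ell}(\overline\psi(t\sL)\phi)\lesi t^{2N}$, which is integrable against $\f{dt}{t}$ near $0$. Near $t=\vc$, however, the symmetric trick fails because $L^{-N}\phi$ is not generally defined for $\phi\in\mathcal{S}$; one must instead extract decay purely from the off-diagonal kernel bound of Lemma~\ref{lem1}(a) combined with the hypothesis $\mu(X)=\vc$ (so $V(x,t)\to\vc$), and establishing this decay uniformly in the relevant $\mathcal{P}_{m,\ell}$-seminorms is the delicate step.
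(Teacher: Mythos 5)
Your first part follows the paper's route essentially verbatim (tail estimates in the seminorms $\mathcal{P}^*_{m,\ell,k}$ via the factorisations $\psi(\lambda)=\lambda^{\pm 2N}(\lambda^{\mp 2N}\psi(\lambda))$, completeness of $\mathcal{S}_\vc$, identification of the limit by spectral theory, then duality), and that part is fine. The gap is exactly the step you flag at the end: for \eqref{Cal formula S'} you must show that $\phi\mapsto \int_0^\vc \psi(t\sL)\phi\,\f{dt}{t}$ is a well-defined continuous map on all of $\mathcal{S}$, and your proposed treatment of the region $t\ge 1$ does not work. There is in general no usable decay of $\mathcal{P}_{m,\ell}(\psi(t\sL)\phi)$ as $t\to\vc$: the paper assumes no reverse doubling, so $\mu(X)=\vc$ only gives $V(x,t)\to\vc$ with no rate, and even in the model case $L=-\Delta$ on $\mathbb{R}^n$ with $\widehat{\phi}(0)\neq 0$ the function $\psi(t\sqrt{L})\phi$ has size $\sim t^{-n}$ spread over scale $t$, so $\mathcal{P}_{m,0}(\psi(t\sqrt{L})\phi)\sim t^{m-n}$ grows for $m>n$ and the integral $\int_1^\vc(\cdots)\f{dt}{t}$ is not absolutely convergent in the seminorms. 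So the "delicate step" you defer is not merely delicate; the absolute-convergence strategy fails.

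The paper avoids integrating the large-$t$ part term by term altogether: for $g\in\mathcal{S}\subset L^2$, spectral calculus gives
\[
\int_1^\vc \psi(s\sL)g\,\f{ds}{s}=\tilde\psi(\sL)g,\qquad \tilde\psi(x)=\int_{|x|}^\vc\psi(s)\,\f{ds}{s},
\]
and since $\supp\psi\subset[1/2,2]$, the function $\tilde\psi$ is even, smooth, constant near $0$ and vanishes for $|x|\ge 2$, hence lies in $\mathscr{S}(\mathbb{R})$. Lemma \ref{lem1}(a) together with Lemma \ref{lem-elementary} then yields $\mathcal{P}_{k,\ell}(\tilde\psi(\sL)g)\lesi \mathcal{P}_{k,\ell}(g)$, while the part $\int_0^1$ is handled exactly as you propose, via $\psi(s\sL)L^\ell g=s^{2N}(s\sL)^{-2N}\psi(s\sL)L^{\ell+N}g$. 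This gives continuity of the map on $\mathcal{S}$, hence by duality $\int_0^\vc\psi(s\sL)f\,\f{ds}{s}\in\mathcal{S}'$ for $f\in\mathcal{S}'$, and your final step (the two functionals agree on $\mathcal{S}_\vc$, hence differ by some $\rho\in\mathscr P$ because $\mathcal{S}'_\vc=\mathcal{S}'/\mathscr P$) then goes through as in the paper. In short: replace your hoped-for large-$t$ decay estimate by the closed-form identity $\int_1^\vc\psi(s\sL)\,\f{ds}{s}=\tilde\psi(\sL)$ and the rest of your argument is sound.
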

\begin{proof}
	We first show that
	\[
	f= c_\psi\int_0^\vc \psi(t\sL)f\f{dt}{t} \ \ \text{in $\mathcal{S}_\vc$}
	\]
	Arguing similarly to the proof of Proposition \ref{prop-Calderon1}, we can prove  that 
	\[
	\lim_{N\to \vc}\int_0^{1/N} \psi(t\sL)f\f{dt}{t}=\lim_{N\to \vc}\int_N^\vc \psi(t\sL)f\f{dt}{t}=0 \ \ \text{in $\mathcal{S}_\vc$},
   \]
   and consequently, by using the completeness of $\mathcal{S}_\vc$ and the spectral theory as in the proof of Proposition \ref{prop-Calderon1}, we obtain the representation 
	\[
	f= c_\psi\int_0^\vc \psi(t\sL)f\f{dt}{t} \ \ \text{in $\mathcal{S}_\vc$}.
	\]
	It follows by duality that 
	\[
	f= c_\psi\int_0^\vc \psi(t\sL)f\f{dt}{t} \ \ \text{in $\mathcal{S}'_\vc$}.
	\]
	For the second part, we note that for $g\in \mathcal S$ we have 
	\[
	\begin{aligned}
	\int_0^\vc \psi(s\sL)g\f{ds}{s}&=\int_0^1 \psi(s\sL)g\f{ds}{s} +\int_1^\vc \psi(s\sL)g\f{ds}{s}\\
	&=\int_0^1 \psi(s\sL)g\f{ds}{s} + \tilde \psi(\sL)g
	\end{aligned}
	\]
	where $\displaystyle\tilde \psi(x)=\int_{|x|}^\vc \psi(s)\f{ds}{s}=\int_1^\vc \psi(sx)\f{ds}{s}$.
	
	Hence, for any $\ell\in \mathbb{N}$,  $k>0$ and $x\in X$ we have
	\[
	\begin{aligned}
	(1+d(x,x_0))^k &\Big|L^\ell \Big(\int_0^\vc \psi(s\sL)g(x)\f{ds}{s}\Big)\Big|\\
	&\le (1+d(x,x_0))^k\int_0^1 |\psi(s\sL)L^\ell g(x)|\f{ds}{s}+(1+d(x,x_0))^k|\tilde \psi(\sL)L^\ell g(x)|\\
	&\le (1+d(x,x_0))^k \int_0^1 s^N|(s\sL)^{-2N}\psi(s\sL)L^{\ell +N}g(x)|\f{ds}{s}+(1+d(x,x_0))^k|\tilde \psi(\sL)L^\ell g(x)|\\
	&=:I_1(g) + I_2(g)
	\end{aligned}
	\]
	where $N$ is a fixed number which is greater than $k +n$.
	
	Note that $x^{-2N}\psi(x)$ is a function in $\mathscr S(\mathbb{R})$ supported in $[1/2,2]$. Using Lemma \ref{lem1} and arguing similarly to the proof of Proposition \ref{prop-Calderon1} we can show that 
	\[
	I_1(g)\lesi \mathcal{P}_{k,\ell +N}(g).
	\]   
	For the same reason, since $\tilde \psi$ is an even function in $\mathscr S(\mathbb{R})$, we also have
	\[
	I_2(g)\lesi \mathcal{P}_{k,\ell}(g).
	\]
	As a consequence,
	\[
	(1+d(x,x_0))^k \Big|L^\ell \Big(\int_0^\vc \psi(s\sL)g(x)\f{ds}{s}\Big)\Big|\lesi  \mathcal{P}_{k,\ell+N}(g)+ \mathcal{P}_{k,\ell}(g)
	\]
	for all $\ell\in \mathbb{N}$,  $k>0$ and $x\in X$.
	
	This implies that $\displaystyle \int_0^\vc \psi(s\sL)g\f{ds}{s}\in \mathcal S$ whenever $g\in \mathcal S$. By duality, $\displaystyle \int_0^\vc \psi(s\sL)f\f{ds}{s}\in \mathcal S'$ whenever $f\in \mathcal S'$. This, along with \eqref{Cal formula S'vc} and the fact that $\mathcal S'_\vc =\mathcal S'/\mathscr P$, implies that there exists $\rho\in \mathscr P$ so that 
	\begin{equation*}
	f-\rho=c_\psi\int_0^\vc \psi(s\sL)f\f{ds}{s}	
	\end{equation*}
	in $\mathcal S'$. 
	
	This completes our proof.
\end{proof}
\bigskip
\begin{lem}
	\label{lem-DKP}
	Let $\phi\in\mathscr{S}(\mathbb{R})$ be an even function such that $\phi(\xi)\neq 0$ on $(-2,-1/2)\cup(1/2,2)$. Then there exist $a, b, c > 0$ and even functions $\Psi, \psi \in \mathscr{S}(\mathbb{R})$ with ${\rm supp}\, \Psi\subset [-a,a]$, ${\rm supp}\, \psi\subset [-c,-b]\cup [b,c]$, so that for every $f\in \mathcal S'$ and every $j\in \mathbb{Z}$ and $t\in [1,2]$ we have
	$$
	f=\Psi(2^{-j}t\sqrt{L})f+\sum_{k\geq 1}\phi(2^{-(k+j)}t\sqrt{L})\psi(2^{-(k+j)}t\sqrt{L})f \ \ \text{in $\mathcal S'$}.
	$$
\end{lem}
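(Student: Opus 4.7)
The plan is to construct $\Psi$ and $\psi$ so that the identity reduces to a scalar Littlewood--Paley partition, then pass from the spectral identity on $L^{2}$ to convergence in $\mathcal{S}'$ by a seminorm estimate analogous to those in Propositions \ref{prop-Calderon1} and \ref{prop-Calderon2}. First, I would pick an even $\chi\in\mathscr{S}(\mathbb{R})$ with $\supp\chi\subset[-c,-b]\cup[b,c]$ for some $1/2<b<c<2$ (so that $\chi$ is supported inside the set where $\phi\neq 0$) and
$$\sum_{k\in\mathbb{Z}}\chi(2^{-k}\mu)=1\quad\text{for all }\mu\neq 0;$$
this is a standard dyadic partition of unity, obtainable e.g.\ by taking $\chi(\mu)=\eta(\mu)-\eta(2\mu)$ for a suitable even $\eta\in C_{c}^{\infty}$ with $\eta(0)=1$. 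Then define
$$\psi(\mu):=\frac{\chi(\mu)}{\phi(\mu)}\chi_{\supp\chi}(\mu),\qquad \Psi(\mu):=1-\sum_{k\geq 1}\chi(2^{-k}\mu).$$
Since $\phi$ is non-vanishing and even on $\supp\chi$, $\psi$ is even, smooth and has the same support as $\chi$, so $\supp\psi\subset[-c,-b]\cup[b,c]$. Because at each $\mu$ only finitely many $\chi(2^{-k}\mu)$ are non-zero and the union of supports for $k\geq 1$ lies in $|\mu|\geq 1/2$, the function $\Psi$ is a well-defined even Schwartz function with $\supp\Psi\subset[-2,2]=:[-a,a]$, and by construction $\phi\psi=\chi$ on all of $\mathbb{R}$.

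The substitution $\mu\mapsto 2^{-j}t\lambda$ into the pointwise identity $1=\Psi(\mu)+\sum_{k\geq 1}\phi(2^{-k}\mu)\psi(2^{-k}\mu)$ (valid for every $\mu\in\mathbb{R}$, including $\mu=0$ where both sides equal $1$) gives, via the spectral theorem,
$$f=\Psi(2^{-j}t\sqrt{L})f+\sum_{k\geq 1}\phi(2^{-(k+j)}t\sqrt{L})\psi(2^{-(k+j)}t\sqrt{L})f\quad\text{in }L^{2}(X)$$
for every $f\in L^{2}(X)$ and every $j\in\mathbb{Z}$, $t\in[1,2]$. To upgrade this to $\mathcal{S}'$, I would argue by duality: for $g\in\mathcal{S}$, set
$$\eta_{K}(\mu):=1-\Psi(\mu)-\sum_{k=1}^{K}\chi(2^{-k}\mu)=\sum_{k>K}\chi(2^{-k}\mu),$$
which is even and Schwartz, supported in $|\mu|\geq 2^{K-1}$. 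Writing $\eta_{K}(\mu)=(2^{-K}\mu)^{-2N}\widetilde{\eta}_{K}(\mu)$ with $\widetilde{\eta}_{K}$ uniformly Schwartz and using Lemma \ref{lem1}(a) together with Lemma \ref{lem-elementary}, the computation of Proposition \ref{prop-Calderon1} yields, for every semi-norm $\mathcal{P}_{m,\ell}$,
$$\mathcal{P}_{m,\ell}\bigl(\eta_{K}(2^{-j}t\sqrt{L})g\bigr)\lesssim 2^{-2NK}\,\mathcal{P}_{m,\ell+N}(g),$$
which tends to $0$ as $K\to\infty$. Hence the partial sums converge to $g-\Psi(2^{-j}t\sqrt{L})g$ in $\mathcal{S}$, and the identity transfers to $\mathcal{S}'$ through $\langle f,\overline{\,\cdot\,}\rangle$ using the self-adjointness of the operators involved and \eqref{eq- s and s'}.

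The main obstacle is the last step: one has to check that each $\phi(2^{-(k+j)}t\sqrt{L})\psi(2^{-(k+j)}t\sqrt{L})g$ and $\Psi(2^{-j}t\sqrt{L})g$ belong to $\mathcal{S}$ (so that they pair against $f\in\mathcal{S}'$) and that the $\mathcal{S}$-seminorm bounds above are uniform in $j\in\mathbb{Z}$ and $t\in[1,2]$. This requires splitting into the regimes $k+j\geq 0$ and $k+j<0$ exactly as in the proof of Proposition \ref{prop-Calderon1} (using two different reformulations of the multiplier to extract enough decay at both high and low frequency); since $\Psi$ is merely a Schwartz function (rather than supported away from the origin as in $\mathcal{S}'_{\infty}$), one does not need the polynomial correction $\rho\in\mathscr{P}$ that appeared in Proposition \ref{prop-Calderon2}, so the identity indeed holds in $\mathcal{S}'$ on the nose.
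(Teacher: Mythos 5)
Your construction of $\psi$ and $\Psi$ is in substance the same as the paper's: the paper quotes \cite{ST} for an even $\psi$ supported in $[-c,-b]\cup[b,c]$ with $\sum_{k\in\mathbb{Z}}\phi(2^{-k}\lambda)\psi(2^{-k}\lambda)=1$ for $\lambda\neq 0$, sets $\Psi(0)=1$ and $\Psi(\lambda)=\sum_{k\le 0}\phi(2^{-k}\lambda)\psi(2^{-k}\lambda)$ otherwise, and then says "argue as in Proposition \ref{prop-Calderon1}" — which is exactly your $\Psi=1-\sum_{k\ge 1}\chi(2^{-k}\cdot)$ once $\phi\psi=\chi$, followed by the same transference plan. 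Your explicit choice $\psi=\chi/\phi$ is legitimate because $\supp\chi$ is a compact subset of the open set where $\phi\neq 0$ (you do need $c\ge 2b$ so that the dyadic dilates of $[b,c]$ cover $(0,\infty)$, which is compatible with $1/2<b<c<2$), so up to this point the proposal matches the paper, with the advantage of making the construction of $\psi$ explicit rather than cited.

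The convergence step, however, contains a genuine error. The tail $\eta_K(\mu)=\sum_{k>K}\chi(2^{-k}\mu)$ is \emph{not} Schwartz: for $|\mu|>2^Kc$ all terms of the full partition with $k\le K$ vanish, so $\eta_K(\mu)=1$ there. Hence Lemma \ref{lem1}(a) cannot be applied to $\eta_K$, and $\tilde\eta_K(\mu)=(2^{-K}\mu)^{2N}\eta_K(\mu)$ grows polynomially, so it is not "uniformly Schwartz"; worse, the factorization $\eta_K(\mu)=(2^{-K}\mu)^{-2N}\tilde\eta_K(\mu)$ corresponds to letting $L^{-N}$ act, which is not defined on $g\in\mathcal{S}$ (only on $\mathcal{S}_\infty$). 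The estimate you want, $\mathcal{P}_{m,\ell}\bigl(\eta_K(2^{-j}t\sqrt{L})g\bigr)\lesssim 2^{-2NK}\mathcal{P}_{m,\ell+N}(g)$ for fixed $j$ and all sufficiently large $K$, is true, but it has to be proved term by term: since $\chi=\phi\psi$ is supported away from the origin, write $\chi(s\sqrt{L})g=s^{2N}\chi_N(s\sqrt{L})(L^Ng)$ with $\chi_N(\nu)=\nu^{-2N}\chi(\nu)\in\mathscr{S}(\mathbb{R})$, apply the kernel computation as in \eqref{eq0-Calderon1-proof} to get $\mathcal{P}_{m,\ell}\bigl(\chi(s\sqrt{L})g\bigr)\lesssim s^{2N}\mathcal{P}_{m,\ell+N}(g)$ for $s\le 1$, and sum the geometric series over $k>K$; this is precisely the $j\ge 0$ case of Proposition \ref{prop-Calderon1}. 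With this repair the rest of your argument (identification of the limit through the $L^2$ identity, then duality) goes through. Note also that no uniformity in $j$ and $t$ is required, since the lemma asserts the identity for each fixed $j\in\mathbb{Z}$, $t\in[1,2]$; the regime $k+j<0$ you worry about involves only finitely many terms, each of which lies in $\mathcal{S}$, so only the small-scale tail matters for convergence.
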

\begin{proof}
	It is well-known that there exist  an even function $\psi \in \mathscr{S}(\mathbb{R})$ with  ${\rm supp}\, \psi\subset [-c,-b]\cup[b,c]$ for some $c>b>0$ so that 
	\[
	\sum_{k=-\infty}^\infty\phi(2^{-k}\lambda)\psi(2^{-k}\lambda)=1, \ \ \forall \lambda\neq 0.
	\]
	See for example \cite{ST}. Define $\Psi\in \mathscr{S}(\mathbb{R})$ by $\Psi(0)=1$, and $\Psi(\lambda)=\sum_{k\le 0}\phi(2^{-k}\lambda)\psi(2^{-k}\lambda)$ for $\lambda\neq 0$. Then it is easy to see that ${\rm supp}\, \Psi\subset [-a,a]$ for some $a>0$, and that 
	\[
	\Psi(\lambda) + \sum_{k\ge 1}\phi(2^{-k}\lambda)\psi(2^{-k}\lambda) = 1,\;\; \forall \lambda \in \mathbb{R}.
	\]
	
	Using the above identity and  arguing similarly to the proof of Proposition \ref{prop-Calderon1} we conclude that 
	\[
	f=\Phi(2^{-j}t\sqrt{L})\Psi(2^{-j}t\sqrt{L})f+\sum_{k\geq 1}\phi(2^{-(k+j)}t\sqrt{L})\psi(2^{-(k+j)}t\sqrt{L})f  \ \ \text{in $\mathcal S'$}.
	\]

(A close inspection of the proof in \cite{ST} shows that we can take $a = 2$, and $1/2 < b < c/2 < 1$.)
\end{proof}
\bigskip
\subsection{Maximal function estimates}
We begin with some technical estimates.
\begin{lem}
	\label{lem2}
	Let $\psi, \varphi\in \mathscr{S}(\mathbb{R})$ be even functions. Assume that $0<a\le b<\vc$ and $\lambda\geq 0$. Then there exists $C>0$ such that
	\begin{equation}
	\label{eq-lem1-maximal}
	\sup_{y\in X}|\psi(s\sL)\varphi(t\sL)f(y)|\Big(1+\f{d(x,y)}{t}\Big)^{-\lambda}\leq C\sup_{y\in X}|\varphi(t\sL)f(y)|\Big(1+\f{d(x,y)}{t}\Big)^{-\lambda}
	\end{equation}
	for all $f\in \mathcal S'$, $x\in X$ and $s\in [at, bt]$. If both funtions $\psi$ and $\varphi$ are supported in $[1/2,2]$, then \eqref{eq-lem1-maximal} holds for all $f\in \mathcal{S}'_\vc$.
\end{lem}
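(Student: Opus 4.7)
The plan is to rewrite the left-hand side via the kernel representation
$$\psi(s\sqrt{L})\varphi(t\sqrt{L})f(y)=\int_X K_{\psi(s\sqrt{L})}(y,z)\,\varphi(t\sqrt{L})f(z)\,d\mu(z),$$
and then exploit the fast spatial decay of $K_{\psi(s\sqrt{L})}$ to shift the weight $(1+d(x,y)/t)^{-\lambda}$ from the $y$-variable onto the $z$-variable. Since $s\in[at,bt]$, we have $s\sim t$ with constants depending only on $a,b$, so quantities phrased in terms of $s$ and $t$ are equivalent throughout.

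First I would invoke Lemma \ref{lem1}(a), which gives, for any $N>0$,
$$|K_{\psi(s\sqrt{L})}(y,z)|\leq \frac{C}{V(y\vee z,s)}\Big(1+\frac{d(y,z)}{s}\Big)^{-N}\leq \frac{C'}{V(y\vee z,t)}\Big(1+\frac{d(y,z)}{t}\Big)^{-N}.$$
Next, from the triangle inequality $d(x,y)\leq d(x,z)+d(y,z)$ I get the elementary weight-transfer inequality
$$\Big(1+\frac{d(x,y)}{t}\Big)^{-\lambda}\leq \Big(1+\frac{d(x,z)}{t}\Big)^{-\lambda}\Big(1+\frac{d(y,z)}{t}\Big)^{\lambda}.$$
Multiplying the kernel estimate by $(1+d(x,y)/t)^{-\lambda}$ and inserting this bound yields
$$|\psi(s\sqrt{L})\varphi(t\sqrt{L})f(y)|\Big(1+\tfrac{d(x,y)}{t}\Big)^{-\lambda}\leq C\int_X\frac{(1+d(y,z)/t)^{-N+\lambda}}{V(y\vee z,t)}|\varphi(t\sqrt{L})f(z)|\Big(1+\tfrac{d(x,z)}{t}\Big)^{-\lambda}d\mu(z).$$
Pulling out $M:=\sup_{z\in X}|\varphi(t\sqrt{L})f(z)|(1+d(x,z)/t)^{-\lambda}$ and choosing $N$ large enough so that $N-\lambda>n$, Lemma \ref{lem-elementary}(a) bounds the remaining integral by a constant, giving the desired estimate after taking the supremum in $y$.

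For the second assertion, when both $\psi$ and $\varphi$ are supported in $[1/2,2]$, the support condition propagates to the kernels via the spectral calculus, so $K_{\psi(s\sqrt{L})}(y,\cdot)\in\mathcal{S}_{\infty}$ (cf. \eqref{eq- s and s'}) and the pairing with $f\in\mathcal{S}'_{\infty}$ is well-defined. The computation above then proceeds verbatim.

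The one subtlety to address is that the quantity $M$ on the right-hand side may a priori be infinite, in which case the inequality is vacuously true; when it is finite, the growth estimate \eqref{eq- bound of varphi and f} from the preceding lemma guarantees that the integrals in question converge absolutely, so the kernel interchange is justified. Beyond this bookkeeping, the argument is a direct Peetre-type maximal estimate, and no further obstacle is expected.
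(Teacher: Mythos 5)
Your proposal is correct and follows essentially the same route as the paper: apply the kernel bound of Lemma \ref{lem1} to $\psi(s\sqrt{L})$ with $s\sim t$, transfer the weight from $y$ to $z$ via the triangle-inequality estimate, pull out the Peetre supremum, and bound the remaining integral by Lemma \ref{lem-elementary} with the decay exponent taken larger than $n+\lambda$, with the same observation handling the $\mathcal{S}'_\infty$ case when $\psi,\varphi$ are supported in $[1/2,2]$.
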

\begin{proof}
	From Lemma \ref{lem1}, for $N>n$ and $f\in \mathcal S'$ we have
	\begin{align*}
	|\psi(s\sL)\varphi(t\sL)f(y)|&\Big(1+\f{d(x,y)}{t}\Big)^{-\lambda}\\
	&\lesi \Big(1+\f{d(x,y)}{t}\Big)^{-\lambda}\int_X\f{1}{V(y,s)}\Big(1+\f{d(y,z)}{s}\Big)^{-N-\lambda}|\varphi(t\sL)f(z)|\dz\\
	&\sim \Big(1+\f{d(x,y)}{t}\Big)^{-\lambda}\int_X\f{1}{V(y,t)}\Big(1+\f{d(y,z)}{t}\Big)^{-N-\lambda}|\varphi(t\sL)f(z)|\dz\\
	&\lesi \int_X\f{1}{V(y,t)}\Big(1+\f{d(y,z)}{t}\Big)^{-N}\Big(1+\f{d(x,z)}{t}\Big)^{-\lambda}|\varphi(t\sL)f(z)|\dz.
	\end{align*}
This along with Lemma \ref{lem-elementary} implies the desired estimate.

Note that in the case that both functions $\psi$ and $\varphi$ are supported in $[1/2,2]$, then we can define $\psi(s\sL)\varphi(t\sL)f$ for all $f\in \mathcal{S}'_\vc$. The above argument also gives \eqref{eq-lem1-maximal} in this case. 

This completes our proof.
\end{proof}

\bigskip
\begin{lem}
	\label{lem1-thm1-continuous cha}
	Let $\psi\in \SR$ with ${\rm supp}\, \psi\subset[1/2,2]$ and $\int \psi\f{d\xi}{\xi}\neq 0$. Then for any $r>0$ and $N>0$ we have
	\begin{equation}
	\label{eq-lem1-continuous cha}
	|\psi(t\sL)f(x)|^r\lesi \int_0^\vc\int_X\f{1}{V(x,s)}\Big(1+\f{d(x,y)}{s}\Big)^{-Nr}\Big(\f{s}{t}\wedge \f{t}{s}\Big)^{Nr}|\psi(s\sL)f(y)|^r\dy\f{ds}{s}
	\end{equation}
	for all $f\in \mathcal{S}'_\vc,x\in X$ and $t>0$.
\end{lem}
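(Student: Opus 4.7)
The plan is to combine the Calder\'on reproducing formula from Proposition \ref{prop-Calderon2} with the composition-kernel estimates of Lemma \ref{lem1}, and then to upgrade the resulting linear bound to the $r$-th power by H\"older's inequality when $r\ge 1$ and by a Peetre-type maximal function argument when $0<r<1$.

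First I apply Proposition \ref{prop-Calderon2} to write $\psi(t\sL)f = c_\psi \int_0^\vc \psi(t\sL)\psi(s\sL) f \,\f{ds}{s}$ in $\mathcal{S}'_\vc$. Since $\supp\psi\su[1/2,2]$, extended evenly $\psi$ has derivatives of all orders vanishing at $0$, so Lemma \ref{lem1}(b) (for $t/2<s<2t$) combined with Lemma \ref{lem1}(c) and its symmetric counterpart (obtained by swapping the roles of $s$ and $t$) yields, for any prescribed $M,L>0$,
\[
|K_{\psi(t\sL)\psi(s\sL)}(x,y)| \lesi \Big(\f{s}{t}\wedge\f{t}{s}\Big)^{M} \f{1}{V(x,s\vee t)}\Big(1+\f{d(x,y)}{s\vee t}\Big)^{-L}.
\]
Using $V(x,s\vee t)\ge V(x,s)$ and the elementary inequality $(1+d/(s\vee t))^{-L}\le (t/s\vee 1)^{L}(1+d/s)^{-L}$, the extraneous factor $(t/s)^L$ on $\{s\le t\}$ (and its dual on $\{s\ge t\}$) can be absorbed into $(s/t\wedge t/s)^M$ by choosing $M$ large, giving
\[
|\psi(t\sL)f(x)| \lesi \int_0^\vc\int_X \Big(\f{s}{t}\wedge\f{t}{s}\Big)^{M} \f{1}{V(x,s)}\Big(1+\f{d(x,y)}{s}\Big)^{-L} |\psi(s\sL)f(y)|\dy\f{ds}{s}
\]
for any $M,L>0$. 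For $r\ge 1$ I split the weight $W(y,s):= (s/t\wedge t/s)^{Nr}V(x,s)^{-1}(1+d(x,y)/s)^{-Nr}$ as $W^{1/r'}W^{1/r}$ and apply H\"older; since $\iint W \dy\f{ds}{s}\lesi 1$ by Lemma \ref{lem-elementary}(a) (with $Nr>n$) and direct integration in $s$, choosing $M,L\ge Nr$ and raising to the $r$-th power gives the claim at once.

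The main obstacle is the case $0<r<1$, where I introduce the Peetre-type maximal function
\[
\mathcal{N}_K(x,t) := \sup_{s>0,\,y\in X} |\psi(s\sL)f(y)|\Big(\f{s}{t}\wedge\f{t}{s}\Big)^K \Big(1+\f{d(x,y)}{s}\Big)^{-K},
\]
which is finite for $K$ sufficiently large thanks to \eqref{eq- bound of varphi and f}. Applying the linear estimate at an arbitrary point $(s_0,y_0)$, splitting $|\psi(s\sL)f(y)| = |\psi(s\sL)f(y)|^r\,|\psi(s\sL)f(y)|^{1-r}$, and bounding the second factor by $\mathcal{N}_K(x,t)^{1-r}(s/t\wedge t/s)^{-K(1-r)}(1+d(x,y)/s)^{K(1-r)}$, I multiply both sides by the Peetre weight at $(s_0,y_0)$. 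Using the submultiplicative bound
\[
\Big(\f{s_0}{t}\wedge\f{t}{s_0}\Big)^K \Big(\f{s}{s_0}\wedge\f{s_0}{s}\Big)^M \lesi \Big(\f{s}{t}\wedge\f{t}{s}\Big)^{\min(K,M)}
\]
(which follows from the triangle inequality applied to $|\log(s/t)|\le|\log(s_0/t)|+|\log(s/s_0)|$), its spatial analogue from the usual triangle inequality, and \eqref{doubling3} to convert $V(y_0,s)^{-1}$ into $V(x,s)^{-1}$ at the cost of an absorbable factor $(1+d(x,y_0)/s)^{\tilde n}$, I obtain upon taking $\sup_{s_0,y_0}$
\[
\mathcal{N}_K(x,t) \lesi \mathcal{N}_K(x,t)^{1-r}\int_0^\vc\int_X \Big(\f{s}{t}\wedge\f{t}{s}\Big)^{Nr}\f{1}{V(x,s)}\Big(1+\f{d(x,y)}{s}\Big)^{-Nr} |\psi(s\sL)f(y)|^r \dy\f{ds}{s}.
\]
Dividing through by the finite quantity $\mathcal{N}_K(x,t)^{1-r}$, raising to the $r$-th power, and using the trivial $|\psi(t\sL)f(x)| \le \mathcal{N}_K(x,t)$ (take $s_0=t$, $y_0=x$) completes the proof. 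The most delicate point is the exponent bookkeeping in the $r<1$ case: $M,L,K$ must be chosen large enough so that every power of $(s/t\wedge t/s)$ and of $(1+d(\cdot,\cdot)/s)$ remains $\ge Nr$ after each absorption step.
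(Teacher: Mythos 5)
Your reproducing-formula reduction and the case $r\ge 1$ are essentially the paper's own argument: the Calder\'on formula plus Lemma \ref{lem1} gives the linear estimate, and H\"older with an auxiliary integrable weight finishes. Two small remarks there: since $\supp\psi\subset[1/2,2]$, the operator $\psi(t\sL)\psi(s\sL)$ vanishes unless $s/t\in[1/4,4]$, so the $(s/t\wedge t/s)^M$ kernel decay you import from Lemma \ref{lem1}(c) is not actually needed; and your restriction $Nr>n$ in the H\"older step is harmless, because the right-hand side of \eqref{eq-lem1-continuous cha} increases as $N$ decreases, so the case of large $N$ implies the case of small $N$ with a constant depending only on $n,r$.

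The genuine gap is in the case $0<r<1$: as written, your constant depends on $f$. The maximal function $\mathcal{N}_K(x,t)$ is finite only for $K\ge K_f$, where $K_f$ comes from \eqref{eq- bound of varphi and f} and depends on the distribution $f$; after dividing by $\mathcal{N}_K(x,t)^{1-r}$, the implicit constant in your final inequality carries the kernel-estimate and absorption constants at the exponent level $K$ (your own remark that ``$M,L,K$ must be chosen large enough'' makes this explicit), hence it depends on $K_f$ and therefore on $f$. But the lemma asserts $\lesssim$ with a constant independent of $f$, and this uniformity is exactly what is needed when the lemma is used in the norm equivalences (Theorem \ref{thm1-continuouscharacter}, Proposition \ref{prop3-maximal functions}). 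Monotonicity in $N$ cannot repair this, since the $f$-dependence sits in the constant, not in the exponent. The paper removes it by a bootstrap that your proposal is missing: it first records the estimate with an $f$-dependent constant for every $N>0$ (inequality \eqref{eq- N large}); it then shows that if the right-hand side of \eqref{eq-lem1-continuous cha} at exponent $N$ is finite, this forces $\psi^{**}_{N+\tilde n/r}(t\sL)f(x)<\infty$ (here \eqref{doubling3} is used to trade $V(z,s)$ for $V(x,s)$ at the cost of $\tilde n/r$ in the exponent), after which the sup-and-divide argument can be rerun at the exponent $N+\tilde n/r$ with constants depending only on $N$ and $r$; and if that right-hand side is infinite, \eqref{eq-lem1-continuous cha} is trivial. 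Without this second pass (or some substitute such as a truncated maximal function), your argument proves only a weaker, $f$-dependent version of the lemma.
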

\begin{proof}
	By Proposition \ref{prop-Calderon2} we have
	\[
	\psi(u\sL)f=c_\psi\int_0^\vc\psi(u\sL)\psi(s\sL)f\f{ds}{s} \ \ \ \text{pointwise},
	\]
	where $c_\psi=\Big[\int_0^\vc\psi(\xi)\f{d\xi}{\xi}\Big]^{-1}$.

	This, along with the fact that supp\,$\psi\subset [1/2,2]$, yields
	\begin{equation}\label{eq1-proof lem1}
	\psi(u\sL)f=c_\psi\int_{u/4}^{4u}\psi(u\sL)\psi(s\sL)f\f{ds}{s}.
	\end{equation}
	Applying Lemma \ref{lem1} we deduce that, for $N>0$,
	\begin{equation}\label{eq2-proof lem1}
	\begin{aligned}
	|\psi(u\sL)f(y)|&\lesi\int_{u/4}^{4u}\int_X \f{1}{V(y,u)}\Big(1+\f{d(y,z)}{u}\Big)^{-N}|\psi(s\sL)f(z)|\dz\f{ds}{s}\\
	&\lesi\int_{u/4}^{4u}\int_X \f{1}{V(y,s)}\Big(1+\f{d(y,z)}{s}\Big)^{-N}\Big(\f{s}{u}\Big)^{N}|\psi(s\sL)f(z)|\dz\f{ds}{s}.
	\end{aligned}
	\end{equation}
	If $r\in [1,\vc)$, then using \eqref{eq2-proof lem1} for $A+N$, where $A>n$, together with H\"older's inequality and Lemma~\ref{lem-elementary} we obtain \eqref{eq-lem1-continuous cha}. 
	
	We now consider the case $r\in (0,1)$. For each $x\in X$ and $t>0$ we define
	\[
	\psi^{**}_{N}(t\sL)f(x)=\sup_{y\in X} \sup_{s>0}\f{|\psi(s\sL)f(y)|}{\Big(1+\f{d(x,y)}{s}\Big)^{N}}\Big(\f{s}{t}\wedge \f{t}{s}\Big)^{N}.
	\]
Note that by Lemma~\ref{lem1} \eqref{eq2-proof lem1} holds with $V(y,s)$ replaced by $V(z,s)$. It follows that 
	\begin{equation}\label{eq-maximal inequality}
	\begin{aligned}
	|\psi(u\sL)&f(y)|\Big(1+\f{d(x,y)}{u}\Big)^{-N}\Big(\f{u}{t}\wedge \f{t}{u}\Big)^{N}\\
	&\lesi\int_{u/4}^{4u}\int_X \f{1}{V(z,s)}\Big(1+\f{d(y,z)}{s}\Big)^{-N}\Big(1+\f{d(x,y)}{u}\Big)^{-N}\Big(\f{u}{t}\wedge \f{t}{u}\Big)^{N}|\psi(s\sL)f(z)|\dz\f{ds}{s}\\
	&\sim\int_{u/4}^{4u}\int_X \f{1}{V(z,s)}\Big(1+\f{d(y,z)}{s}\Big)^{-N}\Big(1+\f{d(x,y)}{s}\Big)^{-N}\Big(\f{s}{t}\wedge \f{t}{s}\Big)^{N}|\psi(s\sL)f(z)|\dz\f{ds}{s}\\
	&\lesi  \int_{u/4}^{4u}\int_X \f{1}{V(z,s)}\Big(1+\f{d(x,z)}{s}\Big)^{-N} \Big(\f{s}{t}\wedge \f{t}{s}\Big)^{N}|\psi(s\sL)f(z)|\dz\f{ds}{s}\\
	&\lesi \psi^{**}_N(t\sL)f(x)^{1-r}\int_{u/4}^{4u}\int_X \f{1}{V(z,s)}\Big(1+\f{d(x,z)}{s}\Big)^{-Nr} \Big(\f{s}{t}\wedge \f{t}{s}\Big)^{Nr}|\psi(s\sL)f(z)|^r \dz\f{ds}{s}.
	\end{aligned}
	\end{equation}
	 
By \eqref{eq- bound of varphi and f}, there exists $N_f>0$ such that $\psi^{**}_N(t\sL)f(x)<\infty$ for all $N\ge N_f$ and $x\in X$. For any such $N$, taking the supremum over $u>0$ and $y\in X$ in the left-hand side of \eqref{eq-maximal inequality} we obtain

 	\begin{equation*}
	\begin{aligned}
	\psi^{**}_{N}(t\sL)f(x)^r
	&\le C(f,N) \int_{u/4}^{4u}\int_X \f{1}{V(z,s)}\Big(1+\f{d(x,z)}{s}\Big)^{-Nr} \Big(\f{s}{t}\wedge \f{t}{s}\Big)^{Nr}|\psi(s\sL)f(z)|^r \dz\f{ds}{s}\\
	&\le C(f,N) \int_{0}^{\vc}\int_X \f{1}{V(z,s)}\Big(1+\f{d(x,z)}{s}\Big)^{-Nr} \Big(\f{s}{t}\wedge \f{t}{s}\Big)^{Nr}|\psi(s\sL)f(z)|^r \dz\f{ds}{s}.
	\end{aligned}
	\end{equation*}
Consequently,
 		
	\begin{equation}\label{eq- N large}
	\begin{aligned}
	|\psi(t\sL)f(x)|^r
	&\le C(f,N)\int_{0}^{\vc}\int_X \f{1}{V(z,s)}\Big(1+\f{d(x,z)}{s}\Big)^{-Nr} \Big(\f{s}{t}\wedge \f{t}{s}\Big)^{Nr}|\psi(s\sL)f(z)|^r \dz\f{ds}{s}.
	\end{aligned}
	\end{equation}	
 
Since the integral on the right hand side of the above gets larger when $N$ gets smaller, \eqref{eq- N large} holds true for all $N>0$ and all $x\in X$.
It follows that
	\begin{equation}\label{eq1- N large}
	\begin{aligned}
	|\psi(u\sL)f(y)|^r
	&\le C(f,N) \int_{0}^{\vc}\int_X \f{1}{V(z,s)}\Big(1+\f{d(y,z)}{s}\Big)^{-2Nr} \Big(\f{s}{u}\wedge \f{u}{s}\Big)^{2Nr}|\psi(s\sL)f(z)|^r \dz\f{ds}{s}\\
	&\le C(f,N) \int_{0}^{\vc}\int_X \f{1}{V(z,s)}\Big(1+\f{d(y,z)}{s}\Big)^{-Nr} \Big(\f{s}{u}\wedge \f{u}{s}\Big)^{2Nr}|\psi(s\sL)f(z)|^r \dz\f{ds}{s}
	\end{aligned}
	\end{equation}	
for all $N>0$, $u>0$ and $y\in X$.
	 
	 	Using the obvious inequalities 
	\[
	\Big(\f{u}{t}\wedge \f{t}{u}\Big)^{Ar}\Big(\f{s}{u}\wedge \f{u}{s}\Big)^{Ar}\le \Big(\f{s}{t}\wedge \f{t}{s}\Big)^{Ar}
	\]
	and
	\[
	\Big(1+\f{d(y,z)}{s}\Big)^{-Ar}\Big(1+\f{d(x,y)}{u}\Big)^{-Ar}\Big(\f{s}{u}\wedge \f{u}{s}\Big)^{Ar}\lesi \Big(1+\f{d(x,z)}{s}\Big)^{-Ar},
	\]
where $A=N+\tilde{n}/r$,	we obtain
	\[
	\begin{aligned}
	\f{|\psi(u\sL)f(y)|^r}{\Big(1+\f{d(x,y)}{u}\Big)^{Ar}}\Big(\f{u}{t}\wedge \f{t}{u}\Big)^{Ar}&\le C(f,N,r)\int_{0}^{\vc}\int_X \f{1}{V(z,s)}\Big(1+\f{d(x,z)}{s}\Big)^{-Ar} \Big(\f{s}{t}\wedge \f{t}{s}\Big)^{Ar}|\psi(s\sL)f(z)|^r \dz\f{ds}{s}.
	\end{aligned}
	\]
	Taking the supremum over all $u>0$ and $y\in X$ gives
	\[
	\begin{aligned}
	\psi^{**}_A(t\sL)f(x)^r &\le C(f,N,r) \int_{0}^{\vc}\int_X \f{1}{V(z,s)}\Big(1+\f{d(x,z)}{s}\Big)^{-Ar} \Big(\f{s}{t}\wedge \f{t}{s}\Big)^{Ar}|\psi(s\sL)f(z)|^r \dz\f{ds}{s}\\
&\le 	C(f,N,r) \int_{0}^{\vc}\int_X \f{1}{V(x,s)}\Big(1+\f{d(x,z)}{s}\Big)^{-Nr} \Big(\f{s}{t}\wedge \f{t}{s}\Big)^{Nr}|\psi(s\sL)f(z)|^r \dz\f{ds}{s}
	\end{aligned}
	\]
for all $N>0$, $x\in X$ and $t>0$, where we use \eqref{doubling3} in the last inequality.

Therefore, if the integral on the right hand side of the above is finite, then $\psi^{**}_{N+\tilde n/r}(t\sL)f(x)<\infty$. In this case we repeat the arguments in the first part of the proof and obtain the required inequality \eqref{eq-lem1-continuous cha}, where the constant in the inequality depends on $N$ and $r$ (but independent of $f$). On the other hand, if this integral is infinite, then \eqref{eq-lem1-continuous cha} holds trivially. Thus we have proved \eqref{eq-lem1-continuous cha} for all $N>0$.
 \end{proof}

\begin{rem}\label{remark-maximal function}
Actually we have proved a stronger statement: 
\begin{equation}\label{eq-maximal inequality 2}
	\psi^{**}_{N+\tilde n/r}(t\sL)f(x)^r \le C_{N,r} \int_{0}^{\vc}\int_X \f{1}{V(x,s)}\Big(1+\f{d(x,z)}{s}\Big)^{-Nr} \Big(\f{s}{t}\wedge \f{t}{s}\Big)^{Nr}|\psi(s\sL)f(z)|^r \dz\f{ds}{s}
\end{equation}
for all $N>0$, $f\in \mathcal{S}'_\infty$, $x\in X$ and $t>0$. Moreover, a close inspection of the proof of the lemma shows that we also have 
\begin{equation}\label{eq-maximal inequality 3}
	\psi^{**}_N(t\sL)f(x)^r \le C_{N,r} \int_{0}^{\vc}\int_X \f{1}{V(z,s)}\Big(1+\f{d(x,z)}{s}\Big)^{-Nr} \Big(\f{s}{t}\wedge \f{t}{s}\Big)^{Nr}|\psi(s\sL)f(z)|^r \dz\f{ds}{s}
\end{equation}
for all $N>0$, $f\in \mathcal{S}'_\infty$, $x\in X$ and $t>0$.

\end{rem}

\bigskip
For $\lambda>0, j\in \mathbb{Z}$ and $\varphi\in \mathscr{S}(\mathbb{R})$ the Peetre's type maximal function is defined, for $f\in \mathcal S'$, by
\begin{equation}
\label{eq-PetreeFunction}
\varphi_{j,\lambda}^*(\sqrt{L})f(x)=\sup_{y\in X}\f{|\varphi_j(\sqrt{L})f(y)|}{(1+2^jd(x,y))^\lambda}\, , x\in X,
\end{equation}
where $\varphi_j(\lambda)=\varphi(2^{-j}\lambda)$.

Obviously, we have
\[
\varphi_{j,\lambda}^*(\sqrt{L})f(x)\geq |\varphi_j(\sqrt{L})f(x)|, \ \ \ \ x\in X.
\]
Similarly, for $s, \lambda>0$ we set
\begin{equation}
\label{eq2-PetreeFunction}
\varphi_{\lambda}^*(s\sqrt{L})f(x)=\sup_{y\in X}\f{|\varphi(s\sqrt{L})f(y)|}{(1+d(x,y)/s)^\lambda}, \ \ \ f\in \mathcal S'.
\end{equation}

We note that in the particular case when $\varphi$ is supported in $(0,\vc)$, these maximal functions can defined for $f\in \mathcal{S}_\vc'$ via \eqref{eq- s and s'}.

Due to \eqref{eq- bound of varphi and f}, $\varphi_{\lambda}^*(s\sqrt{L})f(x)<\vc$ for all $x\in X$, provided that $\lambda$ is sufficiently large.
\begin{prop}
	\label{prop1-maximal function}
	Let $\psi\in \mathscr{S}(\mathbb{R})$ with ${\rm supp}\,\psi\subset [1/2,2]$ and  $\varphi\in \mathscr{S}(\mathbb{R})$ be a partition of unity. Then for any $\lambda>0$ and $j\in \mathbb{Z}$ we have
\begin{equation}
\label{eq-psistar vaphistar}
\sup_{s\in [2^{-j-1},2^{-j}]}\psi^*_{\lambda}(s\sL)f(x) \lesi \sum_{k=j-2}^{j+3} \varphi^*_{k,\lambda}(\sL)f(x), 
\end{equation}
for all $f\in \mathcal{S}_\vc'$ and $x\in X$.
\end{prop}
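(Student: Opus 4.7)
The plan is to combine a Calder\'on-type reproducing formula for the partition of unity $\varphi$ with the compact spectral support of $\psi$, and then invoke Lemma~\ref{lem2} to transfer the Peetre maximal control from $\psi(s\sL)$ to the family $\{\varphi_k(\sL)\}$.

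First, by Proposition~\ref{prop-Calderon1},
\[
f=\sum_{k\in\mathbb{Z}}\varphi_k(\sL)f \quad \text{in } \mathcal{S}'_\vc.
\]
Since $\supp\psi\subset[1/2,2]$, the kernel $K_{\psi(s\sL)}(x,\cdot)$ lies in $\mathcal{S}_\vc$, so pairing the convergent series against this test function yields the pointwise identity $\psi(s\sL)f(y)=\sum_{k\in\mathbb{Z}}\psi(s\sL)\varphi_k(\sL)f(y)$. For $s\in[2^{-j-1},2^{-j}]$ the symbol $\psi(s\cdot)$ is supported in $[2^{j-1},2^{j+2}]$ while $\varphi_k$ is supported in $[2^{k-1},2^{k+1}]$, so by the spectral theorem $\psi(s\sL)\varphi_k(\sL)\equiv 0$ unless $j-2\le k\le j+3$. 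Hence
\[
\psi(s\sL)f(y)=\sum_{k=j-2}^{j+3}\psi(s\sL)\varphi_k(\sL)f(y).
\]

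Next, for every such $k$ and $s\in[2^{-j-1},2^{-j}]$ we have $s\in[\tfrac{1}{8}\cdot 2^{-k},\,8\cdot 2^{-k}]$, so Lemma~\ref{lem2} (applied with $t=2^{-k}$, $a=1/8$, $b=8$) gives
\[
\sup_{y\in X}\frac{|\psi(s\sL)\varphi_k(\sL)f(y)|}{(1+2^kd(x,y))^\lambda}\lesi \varphi_{k,\lambda}^*(\sL)f(x).
\]
Since $2^k s\in[2^{-3},2^3]$ for the relevant indices, the weights $(1+d(x,y)/s)^\lambda$ and $(1+2^kd(x,y))^\lambda$ are comparable up to a constant depending only on $\lambda$. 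Combining these ingredients, summing over the six values $k=j-2,\ldots,j+3$, and then taking the supremum over $y\in X$ and $s\in[2^{-j-1},2^{-j}]$ produces the claimed bound.

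The only subtle step is justifying the pointwise reproducing identity for $f\in\mathcal{S}'_\vc$, since convergence in Proposition~\ref{prop-Calderon1} is only in $\mathcal{S}'_\vc$. This is handled by the observation that $K_{\psi(s\sL)}(x,\cdot)\in\mathcal{S}_\vc$, guaranteed by $\supp\psi\subset[1/2,2]$ together with Lemma~\ref{lem1}; the spectral support truncation then reduces the infinite sum to six terms, eliminating any further convergence issue. Everything else is routine bookkeeping built on Lemma~\ref{lem2}.
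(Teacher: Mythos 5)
Your proof is correct and follows essentially the same route as the paper: both reduce $\psi(s\sL)f$ to the six terms $\psi(s\sL)\varphi_k(\sL)f$, $k=j-2,\dots,j+3$, using the spectral support of $\psi(s\cdot)$ against the partition of unity, and then bound the weighted supremum of each term by $\varphi^*_{k,\lambda}(\sL)f(x)$. The only cosmetic difference is that you invoke Lemma~\ref{lem2} together with the comparability $1+d(x,y)/s\sim 1+2^{k}d(x,y)$, whereas the paper carries out the underlying kernel estimate (Lemma~\ref{lem1} plus Lemma~\ref{lem-elementary}) inline, which is exactly the content of Lemma~\ref{lem2}.
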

\begin{proof}
	Fix $j\in \mathbb{Z}$ and $s\in [2^{-j-1},2^{-j}]$. First note that
	\[
	\psi(s\sL)=\sum_{k=j-2}^{j+3}\psi(s\sL)\varphi_k(\sL).
	\]
Since $2^{-j}\sim s$, by Lemma \ref{lem1} we have,  for $y\in X$ and $N>n$,
	\begin{equation}\label{eq1-pro1}
	\begin{aligned}
	|\psi(s\sL)f(y)|&\leq \sum_{k=j-2}^{j+3}|\psi(s\sL)\varphi_k(\sL)f(y)|\\
	&\lesi  \sum_{k=j-2}^{j+3}\int_X \f{1}{V(y,2^{-j})}(1+2^jd(y,z))^{-N-\lambda}|\varphi_k(\sL)f(z)|\dz.
	\end{aligned}
	\end{equation}
It follows that
	\begin{equation}\label{eq2-pro1}
	\begin{aligned}
	\f{|\psi(s\sL)f(y)|}{(1+d(x,y)/s)^\lambda}&\lesi  \sum_{k=j-2}^{j+3}\int_X \f{1}{V(y,2^{-j})}(1+2^jd(y,z))^{-N-\lambda}\f{|\varphi_k(\sL)f(z)|}{(1+2^jd(x,y))^\lambda}\dz\\
	&\lesi  \sum_{k=j-2}^{j+3}\int_X \f{1}{V(y,2^{-j})}(1+2^jd(y,z))^{-N}\f{|\varphi_k(\sL)f(z)|}{(1+2^jd(x,y))^\lambda(1+2^jd(y,z))^\lambda}\dz\\
	&\lesi  \sum_{k=j-2}^{j+3}\int_X \f{1}{V(y,2^{-j})}(1+2^jd(y,z))^{-N}\f{|\varphi_k(\sL)f(z)|}{(1+2^jd(x,z))^\lambda}\dz.\\
	\end{aligned}
	\end{equation}
	Using the fact that $2^k\sim 2^j$, we obtain 
	\begin{equation}\label{eq2s-pro1}
	\begin{aligned}
	\f{|\psi(s\sL)f(y)|}{(1+d(x,y)/s)^\lambda}&\lesi  \sum_{k=j-2}^{j+3}\int_X \f{1}{V(y,2^{-j})}(1+2^jd(y,z))^{-N}\f{|\varphi_k(\sL)f(z)|}{(1+2^kd(x,z))^\lambda}\dz\\
	&\lesi \sum_{k=j-2}^{j+3} \varphi^*_{k,\lambda}(\sL)f(x)\int_X \f{1}{V(y,2^{-j})}(1+2^jd(y,z))^{-N}\dz\\
	&\lesi \sum_{k=j-2}^{j+3} \varphi^*_{k,\lambda}(\sL)f(x),
	\end{aligned}
	\end{equation}
	where in the last inequality we use Lemma \ref{lem-elementary}.
	Taking the supremum over $y\in X$, we derive \eqref{eq-psistar vaphistar}.
\end{proof}
\bigskip
\begin{prop}
	\label{prop2-maximal functions}
Let $\psi$ be a partition of unity. Then for any $\lambda, s>0$ and $r>0$ we have:
\begin{equation}\label{eq1-proof prop2 maximal}
	\psi^*_\lambda(s\sL)f(x)\lesi \Big[\int_X \f{1}{V(z,s)}\f{|\psi(s\sL)f(z)|^r}{(1+d(x,z)/s)^{\lambda r}}\dz\Big]^{1/r},
	\end{equation}
	\begin{equation}\label{eq1-proof prop2 maximal2}
	\psi^*_{\lambda+\tilde n/r}(s\sL)f(x)\lesi \Big[\int_X \f{1}{V(x,s)}\f{|\psi(s\sL)f(z)|^r}{(1+d(x,z)/s)^{\lambda r}}\dz\Big]^{1/r}
	\end{equation}
	for all $f\in \SLm$ and $x\in X$.
\end{prop}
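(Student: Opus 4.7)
My approach is to first derive a pointwise ``convolution-type'' estimate for $|\psi(s\sL)f(y)|$ in terms of an integral of $|\psi(s\sL)f|$ against a scale-$s$ weight, and then convert it into the Peetre maximal inequality via the standard triangle-type manipulation of the weight $(1+d(x,y)/s)^\lambda$. The starting point is the self-reproducing identity available because $\psi$ is supported in $[1/2,2]$: choose an even function $\tilde\psi\in\mathscr{S}(\mathbb{R})$, supported away from the origin, with $\tilde\psi\equiv 1$ on $\supp\psi\subset[1/2,2]$. Then $\psi(s\sL)=\tilde\psi(s\sL)\psi(s\sL)$ on $\SLm$ by spectral theory, and applying Lemma~\ref{lem1}(a) to $\tilde\psi$ yields, for any $N>0$,
\[
|\psi(s\sL)f(y)|\lesi \int_X \frac{|\psi(s\sL)f(z)|}{V(z,s)(1+d(y,z)/s)^N}\,\dz.
\]

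For $r\ge 1$, I would apply H\"older's inequality (a trivial step when $r=1$) together with the observation that $\int_X [V(z,s)(1+d(y,z)/s)^N]^{-1}\,\dz\lesi 1$ once $N>n+\tilde n$, which follows from \eqref{doubling3} and Lemma~\ref{lem-elementary}(a). This yields $|\psi(s\sL)f(y)|^r\lesi \int_X |\psi(s\sL)f(z)|^r[V(z,s)(1+d(y,z)/s)^N]^{-1}\,\dz$. Dividing by $(1+d(x,y)/s)^{\lambda r}$, using the elementary inequality $(1+d(x,y)/s)(1+d(y,z)/s)\gtrsim (1+d(x,z)/s)$ to shift the Peetre weight from $y$ to $z$, choosing $N$ large enough, and finally taking $\sup_{y\in X}$, gives \eqref{eq1-proof prop2 maximal}.

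For $0<r<1$, H\"older is not available, so I would instead factor $|\psi(s\sL)f(z)|=|\psi(s\sL)f(z)|^{1-r}\cdot|\psi(s\sL)f(z)|^r$ and bound the first factor by $(\psi^*_\lambda(s\sL)f(x))^{1-r}(1+d(x,z)/s)^{\lambda(1-r)}$. Rerunning the same triangle manipulation and taking $\sup_{y\in X}$ produces
\[
\psi^*_\lambda(s\sL)f(x)\lesi (\psi^*_\lambda(s\sL)f(x))^{1-r}\int_X \frac{|\psi(s\sL)f(z)|^r}{V(z,s)(1+d(x,z)/s)^{\lambda r}}\,\dz,
\]
from which \eqref{eq1-proof prop2 maximal} follows by dividing. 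The main obstacle here is justifying this division, i.e.\ ensuring $\psi^*_\lambda(s\sL)f(x)<\vc$ a priori. Following the bootstrap scheme of Lemma~\ref{lem1-thm1-continuous cha}, I would first use the growth bound \eqref{eq- bound of varphi and f} to guarantee finiteness for $\lambda$ exceeding an $f$-dependent threshold, obtain a preliminary version of \eqref{eq1-proof prop2 maximal} with an $f$-dependent constant, propagate finiteness to all $\lambda>0$ (the inequality being trivial when the right-hand side is infinite), and finally re-run the argument cleanly to produce the asserted uniform constant.

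The second inequality \eqref{eq1-proof prop2 maximal2} is immediate from \eqref{eq1-proof prop2 maximal}: apply the latter with $\lambda$ replaced by $\lambda+\tilde n/r$, and then invoke \eqref{doubling3} in the form $V(x,s)\lesi (1+d(x,z)/s)^{\tilde n}V(z,s)$ to swap $1/V(z,s)$ for $1/V(x,s)$; the extra factor $(1+d(x,z)/s)^{\tilde n}$ is exactly absorbed by the $\tilde n/r$ surplus in the Peetre exponent.
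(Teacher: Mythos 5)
Your proposal is correct and follows essentially the same route as the paper: the same scale-$s$ kernel domination (which the paper obtains from the Calder\'on reproducing formula over $u\in[s/4,4s]$ rather than your cutoff identity $\psi=\tilde\psi\,\psi$ — a cosmetic difference), H\"older's inequality for $r\ge 1$, the $(1-r)$-power factorization with division and the Lemma~\ref{lem1-thm1-continuous cha}-style bootstrap (via \eqref{eq- bound of varphi and f}) for $0<r<1$, and \eqref{doubling3} to trade $V(z,s)$ for $V(x,s)$ at the cost of $\tilde n/r$ in the Peetre exponent. The only organizational difference is that you deduce \eqref{eq1-proof prop2 maximal2} from \eqref{eq1-proof prop2 maximal}, whereas the paper proves \eqref{eq1-proof prop2 maximal2} directly and remarks that \eqref{eq1-proof prop2 maximal} is the simpler variant.
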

\begin{proof}
We start with the proof of \eqref{eq1-proof prop2 maximal2}. 
Similarly to \eqref{eq1-proof lem1}, we have
\begin{equation}\label{eq1-proof lem2}
\psi(s\sL)f=c_\psi\int_{s/4}^{4s}\psi(u\sL)\psi(s\sL)f\f{du}{u}.
\end{equation}
	If $r\in (0,1)$, then using Lemma \ref{lem1} and the fact $u\sim s$, we see that, for $\lambda>0$ and $x,y\in X$, 
	\begin{equation*}
	\begin{aligned}
	\f{|\psi(s\sL)f(y)|}{(1+d(x,y)/s)^\lambda}&\lesi  \int_X \f{1}{V(z,s)}{\Big(1+\f{d(y,z)}{s}\Big)^{-\lambda}}\f{|\psi(s\sL)f(z)|}{(1+d(x,y)/s)^\lambda}\dz\\
	&\lesi  \int_X \f{1}{V(z,s)}\f{|\psi(s\sL)f(z)|}{(1+d(x,z)/s)^\lambda}\dz\\
	&\lesi [\psi^*_{\lambda}(s\sL)f(x)]^{1-r}\int_X \f{1}{V(z,s)}\f{|\psi(s\sL)f(z)|^r}{(1+d(x,z)/s)^{\lambda r}}\dz.
	\end{aligned}
	\end{equation*}
By \eqref{eq- bound of varphi and f} there exists $\lambda_f >0$ such that $\psi^*_{\lambda}(s\sL)f(x)<\infty$ for all $\lambda\ge \lambda_f$, $s>0$ and $x\in X$. Using a bootstrap argument similarly  to the proof of Lemma~\ref{lem1-thm1-continuous cha} (see \eqref{eq-maximal inequality 2}), in which \eqref{eq-maximal inequality} is replaced by the above inequality, we deduce the required inequality \eqref{eq1-proof prop2 maximal2} in this case. We omit the details.

On the other hand, if $r\ge 1$, we similarly have
	\begin{equation*}
	\begin{aligned}
	\f{|\psi(s\sL)f(y)|}{(1+d(x,y)/s)^{\lambda+\tilde n/r}}&\lesi  \int_X \f{1}{V(z,s)}{\Big(1+\f{d(y,z)}{s}\Big)^{-n-\lambda-\tilde n/r}}\f{|\psi(s\sL)f(z)|}{(1+d(x,y)/s)^{\lambda+\tilde n/r}}\dz\\
	&\lesi  \int_X \f{1}{V(z,s)}\Big(1+\f{d(y,z)}{s}\Big)^{-n}\f{|\psi(s\sL)f(z)|}{(1+d(x,z)/s)^{\lambda+\tilde n/r}}\dz.
	\end{aligned}
	\end{equation*}
	We now apply H\"older's inequality, Lemma \ref{lem-elementary} and \eqref{doubling3} to deduce that
	\[
	\begin{aligned}
	\f{|\psi(s\sL)f(y)|}{(1+d(x,y)/s)^{\lambda+\tilde n/r}} &\lesi \Big[\int_X \f{1}{V(z,s)}\f{|\psi(s\sL)f(z)|^r}{(1+d(x,z)/s)^{\lambda r+\tilde n}}\dz\Big]^{1/r}\\
	&\lesi \Big[\int_X \f{1}{V(x,s)}\f{|\psi(s\sL)f(z)|^r}{(1+d(x,z)/s)^{\lambda r}}\dz\Big]^{1/r}.
	\end{aligned}
	\]
This implies the required inequality \eqref{eq1-proof prop2 maximal2} when $r\ge 1$.

The proof of \eqref{eq1-proof prop2 maximal} can be done in a similar fashion, but slightly simpler, where \eqref{doubling3} is not used.
\end{proof}

\bigskip
\begin{prop}
	\label{prop3-maximal functions}
Let $\psi$ be a partition of unity and $\varphi\in \mathscr{S}(\mathbb{R})$ be an even function such that $\varphi\neq 0$ on $[1/2,2]$. Then for any $\lambda>0, j\in \mathbb{Z}$ and $r>0$ we have	
	\begin{equation}\label{eq-prop3 maximal}
	|\psi_j(\sL)f(x)|\lesi \Big(\int_{2^{-j-2}}^{2^{-j+2}}|\varphi^*_{\lambda}(s\sL)f(x)|^r\f{ds}{s}\Big)^{1/r}
	\end{equation}
	for every $f\in \mathcal S'$.
\end{prop}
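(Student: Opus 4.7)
The strategy is to combine a Calderón reproducing formula built from $\varphi$ with the spectral support of $\psi_j$, then invoke the kernel estimates of Lemma~\ref{lem1} and the definition of the Peetre maximal function to reduce to an $L^r$-average on the specified interval. Since $\varphi\neq 0$ on $[1/2,2]$, I would choose $\chi\in\SR$ with $\supp\chi\subset[1/2,2]$ such that $c:=\int_0^\infty\chi(s)\varphi(s)\,ds/s\neq 0$ (e.g.\ pick $\chi$ concentrated on a subinterval of $[1/2,2]$ where $\varphi$ has constant sign). Applying Proposition~\ref{prop-Calderon2} to the product $\chi\varphi$ (which is supported in $[1/2,2]$) yields $f=c^{-1}\int_0^\infty\chi(s\sL)\varphi(s\sL)f\,ds/s$ in $\mathcal S'$ modulo $\mathscr P$. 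Applying $\psi_j(\sL)$ and noting that $\supp\psi_j\subset[2^{j-1},2^{j+1}]$ overlaps $\supp\chi(s\,\cdot)\subset[1/(2s),2/s]$ only when $s\in[2^{-j-2},2^{-j+2}]$, the integration range collapses to this interval.

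On this restricted range $s\sim 2^{-j}$, so Lemma~\ref{lem1}(b) provides the kernel bound
\[
|K_{\psi_j(\sL)\chi(s\sL)}(x,y)|\lesi V(x,2^{-j})^{-1}(1+2^jd(x,y))^{-N}
\]
for every $N>0$. Combining with the Peetre estimate $|\varphi(s\sL)f(y)|\le\varphi^*_\lambda(s\sL)f(x)(1+d(x,y)/s)^\lambda$, taking $N>\lambda+n$, and applying Lemma~\ref{lem-elementary}(a) to the $y$-integral, one obtains the key pointwise bound
\[
|\psi_j(\sL)f(x)|\lesi\int_{2^{-j-2}}^{2^{-j+2}}\varphi^*_\lambda(s\sL)f(x)\,\frac{ds}{s}.\qquad (\star)
\]
For $r\ge 1$ the conclusion follows immediately from $(\star)$ by Hölder's inequality, since the interval $[2^{-j-2},2^{-j+2}]$ has the fixed $ds/s$-measure $4\log 2$.

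The main obstacle is the case $0<r<1$, where Hölder runs the wrong way and $(\star)$ alone is insufficient (an $L^1$-average of a nonnegative function cannot in general be dominated by its $L^r$-average). Here I would adapt the bootstrap technique used in the proof of Lemma~\ref{lem1-thm1-continuous cha}. Concretely, I would first upgrade $(\star)$ by replacing $|\psi_j(\sL)f(x)|$ on the left with the spatial Peetre-type maximal $\psi^*_{j,\lambda}(\sL)f(x):=\sup_y|\psi_j(\sL)f(y)|(1+2^jd(x,y))^{-\lambda}$, reproducing the same argument with the factor $(1+2^jd(x,y))^{-\lambda}$ carried along and redistributed via $(1+d(x,y)/s)(1+d(y,z)/s)\gtrsim(1+d(x,z)/s)$. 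Next, inside the kernel integration split $|\varphi(s\sL)f(z)|=|\varphi(s\sL)f(z)|^{r}\cdot|\varphi(s\sL)f(z)|^{1-r}$ and majorise the $(1-r)$-factor by $[\varphi^*_\lambda(s\sL)f(x)]^{1-r}(1+d(x,z)/s)^{\lambda(1-r)}$, choosing the decay exponent large enough that the remaining integral is absolutely convergent by Lemma~\ref{lem-elementary}. After verifying finiteness of the relevant maximal quantities via the a priori bound \eqref{eq- bound of varphi and f}, one absorbs the $(1-r)$-power factor in the bootstrap and extracts the $L^r(ds/s)$-quasi-norm on the right, yielding the claimed inequality uniformly for all $r>0$.
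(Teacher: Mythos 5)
Your first half is sound and is essentially the paper's own set-up: the auxiliary $\chi$ supported in $[1/2,2]$ with $\int\chi\varphi\,\f{d\xi}{\xi}\neq 0$, the reproducing formula, the truncation of the scale integral to $I:=[2^{-j-2},2^{-j+2}]$ via the spectral support of $\psi_j$ (note $\psi_j(\sL)$ annihilates the $\mathscr P$-correction), and the kernel/Peetre estimate giving $(\star)$; for $r\ge 1$ your H\"older step then finishes the proof. The genuine gap is the case $0<r<1$. The absorption you describe does not close: after writing $|\varphi(s\sL)f(z)|=|\varphi(s\sL)f(z)|^{r}|\varphi(s\sL)f(z)|^{1-r}$ and majorising the second factor, the quantity you must absorb is (a sup over $s\in I$ of) $[\varphi^*_\lambda(s\sL)f(x)]^{1-r}$, whereas the quantity on your left-hand side is $|\psi_j(\sL)f(x)|$, or its Peetre majorant $\psi^*_{j,\lambda}(\sL)f(x)$ --- a different object. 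In the bootstrap of Lemma~\ref{lem1-thm1-continuous cha} the absorbed factor is a power of exactly the maximal function being estimated, so once finiteness is known one can divide it out; here you cannot divide out $[\varphi^*_\lambda(s\sL)f(x)]^{1-r}$, and it is not dominated by the target $\bigl(\int_I[\varphi^*_\lambda(s\sL)f(x)]^r\,\f{ds}{s}\bigr)^{1/r}$ without already knowing an inequality of the same strength as the one being proved; moreover, since $\varphi$ need not vanish at the origin, no reproducing formula localized in scale to $I$ is available to compare a sup over $I$ with an $L^r(\f{ds}{s})$-average over $I$. Structurally, once you have passed to the $L^1(\f{ds}{s})$ bound $(\star)$ you have discarded the spectral localization of $\psi_j$, which is precisely the ingredient needed to put an $r$-th power inside a scale integral restricted to $I$.

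The paper avoids this by keeping $\psi_j(\sL)f$ inside the integrand. From the reproducing formula one gets, trivially for every $r>0$, $|\psi_j(\sL)f(x)|^r\lesi\sup_{t\in I}|\varphi(t\sL)\chi(t\sL)\psi_j(\sL)f(x)|^r$; then Lemma~\ref{lem1-thm1-continuous cha} --- which already contains the $r<1$ bootstrap --- is applied with the compactly supported function $\chi\varphi$ to $g=\psi_j(\sL)f$, producing an $r$-th power integral over all scales; the factor $\psi_j(\sL)$ in the integrand kills all scales $s\notin I$; finally Lemma~\ref{lem2} replaces $(1+d(x,y)/s)^{-\lambda}|\chi(s\sL)\varphi(s\sL)\psi_j(\sL)f(y)|$ by $\varphi^*_\lambda(s\sL)f(x)$ (taking the exponent in the lemma slightly larger than $\lambda+n/r$ so the remaining $y$-integral converges by Lemma~\ref{lem-elementary}). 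Replacing your treatment of $r<1$ by this reduction repairs the argument.
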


\begin{proof}
Since  $\varphi\neq 0$ on $[1/2,2]$, there exists $\phi\in \SR$ supported in $[1/2,2]$ so that
\[
c_{\varphi,\phi}=\int_0^\vc\varphi(\xi)\phi(\xi)\f{d\xi}{\xi}\neq 0.
\] 
It follows that, for each $j\in \mathbb{Z}$, 
\[
\begin{aligned}
\psi_j(\sL)f&=c_{\varphi,\phi}^{-1}\int_0^\vc\varphi(t\sL)\phi(t\sL)\psi_j(\sL)f\f{d\xi}{\xi}\\
&=c_{\varphi,\phi}^{-1}\int_{2^{j-2}}^{2^{j+2}}\varphi(t\sL)\phi(t\sL)\psi_j(\sL)f\f{dt}{t}.
\end{aligned}
\]
Hence, for any $r>0$,
\begin{equation}\label{eq1-proof prop3 maximal}
|\psi_j(\sL)f(x)|^r\lesi \sup_{t\in [2^{-j-2},2^{-j+2}]}|\varphi(t\sL)\phi(t\sL)\psi_j(\sL)f(x)|^r
\end{equation} 

Fix $t\in [2^{-j-2},2^{-j+2}]$. By Lemma \ref{lem1-thm1-continuous cha} and the fact that supp\,$\phi\subset [1/2,2]$ we have, for $\lambda>0$,
	\[
	\begin{aligned}
	|\phi(t\sL)\varphi(t\sL)&[\psi_j(\sL)f](x)|^r\\
	&\lesi \int_0^\vc\int_X\f{1}{V(x,s)}\Big(1+\f{d(x,y)}{s}\Big)^{-\lambda r}\Big(\f{s}{t}\wedge \f{t}{s}\Big)^{\lambda r}|\phi(s\sL)\varphi(s\sL)\psi_j(\sL)f(y)|^r\dy\f{ds}{s}\\
	&\lesi \int_{2^{-j-2}}^{2^{-j+2}}\int_X\f{1}{V(x,s)}\Big(1+\f{d(x,y)}{s}\Big)^{-\lambda r}\Big(\f{s}{t}\wedge \f{t}{s}\Big)^{\lambda r}|\phi(s\sL)\varphi(s\sL)\psi_j(\sL)f(y)|^r\dy\f{ds}{s}\\
	&\sim \int_{2^{-j-2}}^{2^{-j+2}}\int_X\f{1}{V(x,s)}\Big(1+\f{d(x,y)}{s}\Big)^{-\lambda r}|\phi(s\sL)\varphi(s\sL)\psi_j(\sL)f(y)|^r\dy\f{ds}{s}.	
	\end{aligned}
	\]
	 
	Note that $ s/4\le 2^{-j}\le 4s$ when $2^{-j-2} \leq s\leq 2^{-j+2}$ . Hence, applying Lemma \ref{lem2} we see that 
	\[
	\begin{aligned}
	\Big(1+\f{d(x,y)}{s}\Big)^{-\lambda}&|\phi(s\sL)\varphi(s\sL)\psi_j(\sL)f(y)|\\
	&\lesi \sup_{y\in X}|\varphi(s\sL)f(y)|\Big(1+\f{d(x,y)}{s}\Big)^{-\lambda}=: \varphi^*_{\lambda}(s\sL)f(x).
	\end{aligned}
	\]
	As a consequence,
	\[
	\begin{aligned}
	\sup_{t\in [2^{-j-2},2^{-j+2}]}|\phi(t\sL)\psi(t\sL)\psi_j(\sL)f(x)|^r\lesi \int_{2^{-j-2}}^{2^{-j+2}}|\varphi^*_{\lambda}(s\sL)f(x)|^r\f{ds}{s}.	
	\end{aligned}
	\]
	This and \eqref{eq1-proof prop3 maximal}  yield \eqref{eq-prop3 maximal}.
	
\end{proof}

\bigskip

\section{Besov and Triebel--Lizorkin spaces associated to $L$: Properties and Characterizations}

\subsection{Definitions of Besov and Triebel--Lizorkin spaces associated to $L$}
\begin{defn}
Let $\psi$ be  a partition of unity. For $0< p, q\leq \vc$, $\alpha\in \mathbb{R}$ and $w\in A_\vc$, we define the weighted homogeneous Besov space $\B^{\alpha, \psi, L}_{p,q,w}(X)$ as follows 
$$
\B^{\alpha, \psi, L}_{p,q,w}(X) = 
\Big\{f\in \mathcal{S}'_\vc:  \|f\|_{\B^{\alpha, \psi, L}_{p,q,w}(X)}<\vc\},
$$
where
\[
\|f\|_{\B^{\alpha, \psi, L}_{p,q,w}(X)}= \Big\{\sum_{j\in \mathbb{Z}}\left(2^{j\alpha}\|\psi_j(\sqrt{L})f\|_{p,w}\right)^q\Big\}^{1/q}.
\]

Similarly, for $0< p<\vc$, $0<q\le \vc$, $\alpha\in \mathbb{R}$ and $w\in A_\vc$, the weighted homogeneous Triebel-Lizorkin space $\F^{\alpha, \psi, L}_{p,q,w}(X)$ is defined by 
$$
\F^{\alpha, \psi, L}_{p,q,w}(X) = 
\Big\{f\in \mathcal{S}'_\vc:  \|f\|_{\F^{\alpha, \psi, L}_{p,q,w}(X)}<\vc\},
$$
where
\[
\|f\|_{\F^{\alpha, \psi, L}_{p,q,w}(X)}= \Big\|\Big[\sum_{j\in \mathbb{Z}}(2^{j\alpha}|\psi_j(\sqrt{L})f|)^q\Big]^{1/q}\Big\|_{p,w}.
\]
\end{defn}

We now claim that $\psi_j(\sqrt{L})f = 0$ for all $j\in \mathbb{Z}$ if and only if $f\in \mathscr{P}$. Indeed, since $\mathscr{P}=\cup_{m\in \mathbb{N}}\mathscr{P}_m$, it is obvious that if $f\in \mathscr{P}$ then $\psi_j(\sqrt{L})f = 0$ for all $j$.

For the reverse direction, we assume that $\psi_j(\sqrt{L})f = 0$ for all $j\in \mathbb{Z}$. Since $f\in \mathcal S'_\vc =\mathcal S'/\mathscr P$, we have $f = f_{\mathcal S'} +f_{\mathscr P}$ for some  $f_{\mathcal S'} \in \mathcal S'$ and $f_{\mathscr P}\in \mathscr P$. It follows that, for every $j\in \mathbb{Z}$,
\[
\psi_j(\sqrt{L})f = \psi_j(\sqrt{L})f_{\mathcal S'}+\psi_j(\sqrt{L})f_{\mathscr P}=\psi_j(\sqrt{L})f_{\mathcal S'}.
\]
Therefore, 
\[
\sum_{j\in \mathbb{Z}}\psi_j(\sqrt{L})f = \sum_{j\in \mathbb{Z}}\psi_j(\sqrt{L})f_{\mathcal S'}
\]
which implies
\[
\sum_{j\in \mathbb{Z}}\psi_j(\sqrt{L})f_{\mathcal S'} =0.
\]
On the other hand, arguing similarly to the proof of Proposition \ref{prop-Calderon2}, we can find $\rho\in \mathscr P$ such that 
\[
\sum_{j\in \mathbb{Z}}\psi_j(\sqrt{L})f_{\mathcal S'} =f_{\mathcal S'} -\rho \ \ \ \text{in $\mathcal S'$}.
\]
From the last two identities we obtain $f_{\mathcal S'} =\rho\in \mathscr P$ and hence $f\in \mathscr{P}$. The statement is proved.

Therefore, each of the above spaces is a quasi-normed linear space (normed linear space when $p,q\ge 1$).

Note that like the classical case, the Triebel-Lizorkin spaces for $p=\vc$ would be defined in a different way. See Subsection \ref{sub-Fvc}.
\bigskip

From Proposition \ref{prop1-maximal function} we have:
\begin{prop}
	\label{prop1-thm1}
	Let $\psi, \varphi$ be  partitions of unity and assume ${\rm supp\,}\psi, {\rm supp\,}\varphi \subset [1/2,2]$.  Let $w\in A_\vc$, $\alpha\in \mathbb{R}$ and $\lambda>0$.
	Then the following norm equivalence holds: For  all $f\in \mathcal{S}'_\infty$
	\begin{enumerate}[{\rm (a)}]
		\item 
		 $$\displaystyle \Big\{\sum_{j\in \mathbb{Z}}\left(2^{j\alpha}\|\psi^*_{j,\lambda}(\sqrt{L})f\|_{p,w}\right)^q\Big\}^{1/q}\sim \Big\{\sum_{j\in \mathbb{Z}}\left(2^{j\alpha}\|\varphi^*_{j,\lambda}(\sqrt{L})f\|_{p,w}\right)^q\Big\}^{1/q},\\\ 0< p, q\le \vc;$$

		\item 
		$$\displaystyle \Big\|\Big[\sum_{j\in \mathbb{Z}}(2^{j\alpha}|\psi^*_{j,\lambda}(\sqrt{L})f|)^q\Big]^{1/q}\Big\|_{p,w}\sim \Big\|\Big[\sum_{j\in \mathbb{Z}}(2^{j\alpha}|\varphi^*_{j,\lambda}(\sqrt{L})f|)^q\Big]^{1/q}\Big\|_{p,w},\\\ 0< p<\vc, 0<q\le \vc.$$
	\end{enumerate}
\end{prop}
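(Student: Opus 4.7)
The plan is to derive both equivalences directly from Proposition \ref{prop1-maximal function}. The key point is that since $\psi$ and $\varphi$ are both partitions of unity with support in $[1/2,2]$, each simultaneously satisfies the hypotheses imposed on the ``$\psi$'' and on the ``$\varphi$'' in that proposition, so it can be applied with the two roles interchanged.

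First I would observe the identification $\psi^*_{j,\lambda}(\sL)f(x)=\psi^*_{\lambda}(2^{-j}\sL)f(x)$, which is immediate from comparing \eqref{eq-PetreeFunction} and \eqref{eq2-PetreeFunction}. Specializing Proposition \ref{prop1-maximal function} to $s=2^{-j}$ then yields the pointwise bound
\[
\psi^*_{j,\lambda}(\sL)f(x)\lesi \sum_{k=j-2}^{j+3}\varphi^*_{k,\lambda}(\sL)f(x), \qquad x\in X, \ \ f\in \mathcal{S}'_\vc,
\]
together with the companion inequality obtained by interchanging $\psi$ and $\varphi$. For part (a), taking the $L^p_w$ norm of both sides (the $L^p_w$ quasi-triangle inequality costs only a constant because the sum has exactly six terms) gives
\[
\|\psi^*_{j,\lambda}(\sL)f\|_{p,w}\lesi \sum_{k=j-2}^{j+3}\|\varphi^*_{k,\lambda}(\sL)f\|_{p,w}.
\]
After multiplying by $2^{j\alpha}$, the dyadic shift $k=j+i$ with $i\in\{-2,\ldots,3\}$ is absorbed via $2^{j\alpha}=2^{-i\alpha}2^{k\alpha}$ into a constant depending only on $\alpha$, and the $\ell^q$ norm in $j$ may then be estimated via its (quasi-)triangle inequality (again with only six terms) and a reindexing $k=j+i$. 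This yields one direction of (a); the reverse is by symmetry.

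For part (b) the same pointwise bound is used, but the order of norms is reversed: multiply by $2^{j\alpha}$, raise to the $q$-th power, sum in $j$, take the $1/q$-th power pointwise in $x$, and then take the $L^p_w$ norm. Applying the $\ell^q$ (quasi-)triangle inequality to the six-term inner sum and the same dyadic reindexing as before yields the desired estimate, and the reverse follows by symmetry. I do not anticipate any substantive obstacle: the analytical work is already contained in Proposition \ref{prop1-maximal function}, and what remains is a routine packaging of norms. The only care needed is in tracking the constants from the quasi-triangle inequalities when $p$ or $q$ lies in $(0,1)$, but because the inner sums always have the fixed length six this incurs only a constant loss.
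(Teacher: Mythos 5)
Your proposal is correct and is essentially the paper's own argument: the paper derives Proposition \ref{prop1-thm1} directly from Proposition \ref{prop1-maximal function} (applied symmetrically, since both $\psi$ and $\varphi$ satisfy both sets of hypotheses there), exactly as you do. The remaining steps you spell out — the identification $\psi^*_{j,\lambda}(\sL)f=\psi^*_\lambda(2^{-j}\sL)f$, the six-term quasi-triangle inequalities, and the dyadic reindexing absorbing $2^{j\alpha}$ — are the routine packaging the paper leaves implicit, and they are handled correctly.
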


We next prove the following result.
\begin{prop}
	\label{prop2-thm1}
	Let $\psi$ be a partition of unity. Then we have:
	\begin{enumerate}[{\rm (a)}]
		\item For $0< p, q\le \vc$, $\alpha\in \mathbb{R}$ and $\lambda>nq_w/p$,
		$$\displaystyle \Big\{\sum_{j\in \mathbb{Z}}\left(2^{j\alpha}\|\psi^*_{j,\lambda}(\sqrt{L})f\|_{p,w}\right)^q\Big\}^{1/q}\sim \|f\|_{\B^{\alpha, \psi, L}_{p,q,w}(X)}.
		$$
		
		\item For $0< p<\vc$, $0<q\le \vc$, $\alpha\in \mathbb{R}$ and $\lambda>\max\{n/q, nq_w/p\}$,
		$$\displaystyle \Big\|\Big[\sum_{j\in \mathbb{Z}}(2^{j\alpha}|\psi^*_{j,\lambda}(\sqrt{L})f|)^q\Big]^{1/q}\Big\|_{p,w}\sim \|f\|_{\F^{\alpha, \psi, L}_{p,q,w}(X)}.$$
	\end{enumerate}
\end{prop}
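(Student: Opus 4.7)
The plan is to prove the nontrivial direction ``$\lesi$'' in both (a) and (b); the reverse ``$\gtrsim$'' direction is immediate from the pointwise lower bound $\psi^*_{j,\lambda}(\sL)f(x)\ge |\psi_j(\sL)f(x)|$ already noted after \eqref{eq-PetreeFunction}.

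The key intermediate step is the pointwise majorization
\[
\psi^*_{j,\lambda}(\sL)f(x) \lesi \mathcal{M}_r(\psi_j(\sL)f)(x)
\]
for an appropriate $r>0$. To obtain this, I would specialize \eqref{eq1-proof prop2 maximal} from Proposition~\ref{prop2-maximal functions} to $s=2^{-j}$, which yields
\[
\psi^*_{j,\lambda}(\sL)f(x) \lesi \Big[\int_X \f{1}{V(z,2^{-j})}\f{|\psi_j(\sL)f(z)|^r}{(1+2^j d(x,z))^{\lambda r}}\dz\Big]^{1/r},
\]
and then split the integral into the dyadic annuli $A_0=B(x,2^{-j})$ and $A_k=B(x,2^{k-j})\setminus B(x,2^{k-j-1})$ for $k\ge 1$. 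On $A_k$ we have $(1+2^j d(x,z))^{\lambda r}\sim 2^{k\lambda r}$, while combining \eqref{doubling3} with \eqref{doubling2} gives $V(z,2^{-j})^{-1}\lesi 2^{kn} V(x,2^{k-j})^{-1}$. Summing the resulting geometric series in $k$ produces
\[
\psi^*_{j,\lambda}(\sL)f(x)^r \lesi \Big(\sum_{k\ge 0} 2^{k(n-\lambda r)}\Big)\mathcal{M}(|\psi_j(\sL)f|^r)(x) \lesi \mathcal{M}_r(\psi_j(\sL)f)(x)^r,
\]
which converges precisely when $\lambda r>n$.

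For part (a), I would pick $r\in (n/\lambda,\,p/q_w)$; this interval is nonempty because $\lambda>nq_w/p$. Since $w\in A_{p/r}$, estimate \eqref{boundedness maximal function 2} gives $\|\mathcal{M}_r g\|_{p,w}\lesi \|g\|_{p,w}$. Applied with $g=\psi_j(\sL)f$, followed by multiplication by $2^{j\alpha}$ and the $\ell^q$-norm in $j$, this yields the desired inequality. For part (b), I would pick $r\in (n/\lambda,\min\{q,p/q_w\})$, which is possible by the assumption $\lambda>\max\{n/q,nq_w/p\}$. The vector-valued Fefferman--Stein inequality \eqref{FSIn} is then available, and applying it to the sequence $(2^{j\alpha}\psi_j(\sL)f)_{j\in\mathbb{Z}}$ closes the argument.

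The main obstacle is the simultaneous selection of $r$ meeting three constraints: $\lambda r>n$ so the annular sum converges, $r<p/q_w$ so that $w\in A_{p/r}$, and (for (b) only) $r<q$ so that Fefferman--Stein applies. The hypotheses on $\lambda$ are engineered precisely to keep the admissible window for $r$ nonempty. Once this bookkeeping is handled, the rest of the argument is mechanical given Propositions~\ref{prop1-maximal function} and \ref{prop2-maximal functions}.
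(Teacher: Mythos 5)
Your proposal is correct and follows essentially the same route as the paper: specialize \eqref{eq1-proof prop2 maximal} to $s=2^{-j}$, deduce the pointwise bound $\psi^*_{j,\lambda}(\sL)f\lesi \mathcal{M}_r(\psi_j(\sL)f)$ for $r$ chosen in the window $(n/\lambda,\min\{q,p/q_w\})$ (resp. $(n/\lambda,p/q_w)$ for (a)), and conclude with the weighted Fefferman--Stein inequality \eqref{FSIn} for (b) and the $L^p_w$-boundedness of $\mathcal{M}_r$ for (a). The only cosmetic difference is that your dyadic-annuli computation simply re-derives Lemma~\ref{lem-elementary}(b), which the paper cites directly at that step.
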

\begin{proof}
	We will provide the proof for (b), since the proof of  (a) is similar and even easier.
	
	Observe that from Proposition \ref{prop1-thm1} it suffices to prove that 
	\begin{equation}
	\label{eq1-proof prop2 thm1}
	\Big\|\Big[\sum_{j\in \mathbb{Z}}(2^{j\alpha}|\psi^*_{j,\lambda}(\sqrt{L})f|)^q\Big]^{1/q}\Big\|_{p,w}\lesi \Big\|\Big[\sum_{j\in \mathbb{Z}}(2^{j\alpha}|\psi_{j}(\sqrt{L})f|)^q\Big]^{1/q}\Big\|_{p,w}.
	\end{equation}
	Indeed, taking $r<\min\{p,q, p/q_w\}=\min\{q, p/q_w\}$ so that $\lambda>n/r$ and $w\in A_{p/r}$, then applying \eqref{eq1-proof prop2 maximal} we have
	\begin{equation*}
	\begin{aligned}
	\psi^*_{j,\lambda}(\sL)f(x)&\lesi \Big[\int_X \f{1}{V(z,2^{-j})}\f{|\psi_j(\sL)f(z)|^r}{(1+2^jd(x,z))^{\lambda r}}\dz\Big]^{1/r}\\
	&\lesi \mathcal{M}_r(|\psi_j(\sL)f)(x),
	\end{aligned}
	\end{equation*}
	where we use Lemma \ref{lem-elementary} in the last inequality.
	The desired inequality \eqref{eq1-proof prop2 thm1} then follows by using the weighted Fefferman-Stein maximal inequality \eqref{FSIn}.
	\end{proof}

\bigskip
As a consequence of Proposition \ref{prop1-thm1} and Proposition \ref{prop2-thm1}, we obtain the following theorem.
\begin{thm}
	\label{thm1}
	Let $\psi$ and $\varphi$ be partitions of unity. Then following statements hold:
	\begin{enumerate}[(a)]
		\item  The spaces $\B^{\alpha, \psi, L}_{p,q,w}(X)$ and $\B^{\alpha, \varphi, L}_{p,q,w}(X)$ coincide with equivalent norms for all $0< p, q\le \vc$, $\alpha\in \mathbb{R}$ and $w\in A_\vc$.
		
		\item  The spaces $\F^{\alpha, \psi, L}_{p,q,w}(X)$ and $\F^{\alpha, \varphi, L}_{p,q,w}(X)$ coincide with equivalent norms for all $0< p< \vc$, $0<q\le \vc$, $\alpha\in \mathbb{R}$ and $w\in A_\vc$.		
	\end{enumerate}
For this reason, we define the spaces  $\B^{\alpha, L}_{p,q,w}(X)$ and  $\F^{\alpha, L}_{p,q,w}(X)$ to be any spaces  $\B^{\alpha, \psi, L}_{p,q,w}(X)$ and  $\B^{\alpha, \psi, L}_{p,q,w}(X)$ with any partitions of unity $\psi$, respectively. 
\end{thm}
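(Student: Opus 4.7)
My plan is to prove both (a) and (b) by chaining Propositions \ref{prop1-thm1} and \ref{prop2-thm1} through a common intermediate quantity, namely the norm built from the Peetre-type maximal function $\psi^*_{j,\lambda}(\sL)f$. Since every ``partition of unity'' is by definition supported in $[1/2,2]$, the support hypothesis of Proposition \ref{prop1-thm1} is automatically satisfied for any pair $\psi,\varphi$ of such partitions, so nothing extra needs to be checked on that front.

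For part (a), I would fix any $\lambda > nq_w/p$. Two applications of Proposition \ref{prop2-thm1}(a)---once with the partition $\psi$ and once with $\varphi$---yield
\[
\|f\|_{\B^{\alpha,\psi,L}_{p,q,w}(X)}\sim\Big\{\sum_{j\in\ZZ}\ContainB{2^{j\alpha}\NormA{\psi^*_{j,\lambda}(\sL)f}_{p,w}}^q\Big\}^{1/q}
\]
and the analogous equivalence with $\varphi$ in place of $\psi$. The two right-hand sides are equivalent to each other by Proposition \ref{prop1-thm1}(a), so composing the three equivalences gives $\|f\|_{\B^{\alpha,\psi,L}_{p,q,w}(X)}\sim\|f\|_{\B^{\alpha,\varphi,L}_{p,q,w}(X)}$. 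In particular $f\in\SLm$ lies in one Besov space iff it lies in the other, so the spaces coincide as sets with equivalent (quasi-)norms.

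For part (b) the argument is structurally identical; the only adjustment is to pick $\lambda>\max\{n/q,\,nq_w/p\}$ so that Proposition \ref{prop2-thm1}(b) is available, and then to close the loop via Proposition \ref{prop1-thm1}(b). The concluding sentence of the theorem, defining the common spaces $\B^{\alpha,L}_{p,q,w}(X)$ and $\F^{\alpha,L}_{p,q,w}(X)$, is then purely notational.

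I do not expect a substantive obstacle, since Propositions \ref{prop1-thm1} and \ref{prop2-thm1} absorb all the analytic content (the kernel estimates of Lemma \ref{lem1}, the weighted Fefferman--Stein maximal inequality, and the Peetre-type bootstrap of Proposition \ref{prop2-maximal functions}). The only care required is the bookkeeping of admissible ranges of $\lambda$ in each case and verification that the endpoint cases ($p=\infty$ or $q=\infty$ in (a), and $q=\infty$ in (b)) are in fact covered as stated by the two propositions. Should any endpoint turn out to be only implicit, a separate direct check via the Peetre maximal function (bypassing Proposition \ref{prop2-thm1}) would still be available because $|\psi_j(\sL)f|\le\psi^*_{j,\lambda}(\sL)f$ pointwise, giving one direction for free and leaving only the reverse to argue in the uniform-in-$x$ setting.
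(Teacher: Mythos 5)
Your proposal is correct and is essentially the paper's own proof: the paper states Theorem \ref{thm1} precisely as a consequence of Proposition \ref{prop1-thm1} and Proposition \ref{prop2-thm1}, chained through the Peetre-type maximal norms exactly as you describe, with the support hypothesis of Proposition \ref{prop1-thm1} automatic from the definition of a partition of unity. Your choice of $\lambda$ in each case and the observation that the stated ranges of $p,q$ (including the endpoints) are already covered by the two propositions match the intended argument.
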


 It is routine to show that the spaces $\B^{\alpha, L}_{p,q,w}(X)$ and  $\F^{\alpha, L}_{p,q,w}(X)$ are complete, and each is continously embedded into $\mathcal{S}'_\vc$. We omit the details.

\subsection{Continuous characterizations by functions with compact supports }
In this section, we will prove continuous characterizations for new Besov and Triebel--Lizorkin spaces including those using Lusin functions and the Littlewood-Paley functions. 
\begin{thm}
	\label{thm1-continuouscharacter}
	Let $\psi$ be a partition of unity. Then we have:
	\begin{enumerate}[(a)]
		\item For $w\in A_\vc$, $0< p, q\le \vc$, $\alpha\in \mathbb{R}$ and $\lambda>nq_w/p$,
		\begin{equation}\label{eq-B continuous cha}
		\|f\|_{\B_{p,q,w}^{\alpha,L}}\sim \Big(\int_0^\vc \Big[t^{-\alpha}\|\psi(t\sL)f\|_{p,w}\Big]^q\f{dt}{t}\Big)^{1/q}\sim \Big(\int_0^\vc \Big[t^{-\alpha}\|\psi^*_\lambda(t\sL)f\|_{p,w}\Big]^q\f{dt}{t}\Big)^{1/q}
		\end{equation}
		for all $f\in \mathcal{S}'_\vc$.
		\item For $w\in A_\vc$, $0< p< \vc$, $0<q\le \vc$, $\alpha\in \mathbb{R}$, and $\lambda>\max\{n/q, nq_w/p\}$,
		\begin{equation}\label{eq-F continuous cha}
		\|f\|_{\F_{p,q,w}^{\alpha,L}}\sim \Big\|\Big(\int_0^\vc \Big[t^{-\alpha}|\psi(t\sL)f|\Big]^q\f{dt}{t}\Big)\Big\|_{p,w}\sim \Big\|\Big(\int_0^\vc \Big[t^{-\alpha}\psi^*_\lambda(t\sL)f\Big]^q\f{dt}{t}\Big)\Big\|_{p,w}
	   \end{equation}
	   for all $f\in \mathcal{S}'_\vc$.
	\end{enumerate}
\end{thm}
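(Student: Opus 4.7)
My plan is to prove the equivalences via the chain $A \lesi B \le C \lesi A$ (and its Triebel--Lizorkin analogue $A' \lesi B' \le C' \lesi A'$), where $A$, $B$, $C$ denote the three quantities in \eqref{eq-B continuous cha} (respectively \eqref{eq-F continuous cha}) in the order they appear. The middle inequality $B \le C$ (and $B' \le C'$) is immediate from the pointwise bound $|\psi(t\sL)f(x)| \le \psi^*_\lambda(t\sL)f(x)$.

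For the direction $A \lesi B$, the key input is Proposition~\ref{prop3-maximal functions} applied with $\varphi = \psi$ (the partition of unity $\psi$ is nontrivial on $[1/2,2]$, which one easily checks suffices for the proof of that proposition). This yields the pointwise bound $|\psi_j(\sL)f(x)| \lesi \bigl(\int_{2^{-j-2}}^{2^{-j+2}} |\psi^*_\lambda(s\sL)f(x)|^r\, ds/s\bigr)^{1/r}$ for any $r>0$. For part (a), I take the $L^p_w$-norm, apply Minkowski's inequality (valid for $r \le p$), and then use Proposition~\ref{prop2-maximal functions} combined with the weighted maximal inequality \eqref{boundedness maximal function} to replace $\|\psi^*_\lambda(s\sL)f\|_{p,w}$ by $\|\psi(s\sL)f\|_{p,w}$. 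Multiplying by $2^{j\alpha} \sim s^{-\alpha}$, applying H\"older (with $r \le q$) to raise the inner integral exponent from $r$ to $q$, then summing over $j$ with the finite overlap of the intervals $[2^{-j-2},2^{-j+2}]$ completes the argument. For part (b), I raise the pointwise bound to the $q$-th power, apply H\"older pointwise to upgrade $r$ to $q$, sum over $j$ using finite overlap to pass from a sum of integrals to a single integral over $(0,\infty)$, take the $L^p_w$-norm, and finish with Proposition~\ref{prop2-maximal functions} together with the continuous Fefferman--Stein inequality \eqref{FSIn1}.

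For the direction $C \lesi A$, I invoke Proposition~\ref{prop1-maximal function}, which for $s \in [2^{-j-1},2^{-j}]$ gives $\psi^*_\lambda(s\sL)f(x) \lesi \sum_{k=j-2}^{j+3} \psi^*_{k,\lambda}(\sL)f(x)$. Using $s^{-\alpha} \sim 2^{j\alpha}$ on this interval and the bounded $ds/s$-measure of each dyadic piece, the continuous integral reduces to a sum of discrete maximal norms; applying Proposition~\ref{prop2-thm1}, which identifies the $\psi^*_{j,\lambda}$-norm with $A$ under the stated hypothesis on $\lambda$, yields $C \lesi A$. The same pointwise reduction followed by the $L^p_w(\ell^q)$-norm gives $C' \lesi A'$ for (b).

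The main obstacle I anticipate is the simultaneous choice of the parameter $r > 0$, which must satisfy $r \le p$ (for Minkowski), $r \le q$ (for H\"older), $r < p/q_w$ (so that $w \in A_{p/r}$, as required by the weighted maximal and Fefferman--Stein inequalities), and $r > \tilde n/\lambda$ (so that Proposition~\ref{prop2-maximal functions} applies). The hypothesis $\lambda > n q_w/p$ in (a), and $\lambda > \max\{n/q, n q_w/p\}$ in (b), is tailored precisely so that such an $r$ exists.
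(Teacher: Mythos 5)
Your chain $A\lesssim B\le C\lesssim A$ uses exactly the same ingredients as the paper's proof (Propositions \ref{prop1-maximal function}, \ref{prop2-maximal functions}, \ref{prop3-maximal functions}, \ref{prop2-thm1}, and the maximal/Fefferman--Stein inequalities); the paper merely orders them differently, proving $B\lesssim A$ (Step 1), $A\lesssim C$ (Step 2) and $C\lesssim B$ (Step 3), so up to reorganization the arguments coincide. Your application of Proposition \ref{prop3-maximal functions} with $\varphi=\psi$ is legitimate (the paper does the same in its Step 2), and for part (b) your single choice of $r$ with $n/\lambda<r<\min\{p,q,p/q_w\}$ exists under $\lambda>\max\{n/q,nq_w/p\}$; replacing the paper's blockwise Minkowski argument by the continuous Fefferman--Stein inequality \eqref{FSIn1} is a perfectly good, slightly cleaner way to close Step 3.

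The gap is in part (a). You require a single $r$ with $r\le q$ (for the H\"older upgrade from $r$ to $q$ on each dyadic $s$-window) and simultaneously $r>\tilde n/\lambda$ --- which should in fact be $r>n/\lambda$, since it is Lemma \ref{lem-elementary}, with decay exponent $n+\epsilon$, that turns the bound of Proposition \ref{prop2-maximal functions} into $\mathcal{M}_r(\psi(s\sL)f)(x)$ --- and you assert that $\lambda>nq_w/p$ guarantees such an $r$. It does not: part (a) does not assume $\lambda>n/q$, so for small $q$ (e.g.\ $n=1$, $w\equiv1$, $p=1$, $\lambda=1.1$, $q=1/2$) the constraints $r\le q$ and $r>n/\lambda$ are incompatible and your combined step $A\lesssim B$ breaks down. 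The repair is to decouple the two exponents, which the paper's ordering does automatically: the conversion $\|\psi^*_\lambda(s\sL)f\|_{p,w}\lesssim\|\psi(s\sL)f\|_{p,w}$ is carried out at fixed $s$, inside the $L^p_w$-norm and before any $q$-integration, with its own exponent $r_2\in(n/\lambda,\min\{p,p/q_w\})$, whose existence is exactly what $\lambda>nq_w/p$ gives (and which uses \eqref{boundedness maximal function 2} for $\mathcal{M}_{r_2}$ with $w\in A_{p/r_2}$, rather than \eqref{boundedness maximal function}, which concerns $\mathcal{M}_{r,w}$); the exponent $r_1$ entering Proposition \ref{prop3-maximal functions}, Minkowski and H\"older only needs $r_1\le\min\{p,q\}$ and carries no lower bound. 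With this decoupling your proof of (a) goes through under the stated hypothesis.
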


\begin{proof} We give the proof of (b) only, since the proof of (a) can be done in the same manner.\\

We divide the proof of (b) into three steps.
	
	\medskip
	
	\noindent{\it \textbf{Step 1:}} We first prove that 
	\begin{equation}
	\label{first ineq continuous}
	\Big\|\Big(\int_0^\vc \Big[t^{-\alpha}|\psi(t\sL)f|\Big]^q\f{dt}{t}\Big)^{1/q}\Big\|_{p,w}\lesi \|f\|_{\F_{p,q,w}^{\alpha,L}}.
	\end{equation}
	Indeed, for $t\in [2^{-j-1},2^{-j}]$ with $j\in \mathbb{Z}$, from \eqref{eq-psistar vaphistar}, we see that
		\[
	\sup_{t\in [2^{-j-1},2^{-j}]}|\psi(t\sL)f(x)|\lesi \sum_{k=j-2}^{j+3}\psi^*_{k,\lambda}(\sL)f(x).
	\]
	Hence, the estimate \eqref{first ineq continuous} follows from the above inequality and Proposition \ref{prop1-thm1}.
	
	\medskip

	\noindent{\it \textbf{Step 2:}} We next show that
	\begin{equation}
	\label{reverse ineq continuous}
	\|f\|_{\F_{p,q,w}^{\alpha,L}}\lesi \Big\|\Big(\int_0^\vc \Big[t^{-\alpha}\psi^*_\lambda(t\sL)f\Big]^q\f{dt}{t}\Big)^{1/q}\Big\|_{p,w}.
	\end{equation}
	Using Proposition \ref{prop3-maximal functions} we have
	\begin{equation*}%\label{eq-key estimate}
	|\psi_j(\sL)f(x)|\lesi \Big(\int_{2^{-j-2}}^{2^{-j+2}}|\psi^*_{\lambda}(s\sL)f(x)|^q\f{ds}{s}\Big)^{1/q}.
	\end{equation*}
	This implies the desired inequality \eqref{reverse ineq continuous}.
	 
	\medskip

	\noindent{\it \textbf{Step 3:}} To complete the proof of the theorem, we need to prove that
	\begin{equation}\label{psi*-psi inequality}
	\Big\|\Big(\int_0^\vc \Big[t^{-\alpha}\psi^*_\lambda(t\sL)f\Big]^q\f{dt}{t}\Big)^{1/q}\Big\|_{p,w}\lesi \Big\|\Big(\int_0^\vc \Big[t^{-\alpha}|\psi(t\sL)f|\Big]^q\f{dt}{t}\Big)^{1/q}\Big\|_{p,w}.
	\end{equation}
	To see this, taking $r<\min\{p,q, p/q_w\}=\min\{q, p/q_w\}$ so that $\lambda>n/r$ and $w\in A_{p/r}$, then applying \eqref{eq1-proof prop2 maximal} we have, for all $t\in [1,2]$,
	\begin{equation*}
	\begin{aligned}
	|\psi^*_{\lambda}(2^{-j}t\sL)f(x)|^r&\lesi \int_X \f{1}{V(x,2^{-j})}\f{|\psi(2^{-j}t\sL)f(z)|^r}{(1+2^jd(x,z))^{\lambda r}}\dz.
	\end{aligned}
	\end{equation*}
	
		Since $r<q$, we use Minkowski's inequality to get the bound:
	\[
	\Big(\int_{1}^2|\psi_{\lambda}^{*}(2^{-j}t\sL)f(x)|^{q}\f{dt}{t}\Big)^{r/q}\lesi \int_X \f{1}{V(z,2^{-j})}\f{\Big(\int_{1}^2|\psi(2^{-j}t\sL)f(z)|^q\f{dt}{t}\Big)^{r/q}}{(1+2^{j}d(x,z))^{\lambda r}}\dz.
	\]
	By a change of variables,
	\[
	\Big[\int_{2^{-j}}^{2^{-j+1}}(t^{-\alpha}|\psi_{\lambda}^{*}(t\sL)f(x)|)^{q}\f{dt}{t}\Big]^{r/q}\lesi \int_X \f{1}{V(z,2^{-j})}\f{\Big[\int_{2^{-j}}^{2^{-j+1}}(t^{-\alpha}|\psi(t\sL)f(z)|)^q\f{dt}{t}\Big]^{r/q}}{(1+2^{j}d(x,z))^{\lambda r}}\dz.
	\]
	Hence, applying Lemma \ref{lem-elementary} we obtain
	\[
	\Big(\int_{2^{-j}}^{2^{-j+1}}|\psi_{\lambda}^{*}(t\sL)f(x)|^{q}\f{dt}{t}\Big)^{1/q}\lesi  \mathcal{M}_r\Big[\Big(\int_{2^{-j}}^{2^{-j+1}}|\psi(t\sL)f|^q\f{dt}{t}\Big)^{1/q}\Big](x)
	\]
 as long as $\lambda r>n$. Using \eqref{FSIn}, we deduced the required estimate \eqref{psi*-psi inequality}.

The proof of our theorem is thus complete.
\end{proof}
\bigskip

\subsection{Continuous characterizations by functions in $\mathscr{S}_m(\mathbb{R})$.}
For each $m\in \mathbb{N}$ we denote by $\mathscr{S}_m(\mathbb{R})$ the set of all even functions $\varphi\in \mathscr{S}(\mathbb{R})$ such that $\varphi(\xi)=\xi^{2m}\phi(\xi)$ for some $\phi\in \mathscr{S}(\mathbb{R})$, and $\varphi(\xi)\neq 0$ on $(-2,-1/2)\cup(1/2,2)$.

We have the following characterization for  the new  Besov and Triebel-Lizorkin spaces via functions in $\mathscr{S}_m(\mathbb{R})$.
\begin{thm}
	\label{thm2}
	Let $w\in A_\vc$, $\alpha\in \mathbb{R}$, $m>\alpha/2$ and let  $\varphi\in \mathscr{S}_m(\mathbb{R})$. Then the following statements hold:
	\begin{enumerate}[(a)]
		\item For $0< p, q\le \vc$, $\lambda>nq_w/p$, and $f\in \mathcal S'$ there exists $\rho\in \mathscr P$ so that
		\begin{equation}
		\label{eq-Besov equivalent norms square functions}
		\Big(\int_0^\vc\Big[t^{-\alpha}\|\varphi^*_\lambda(t\sL)(f-\rho)\|_{p,w}\Big]^q\f{dt}{t}\Big)^{1/q}\lesi \|f\|_{\B^{\alpha, L}_{p,q,w}}\lesi \Big(\int_0^\vc\Big[t^{-\alpha}\|\varphi(t\sL)f\|_{p,w}\Big]^q\f{dt}{t}\Big)^{1/q}.
		\end{equation}

		\item For $0< p< \vc$, $0<q\le \vc$, $\lambda>\max\{n/q, nq_w/p\}$, and $f\in \mathcal S'$ there exists $\rho\in \mathscr P$ so that
		\begin{equation}
		\label{eq-TL equivalent norms square functions}
		\Big\| \Big(\int_0^\vc\Big[t^{-\alpha}\varphi^*_\lambda(t\sL)(f-\rho)\Big]^q\f{dt}{t}\Big)^{1/q}\Big\|_{p,w}\lesi \|f\|_{\F^{\alpha, L}_{p,q,w}}\lesi \Big\| \Big(\int_0^\vc\Big[t^{-\alpha}\varphi(t\sL)f\Big]^q\f{dt}{t}\Big)^{1/q}\Big\|_{p,w}.
		\end{equation}
	\end{enumerate}
\end{thm}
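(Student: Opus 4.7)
The plan is to prove part (b); part (a) is entirely analogous but simpler, with the weighted $L^p_w$ norm taken before the $\ell^q$/integral norm. Both inequalities rest on Calder\'on reproducing formulas, the kernel estimates of Lemma \ref{lem1}, the Peetre maximal function machinery from Propositions \ref{prop1-maximal function} and \ref{prop2-thm1}, and the weighted Fefferman--Stein and Young-type inequalities \eqref{FSIn1} and \eqref{YFSIn}. Throughout, $\psi$ denotes a fixed partition of unity.

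\smallskip

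\noindent\emph{Upper bound.} Choose an even $\eta\in\mathscr{S}(\mathbb{R})$ supported in $[1/2,2]$ with $\int_0^\infty \eta(s)\varphi(s)\,ds/s = 1$; this is possible since $\varphi$ does not vanish on $(1/2,2)$. The argument of Proposition \ref{prop-Calderon2} adapts to give $\tilde\rho\in\mathscr{P}$ with
$$f - \tilde\rho = \int_0^\infty \eta(s\sqrt{L})\varphi(s\sqrt{L})f\,\frac{ds}{s}\quad\text{in }\mathcal{S}'.$$
Applying $\psi_j(\sqrt{L})$ and using $\psi_j(\sqrt{L})\tilde\rho=0$ (from $\psi$ supported away from $0$), the kernel of $\psi_j(\sqrt{L})\eta(s\sqrt{L})$ obeys, via Lemma \ref{lem1}(c) and the infinite-order vanishing of both $\psi$ and $\eta$ at $0$,
$$|K_{\psi_j(\sqrt{L})\eta(s\sqrt{L})}(x,y)| \lesssim (\min(s2^j,(s2^j)^{-1}))^M\,\frac{1}{V(x\vee y,2^{-j}\vee s)}\Bigl(1+\frac{d(x,y)}{2^{-j}\vee s}\Bigr)^{-N}$$
for every $M,N>0$. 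Combined with Lemma \ref{lem-elementary} this gives $|\psi_j(\sqrt{L})f(x)|\lesssim \int_0^\infty (\min(s2^j,(s2^j)^{-1}))^M\,\mathcal{M}_r[\varphi(s\sqrt{L})f](x)\,ds/s$. Setting $s=2^{-k}u$, $u\in[1,2]$, multiplying by $2^{j\alpha}$, and choosing $M>|\alpha|$ so that the factor $2^{(j-k)\alpha}(\min(2^{j-k},2^{k-j}))^M$ is $\ell^1$-summable in $j-k$, one applies \eqref{FSIn1} and \eqref{YFSIn} to obtain the desired bound.

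\smallskip

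\noindent\emph{Lower bound.} Proposition \ref{prop-Calderon2} (together with discretization, or via Proposition \ref{prop-Calderon1} lifted to $\mathcal{S}'$) produces $\rho\in\mathscr{P}$ with $f-\rho=\sum_{j\in\mathbb{Z}}\psi_j(\sqrt{L})f$ in $\mathcal{S}'$, so that $\varphi(t\sqrt{L})(f-\rho)=\sum_j\varphi(t\sqrt{L})\psi_j(\sqrt{L})f$. The key algebraic identity, obtained from $\varphi(\xi)=\xi^{2m}\phi(\xi)$ by absorbing $(2^{-j}\sqrt{L})^{2m}$ into $\psi_j(\sqrt{L})$, is
$$\varphi(t\sqrt{L})\psi_j(\sqrt{L}) = (t\,2^j)^{2m}\,\phi(t\sqrt{L})\bar\psi_j(\sqrt{L}),\quad \bar\psi(\xi):=\xi^{2m}\psi(\xi),$$
where $\bar\psi$ remains supported in $[1/2,2]$. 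A kernel-composition argument based on Lemma \ref{lem1} then yields
$$|K_{\varphi(t\sqrt{L})\psi_j(\sqrt{L})}(x,y)| \lesssim \min((t2^j)^{2m},(t2^j)^{-M})\,\frac{1}{V(x\vee y,2^{-j}\vee t)}\Bigl(1+\frac{d(x,y)}{2^{-j}\vee t}\Bigr)^{-N}$$
for arbitrary $M$, where the $(t2^j)^{-M}$ branch (for $t\geq 2^{-j}$) uses the infinite-order vanishing of $\bar\psi$ at $0$. Following the Peetre maximal function argument of Lemma \ref{lem2}, this transfers to
$$\varphi^*_\lambda(t\sqrt{L})(f-\rho)(x) \lesssim \sum_{j\in\mathbb{Z}}\min((t2^j)^{2m},(t2^j)^{-M})\,\psi^*_{j,\lambda}(\sqrt{L})f(x).$$
Setting $t=2^{-k}u$ with $u\in[1,2]$, multiplying by $t^{-\alpha}$, and applying \eqref{YFSIn} and Proposition \ref{prop2-thm1} yields the bound by $\|f\|_{\F^{\alpha,L}_{p,q,w}}$, provided the discrete convolution kernel is $\ell^1$-summable: for $j\leq k$ the factor $2^{(k-j)(\alpha-2m)}$ forces $m>\alpha/2$, while for $j\geq k$ the factor $2^{-(j-k)(\alpha+M)}$ forces $M>-\alpha$, which is always achievable.

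\smallskip

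The main obstacle is extracting the scale-separation factor $(t\,2^j)^{2m}$ in $K_{\varphi(t\sqrt{L})\psi_j(\sqrt{L})}$ when $t\leq 2^{-j}$: since $\varphi$ is not compactly supported, Lemma \ref{lem1}(c) cannot be invoked directly with $\varphi$ in the role of the smaller-scale function. The factorization $\varphi(t\sqrt{L})=t^{2m}L^m\phi(t\sqrt{L})$, combined with the observation that $(2^{-j}\sqrt{L})^{2m}\psi_j(\sqrt{L})=\bar\psi_j(\sqrt{L})$ remains supported in $[1/2,2]$, reduces the kernel to a composition of two well-behaved pieces at scales $t$ and $2^{-j}$; the hypothesis $m>\alpha/2$ enters precisely here, ensuring $\ell^1$-summability of the resulting convolution kernel in the low-frequency branch $j\leq k$.
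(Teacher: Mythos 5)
Your left-hand inequality (the bound of the $\varphi^*_\lambda(t\sL)(f-\rho)$ quantity by $\|f\|_{\F^{\alpha,L}_{p,q,w}}$) is essentially sound and is, in substance, the paper's own Step 1 in slightly different clothing: the paper pulls powers of $L$ across the product $\psi(s\sL)\varphi(t\sL)$ and applies the one-operator bound of Lemma \ref{lem1} to $\xi^{2M}\varphi(\xi)$ and $\xi^{-2m}\varphi(\xi)$, whereas you extract the factor $(t2^j)^{2m}$ algebraically and estimate the composed kernel $K_{\varphi(t\sL)\psi_j(\sL)}$; both lead to the same discrete convolution against $2^{-2m|\nu-j|}\psi^*_{j,\lambda}(\sL)f$, and then Young's inequality (or the $q$-power trick) plus Proposition \ref{prop2-thm1} finishes. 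One small inaccuracy: in the branch $t\ge 2^{-j}$, passing from the kernel bound (localized at scale $t$) to $\psi^*_{j,\lambda}(\sL)f(x)$ costs an extra factor $(t2^j)^{\lambda}$, so your transfer inequality should read $(t2^j)^{-(M-\lambda)}$ there; harmless since $M$ is arbitrary, but it should be recorded (the paper's condition $M>m+\lambda/2$ reflects exactly this loss).

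The genuine gap is in your right-hand inequality, which is precisely the part the paper calls its most elaborate step. Your key claim that the kernel bound ``combined with Lemma \ref{lem-elementary}'' gives $|\psi_j(\sL)f(x)|\lesi \int_0^\vc \min(s2^j,(s2^j)^{-1})^M\,\mathcal{M}_r[\varphi(s\sL)f](x)\,\f{ds}{s}$ is only justified for $r\ge 1$: Lemma \ref{lem-elementary} dominates the convolution-type integral by the ordinary maximal function $\mathcal{M}$, and for a general function an average cannot be dominated pointwise by $\mathcal{M}_r$ with $r<1$. Since the theorem is claimed for all $0<p<\vc$, $0<q\le\vc$ and all $w\in A_\vc$, the weighted Fefferman--Stein inequalities \eqref{FSIn}, \eqref{FSIn1}, \eqref{YFSIn} force $r<\min\{q,p/q_w\}$, which is $<1$ on a large part of the range (e.g. $p\le 1$, $q\le 1$, or $q_w\ge p$), and the same issue occurs in the Besov case (a). To obtain the $r<1$ bound one must exploit the reproducing structure of $\varphi(s\sL)f$ itself via a sub-mean-value/bootstrap argument, together with an a priori finiteness argument for the Peetre-type maximal functions of a general $f\in\mathcal S'$ (finiteness is known only for large $\lambda$ depending on $f$, and one has to bootstrap down to the desired $\lambda$ with constants independent of $f$). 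This is exactly what the paper's Step 3 does through Lemma \ref{lem-DKP}, the double maximal function $\varphi^{**}_{A,\lambda}$, and the estimates \eqref{eq-varphi*} and \eqref{eq-Mink-heat}; note you cannot shortcut this by citing Proposition \ref{prop2-maximal functions}, which requires the compact support of the symbol. Your subsequent vector-valued bookkeeping is fine once such a pointwise estimate is in hand, but as written the hardest analytic content of the theorem is asserted rather than proved.
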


\begin{proof}
	In comparison with the proof of Theorem \ref{thm1-continuouscharacter}, the proof of Theorem \ref{thm2} is much more difficult, due to the lack of compact support condition for the functions in $\mathscr{S}_m(\mathbb{R})$. 
	A significant difference  with Theorem~\ref{thm1-continuouscharacter} is that the results are formulated for $f\in \mathcal{S}'$ (as opposed to $f\in\mathcal{S}'_\infty$). The reason being that, as $K_{\varphi(t\sL)}(x,\cdot)$ may not be in $\mathcal{S}_\infty$ for $\varphi\in \mathscr{S}_m(\mathbb{R})$, $\varphi(t\sL)f$ may not be defined when $f\in \mathcal{S}'_\infty$. Although each $f\in \mathcal{S}'_\infty$ has an extension to an element in $\mathcal{S}'$, the extension is not unique; that is, $\varphi(t\sL)f$ will depend on the chosen representative of $f$. Our theorem says that there exists a representative so that the left-hand side inequality in \eqref{eq-Besov equivalent norms square functions} (or in \eqref{eq-TL equivalent norms square functions}) holds.
	
	We will prove \eqref{eq-TL equivalent norms square functions} only, because the proof of \eqref{eq-Besov equivalent norms square functions} can be done in the same manner. We divide the proof into a number of steps.
	
	\medskip

\noindent{\it \textbf{Step 1:}}	Let $\psi$ be a partition of unity. From Proposition \ref{prop-Calderon2}, 
	there exists $\rho\in \mathscr P$ so that 
	\begin{equation*}
	f-\rho=c_\psi\int_0^\vc \psi(s\sL)f\f{ds}{s}	 \quad
	\text{in $\mathcal S'$}.
	\end{equation*} 
We will show that 
	\begin{equation}
	\label{eq- first direc square}
	\Big\| \Big(\int_0^\vc\Big[t^{-\alpha}\varphi^*_\lambda(t\sL)(f-\rho)\Big]^q\f{dt}{t}\Big)^{1/q}\Big\|_{p,w}\lesi \|f\|_{\F^{\alpha, L}_{p,q,w}}.
	\end{equation}
	 
	First note that the Calder\'{o}n reproducing formula above implies the pointwise representation
	\begin{equation}\label{eq- f Svc S}
	\varphi(t\sL)(f-\rho)=c_\psi\int_0^\vc \varphi(t\sL)\psi(s\sL)f\f{ds}{s}	
	\end{equation}
	for all $t>0$.
	
	Let $\lambda>0$, $t\in [2^{-\nu-1},2^{-\nu}]$ for some $\nu \in \mathbb{Z}$ and $M>m+\lambda/2$.  For simplicity of writing we let $c_\psi = 1$. We then have
	\[
	\begin{aligned}
	\varphi(t\sL)(f-\rho)&=\int_0^\vc \psi(s\sL)\varphi(t\sL)f\f{ds}{s}\\
	&=\sum_{j\ge \nu}\int_{2^{-j-1}}^{2^{-j}} \psi(s\sL)\varphi(t\sL)f\f{ds}{s}+\sum_{j< \nu}\int_{2^{-j-1}}^{2^{-j}} \psi(s\sL)\varphi(t\sL)f\f{ds}{s}\\
	&=\sum_{j\ge \nu}\int_{2^{-j-1}}^{2^{-j}}\Big(
	\f{s}{t}\Big)^{2M} (s^2L)^{-M}\psi(s\sL)(t^2L)^{M}\varphi(t\sL)f\f{ds}{s}\\
	& \ \ \ +\sum_{j< \nu}\int_{2^{-j-1}}^{2^{-j}} \Big(
	\f{t}{s}\Big)^{2m} (s^2L)^{m}\psi(s\sL)(t^2L)^{-m}\varphi(t\sL)f\f{ds}{s}.
	\end{aligned}
	\]
	Setting $\psi_{s,M}(x)=x^{-2M}\psi(x)$ and $\tilde\psi_{s,m}(x)=x^{2m}\psi(x)$, we rewrite the above as
	\[
	\begin{aligned}
	\varphi(t\sL)(f-\rho)&=\sum_{j\ge \nu}\int_{2^{-j-1}}^{2^{-j}}\Big(
	\f{s}{t}\Big)^{2M} (t^2L)^{M}\varphi(t\sL)\psi_{s,M}(\sL)f\f{ds}{s}\\
	& \ \ \ \ +\sum_{j< \nu}\int_{2^{-j-1}}^{2^{-j}} \Big(
	\f{t}{s}\Big)^{2m} (t^2L)^{-m}\varphi(t\sL)\tilde\psi_{s,m}(\sL)f\f{ds}{s}.
	\end{aligned}
	\]

	Since $\xi^{2M}\varphi(\xi)$ is an even function in $\mathscr{S}(\mathbb{R})$, we use Lemma \ref{lem1} to deduce that 
	\[
	\begin{aligned}
	|(t^2L)^{M}\varphi(t\sL)\psi_{s,N}(\sL)f(y)|\lesi \int_X\f{1}{V(y,t)}\Big(1+\f{d(y,z)}{t}\Big)^{-\lambda-N}|\psi_{s,N}(\sL)f(z)|\dz,
	\end{aligned}
	\]
	where $N>n$.
	
	It follows that
	\[
	\begin{aligned}
	\f{|(t^2L)^{M}\varphi(t\sL)\psi_{s,N}(\sL)f(y)|}{(1+d(x,y)/t)^\lambda}\lesi \int_X\f{1}{V(y,t)}\Big(1+\f{d(y,z)}{t}\Big)^{-N}\f{|\psi_{s,N}(\sL)f(z)|}{(1+d(x,z)/t)^\lambda}\dz
	\end{aligned}
	\]
	for all $x,y\in X$.
	
	Hence, for $j\ge \nu$, $t\in [2^{-\nu-1},2^{-\nu}]$ and $s\in [2^{-j-1},2^{-j}]$ we have
	\[
	\begin{aligned}
	\f{|(t^2L)^{M}\varphi(t\sL)\psi_{s,N}(\sL)f(y)|}{(1+d(x,y)/t)^\lambda}&\lesi 2^{\lambda(j-\nu)}\psi^*_{s,N,\lambda}(\sL)f(x)\int_X\f{1}{V(y,t)}\Big(1+\f{d(y,z)}{t}\Big)^{-N}\dy\\
	&\lesi 2^{\lambda(j-\nu)}\psi^*_{s,N,\lambda}(\sL)f(x).
	\end{aligned}
	\]
	Since $\psi \in \mathscr{S}_m(\mathbb{R})$, $x^{-2m}\psi(x)\in \mathscr{S}(\mathbb{R})$. Using Lemma \ref{lem1} and an  argument similar to the above estimate for $\psi_{s,M}$, we obtain, for $j< \nu$, $t\in [2^{-\nu-1},2^{-\nu}]$ and $s\in [2^{-j-1},2^{-j}]$,
	\[
	\f{|(t^2L)^{-m}\varphi(t\sL)\tilde\psi_{s,m}(\sL)f(y)|}{(1+d(x,y)/t)^\lambda}\lesi \tilde{\psi}^*_{s,m,\lambda}(\sL)f(x).
	\]
	
	Combining the above two estimates we deduce that 
	\[
	\begin{aligned}
	|\varphi^*_\lambda(t\sL)(f-\rho)|&\le\sum_{j\ge \nu}2^{-(j-\nu)(2M-\lambda)}\sup_{s\in (2^{-j-1},2^{-j}]}\psi^*_{s,N,\lambda}(\sL)f\\
	& \ \ \ \ + \sum_{j< \nu}2^{-2m(\nu-j)}\sup_{s\in (2^{-j-1},2^{-j}]}\tilde{\psi}^*_{s,m,\lambda}(\sL)f.
	\end{aligned}
	\]
	This, along with Proposition \ref{prop1-maximal function}, implies that
	\begin{equation}\label{eq1-heat kernel charac}
	\begin{aligned}
	|\varphi^*_\lambda(t\sL)(f-\rho)|&\lesi \sum_{j\ge \nu -1}2^{-(2M-\lambda)(j-\nu)}\psi^*_{j,\lambda}(\sL)f +\sum_{j< \nu +3}2^{-2m(\nu-j)}\psi^*_{j,\lambda}(\sL)f\\
	&\lesi \sum_{j\in \mathbb{Z}}2^{-2m|\nu-j|} \psi^*_{j,\lambda}(\sL)f
	\end{aligned}
	\end{equation}
	for all $t\in [2^{-\nu-1},2^{-\nu}]$ and $M>m+\lambda/2$.
	
	Therefore, if $q\leq 1$, we have
	$$
	\begin{aligned}
	\int_{2^{-\nu-1}}^{2^{-\nu}} (t^{-\alpha}|\varphi^*_\lambda(t\sL)(f-\rho)|)^q\f{dt}{t}
	&\lesi \sum_{j\in \mathbb{Z}}2^{-q(2m-\alpha)|\nu-j|} (2^{j\alpha}\psi^*_{j,\lambda}(\sL)f)^q.
	\end{aligned}
	$$
	It follows that 
	$$
	\begin{aligned}
	\Big(\int_{0}^{\vc} (t^{-\alpha}|\varphi^*_\lambda(t\sL)(f-\rho)|)^q\f{dt}{t}\Big)^{1/q}
	&\lesi \Big(\sum_{\nu\in \mathbb{Z}}\sum_{j\in \mathbb{Z}}2^{-q(2m-\alpha)|\nu-j|} (2^{j\alpha}\psi^*_{j,\lambda}(\sL)f)^q\Big)^{1/q}\\
	&\lesi \Big(\sum_{j\in \mathbb{Z}}(2^{j\alpha}\psi^*_{j,\lambda}(\sL)f)^q\Big)^{1/q},
	\end{aligned}
	$$
	which, along with Proposition \ref{prop2-thm1}, yields \eqref{eq- first direc square}.
	
	On the other hand, if $q>1$, then we use Young's inequality to get the bound:
	$$
	\begin{aligned}
	\Big(\int_{0}^{\vc} (t^{-\alpha}|\varphi^*_\lambda(t\sL)(f-\rho)|)^q\f{dt}{t}\Big)^{1/q}
	&\lesi \Big(\sum_{\nu\in \mathbb{Z}}\Big[\sum_{j\in \mathbb{Z}}2^{-(2m-\alpha)|\nu-j|} 2^{j\alpha}\psi^*_{j,\lambda}(\sL)f\Big]^q\Big)^{1/q}\\
	&\lesi \Big(\sum_{j\in \mathbb{Z}}(2^{j\alpha}\psi^*_{j,\lambda}(\sL)f)^q\Big)^{1/q}.
	\end{aligned}
	$$
	Hence, \eqref{eq- first direc square} follows from this and  Proposition \ref{prop2-thm1}.
	
	\bigskip

	\noindent{\it \textbf{Step 2:}} We next show that
	\[
	\|f\|_{\F^{\alpha, L}_{p,q,w}}\lesi \Big\| \Big(\int_0^\vc\Big[t^{-\alpha}\varphi^*_\lambda(t\sL)f\Big]^q\f{dt}{t}\Big)^{1/q}\Big\|_{p,w}.
	\]
	Let $\psi$ be a partition of unity. By Proposition \ref{prop3-maximal functions}, 
	\begin{equation*} 
		|\psi_j(\sL)f(x)|^q\lesi \int_{2^{-j-2}}^{2^{-j+2}}|\varphi^*_\lambda(t\sL)f(x)|^q\f{ds}{s}.
	\end{equation*}
	Hence the desired inequality follows.
	 
	\bigskip
	
	\noindent{\it \textbf{Step 3:}} This is the most elaborate step in the proof, where we will prove that
	\[
	\Big\| \Big(\int_0^\vc\Big[t^{-\alpha}\varphi^*_\lambda(t\sL)f\Big]^q\f{dt}{t}\Big)^{1/q}\Big\|_{p,w}\lesi \Big\| \Big(\int_0^\vc\Big[t^{-\alpha}\varphi(t\sL)f\Big]^q\f{dt}{t}\Big)^{1/q}\Big\|_{p,w}.
	\]
	To this end, we apply Lemma \ref{lem-DKP} to find  $a,b,c>0$ and even functions $\phi,\eta\in \mathscr{S}(\mathbb{R})$ with ${\rm supp}\,\phi \subset [-a,a]$, ${\rm supp}\,\eta \subset [-c,-b]\cup[b,c]$, and  so that for every $\ell\in \mathbb{Z}$, $t\in [1,2]$ and $f\in \mathcal S'$ we have
	$$
	\begin{aligned}
	f=&\phi(2^{-\ell}t\sqrt{L})f+\sum_{k\geq 1}\varphi(2^{-k-\ell}t\sL)\eta(2^{-k-\ell}t\sL)f \quad \text{in $\mathcal S'$}.
	\end{aligned}
	$$
	This implies that 
	$$
	\begin{aligned}
	\varphi(2^{-\ell}t\sL)f = \ &\phi(2^{-\ell}t\sqrt{L})\varphi(2^{-\ell}t\sL)f+\sum_{k\geq 1}\varphi(2^{-k-\ell}t\sL)\eta(2^{-k-\ell}t\sL)\varphi(2^{-\ell}t\sL)f
	\end{aligned}
	$$
pointwise.	

Let $\lambda,A>0$. Put $M=\lambda+A$. Using the above pointwise representation together with Lemma~\ref{lem1}, we get
	\[
	\begin{aligned}
	|\varphi(2^{-\ell}t\sL)f(y)|\lesi&\int_X\f{1}{V(z,2^{-\ell})}(1+2^\ell d(y,z))^{-\lambda}|\varphi(2^{-\ell}t\sL)f(z)|\dz\\
	&+\sum_{k\geq 1}2^{-kM}\int_X\f{1}{V(z,2^{-\ell})}(1+2^\ell d(y,z))^{-\lambda}|\varphi(2^{-\ell-k}t\sL)f(z)|\dz\\
	&= \sum_{k\geq 0}2^{-kM}\int_X\f{1}{V(z,2^{-\ell})}(1+2^\ell d(y,z))^{-\lambda}|\varphi(2^{-\ell-k}t\sL)f(z)|\dz\\
	&= 2^{(\ell-j)M}\sum_{k\geq \ell}2^{(j-k)M}\int_X\f{1}{V(z,2^{-\ell})}(1+2^\ell d(y,z))^{-\lambda}|\varphi(2^{-k}t\sL)f(z)|\dz\\
	&\leq 2^{(\ell-j)M}\sum_{k\geq \ell}2^{(j-k)M}\int_X\f{1}{V(z,2^{-k})}\frac{|\varphi(2^{-k}t\sL)f(z)|}
	{(1+2^\ell d(y,z))^{\lambda}}\dz,
	\end{aligned}
	\]
where $j,\ell \in \mathbb{Z}$ and $\ell \geq j$.	
It follows that, for any $0<r\le 1$,

	\begin{equation}\label{eq2-proof thm2}
	\begin{aligned}
	2^{(j-\ell)M}\f{|\varphi(2^{-\ell}t\sL)f(y)|}{(1+2^j d(x,y))^\lambda}\lesi&
	\sum_{k\geq \ell}2^{(j-k)M}\int_X\f{1}{V(z,2^{-k})}\frac{|\varphi(2^{-k}t\sL)f(z)|}{(1+2^\ell d(y,z))^{\lambda}(1+2^jd(x,y))^\lambda}\dz\\
	\le& \sum_{k\geq \ell}2^{(j-k)M}\int_X\f{1}{V(z,2^{-k})}\frac{|\varphi(2^{-k}t\sL)f(z)|}{(1+2^j d(y,z))^{\lambda}(1+2^jd(x,y))^\lambda}\dz\\
	\le& \sum_{k\geq \ell}2^{(j-k)M}\int_X\f{1}{V(z,2^{-k})}\frac{|\varphi(2^{-k}t\sL)f(z)|}{(1+2^j d(x,z))^{\lambda}}\dz\\
	\le& \varphi_{A,\lambda}^{**}(2^{-j}t\sL)f(x)^{1-r}
\sum_{k\geq j}2^{(j-k)Mr}\int_X\f{1}{V(z,2^{-k})}\frac{|\varphi(2^{-k}t\sL)f(z)|^r}{(1+2^j d(x,z))^{\lambda r}}\dz,
	\end{aligned}
	\end{equation}
    where, for each $j\in \mathbb{Z}$ and $t\in [1,2] $, we define the   Peetre-type maximal function by  
	$$
	\varphi_{A,\lambda}^{**}(2^{-j}t\sL)f(x)=\sup_{k\ge j}\sup_{y\in X}2^{(j-k)M}\f{|\varphi(2^{-k}t\sL)f(y)|}{(1+2^{j}d(x,y))^\lambda}.
	$$

By \eqref{eq- bound of varphi and f}, $\varphi_{A,\lambda}^{**}(2^{-j}t\sL)f(x) < \infty$ , for all sufficiently large $\lambda$ (depending on $f$), all $x\in X$ and $A>0$. Hence, for any such $\lambda$, by taking the supremum of the LHS of \eqref{eq2-proof thm2} over $\ell\ge j$ and $y\in X$, and using the obvious inequality $(1+2^jd(x,z))\ge 2^{j-k}(1+2^kd(x,z))$ on the RHS, we obtain 
\begin{equation}\label{eq- vrphi **}
\begin{aligned}
\varphi_{A,\lambda}^{**}(2^{-j}t\sL)f(x)^r &\lesi
\sum_{k\geq j}2^{(j-k)(M-\lambda)r}\int_X\f{1}{V(z,2^{-k})}\frac{|\varphi(2^{-k}t\sL)f(z)|^r}{(1+2^k d(x,z))^{\lambda r}}\dz\\
&=\sum_{k\geq j}2^{(j-k)Ar}\int_X\f{1}{V(z,2^{-k})}\frac{|\varphi(2^{-k}t\sL)f(z)|^r}{(1+2^k d(x,z))^{\lambda r}}\dz \quad 
\text{(as $M-\lambda=A$)}.
\end{aligned}
\end{equation}
Since clearly 
$|\varphi(\cdots)f| \le \varphi_{A,\lambda}^{**}(\cdots)f$, the above implies that
\[
|\varphi(2^{-\ell}t\sL)f(y)|^r \lesi \sum_{k\geq \ell}2^{(\ell-k)Ar}\int_X\f{1}{V(z,2^{-k})}\frac{|\varphi(2^{-k}t\sL)f(z)|^r}{(1+2^k d(x,z))^{\lambda r}}\dz 
\]
for all sufficiently large $\lambda$,  $A>0$, $y\in X$ and $\ell \in \mathbb{Z}$. But the right-hand side of the above inequality increases as $\lambda$ decreases, and hence this inequality holds for all $\lambda>0$ and $A>0$, with the inequality constant also depending on $f$. It follows that, for $\ell \ge j$,

\begin{equation*} 
	\begin{aligned}
	2^{(j-\ell)Mr}\f{|\varphi(2^{-\ell}t\sL)f(y)|^r}{(1+2^j d(x,y))^{\lambda r}} \lesi&
	\sum_{k\geq \ell}2^{(j-k)Ar}\int_X\f{2^{(j-\ell)\lambda r}}{V(z,2^{-k})}\frac{|\varphi(2^{-k}t\sL)f(z)|^r}{(1+2^k d(y,z))^{\lambda r}(1+2^jd(x,y))^{\lambda r}}\dz \\
	=& \sum_{k\geq \ell}2^{(j-k)Ar}\int_X\f{2^{(k-\ell)\lambda r}}{V(z,2^{-k})}\frac{|\varphi(2^{-k}t\sL)f(z)|^r}{(1+2^k d(y,z))^{\lambda r}(2^{k-j}+2^kd(x,y))^{\lambda r}}\dz\\
	\leq& \sum_{k\geq j}2^{(j-k)(A-\lambda)r}\int_X\f{1}{V(z,2^{-k})}\frac{|\varphi(2^{-k}t\sL)f(z)|^r}{(1+2^k d(x,z))^{\lambda r}}\dz.
	\end{aligned}
	\end{equation*}
	(Recall that $M=\lambda+A.$) Taking the supremum over $\ell \ge j$ and $y\in X$ gives
	\[
	\varphi_{A,\lambda}^{**}(2^{-j}t\sL)f(x)^r \lesi
	\sum_{k\geq j}2^{(j-k)(A-\lambda)r}\int_X\f{1}{V(z,2^{-k})}\frac{|\varphi(2^{-k}t\sL)f(z)|^r}{(1+2^k d(x,z))^{\lambda r}}\dz.
	\]
Therefore, if the right-hand side of the above is finite, then	 $\varphi_{A,\lambda}^{**}(2^{-j}t\sL)f(x) < \infty$.  Repeating the proof of \eqref{eq- vrphi **}, we obtain
\begin{equation*}
\begin{aligned}
\varphi_{A,\lambda}^{**}(2^{-j}t\sL)f(x)^r &\lesi \sum_{k\geq j}2^{(j-k)Ar}\int_X\f{1}{V(z,2^{-k})}\frac{|\varphi(2^{-k}t\sL)f(z)|^r}{(1+2^k d(x,z))^{\lambda r}}\dz\\
&\lesi
\sum_{k\geq j}2^{(j-k)(A-\lambda)r}\int_X\f{1}{V(z,2^{-k})}\frac{|\varphi(2^{-k}t\sL)f(z)|^r}{(1+2^k d(x,z))^{\lambda r}}\dz
\end{aligned}
\end{equation*}
(with inequality constant independent of $f$). Since clearly $\varphi^{*}_\lambda \lesi \varphi_{A,\lambda}^{**}$,  it follows that
\begin{equation}\label{eq-varphi*}
\varphi^{*}_\lambda(2^{-j}t\sL)f(x)^r \lesi \sum_{k\geq j}2^{(j-k)(A-\lambda)r}\int_X\f{1}{V(z,2^{-k})}\frac{|\varphi(2^{-k}t\sL)f(z)|^r}{(1+2^k d(x,z))^{\lambda r}}\dz,
\end{equation}
provided the sum in the right-hand side is finite.  Since \eqref{eq-varphi*} is obviously true when this sum is infinite, we conclude that it holds for all $\lambda>0, A>0, j\in \mathbb{Z}, t\in [1,2]$, and $0<r\le 1$.

Assume now that $r>1$. Let $\lambda>0,A>0$ and $N>n$. Using the Calder\'{on} reproducing formula and Lemma~\ref{lem1} as at the beginning of this step, we see that, for any $j\in \mathbb{Z}$, $t\in [1,2]$ and $y\in X$,
\[
	\begin{aligned}
	|\varphi(2^{-j}t\sL)f(y)| \lesi&
	 \sum_{k\geq j}2^{(j-k)A}\int_X\f{(1+2^jd(y,z))^{-N}}{V(z,2^{-j})}\frac{|\varphi(2^{-k}t\sL)f(z)|}
	{(1+2^j d(y,z))^{\lambda}}\dz\\
	\lesi& \left(\sum_{k\geq j}2^{(j-k)Ar}\int_X\f{1}{V(z,2^{-j})}\frac{|\varphi(2^{-k}t\sL)f(z)|^r}{(1+2^j d(y,z))^{\lambda r}}\dz\right)^{1/r}\\
	\le& \left(\sum_{k\geq j}2^{(j-k)Ar}\int_X\f{1}{V(z,2^{-k})}\frac{|\varphi(2^{-k}t\sL)f(z)|^r}{(1+2^j d(y,z))^{\lambda r}}\dz\right)^{1/r},
	\end{aligned}
	\]
	where we have also used Lemma~\ref{lem-elementary} and 
	 H\"older's inequality in the second last inequality in the above. It follows that 
	\begin{equation}
	\label{eq-59}
	\begin{aligned}
	\frac{|\varphi(2^{-j}t\sL)f(y)|^r}{(1+2^j d(x,y)/t)^{\lambda r}} %\lesi& \sum_{k\geq j}2^{(j-k)Ar}\int_X\f{1}{V(z,2^{-k})}\frac{|\varphi(2^{-k}t\sL)f(z)|^r}{(1+2^j d(y,z))^{\lambda r}}\dz\\
	\lesi& \sum_{k\geq j}2^{(j-k)Ar}\int_X\f{1}{V(z,2^{-k})}\frac{|\varphi(2^{-k}t\sL)f(z)|^r}{(1+2^j d(y,z))^{\lambda r}(1+2^j d(x,y))^{\lambda r}}\dz\\
	\lesi& \sum_{k\geq j}2^{(j-k)Ar}\int_X\f{1}{V(z,2^{-k})}\frac{|\varphi(2^{-k}t\sL)f(z)|^r}{(1+2^j d(x,z))^{\lambda r}}\dz\\
	\lesi& \sum_{k\geq j}2^{(j-k)(A-\lambda)r}\int_X\f{1}{V(z,2^{-k})}\frac{|\varphi(2^{-k}t\sL)f(z)|^r}{(1+2^k d(x,z))^{\lambda r}}\dz.
	\end{aligned}
	\end{equation}
	Taking the supremum of the LHS over all $y\in X$, we deduce that \eqref{eq-varphi*} also holds for $r>1$. Hence \eqref{eq-varphi*} holds for all $r>0$. Consequently,
	\begin{equation*}
	\begin{aligned}
	\left[(2^{-j}t)^{-\alpha}\varphi^{*}_\lambda(2^{-j}t\sL)f(x)\right]^{r}
	\lesi\sum_{k\ge j}2^{(j-k)(A-\lambda+\alpha) r}\int_X\f{1}{V(z,2^{-k})}\f{\left[(2^{-k}t)^{-\alpha}|\varphi(2^{-k}t\sL)f(z)|\right]^r}{(1+2^k d(x,z))^{\lambda r}}\dz.
	\end{aligned}
	\end{equation*}
	
	We now choose $r>0$ such that $\max\{n/p,n/q, nq_w/p\} =
	\max\{n/q, nq_w/p\}< n/r < \lambda$ and then choose $A>0$ such that $A-\lambda+\alpha>0$ . Minkowski's inequality and the above inequality then imply that
	\begin{equation*}
	\begin{aligned}
	\Big(\int_{1}^2&\left[(2^{-j}t)^{-\alpha}\varphi^{*}_\lambda(2^{-j}t\sL)f(x)\right]^{q}\f{dt}{t}\Big)^{r/q}\\
	&\lesi \sum_{k\ge j}\int_X \f{2^{(j-k)(A-\lambda+\alpha)r}}{V(z,2^{-k})}\f{\Big(\displaystyle \int_{1}^2\left[(2^{-k}t)^{-\alpha}|\varphi(2^{-k}t\sL)f(z)|\right]^q\f{dt}{t}\Big)^{r/q}}{(1+2^{k}d(x,z))^{\lambda r}}\dz.
	\end{aligned}
	\end{equation*}
	By a change of variables, we get that
	\begin{equation}
	\label{eq-Mink-heat}
	\begin{aligned}
	\Big[\int_{2^{-j}}^{2^{-j+1}}&(t^{-\alpha}|\varphi^{*}_\lambda(t\sL)f(x)|)^{q}\f{dt}{t}\Big]^{r/q}\\
	&\lesi \sum_{k\ge j}\int_X \f{2^{(j-k)(A-\lambda+\alpha)r}}{V(z,2^{-k})}\f{\Big[\displaystyle\int_{2^{-k}}^{2^{-k+1}}(t^{-\alpha}|\varphi(t\sL))f(z)|)^q\f{dt}{t}\Big]^{r/q}}{(1+2^{k}d(x,z))^{\lambda r}}\dz.
	\end{aligned}
	\end{equation}
	Setting $F_k = {\Big[\displaystyle\int_{2^{-k}}^{2^{-k+1}}(t^{-\alpha}|\varphi(t\sL))f(z)|)^q\f{dt}{t}\Big]^{1/q}}$ and using Lemma~\ref{lem-elementary}, we deduce that
	\[
	\begin{aligned}
	\Big[\int_{2^{-j}}^{2^{-j+1}}(t^{-\alpha}|\varphi^{*}_\lambda(t\sL)f(x)|)^{q}\f{dt}{t}\Big]^{r/q}
	&\lesi \sum_{k\ge j} 2^{(j-k)(A-\lambda+\alpha)r} \mathcal{M}_r(F_k)(x)^r\\
	&\lesi \Big[\sum_{k\ge j} 2^{(j-k)(A-\lambda+\alpha)r} \mathcal{M}_r(F_k)(x)^q\Big]^{r/q},
	\end{aligned}
	\]
	where in the last inequality we use  H\"older's inequality and the fact that $A-\lambda+\alpha>0$.   Hence, by applying \eqref{YFSIn}, we obtain the desired estimate
	$$
	\Big\|\Big(\int_0^\vc \Big[t^{-\alpha}|\varphi^{*}_\lambda(t\sL)f|\Big]^q\f{dt}{t}\Big)\Big\|_{p,w}\lesi \Big\|\Big(\int_0^\vc \Big[t^{-\alpha}|\varphi(t\sL)f|\Big]^q\f{dt}{t}\Big)\Big\|_{p,w}.
	$$
	Combining the results in Step 1, Step 2 and Step 3, we complete the proof of the Theorem.  
\end{proof}

\begin{rem}
	\label{rem theorem 4.2}
	\noindent {\rm (a)} It is interesting to note that when $L=-\Delta$ the Laplacian on $\mathbb{R}^n$, $\mathscr P$ is the set of all polynomials. In this situation, Theorem \ref{thm2} are in line with findings in \cite{Qui3}.
	
	\noindent {\rm (b)} The presence of the polynomial $\rho$ in \eqref{eq-Besov equivalent norms square functions} and \eqref{eq-TL equivalent norms square functions} can be omitted if $f\in L^2$. Indeed, in this case instead of \eqref{eq- f Svc S} we have, by the spectral theory,
	\[
	\varphi(t\sL)f=c_\psi\int_0^\vc \varphi(t\sL)\psi(s\sL)f\f{ds}{s} \quad \text{(in $L^2$ and hence in $\mathcal{S}'$)}.
	\]
	Arguing similarly to the proof of the first inequalities in \eqref{eq-Besov equivalent norms square functions} and \eqref{eq-TL equivalent norms square functions} we get the desired estimates.
\end{rem}
\bigskip

We denote $\Psi_{m,t}(L)=(t^2L)^me^{-t^2L}$ for $t>0$ and $m\in \mathbb{N}$. For $\lambda>0$ we define
\begin{equation}
\label{eq-PetreeFunction-heatkernel}
\Psi_{m,t,\lambda}^*(L)f(x)=\sup_{y\in X}\f{|\Psi_{m,t}(L)f(y)|}{(1+d(x,y)/t)^\lambda}
\end{equation}
for $f\in \mathcal S'$.

Applying Theorem \ref{thm2} and Remark \ref{rem theorem 4.2} for $\varphi(\xi)=\xi^{2m}e^{-\xi^2}$, we have the following heat kernel characterizations for the new Besov and Triebel-Lizorkin spaces:
\begin{cor}
	\label{cor1}
	Let $w\in A_\vc$, $\alpha\in \mathbb{R}$ and $m>\alpha/2$. Then  we the following norm equivalences hold:
	\begin{enumerate}[(a)]
		\item For $0< p, q\le \vc$, $\lambda>nq_w/p$, and $f\in \mathcal S'$ there exists $\rho\in \mathscr P$ so that 
		\begin{equation}
		\label{eq-Besov equivalent norms square functions-heatkernels}
		\Big(\int_0^\vc\Big[t^{-\alpha}\|\Psi_{m,t,\lambda}^*(L)(f-\rho)\|_{p,w}\Big]^q\f{dt}{t}\Big)^{1/q}\lesi \|f\|_{\B^{\alpha, L}_{p,q,w}}\lesi \Big(\int_0^\vc\Big[t^{-\alpha}\|\Psi_{m,t}(L)f\|_{p,w}\Big]^q\f{dt}{t}\Big)^{1/q}.
		\end{equation}
		
		\item For $0< p<0$, $0< q\le \vc$, $\lambda>\max\{n/q, nq_w/p\}$, and $f\in \mathcal S'$ there exists $\rho\in \mathscr P$ so that 
		\begin{equation}
		\label{eq-TL equivalent norms square functions-heatkernel}
		\Big\| \Big(\int_0^\vc\Big[t^{-\alpha}\Psi_{m,t,\lambda}^*(L)(f-\rho)\Big]^q\f{dt}{t}\Big)^{1/q}\Big\|_{p,w}\lesi \|f\|_{\F^{\alpha, L}_{p,q,w}}\lesi \Big\| \Big(\int_0^\vc\Big[t^{-\alpha}\Psi_{m,t}(L)f\Big]^q\f{dt}{t}\Big)^{1/q}\Big\|_{p,w}.
		\end{equation}
		Moreover, if $f\in L^2$, $\rho$ can be removed in \eqref{eq-Besov equivalent norms square functions-heatkernels} and \eqref{eq-TL equivalent norms square functions-heatkernel}.
	\end{enumerate}
\end{cor}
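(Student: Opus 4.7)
The plan is to deduce this corollary directly from Theorem~\ref{thm2} by checking that the specific choice $\varphi(\xi)=\xi^{2m}e^{-\xi^2}$ satisfies the hypotheses required to be a member of the class $\mathscr{S}_m(\mathbb{R})$. First I would verify these membership conditions: (i) $\varphi$ is even, since both $\xi \mapsto \xi^{2m}$ and $\xi \mapsto e^{-\xi^2}$ are even; (ii) $\varphi \in \mathscr{S}(\mathbb{R})$, since it is the product of a polynomial and the Gaussian, all of whose derivatives decay faster than any polynomial; (iii) $\varphi(\xi) = \xi^{2m}\phi(\xi)$ with $\phi(\xi) = e^{-\xi^2} \in \mathscr{S}(\mathbb{R})$, which is the required factorization through $\xi^{2m}$; and (iv) $\varphi(\xi) \neq 0$ on $(-2,-1/2) \cup (1/2,2)$, since on this set both $\xi^{2m}$ and $e^{-\xi^2}$ are strictly positive. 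Hence $\varphi \in \mathscr{S}_m(\mathbb{R})$.

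Next I would identify the notation. By definition $\Psi_{m,t}(L) = (t^2 L)^m e^{-t^2 L} = \varphi(t\sqrt{L})$ via the spectral functional calculus, and consequently the Peetre-type maximal function $\Psi^*_{m,t,\lambda}(L)f(x)$ given in \eqref{eq-PetreeFunction-heatkernel} coincides exactly with $\varphi^*_\lambda(t\sqrt{L})f(x)$ as defined in \eqref{eq2-PetreeFunction}. With these identifications in hand, inequalities \eqref{eq-Besov equivalent norms square functions-heatkernels} and \eqref{eq-TL equivalent norms square functions-heatkernel} are simply restatements of \eqref{eq-Besov equivalent norms square functions} and \eqref{eq-TL equivalent norms square functions} respectively, under the respective index conditions on $p,q,\lambda$ imposed in parts (a) and (b) of Theorem~\ref{thm2}, and these hypotheses match those of the corollary exactly.

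For the final assertion that $\rho$ can be removed when $f \in L^2$, I would invoke Remark~\ref{rem theorem 4.2}(b), which states precisely this fact for any $\varphi \in \mathscr{S}_m(\mathbb{R})$: the argument there relies on the fact that for $f \in L^2$, the Calder\'on reproducing formula $f = c_\psi \int_0^\infty \psi(s\sqrt{L})f \, \tfrac{ds}{s}$ holds in $L^2$ (hence in $\mathcal{S}'$) by spectral theory, with no polynomial correction term needed; applying $\varphi(t\sqrt{L})$ to both sides yields the pointwise representation without $\rho$, and then the bootstrap arguments from Steps~1--3 of the proof of Theorem~\ref{thm2} carry through verbatim.

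I do not anticipate a substantive obstacle: the entire corollary is a specialization of Theorem~\ref{thm2}, and the only nontrivial check is the membership $\varphi \in \mathscr{S}_m(\mathbb{R})$, which is immediate. If anything requires a moment of care, it is noting that $m > \alpha/2$ is exactly the hypothesis of Theorem~\ref{thm2}, and that the index ranges for $\lambda$ and $(p,q,w)$ are inherited without change.
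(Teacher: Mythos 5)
Your proposal is correct and matches the paper's own argument: the corollary is stated there precisely as an application of Theorem~\ref{thm2} and Remark~\ref{rem theorem 4.2}(b) with $\varphi(\xi)=\xi^{2m}e^{-\xi^2}$, and your verification that this $\varphi$ belongs to $\mathscr{S}_m(\mathbb{R})$ together with the identification $\Psi_{m,t}(L)=\varphi(t\sqrt{L})$ is exactly the (routine) content the paper leaves implicit.
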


\begin{rem}
	Note that in \cite{KP, G.etal}, the authors proved \eqref{eq-Besov equivalent norms square functions-heatkernels} for $s\in \mathbb{R}, 1\le p\le \vc, 0<q\le \vc$, and \eqref{eq-TL equivalent norms square functions-heatkernel} for $s\in \mathbb{R}, 1< p< \vc, 1< q\le \vc$ for inhomogeneous and homogeneous Besov and Triebel-Lizorkin spaces under additional conditions of H\"older continuity and Markov property of the heat kernel $p_t(x,y)$. Moreover, their results are formulated for distributions in $\mathcal{S}'_\infty$. 
	Hence, Corollary \ref{cor1} can be viewed as a significant extension of those results in \cite{KP, G.etal}. 
\end{rem}

We end this subsection by a remark on an interesting extension of Theorem~\ref{thm2}.

\begin{rem}
The non-degeneracy condition of the function $\varphi$ in the definition of the class $\mathscr{S}_m(\mathbb{R})$, $\varphi(\xi)\neq 0$ on $(-2,-1/2)\cup (1/2,2)$, can be weakened to $\varphi(\lambda)\neq 0$ for some $\lambda > 0$ (as $\varphi$ is even, this implies also $\varphi(-\lambda)\neq 0$). Then Theorem~\ref{thm2} holds under this weaker condition on $\varphi$. The proof of this stronger result can be done by modifying the proof for Theorem~\ref{thm2} and using the following two observations:

(a) There exist $a>0$, $c>b>0$, and even functions $\phi,\eta\in \mathscr{S}(\mathbb{R})$, such that $\rm{supp}\, \phi \subset [-a,a], \rm{supp}\,\eta \subset [-c,-b]\cup[b,c]$, and 
$$
\phi(\lambda) + 
\int_1^\infty \varphi(s\lambda)\eta(s\lambda)
\frac{ds}{s} = 1  
\quad  \forall \lambda \in \mathbb{R}.
$$
See, for example \cite{Hei,ST}.

(b) Part (a) and an argument similar to the proof of Proposition~\ref{prop-Calderon1} imply a Calder\'{o}n reproducing formula
$$
f=\phi(t\sqrt{L}f) + \int_1^\infty \varphi(ts\sqrt{L}f)\eta(ts\sqrt{L})\frac{ds}{s} \quad \text{in $\mathcal{S}'$},
$$
for all $f\in \mathcal{S}'$ and all $t>0$. 

We leave the details to the interested reader, and also refer to \cite{Qui3,Qui4} for the proof in the classical case.

\end{rem}

\subsection{Characterizations for Triebel-Lizorkin spaces via Lusin functions and the Littlewood--Paley functions.}

For $\alpha\in \mathbb{R}, \lambda, a>0$ and $0<q<\vc$ we define the Lusin function and the Littlewood--Paley function by setting
\begin{equation}\label{g-function}
\mathcal{G}^{\alpha}_{\lambda, q}F(x)=\left[\int_0^\vc\int_X(t^{-\alpha}|F(y,t)|)^q\Big(1+\f{d(x,y)}{t}\Big)^{-\lambda q} \f{\dy dt}{tV(x,t)}\right]^{1/q}
\end{equation}
and
\begin{equation}\label{lusin-function}
\mathcal{S}^\alpha_{a,q}F(x)=\left[\int_0^\vc\int_{d(x,y)<at}(t^{-\alpha}|F(y,t)|)^q\f{\dy dt}{tV(x,t)}\right]^{1/q},
\end{equation}
respectively.

When either $\alpha=0$ or $a=1$ we will drop them in the notation of $\mathcal{S}^\alpha_{a,q}$ and $\mathcal{G}^\alpha_{\lambda,q}$. We now have the following result regarding the estimates on the change of the angles for the function $\mathcal{S}^\alpha_{a,q}$.

\begin{prop}
	\label{pro-change the angle}
	Let $a>1$, $w\in A_r, 1\leq r<\vc, 0<q<\vc, \alpha\in \mathbb{R}$ and $0<p<\vc$. Then there exists a constant $C$ so that
	\[
	\|\mathcal{S}^\alpha_{a,q}F\|_{p,w}\leq Ca^{\f{rn}{p\wedge q}}\|\mathcal{S}^\alpha_{q}F\|_{p,w}
		\]
	for all $F$.
\end{prop}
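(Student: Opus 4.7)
I would split the argument into two cases according to which of $p$ or $q$ is smaller.

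\textbf{Case 1: $q\le p$ (so $p\wedge q=q$).} I would dualize in $L^{p/q,w}$: $\|\mathcal{S}^\alpha_{a,q}F\|_{p,w}^{q}$ is comparable to $\sup_h\int(\mathcal{S}^\alpha_{a,q}F)^q hw\,d\mu$, the supremum being over $h\ge 0$ with $\|h\|_{(p/q)',w}\le 1$. By Fubini,
\begin{equation*}
\int(\mathcal{S}^\alpha_{a,q}F)^q hw\,d\mu=\int_0^\infty\!\!\int_X(t^{-\alpha}|F(y,t)|)^q\Big[\int_{B(y,at)}\f{h(x)w(x)}{V(x,t)}d\mu(x)\Big]\f{d\mu(y)dt}{t}.
\end{equation*}
A dyadic decomposition $B(y,at)=\bigcup_{k\le\log_2 a}A_k$ with $A_k=B(y,2^kt)\setminus B(y,2^{k-1}t)$, combined with \eqref{doubling3} to bound $V(x,t)$ from below on $A_k$ and with \eqref{eq- doubling w} applied to $w\in A_r$ to bound $w(B(y,2^kt))\lesi 2^{krn}w(B(y,t))$, bounds the bracket by $Ca^{rn}\,\tfrac{w(B(y,t))}{V(y,t)}M^w(h)(y)$, where $M^w$ denotes the weighted Hardy--Littlewood maximal operator (any $\tilde n$-surplus is absorbed by passing to a marginally larger $r$ via Lemma~\ref{weightedlemma1}(i)). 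Returning to the pairing, reversing Fubini and using a Coifman--Rochberg-type regularity of $M^w(h)$, this is controlled by $Ca^{rn}\int(\mathcal{S}^\alpha_qF)^qM^w(h)w\,d\mu$. H\"older's inequality in $L^{p/q,w}\times L^{(p/q)',w}$ together with the $L^{(p/q)',w}$-boundedness of $M^w$ (valid when $(p/q)'>1$; the endpoint $p=q$ is immediate) finishes Case~1 with the factor $a^{rn/q}$.

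\textbf{Case 2: $p<q$ (so $p\wedge q=p$).} I would choose $s=p/r<p$ (so that $w\in A_r=A_{p/s}$) and aim for the pointwise inequality
\begin{equation*}
\mathcal{S}^\alpha_{a,q}F(x)\lesi a^{n/s}\mathcal{M}_s(\mathcal{S}^\alpha_qF)(x),\qquad x\in X.
\end{equation*}
Combined with the weighted Fefferman--Stein inequality \eqref{boundedness maximal function 2}, this immediately yields $\|\mathcal{S}^\alpha_{a,q}F\|_{p,w}\lesi a^{n/s}\|\mathcal{S}^\alpha_qF\|_{p,w}=a^{rn/p}\|\mathcal{S}^\alpha_qF\|_{p,w}$, as required. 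To prove the pointwise bound, I would cover the widened Lusin cone $\Gamma_a(x)=\{(y,t):d(x,y)<at\}$ by Whitney-type tents $T_j=B(y_j,ct_j)\times[t_j,2t_j]$ with a fixed small $c<1/2$: for every $z$ in the base $B(y_j,ct_j)\subset B(x,2at)$ one has $(y_j,t_j)\in\Gamma_1(z)$, so the contribution of $T_j$ to $\mathcal{S}^\alpha_{a,q}F(x)^q$ is controlled by the base-average of $(\mathcal{S}^\alpha_qF)^q$, which in turn is dominated by $\mathcal{M}_s(\mathcal{S}^\alpha_qF)(x)^q$ since the base lies inside $B(x,2at)$. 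A scale-by-scale count ($\lesi a^n$ tents per dyadic scale) produces the factor $a^{n/s}$ upon taking the $q$-th root.

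The main technical obstacle is the pointwise estimate in Case~2: the Whitney cover of the enlarged cone and the scale-by-scale book-keeping must be arranged so that the sharp exponent $n/s$ (rather than a crude $n$ or $n/q$) emerges from the integration over tent bases and the maximal function domination on $B(x,2at)$. Boundary cases --- $p=q$, $r=1$, or $(p/q)'=\infty$ --- require minor separate arguments, typically by taking $r$ slightly larger within $A_\infty$ via Lemma~\ref{weightedlemma1}(i), but they follow the same template.
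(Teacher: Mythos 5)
Your Case 2 is where the proposal breaks down: the pointwise bound $\mathcal{S}^\alpha_{a,q}F(x)\lesssim a^{n/s}\,\mathcal{M}_s\bigl(\mathcal{S}^\alpha_{q}F\bigr)(x)$ is false, and no Whitney/tent bookkeeping can produce it, because the aperture change in the range $p<q$ is not a pointwise phenomenon. Take $X=\mathbb{R}^n$, $\alpha=0$, a fixed admissible $a$ (say $a=16$), and let $F$ consist of $N$ bumps concentrated near $(y_k,2^k)$, $k=1,\dots,N$, with $d(x,y_k)\approx \tfrac{a}{2}2^k$, each normalized so that it contributes $\approx 1$ to $\mathcal{S}^0_{a,q}F(x)^q$. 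Then $\mathcal{S}^0_{a,q}F(x)^q\approx N$, while $\mathcal{S}^0_{q}F\approx 1$ on the pairwise disjoint balls $B(y_k,2^{k+1})$ and vanishes elsewhere; any ball containing $x$ and meeting $B(y_k,2^{k+1})$ has radius $\gtrsim a2^k$, so summing the geometric series over the scales it meets gives $\mathcal{M}_s\bigl(\mathcal{S}^0_qF\bigr)(x)\lesssim a^{-n/s}$ uniformly in $N$. Thus the right-hand side of your claimed estimate stays bounded while the left-hand side grows like $N^{1/q}$. The failure is exactly at your ``scale-by-scale count'': the per-scale averages over $B(x,2at)$ live at infinitely many different scales and cannot be merged into a single value of a maximal function. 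This is why the paper (following Aguilera--Segovia, and Calder\'on--Torchinsky) treats $p<p\vee q$, i.e.\ the case $p<q$ after the reduction to $q=2$, by a good-$\lambda$/distribution-function comparison of $\mathcal{S}^\alpha_{a,q}F$ and $\mathcal{S}^\alpha_{q}F$ with respect to $w\,d\mu$; that is the missing idea, and it cannot be replaced by a maximal-function majorization plus \eqref{boundedness maximal function 2}.

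Your Case 1 is essentially the paper's duality argument for $p\ge q$ (the paper first reduces to $q=2$ and dualizes in $L^{p/2}_w$), but one step is mis-justified: after bounding the bracket by $a^{rn}\frac{w(B(y,t))}{V(y,t)}M^w h(y)$, reversing Fubini against $\mathcal{S}^\alpha_qF$ requires $M^wh(y)\lesssim \frac{1}{w(B(y,t))}\int_{B(y,t)}M^wh\,w\,d\mu$, i.e.\ the value at $y$ controlled by the average over $B(y,t)$. Coifman--Rochberg/$A_1$-type regularity gives the opposite inequality, and the one you need fails pointwise (take $h$ a tiny bump near $y$). The repair is the paper's elementary trick: all balls occurring have radius $\ge t$, so each large-ball average is dominated by $\inf_{z\in B(y,t)}\mathcal{M}_wh(z)$, hence by the $w$-average of $\mathcal{M}_wh$ over $B(y,t)$, which makes the reverse Fubini exact; then boundedness of $\mathcal{M}_w$ on $L^{(p/q)'}_w$ concludes. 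Finally, note that absorbing the $a^{\tilde n}$ surplus ``by passing to a marginally larger $r$'' does not literally work, since the exponent in the conclusion is tied to the given $r$; this factor has to be handled (as the paper does) when replacing $V(x,t)$ by $V(y,t)$ inside the wide cone, not traded against the weight class.
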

\begin{proof}
	It suffices  to prove the proposition for $\alpha=0$ and $q=2$, since in the general case of $\alpha$ and $q$ we set $\tilde F(y,t)=(t^{-\alpha} |F(y,t)|)^{q/2}$ and then apply the result for the case $\alpha=0$ and $q=2$ we will get the desired estimate. In this situation, we can adapt the proof of   \cite[Theorem 1]{AS} to our setting easily. Hence we omit the details.
	
	We need to give the proof for the case $p\geq 2$. Indeed, for a positive function $g\in L^{(p/2)'}_w$ we have
	\[
	\begin{aligned}
	\langle [\mathcal{S}_{a,q}f]^2,g\rangle_w:&=\int_X\int_0^\vc\int_{d(x,y)<at}|f(y,t)|^2\f{\dy dt}{tV(x,t)}g(x)w(x)\dx\\
	&\sim \int_X\int_0^\vc\int_{d(x,y)<at}|f(y,t)|^2\f{\dy dt}{tV(y,t)}g(x)w(x)\dx\\
	&=\int_X\int_0^\vc|f(y,t)|^2M_{at,w}g(y)w(B(y,at))\f{\dy dt}{tV(y,t)}
	\end{aligned}
	\] 
	where 
	\[
	M_{at,w}g(y)=\f{1}{w(B(y,at))}\int_{B(y,at)}g(x)w(x)\dx.
	\]
	We now observe that
	\[
	\chi_{B(y,at)}(x)\leq \f{c_n}{w(B(y,t))}\int_{B(y,t)}\chi_{B(x,at)}(z)w(z)\dz.
	\]
	It follows
	\[
	\begin{aligned}
	\f{1}{w(B(y,at))}&\int_{B(y,at)}\chi_{B(y,at)}(x)g(x)w(x)\dx\\
	&\leq \f{c_n}{w(B(y,t))}\int_{B(y,t)}\f{1}{w(B(y,at))}\int_{B(y,at)}\chi_{B(x,at)}(z)g(x)w(x)\dx w(z)\dz .
	\end{aligned}
	\]
	Note that in this situation, we have $d(y,z)\leq 3at$, and hence $B(y,at)\subset B(z,3at)$ and $w(B(y,at))\sim w(B(z,3at))$. As a consequence, we have
	\[
	\begin{aligned}
	\f{1}{w(B(y,at))}\int_{B(y,at)}\chi_{B(x,at)}(z)g(x)w(x)\dx
	&\lesi \f{1}{w(B(z,3at))}\int_{B(z,3at)}g(x)w(x)\dx\\
	&\lesi \mathcal{M}_wg(z).
	\end{aligned}
	\]
	This implies that
	\[
	\begin{aligned}
	M_{at,w}g(y)\leq c_n M_{t,w}[\mathcal{M}_w g](y).
	\end{aligned}
	\]
	Hence,
	\[
	\begin{aligned}
	\langle [\mathcal{S}_{a,q}f]^2,g\rangle_w	&\leq c_n \int_X\int_0^\vc|f(y,t)|^2M_{t,w}[\mathcal{M}_w g](y)w(B(y,at))\f{\dy dt}{tV(y,t)}\\
	&\sim c_n \int_X\int_0^\vc\int_{d(x,y)<t}|f(y,t)|^2\f{w(B(y,at))}{w(B(y,t))}\f{\dy dt}{tV(y,t)}\mathcal{M}_wg(x)w(x)\dx\\
	&\sim c_na^{nr} \int_X\int_0^\vc\int_{d(x,y)<t}|f(y,t)|^2\f{\dy dt}{tV(y,t)}\mathcal{M}_wg(x)w(x)\dx\\
	&\sim c_na^{nr} \langle [\mathcal{S}_{q}f]^2,\mathcal{M}_wg\rangle_w\\
	&\leq c_na^{nr} \|[\mathcal{S}_{q}f]^2\|_{\f{p}{2},w}\|\mathcal{M}_wg\|_{(\f{p}{2})',w}\leq c_na^{nr}\|\mathcal{S}_{q}f\|^2_{p,w}\|g\|_{(\f{p}{2})',w}.
	\end{aligned}
	\]
	Taking the supremum over all  $g\in L^{(p/2)'}_w$ with $\|g\|_{(p/2)',w}\le 1$ we obtain
	\[
	\|\mathcal{S}_{a,q}f\|_{p,w}\leq \sqrt{c_n}a^{nr/2}\|\mathcal{S}_{q}f\|_{p,w}.
	\]
	This completes our proof.
\end{proof} 
\bigskip

We have the following corollary.
\begin{cor}
	\label{cor-change angle}
	Let $a\ge 1$, $w\in A_r, 1\leq r<\vc, 0<q<\vc, \alpha\in \mathbb{R}$ and $0<p<\vc$. Then there exists a constant $C$ so that
	\[
	C^{-1}\|\mathcal{G}^{\alpha}_{\lambda,q}F\|_{p,w}\le \|\mathcal{S}^\alpha_{a,q}F\|_{p,w}\le C \|\mathcal{G}^{\alpha}_{\lambda,q}F\|_{p,w}
	\]
	for all $F$ provided that  $\lambda>\f{nr}{p\wedge q}$.
\end{cor}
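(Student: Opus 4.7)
The plan is to establish the two inequalities separately, one being essentially pointwise and the other requiring a dyadic annular decomposition followed by the change-of-aperture estimate from Proposition~\ref{pro-change the angle}.

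For the upper bound $\|\mathcal{S}^\alpha_{a,q}F\|_{p,w}\le C\|\mathcal{G}^\alpha_{\lambda,q}F\|_{p,w}$, I would first note that on the truncated cone $\{(y,t):d(x,y)<at\}$ one trivially has $(1+d(x,y)/t)^{-\lambda q}\ge(1+a)^{-\lambda q}$. Inserting this pointwise lower estimate into the integrand of $\mathcal{S}^\alpha_{a,q}F(x)^q$ and extending the region of integration to all of $X\times(0,\infty)$ produces the pointwise bound $\mathcal{S}^\alpha_{a,q}F(x)\le(1+a)^{\lambda}\mathcal{G}^\alpha_{\lambda,q}F(x)$, from which the weighted $L^p$ inequality is immediate.

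For the reverse inequality, I would first reduce to the case $a=1$ using the monotonicity $\mathcal{S}^\alpha_{1,q}F\le\mathcal{S}^\alpha_{a,q}F$ for $a\ge 1$, and then decompose the integrand of $\mathcal{G}^\alpha_{\lambda,q}F(x)^q$ dyadically in the variable $d(x,y)/t$. Writing $X=\{d(x,y)<t\}\cup\bigcup_{k\ge 1}\{2^{k-1}t\le d(x,y)<2^{k}t\}$, on the $k$-th annulus one has $(1+d(x,y)/t)^{-\lambda q}\lesssim 2^{-k\lambda q}$, while the doubling property \eqref{doubling2} gives $V(x,t)^{-1}\lesssim 2^{kn}V(x,2^{k}t)^{-1}$. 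Combining these two estimates would produce the pointwise decomposition
$$\mathcal{G}^\alpha_{\lambda,q}F(x)^{q}\lesssim\sum_{k\ge 0}2^{k(n-\lambda q)}[\mathcal{S}^\alpha_{2^{k},q}F(x)]^{q}.$$
I would then apply Proposition~\ref{pro-change the angle} with aperture $a=2^{k}$ to obtain $\|\mathcal{S}^\alpha_{2^{k},q}F\|_{p,w}\lesssim 2^{knr/(p\wedge q)}\|\mathcal{S}^\alpha_{q}F\|_{p,w}$, and pass to $L^{p}_{w}$ norms by splitting into two cases: when $p\ge q$, invoking Minkowski's inequality in $L^{p/q}_{w}$; when $p<q$, using the subadditivity $(\sum b_{k})^{p/q}\le\sum b_{k}^{p/q}$.

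The hard part will be arranging the summation so that the resulting geometric series converges under the precise condition $\lambda>nr/(p\wedge q)$ stated in the corollary, since a naive term-wise estimate balancing $2^{k(n-\lambda q)}$ against $2^{knr/(p\wedge q)}$ appears to lose an additional factor of $n/q$ in the exponent. I anticipate that recovering the sharp threshold requires a finer pointwise domination of the form $\mathcal{G}^\alpha_{\lambda,q}F(x)\lesssim [M^{w}(|\mathcal{S}^\alpha_{q}F|^{s})(x)]^{1/s}$ for some $s<p\wedge q$ satisfying $\lambda s>nr$, where $M^{w}$ denotes the Hardy--Littlewood maximal operator with respect to the doubling measure $w\,d\mu$; the $L^{p/s}(w\,d\mu)$-boundedness of $M^{w}$ (automatic for $s<p$) then yields the conclusion after taking the $L^{p}_{w}$ norm, and sending $s\uparrow p\wedge q$ captures exactly the stated range of $\lambda$.
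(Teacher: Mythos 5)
Your overall strategy (trivial pointwise bound for $\|\mathcal{S}^\alpha_{a,q}F\|_{p,w}\lesssim\|\mathcal{G}^\alpha_{\lambda,q}F\|_{p,w}$, reduction to $a=1$, dyadic annular decomposition of $\mathcal{G}^\alpha_{\lambda,q}$, Proposition~\ref{pro-change the angle} on each piece, and the case split $p/q\ge 1$ versus $p/q<1$) is exactly the paper's route. However, there is a concrete flaw in the decomposition step, and it is precisely what creates the ``lost factor of $n/q$'' you flag at the end. You insert the doubling estimate $V(x,t)^{-1}\lesssim 2^{kn}V(x,2^kt)^{-1}$, but this is unnecessary: both $\mathcal{G}^{\alpha}_{\lambda,q}$ and $\mathcal{S}^{\alpha}_{a,q}$ are defined with the \emph{same} normalizing measure $\f{\dy\,dt}{tV(x,t)}$ (the Lusin function with aperture $a$ is \emph{not} normalized by $V(x,at)$ in \eqref{lusin-function}). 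Hence on the annulus $\{2^{k-1}t\le d(x,y)<2^kt\}$ you only need $(1+d(x,y)/t)^{-\lambda q}\lesssim 2^{-k\lambda q}$ and the inclusion of the annulus in the cone of aperture $2^k$, which gives the clean pointwise bound $[\mathcal{G}^{\alpha}_{\lambda,q}F]^q\lesssim \sum_{k\ge 0}2^{-k\lambda q}[\mathcal{S}^\alpha_{2^k,q}F]^q$, with no factor $2^{kn}$. Feeding this into Proposition~\ref{pro-change the angle} (which costs $2^{knr/(p\wedge q)}$ per term) and summing via Minkowski in $L^{p/q}_w$ when $p\ge q$, or via $p/q$-subadditivity when $p<q$, the geometric series converges exactly under the stated condition $\lambda>\f{nr}{p\wedge q}$. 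This is the paper's argument; no refinement is needed.

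As written, your proof does not close: with the extra $2^{kn}$ you would need $\lambda>\f{n}{q}+\f{nr}{p\wedge q}$, and the rescue you propose --- a pointwise domination $\mathcal{G}^{\alpha}_{\lambda,q}F\lesssim [M^w(|\mathcal{S}^\alpha_qF|^s)]^{1/s}$ --- is only conjectured, not proved, and is doubtful as stated: the weighted change-of-aperture estimate in Proposition~\ref{pro-change the angle} is established in the paper by an $L^{(p/2)'}_w$-duality argument (for $p\ge 2$) and by adapting the Aguilera--Segovia argument otherwise, not by a pointwise maximal-function bound, and a single pointwise inequality uniform in $p$ would be a substantially stronger claim than what is needed. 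Replace the doubling step by the correct annular estimate and your argument becomes the paper's proof verbatim.
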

\begin{proof}
	Due to Proposition \ref{pro-change the angle}, we need only to prove the corollary for $a=1$.\\
	
	Since $\mathcal{S}^\alpha_{a,q}F\leq \mathcal{G}^{\alpha}_{\lambda,q}F$ for any $\lambda>0$, it suffices to prove that
	\[
	\|\mathcal{G}^{\alpha}_{\lambda,q}F\|_{p,w}\lesi \|\mathcal{S}^\alpha_{q}F\|_{p,w}.
	\]
	Indeed, it is easy to see that
	\begin{equation}\label{eq1-proof change angle}
	[\mathcal{G}^{\alpha}_{\lambda,q}F]^q\leq \sum_{k\ge 0}2^{-kq\lambda}[\mathcal{S}^\alpha_{2^k,q}F]^q.
	\end{equation}
	Hence,
	\begin{equation*}
	[\mathcal{G}^{\alpha}_{\lambda,q}F]^p\leq \Big[\sum_{k\ge 0}2^{-kq\lambda}[\mathcal{S}^\alpha_{2^k,q}F]^q\Big]^{p/q}.
	\end{equation*}
	If $p/q<1$, we then have
	\[
	[\mathcal{G}^{\alpha}_{\lambda,q}F]^p\leq \sum_{k\ge 0}2^{-kp\lambda}[\mathcal{S}^\alpha_{2^k,q}F]^p.
	\]
	This, along with Proposition \ref{pro-change the angle}, implies that
	\[
	\begin{aligned}
	\|\mathcal{G}^{\alpha}_{\lambda,q}F\|_{p,w}^p&\leq \sum_{k\ge 0}2^{-kp\lambda}\|\mathcal{S}^\alpha_{2^k,q}F\|_{p,w}^p\leq c\sum_{k\ge 0}2^{-kp\lambda}2^{knr}\|\mathcal{S}^\alpha_{q}F\|_{p,w}^p\\
	&\lesi \|\mathcal{S}^\alpha_{q}F\|_{p,w}^p
	\end{aligned}
	\]
	as long as $\lambda>nr/p$.

	\medskip
	
	If $p/q\ge 1$, then from \eqref{eq1-proof change angle} we have
	\[
	\begin{aligned}
	\|[\mathcal{G}^{\alpha}_{\lambda,q}F]^q\|_{p/q,w}\leq \sum_{k\ge 0}2^{-kq\lambda}\|[\mathcal{S}^\alpha_{2^k,q}F]^q\|_{p/q,w}.
	\end{aligned}
	\]
	Applying Proposition \ref{pro-change the angle}, we get 
	\[
	\begin{aligned}
	\|\mathcal{G}^{\alpha}_{\lambda,q}F\|^q_{p,w}&\leq \sum_{k\ge 0}2^{-kq\lambda}\|\mathcal{S}^\alpha_{2^k,q}F\|^q_{p,w}\lesi \sum_{k\ge 0}2^{-kq\lambda}2^{knr}\|\mathcal{S}^\alpha_{q}F\|^q_{p,w}\\
	&\lesi \|\mathcal{S}^\alpha_{q}F\|^q_{p,w}
	\end{aligned}
	\]
	provided $\lambda>nr/q$.
	
	This completes our proof of the corollary.
\end{proof}

\bigskip

We have the following characterization.
\begin{prop}\label{prop4.1b}
	Let $\psi$ be a partition of unity. Then for  $w\in A_\vc$, $0<p, q<\vc$, and $\alpha\in \mathbb{R}$, we have
	$$
	\|f\|_{\F^{\alpha,L}_{p,q,w}}\sim  \|\mathcal{G}^{\alpha}_{\lambda, q}(\psi(t\sL)f)\|_{p,w}\sim \|\mathcal{S}^{\alpha}_{q}(\psi(t\sL)f)\|_{p,w}
	$$
	for all $f\in \mathcal{S}'_\vc$, provided that $\lambda>\f{nq_w}{p\wedge q}$.
\end{prop}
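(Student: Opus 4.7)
The plan is to reduce the statement to results already established in earlier sections: the continuous characterization of $\F^{\alpha,L}_{p,q,w}$ via the Peetre-type maximal function (Theorem~\ref{thm1-continuouscharacter}), the pointwise Peetre-type bound of Proposition~\ref{prop2-maximal functions}, and the change-of-angle comparisons of Proposition~\ref{pro-change the angle} and Corollary~\ref{cor-change angle}. First I will apply Corollary~\ref{cor-change angle} with $F(y,t)=\psi(t\sL)f(y)$; since $w\in A_r$ for some $r$ arbitrarily close to $q_w$, the hypothesis $\lambda>nq_w/(p\wedge q)$ allows the choice of an admissible $r$ satisfying $\lambda>nr/(p\wedge q)$, whence $\|\mathcal{G}^{\alpha}_{\lambda,q}(\psi(t\sL)f)\|_{p,w}\sim\|\mathcal{S}^{\alpha}_{q}(\psi(t\sL)f)\|_{p,w}$. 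It therefore remains to prove $\|f\|_{\F^{\alpha,L}_{p,q,w}}\sim\|\mathcal{S}^{\alpha}_{q}(\psi(t\sL)f)\|_{p,w}$.

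For the upper estimate $\|\mathcal{S}^{\alpha}_{q}(\psi(t\sL)f)\|_{p,w}\lesi\|f\|_{\F^{\alpha,L}_{p,q,w}}$, the definition of $\psi^*_\lambda$ yields $|\psi(t\sL)f(y)|\le 2^\lambda\psi^*_\lambda(t\sL)f(x)$ whenever $y\in B(x,t)$. Averaging this pointwise bound over $B(x,t)$ with respect to $\dy/V(x,t)$, raising to the $q$-th power, and integrating in $t$ gives the pointwise inequality
\[
\mathcal{S}^{\alpha}_{q}(\psi(t\sL)f)(x)\lesi\Big(\int_0^\infty\big(t^{-\alpha}\psi^*_\lambda(t\sL)f(x)\big)^q\f{dt}{t}\Big)^{1/q},
\]
whose $L^p_w$-norm is comparable to $\|f\|_{\F^{\alpha,L}_{p,q,w}}$ by Theorem~\ref{thm1-continuouscharacter}, since $\lambda>nq_w/(p\wedge q)\ge\max\{n/q,nq_w/p\}$.

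The main obstacle is the reverse inequality $\|f\|_{\F^{\alpha,L}_{p,q,w}}\lesi\|\mathcal{S}^{\alpha}_{q}(\psi(t\sL)f)\|_{p,w}$. I will fix an auxiliary parameter $\lambda_0$ (to be taken large) and a small $r\in(0,q)$. Starting from Proposition~\ref{prop2-maximal functions}, estimate \eqref{eq1-proof prop2 maximal}, and splitting the $X$-integral into the dyadic annuli $B(x,2^k t)\setminus B(x,2^{k-1}t)$, together with the doubling estimate \eqref{doubling3} to compare $V(z,t)$ with $V(x,2^k t)$, I will arrive at the shell bound
\[
\psi^*_{\lambda_0}(t\sL)f(x)^r\lesi\sum_{k\ge 0}2^{-k(\lambda_0 r-\tilde n)}\fint_{B(x,2^k t)}|\psi(t\sL)f(z)|^r\dz.
\]
Raising to the power $q/r\ge 1$ and applying Jensen's inequality twice (first to the discrete measure on $\{k\}$ with weights proportional to $2^{-k(\lambda_0 r-\tilde n)}$, and then to the average over each ball), then multiplying by $t^{-\alpha q}$ and integrating in $t$, I will identify each resulting integral as $\mathcal{S}^{\alpha}_{2^k,q}(\psi(t\sL)f)(x)^q$ to obtain
\[
\int_0^\infty\big(t^{-\alpha}\psi^*_{\lambda_0}(t\sL)f(x)\big)^q\f{dt}{t}\lesi\sum_{k\ge 0}2^{-k\sigma}\mathcal{S}^{\alpha}_{2^k,q}(\psi(t\sL)f)(x)^q,
\]
where $\sigma$ is proportional to $\lambda_0$ and can be made as large as needed. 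Taking the $L^p_w$-quasinorm (with the routine case split between $p\ge q$ and $p<q$) and applying Proposition~\ref{pro-change the angle} to each term (contributing at most a factor $2^{knrq/(p\wedge q)}$), the series converges geometrically provided $\sigma>nrq/(p\wedge q)$, which is arranged by taking $\lambda_0$ sufficiently large. A final appeal to Theorem~\ref{thm1-continuouscharacter} with parameter $\lambda_0$ identifies the left-hand side with $\|f\|_{\F^{\alpha,L}_{p,q,w}}$, concluding the proof.
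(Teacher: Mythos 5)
Your proof is correct, and two of its three ingredients coincide with the paper's: the estimate $\|\mathcal{S}^{\alpha}_{q}(\psi(t\sL)f)\|_{p,w}\lesssim\|f\|_{\F^{\alpha,L}_{p,q,w}}$ via the Peetre-type maximal function and Theorem \ref{thm1-continuouscharacter}, and the equivalence $\|\mathcal{G}^{\alpha}_{\lambda,q}(\psi(t\sL)f)\|_{p,w}\sim\|\mathcal{S}^{\alpha}_{q}(\psi(t\sL)f)\|_{p,w}$ from Corollary \ref{cor-change angle}. Where you genuinely diverge is the reverse inequality: the paper applies Proposition \ref{prop2-maximal functions} with exponent $r=q$ at the given $\lambda$, which yields the pointwise bound $\int_0^\infty \bigl(t^{-\alpha}|\psi(t\sL)f(x)|\bigr)^q\,\frac{dt}{t}\lesssim \mathcal{G}^{\alpha}_{\lambda,q}(\psi(t\sL)f)(x)^q$ in one line, and then concludes $\|f\|_{\F^{\alpha,L}_{p,q,w}}\lesssim\|\mathcal{G}^{\alpha}_{\lambda,q}(\psi(t\sL)f)\|_{p,w}$ by Theorem \ref{thm1-continuouscharacter}, the change-of-angle corollary closing the circle. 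You instead prove $\|f\|_{\F^{\alpha,L}_{p,q,w}}\lesssim\|\mathcal{S}^{\alpha}_{q}(\psi(t\sL)f)\|_{p,w}$ directly, by introducing an auxiliary large $\lambda_0$ and a small $r<q$ in \eqref{eq1-proof prop2 maximal}, decomposing into dyadic annuli, applying Jensen twice, and then invoking the change-of-aperture Proposition \ref{pro-change the angle} with geometric summation; in effect you re-run the proof of Corollary \ref{cor-change angle} fused with Proposition \ref{prop2-maximal functions}. Both routes are valid: the paper's is shorter because the choice $r=q$ makes the kernel bound match $\mathcal{G}^{\alpha}_{\lambda,q}$ exactly at the stated $\lambda$, while yours makes explicit that the lower bound by the Lusin function requires no largeness of the given $\lambda$, only the aperture machinery. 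One small quantitative slip: in your shell bound the decay exponent should also account for replacing $V(x,t)$ by $V(x,2^kt)$ (an extra factor $2^{kn}$ from \eqref{doubling2}), so it reads $2^{-k(\lambda_0 r-\tilde n-n)}$ rather than $2^{-k(\lambda_0 r-\tilde n)}$; this is harmless since $\lambda_0$ is at your disposal and only the proportionality of $\sigma$ to $\lambda_0$ matters.
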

\begin{proof}
	We first prove that $\|\mathcal{S}^{\alpha}_{q}(\psi(t\sL)f)\|_{p,w}\lesi \|f\|_{\F^{\alpha,L}_{p,q,w}}$. First observe that  
	\[
	|\psi(t\sL)f(y)|\leq \psi^*_\lambda(t\sL)f(x)
	\]
	for all $\lambda>0$ and $d(x,y)<t$.
	
	Therefore,
	\[
	\begin{aligned}
	\mathcal{S}^{\alpha}_{q}(\psi(t\sL)f)(x)&\leq \left[\int_0^\vc\int_{d(x,y)<t}(t^{-\alpha}|\psi^*_\lambda(t\sL)f(x)|)^q\f{\dy dt}{tV(x,t)}\right]^{1/q}\\
	&\lesi \left[\int_0^\vc(t^{-\alpha}|\psi^*_\lambda(t\sL)f(x)|)^q\f{dt}{t}\right]^{1/q}.
	\end{aligned}
	\]
	This, along with Theorem  \ref{thm1-continuouscharacter}, implies that
	\[
	\|\mathcal{S}^{\alpha}_{q}(\psi(t\sL)f)\|_{p,w}\lesi \|f\|_{\F^{\alpha,L}_{p,q,w}}.
	\]
	Due to the above estimate and Corollary \ref{cor-change angle}, it remains to show that
	\[
	\|f\|_{\F^{\alpha,L}_{p,q,w}}\lesi \|\mathcal{G}^{\alpha}_{\lambda, q}(\psi(t\sL)f)\|_{p,w}.
	\] 
	By Proposition \ref{prop2-maximal functions} we have 
	\begin{equation*}
	|\psi(t\sL)f(x)|\lesi \Big[\int_X \f{1}{V(x,t)}\f{|\psi(t\sL)f(z)|^q}{(1+d(x,z)/t)^{\lambda q}}\dz\Big]^{1/q}
	\end{equation*}
	for all $x\in X$, $\lambda>0$ and $t>0$.
	
	This implies that
	\[
	\Big\|\Big(\int_0^\vc (t^{-\alpha}|\psi(t\sL)f|)^q\f{dt}{t}\Big)^{1/q}\Big\|_{p,w}\lesi \|\mathcal{G}^{\alpha}_{\lambda, q}(\psi(t\sL)f)\|_{p,w}.
	\]
	Using Theorem \ref{thm1-continuouscharacter} we obtain the bound:
	\[
	\|f\|_{\F^{\alpha,L}_{p,q,w}}\lesi \|\mathcal{G}^{\alpha}_{\lambda, q}(\psi(t\sL)f)\|_{p,w}
	\]
	as desired.
\end{proof}

\bigskip

We also have a similar square function characterization for the new Triebel-Lizorkin spaces via functions in $\mathscr{S}_m(\mathbb{R})$.
\begin{prop}\label{prop4.1-heat kernel}
	Let  $w\in A_\vc$, $0<p<\vc$, $0<q< \vc$, $\alpha\in \mathbb{R}$, $\lambda>\f{nq_w}{p\wedge q}$ and $\varphi\in \mathscr{S}_m(\mathbb{R})$ with $m>\alpha/2$. Then for each  $f\in \mathcal S'$  there exists $\rho\in \mathscr P$ such that 
	$$
	\begin{aligned}
	\|\mathcal{G}^{\alpha}_{\lambda, q}(\varphi(t\sL)(f-\rho))\|_{p,w}&\sim \|\mathcal{S}^{\alpha}_{q}(\varphi(t\sL)(f-\rho))\|_{p,w} \\
	&\lesi \|f\|_{\F^{\alpha,L}_{p,q,w}}\lesi  \|\mathcal{G}^{\alpha}_{\lambda, q}(\varphi(t\sL)f)\|_{p,w}\sim \|\mathcal{S}^{\alpha}_{q}(\varphi(t\sL)f)\|_{p,w}.
	\end{aligned}	
	$$
	Moreover, the distribution $\rho$ in the inequalities above can be removed if $f\in L^2$.
\end{prop}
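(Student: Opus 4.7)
The plan is to split the four-fold comparison into three tasks: the two $\mathcal{G}\sim\mathcal{S}$ equivalences, the upper bound $\|\mathcal{S}\|\lesi \|f\|_{\F^{\alpha,L}_{p,q,w}}$, and the lower bound $\|f\|_{\F^{\alpha,L}_{p,q,w}}\lesi \|\mathcal{G}\|$. Both $\mathcal{G}\sim\mathcal{S}$ equivalences follow immediately from Corollary \ref{cor-change angle} applied to $F(y,t)=\varphi(t\sL)(f-\rho)(y)$ and $F(y,t)=\varphi(t\sL)f(y)$ separately, since our hypothesis $\lambda>nq_w/(p\wedge q)$ matches that corollary's requirement (with $r$ slightly above $q_w$). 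The sandwich bounds will be anchored by Theorem \ref{thm2}.

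For $\|\mathcal{S}^\alpha_q(\varphi(t\sL)(f-\rho))\|_{p,w}\lesi \|f\|_{\F^{\alpha,L}_{p,q,w}}$, I would use the trivial pointwise estimate $|\varphi(t\sL)(f-\rho)(y)|\le 2^{\lambda}\,\varphi^*_\lambda(t\sL)(f-\rho)(x)$ when $d(x,y)<t$, which follows directly from the definition of the Peetre-type maximal function. Substituting into the defining integral of the Lusin function and using $\int_{d(x,y)<t}\dy/V(x,t)\lesi 1$ yields
\[
\mathcal{S}^\alpha_q(\varphi(t\sL)(f-\rho))(x)\lesi\Bigl(\int_0^\vc [t^{-\alpha}\varphi^*_\lambda(t\sL)(f-\rho)(x)]^q\f{dt}{t}\Bigr)^{1/q}.
\]
Passing to the $L^p_w$ norm and invoking the first inequality in \eqref{eq-TL equivalent norms square functions}, which simultaneously produces the polynomial $\rho\in\mathscr{P}$, closes this side.

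The delicate inequality is $\|f\|_{\F^{\alpha,L}_{p,q,w}}\lesi \|\mathcal{G}^\alpha_{\lambda,q}(\varphi(t\sL)f)\|_{p,w}$. By the second inequality in \eqref{eq-TL equivalent norms square functions}, it suffices to show
\[
\Bigl\|\Bigl(\int_0^\vc [t^{-\alpha}|\varphi(t\sL)f|]^q\f{dt}{t}\Bigr)^{1/q}\Bigr\|_{p,w}\lesi \|\mathcal{G}^\alpha_{\lambda,q}(\varphi(t\sL)f)\|_{p,w},
\]
which follows from a pointwise self-improvement
\[
|\varphi(t\sL)f(x)|^q\lesi \int_X \f{|\varphi(t\sL)f(z)|^q}{V(x,t)(1+d(x,z)/t)^{\lambda q}}\dz\qquad(\ast);
\]
indeed, substituting $(\ast)$ into $\int_0^\vc [t^{-\alpha}\cdot]^q\f{dt}{t}$ reproduces $\mathcal{G}^\alpha_{\lambda,q}(\varphi(t\sL)f)(x)^q$ up to an absorbed constant. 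In practice it is cleaner to establish the analogous bound for $\varphi^*_\lambda(t\sL)f(x)^r$ at an exponent $r<q$ and then pass to $q$ via H\"older; $(\ast)$ is the $\varphi\in\mathscr{S}_m$ counterpart of Proposition \ref{prop2-maximal functions}.

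The main obstacle is establishing $(\ast)$, since the compact-support Calder\'on identity underpinning Proposition \ref{prop2-maximal functions} is unavailable for $\varphi\in\mathscr{S}_m$. My plan is to replace it with the Calder\'on formula of Proposition \ref{prop-Calderon2}, applied with an auxiliary $\psi\in\mathscr{S}(\mathbb{R})$ supported in $[1/2,2]$, yielding the pointwise representation $\varphi(t\sL)(f-\rho) = c_\psi\int_0^\vc \varphi(t\sL)\psi(s\sL) f\,\f{ds}{s}$ (the polynomial correction does not affect the final estimate due to the order-$2m$ vanishing of $\varphi$ at zero). Lemma \ref{lem1}(b),(c) then furnish the kernel bound
\[
|K_{\varphi(t\sL)\psi(s\sL)}(x,z)|\lesi \Bigl(\f{s}{t}\wedge\f{t}{s}\Bigr)^{2m'}\f{1}{V(x,t\vee s)}\Bigl(1+\f{d(x,z)}{t\vee s}\Bigr)^{-N},
\]
for a suitable $m'\le m$, thanks to the vanishing derivatives of $\psi$ at zero and the order-$2m$ vanishing of $\varphi$. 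A bootstrap on $\varphi^*_\lambda(t\sL)f(x)$ in the spirit of Lemma \ref{lem1-thm1-continuous cha}---with a priori finiteness secured by \eqref{eq- bound of varphi and f}---absorbs the tail factor $(s/t\wedge t/s)^{2m'}$ and produces $(\ast)$. Finally, the assertion that $\rho$ may be dropped when $f\in L^2$ follows immediately from Remark \ref{rem theorem 4.2}(b), as the Calder\'on formula then holds without polynomial correction.
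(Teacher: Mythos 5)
Your decomposition of the statement and the routine parts are fine and essentially follow the paper: the $\mathcal{G}\sim\mathcal{S}$ equivalences via Corollary \ref{cor-change angle}, the estimate $\|\mathcal{S}^{\alpha}_{q}(\varphi(t\sL)(f-\rho))\|_{p,w}\lesi\|f\|_{\F^{\alpha,L}_{p,q,w}}$ via the trivial bound $|\varphi(t\sL)(f-\rho)(y)|\lesi\varphi^*_\lambda(t\sL)(f-\rho)(x)$ for $d(x,y)<t$ together with the first inequality in \eqref{eq-TL equivalent norms square functions}, and the removal of $\rho$ for $f\in L^2$ via Remark \ref{rem theorem 4.2}(b). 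The genuine gap is your key step $(\ast)$: the single-scale self-improvement $|\varphi(t\sL)f(x)|^q\lesi\int_X|\varphi(t\sL)f(z)|^q\,V(x,t)^{-1}(1+d(x,z)/t)^{-\lambda q}\dz$, with a constant independent of $f$ and $t$, is simply false for a non-compactly supported $\varphi\in\mathscr{S}_m(\mathbb{R})$, and the same is true of the proposed $\mathscr{S}_m$-analogue of Proposition \ref{prop2-maximal functions} for $\varphi^*_\lambda$. Proposition \ref{prop2-maximal functions} uses $\supp\psi\subset[1/2,2]$ in an essential way: the Calder\'on identity then collapses to $\psi(s\sL)f=c_\psi\int_{s/4}^{4s}\psi(u\sL)\psi(s\sL)f\,\f{du}{u}$, so only comparable scales enter, which is exactly what produces a bound at the single scale $s$. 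Your substitute representation $\varphi(t\sL)(f-\rho)=c_\psi\int_0^\vc\varphi(t\sL)\psi(s\sL)f\,\f{ds}{s}$ involves $\psi(s\sL)f$ at \emph{all} scales, and no bootstrap can convert these contributions back into data of $\varphi(t\sL)f$ at the single scale $t$: the bootstraps of Lemma \ref{lem1-thm1-continuous cha} and of Step 3 of Theorem \ref{thm2} only yield multi-scale inequalities such as \eqref{eq-maximal inequality 2} and \eqref{eq-varphi*}, whose right-hand sides retain geometric tails over all finer scales.

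To see that $(\ast)$ fails outright, take $L=-\Delta$ on $\mathbb{R}$, $\varphi(\xi)=\xi^{2}e^{-\xi^{2}}\in\mathscr{S}_1(\mathbb{R})$, $t=1$, and $\hat\eta\in C_c^\vc$, $\hat\eta\ge0$, supported in $\{1/2\le|\xi|\le2\}$; define $f\in\mathscr{S}(\mathbb{R})$ by $\hat f(\xi)=\xi^{-2}e^{\xi^{2}}\hat\eta(\xi/R)$. Then $\varphi(\sqrt L)f(x)=R\,\eta(Rx)$, so at $x=0$ the left-hand side of $(\ast)$ is comparable to $R^{q}\eta(0)^{q}$ while the right-hand side is at most $CR^{q-1}\|\eta\|_{L^q}^{q}$; the ratio blows up as $R\to\vc$, so no uniform constant exists (and the a priori finiteness from \eqref{eq- bound of varphi and f} cannot rescue a false inequality, since this $f$ is Schwartz). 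The correct substitute, and the paper's actual argument, is multi-scale: for $q\le1$ one applies \eqref{eq-Mink-heat} (a consequence of \eqref{eq-varphi*}, established in Step 3 of Theorem \ref{thm2}) with $r=q$, $\lambda$ replaced by $\lambda+\tilde n/q$ and $A$ large, sums over the scales and uses \eqref{doubling3} to obtain the pointwise bound $\int_0^\vc(t^{-\alpha}|\varphi(t\sL)f(x)|)^{q}\f{dt}{t}\lesi\mathcal{G}^{\alpha}_{\lambda,q}(\varphi(t\sL)f)(x)^{q}$, and then invokes Theorem \ref{thm2}; for $q>1$ the paper instead starts from $f=c\int_0^\vc\psi(s\sL)\varphi(s\sL)f\,\f{ds}{s}$ with $\psi$ a partition of unity, bounds $\int_0^\vc(t^{-\alpha}|\psi(t\sL)f(x)|)^{q}\f{dt}{t}$ by $\mathcal{G}^{\alpha}_{\lambda,q}(\varphi(t\sL)f)(x)^{q}$ using Lemma \ref{lem1}, H\"older and Lemma \ref{lem-elementary}, and concludes with Theorem \ref{thm1-continuouscharacter} (this route also sidesteps the issue of whether $\varphi(t\sL)\rho$ vanishes). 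As written, your central step does not hold and must be replaced by one of these multi-scale arguments.
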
 
\begin{proof}
	Arguing similarly to the proof of Proposition \ref{prop4.1b} and using Theorem \ref{thm2} we show that
	\[
	\|\mathcal{S}^{\alpha}_{q}(\varphi(t\sL)(f-\rho))\|_{p,w}\lesi \|f\|_{\F^{\alpha,L}_{p,q,w}}.
	\]
	It remains to prove that
	\begin{equation}\label{eq2-proof square}
	\|f\|_{\F^{\alpha,L}_{p,q,w}}\lesi \|\mathcal{G}^{\alpha}_{\lambda, q}(\varphi(t\sL)f)\|_{p,w}.
	\end{equation}
	To do this, we divide into 2 cases. 
	
		If $q\in (0,1]$, then applying \eqref{eq-Mink-heat} with $\lambda$ replaced by $\lambda+\tilde{n}/q$, $r=q$  and $A-\lambda-\tilde{n}/q+\alpha>0$, and using \eqref{doubling3}, we deduce that 

	\begin{equation*}
	\sum_{j\in \mathbb{Z}}\int_{2^{-j}}^{2^{-j+1}}(t^{-\alpha}|\varphi(t\sL)f(x)|)^{q}\f{dt}{t}\lesi \sum_{j\in \mathbb{Z}}\sum_{k\in \mathbb{Z}}\int_{2^{-k}}^{2^{-k+1}}\int_X \f{2^{-|k-j|(A-\lambda-\tilde n/q +\alpha) q}}{V(x,t)}\f{(t^{-\alpha}|\varphi(t\sL)f(z)|)^q}{(1+d(x,z)/t)^{\lambda q}}\dz\f{dt}{t}.
	\end{equation*}
	Hence,
	\begin{equation*}
	\int_{0}^{\vc}(t^{-\alpha}|\varphi(t\sL)f(x)|)^{q}\f{dt}{t}\lesi \int_{0}^{\vc}\int_X \f{1}{V(x,t)}\f{(t^{-\alpha}|\varphi(t\sL)f(z)|)^q}{(1+d(x,z)/t)^{\lambda q}}\dz\f{dt}{t}.
	\end{equation*}
	As a consequence,
	\[
	\Big[\int_{0}^{\vc}(t^{-\alpha}|\varphi(t\sL)f(x)|)^{q}\f{dt}{t}\Big]^{1/q}\lesi \mathcal{G}^{\alpha}_{\lambda, q}(\varphi(t\sL)f)(x).
	\]
	This, along with Theorem \ref{thm2}, yields \eqref{eq2-proof square}.
	
	If $q>1$, then for a partition of unity  function $\psi$ we have
	\[
	f= c\int_0^\vc \psi(s\sL)\varphi(s\sL)f\f{ds}{s} \ \ \text{in $\mathcal S'_\vc$}.
	\]
	
	It follows that, for each $t>0$ and $x\in X$,
	\[
	\psi(t\sL)f(x) =c\int_{t/4}^{4t} \psi(t\sL)\psi(s\sL)\varphi(s\sL)f(x)\f{ds}{s}.
	\]
	Using Lemma \ref{lem1}, for $\lambda>\f{nq_w}{p\wedge q}$, we see that 
	\begin{equation*}
	\begin{aligned}
	|\psi(t\sL)f(x)| &\lesi \int_{t/4}^{4t} \int_X \f{1}{V(x\vee y,s)}\Big(\f{s}{s+d(x,y)}\Big)^{\lambda q}|\varphi(s\sL)f(y)|\dy\f{ds}{s}.
	\end{aligned}
	\end{equation*}
	
	H\"older's inequality and  Lemma \ref{lem-elementary}  then imply that
	\begin{equation*}
	\begin{aligned}
	|\psi(t\sL)f(x)|^q&\lesi \int_{t/4}^{4t} \int_X \f{1}{V(x\vee y,s)}\Big(\f{s}{s+d(x,y)}\Big)^{\lambda q}|\varphi(s\sL)f(y)|^q\dy\f{ds}{s}\\
	& \ \ \times \Big[\int_{t/4}^{4t} \int_X \f{1}{V(x\vee y,s)}\Big(\f{s}{s+d(x,y)}\Big)^{\lambda q}\dy\f{ds}{s}\Big]^{q/q'}\\
	&\lesi \int_{t/4}^{4t} \int_X \f{1}{V(x\vee y,s)}\Big(\f{s}{s+d(x,y)}\Big)^{\lambda q}|\varphi(s\sL)f(y)|^q\dy\f{ds}{s}.
	\end{aligned}
	\end{equation*}
	Consequently, 
	\[
	\begin{aligned}
	\int_0^\vc (t^{-\alpha}|\psi(t\sL)f(x)|)^q\f{dt}{t}&\lesi \int_0^\vc\int_{t/4}^{4t} \int_X \f{1}{V(x\vee y,s)}\Big(\f{s}{s+d(x,y)}\Big)^{\lambda q}(s^{-\alpha}|\varphi(s\sL)f(y)|)^q\dy\f{ds}{s}\f{dt}{t}\\
	&\lesi \int_0^\vc\int_{s/4}^{4s} \int_X \f{1}{V(x\vee y,s)}\Big(\f{s}{s+d(x,y)}\Big)^{\lambda q}(s^{-\alpha}|\varphi(s\sL)f(y)|)^q\dy\f{dt}{t}\f{ds}{s}\\
	&\lesi \int_0^\vc\int_X \f{1}{V(x\vee y,s)}\Big(\f{s}{s+d(x,y)}\Big)^{\lambda q}(s^{-\alpha}|\varphi(s\sL)f(y)|)^q\dy\f{ds}{s}\\
	&\lesi  \mathcal{G}^{\alpha}_{\lambda, q}(\varphi(t\sL)f)(x)^q.
	\end{aligned}
	\]
	This, along with Theorem \ref{thm1-continuouscharacter}, yields \eqref{eq2-proof square}.
	
	This completes our proof.
\end{proof}

\bigskip

\begin{cor}\label{cor-area function}
	Let  $w\in A_\vc$, $0<p<\vc$, $0<q< \vc$, $\alpha\in \mathbb{R}$, $\lambda>\f{nq_w}{p\wedge q}$ and  $m>\alpha/2$. Then for each $f\in \mathcal S'$ there exists $\rho\in \mathscr P$ such that
	$$
	\begin{aligned}
	\|\mathcal{G}^{\alpha}_{\lambda, q}(\Psi_{m,t}(L)(f-\rho))\|_{p,w}&\sim \|\mathcal{S}^{\alpha}_{q}(\Psi_{m,t}(L)(f-\rho))\|_{p,w}\\
	&\lesi \|f\|_{\F^{\alpha,L}_{p,q,w}}\lesi  \|\mathcal{G}^{\alpha}_{\lambda, q}(\Psi_{m,t}(L)f)\|_{p,w}\sim \|\mathcal{S}^{\alpha}_{q}(\Psi_{m,t}(L)f)\|_{p,w}
	\end{aligned}
	$$
	where $\Psi_{m,t}(L)=(t^2L)^me^{-t^2L}$.
	
	Moreover, the distribution $\rho$ can be removed if $f\in L^2$.
\end{cor}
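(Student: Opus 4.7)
The plan is that the corollary is a direct specialization of Proposition~\ref{prop4.1-heat kernel} to a particular choice of the function $\varphi$. To make this precise, I would first observe that if we set $\varphi(\xi) = \xi^{2m}e^{-\xi^2}$, then $\varphi(t\sqrt{L}) = (t^2L)^m e^{-t^2L} = \Psi_{m,t}(L)$ by the functional calculus for the nonnegative self-adjoint operator $L$. So the entire chain of equivalences in the corollary is exactly the statement of Proposition~\ref{prop4.1-heat kernel} applied to this $\varphi$, provided one verifies the admissibility conditions.

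The verification that $\varphi \in \mathscr{S}_m(\mathbb{R})$ is immediate: $\varphi$ is clearly even, it factors as $\varphi(\xi) = \xi^{2m}\phi(\xi)$ with $\phi(\xi) = e^{-\xi^2} \in \mathscr{S}(\mathbb{R})$, and since $\xi^{2m}e^{-\xi^2} > 0$ for all $\xi \neq 0$, in particular $\varphi(\xi) \neq 0$ on $(-2,-1/2)\cup(1/2,2)$. The hypothesis $m > \alpha/2$ carries over directly from Proposition~\ref{prop4.1-heat kernel}, as do the ranges of $p, q, \alpha, w$ and the lower bound on $\lambda$.

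With these checks done, invoking Proposition~\ref{prop4.1-heat kernel} yields both the $\sim$ equivalences between the Lusin-type and Littlewood--Paley-type norms (on both ends of the chain) and the two $\lesssim$ estimates sandwiching $\|f\|_{\F^{\alpha,L}_{p,q,w}}$; the distribution $\rho \in \mathscr{P}$ on the left-hand side comes from the same Calder\'on reproducing formula that was used in Proposition~\ref{prop4.1-heat kernel} (applied through Theorem~\ref{thm2} and Proposition~\ref{prop-Calderon2}). The final assertion that $\rho$ can be removed when $f \in L^2$ is transported verbatim from the corresponding statement in Proposition~\ref{prop4.1-heat kernel}, whose justification (as noted in Remark~\ref{rem theorem 4.2}(b)) is that in the $L^2$ case the Calder\'on reproducing formula holds exactly in $L^2$, with no polynomial correction needed. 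No genuine obstacle arises; the work has all been done in Proposition~\ref{prop4.1-heat kernel}, and the only task here is to recognize $\Psi_{m,t}(L)$ as $\varphi(t\sqrt{L})$ for an admissible $\varphi \in \mathscr{S}_m(\mathbb{R})$.
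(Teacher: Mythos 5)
Your proposal is correct and is exactly the paper's intended argument: Corollary~\ref{cor-area function} is obtained by applying Proposition~\ref{prop4.1-heat kernel} with $\varphi(\xi)=\xi^{2m}e^{-\xi^2}\in\mathscr{S}_m(\mathbb{R})$, so that $\varphi(t\sqrt{L})=\Psi_{m,t}(L)$, with the removal of $\rho$ for $f\in L^2$ carried over from that proposition (cf.\ Remark~\ref{rem theorem 4.2}(b)), just as Corollary~\ref{cor1} was deduced from Theorem~\ref{thm2}. Your admissibility checks for $\varphi$ are the only verifications needed, and they are done correctly.
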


\subsection{Weighted Triebel-Lizorkin space $\F_{\vc,q,w}^{\alpha,L}$ and its characterizations}\label{sub-Fvc}
In this section we will give the definition of the weighted Triebel-Lizorkin space $\F_{\vc,q,w}^{\alpha,L}$ and prove some characterizations for this space.
\begin{defn}\label{de pvc}
	Let $0<q\leq \vc, \alpha\in \mathbb{R}$ and $w\in A_\vc$. Let $\psi$ be a partition of unity. The space $\F_{\vc,q,w}^{\alpha,\psi,L}$ is defined as the set of all $f\in \mathcal{S}'_\vc$ so that
	\[
	\|f\|_{\F_{\vc,q,w}^{\alpha,\psi,L}}=
	\sup_{Q: \ {\rm balls}}\Big(\f{V(Q)}{w(Q)^2}\int_{Q}\sum_{j\ge -\log_2 r_Q}^\vc (2^{j\alpha}|\psi_j(\sL)f(x)|)^q \dx\Big)^{1/q},
	\]
	where the supremum is taken over all balls $Q$ in $X$ with radius $r_Q$, with the interpretation that when $q=\infty$, one has
\[
	\|f\|_{\F_{\vc,\vc,w}^{\alpha,\psi,L}}=
	\sup_{Q: \ {\rm balls}}\sup_{j\ge -\log_2 r_Q}
	\Big(\f{V(Q)}{w(Q)^2}\int_{Q} 2^{j\alpha}|\psi_j(\sL)f(x)| \dx\Big).
	\]
\end{defn}

\begin{prop}
	Let $0<q\leq \vc, \alpha\in \mathbb{R}$ and $w\in A_\vc$. Let $\psi$ and $\varphi$ be partitions of unity. Then the spaces $\F_{\vc,q,w}^{\alpha,\psi,L}$ and $\F_{\vc,q,w}^{\alpha,\varphi,L}$ are equivalent. Hence we define the space $\F_{\vc,q,w}^{\alpha,L}$ as any space $\F_{\vc,q,w}^{\alpha,\varphi,L}$ with $\varphi$ being a partition of unity.
\end{prop}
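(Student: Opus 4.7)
By symmetry, it suffices to show $\|f\|_{\F^{\alpha,\psi,L}_{\vc,q,w}} \lesssim \|f\|_{\F^{\alpha,\varphi,L}_{\vc,q,w}}$ for any two partitions of unity $\psi,\varphi$. The plan begins with the pointwise bound
\[
|\psi_j(\sL)f(x)|^q \lesssim \sum_{|k-j|\leq 3}\int_X \f{|\varphi_k(\sL)f(z)|^q}{V(x,2^{-k})(1+2^k d(x,z))^{\lambda q - \tilde n}}\,\dz,
\]
valid for any $\lambda$ with $\lambda q > \tilde n$. This follows by chaining the trivial inequality $|\psi_j(\sL)f(x)|\leq \psi^*_{j,\lambda}(\sL)f(x)$, Proposition~\ref{prop1-maximal function} (which gives $\psi^*_{j,\lambda}(\sL)f(x)\lesssim \sum_{|k-j|\leq 3}\varphi^*_{k,\lambda}(\sL)f(x)$), taking the $q$-th power (using $|\sum a_k|^q\lesssim \sum a_k^q$ for $q\ge 1$, or subadditivity for $q<1$), and finally applying Proposition~\ref{prop2-maximal functions}, in particular \eqref{eq1-proof prop2 maximal2}, with $r=q$. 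Fixing an arbitrary ball $Q$, I would then sum over $j\geq -\log_2 r_Q$, integrate in $x\in Q$, and split the $z$-integration as $X=4Q\cup\bigsqcup_{l\geq 2}(2^{l+1}Q\setminus 2^l Q)$.

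For the near piece $z\in 4Q$, switching the order of integration and using $V(x,2^{-k})^{-1}\leq (1+2^k d(x,z))^{\tilde n} V(z,2^{-k})^{-1}$ (a consequence of \eqref{doubling3}), Lemma~\ref{lem-elementary} gives $\int_Q \dx/[V(x,2^{-k})(1+2^k d(x,z))^{\lambda q-\tilde n}]\lesssim 1$ provided $\lambda q-2\tilde n>n$. The near contribution is then dominated by $\f{V(Q)}{w(Q)^2}\sum_{k\geq -\log_2 r_Q - 3}2^{k\alpha q}\int_{4Q}|\varphi_k(\sL)f|^q\,\dz$, which fits (after enlarging $4Q$ to $8Q$, whose radius satisfies $-\log_2 r_{8Q}=-\log_2 r_Q-3$) inside the definition of $\|f\|^q_{\F^{\alpha,\varphi,L}_{\vc,q,w}}$: the leftover ratio $V(Q)w(8Q)^2/[w(Q)^2 V(8Q)]$ is bounded by a constant via Lemma~\ref{weightedlemma1}(iv) and doubling of $\mu$.

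For the annular piece $z\in 2^{l+1}Q\setminus 2^l Q$ with $l\geq 2$, observe that $d(x,z)\sim 2^l r_Q$ for $x\in Q$ and that $\int_Q \dx/V(x,2^{-k})\lesssim \max(1,(2^k r_Q)^n)$ by doubling. The polynomial growth in $2^k r_Q$ is absorbed by $(1+2^k 2^l r_Q)^{\lambda q-\tilde n}$, yielding $\int_Q \dx/[V(x,2^{-k})(1+2^k d(x,z))^{\lambda q-\tilde n}]\lesssim 2^{-l(\lambda q-\tilde n)}$. The resulting annular contribution is bounded by $2^{-l(\lambda q-\tilde n)}\cdot V(Q)w(B_l)^2/[w(Q)^2 V(B_l)]\cdot \|f\|^q_{\F^{\alpha,\varphi,L}_{\vc,q,w}}$ with $B_l=2^{l+6}Q$, chosen large enough that the range $k\geq -\log_2 r_Q-3$ lies within the defining range $k\geq -\log_2 r_{B_l}$ at $B_l$. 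Using Lemma~\ref{weightedlemma1}(iv) with any $p$ for which $w\in A_p$, the ratio $V(Q)w(B_l)^2/[w(Q)^2 V(B_l)]$ is $\lesssim 2^{ln(2p-1)}$; choosing $\lambda q>\tilde n+n(2p-1)$ makes the geometric sum over $l$ converge.

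The main technical obstacle is the careful handling of the factor $V(x,2^{-k})^{-1}$, which depends on both $x$ and the frequency scale: since $2^{-k}$ may be either much smaller than or comparable to $r_Q$, the estimate of $\int_Q\dx/V(x,2^{-k})$ has to be carried out in two regimes, and one must check that the resulting polynomial growth in $2^k r_Q$ is always absorbed by the distance-decay factor in the relevant annulus. Once this is handled, the remainder of the argument reduces to a standard geometric-series estimate combined with the doubling properties of $\mu$ and $w$.
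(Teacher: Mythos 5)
Your proposal is correct for $0<q<\infty$ and is, at bottom, the same argument the paper gives, only reorganized. The paper deduces the proposition from Lemma~\ref{lem1-Fvc} and Lemma~\ref{lem2-Fvc}, i.e.\ it passes through the Peetre-maximal version of the norm: Proposition~\ref{prop1-maximal function} switches between $\psi$ and $\varphi$ at the maximal-function level, and Proposition~\ref{prop2-maximal functions}, combined with a $4Q$ versus $X\setminus 4Q$ split and the $A_p$-doubling bound \eqref{eq- doubling w}, removes the maximal function. You use exactly the same two propositions and the same near/annulus decomposition, but merge everything into one direct estimate of $\|f\|_{\F^{\alpha,\psi,L}_{\vc,q,w}}$ by $\|f\|_{\F^{\alpha,\varphi,L}_{\vc,q,w}}$; in the local piece you take $r=q$ in \eqref{eq1-proof prop2 maximal2} and integrate over $Q$ directly via Fubini and Lemma~\ref{lem-elementary}, whereas the paper takes $r=q/p$ and invokes the boundedness of $\mathcal{M}_r$ --- since only an integral over a fixed ball is involved, your variant dispenses with the maximal operator, at the harmless price of a $\lambda$-threshold depending on the $A_p$-exponent of $w$ (immaterial, since $\lambda$ is a free auxiliary parameter and $w\in A_\vc$ lies in some $A_p$). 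Two caveats, both consistent with the paper's own level of detail: your pointwise bound with exponent $q$ does not literally cover $q=\vc$, which requires the routine sup-modification that the paper likewise leaves to the reader in Lemma~\ref{lem2-Fvc}; and the comparability $d(x,z)\sim 2^{l}r_Q$ on the first few annuli holds only up to the quasi-metric constant, which is absorbed by enlarging the ``near'' dilate, again in line with the paper's conventions.
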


The proposition is a direct consequence of Lemma \ref{lem1-Fvc} and Lemma \ref{lem2-Fvc} below.
\begin{lem}\label{lem1-Fvc}
	Let $0<q\leq \vc, \alpha\in \mathbb{R}$ and $w\in A_\vc$. Let $\psi$ and $\varphi$ be partitions of unity. Then for each $\lambda>np/q+2np^2/q$ we have
	\[
	\begin{aligned}
	\sup_{Q: \ {\rm balls}}&\Big(\f{V(Q)}{w(Q)^2}\int_{Q}\sum_{j\ge -\log_2 r_Q}^\vc (2^{j\alpha}|\psi^*_{j, \lambda}(\sL)f(x)|)^q\dx\Big)^{1/q}\\
	&\sim \sup_{Q: \ {\rm balls}}\Big(\f{V(Q)}{w(Q)^2}\int_{Q}\sum_{j\ge -\log_2r_Q}^\vc (2^{j\alpha}|\varphi^*_{j, \lambda}(\sL)f(x)|)^q \dx\Big)^{1/q} 
	\end{aligned}
	\]
\end{lem}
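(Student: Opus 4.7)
The plan is to invoke the pointwise comparison of Peetre-type maximal functions in Proposition~\ref{prop1-maximal function} and to combine it with a dyadic dilation of the ball $Q$ to absorb the shift in the lower summation index.

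Since both $\psi$ and $\varphi$ are partitions of unity supported in $[1/2,2]$, Proposition~\ref{prop1-maximal function} gives
\[
\psi^*_{j,\lambda}(\sL)f(x) \le \sup_{s\in [2^{-j-1},2^{-j}]}\psi^*_{\lambda}(s\sL)f(x) \lesi \sum_{k=j-2}^{j+3}\varphi^*_{k,\lambda}(\sL)f(x),
\]
for all $j\in \mathbb{Z}$ and $x\in X$. Raising to the $q$-th power (via the power-mean inequality for $q\ge 1$, or via $(a+b)^q\le a^q+b^q$ for $0<q\le 1$) and using that $2^{jq\alpha}\sim 2^{kq\alpha}$ whenever $|k-j|\le 3$, this yields the pointwise bound
\[
\bigl(2^{j\alpha}|\psi^*_{j,\lambda}(\sL)f(x)|\bigr)^q \lesi \sum_{k=j-2}^{j+3}\bigl(2^{k\alpha}|\varphi^*_{k,\lambda}(\sL)f(x)|\bigr)^q,
\]
with the obvious analogue in the $\ell^\infty$ sense for $q=\vc$.

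Next, fix a ball $Q$ with radius $r_Q$ and set $j_Q:=-\log_2 r_Q$. Summing the pointwise bound over $j\ge j_Q$ and swapping the order of summation gives
\[
\sum_{j\ge j_Q}\bigl(2^{j\alpha}|\psi^*_{j,\lambda}(\sL)f(x)|\bigr)^q \lesi \sum_{k\ge j_Q-2}\bigl(2^{k\alpha}|\varphi^*_{k,\lambda}(\sL)f(x)|\bigr)^q.
\]
Setting $\tilde Q:=4Q$, we have $r_{\tilde Q}=4r_Q$, so that $-\log_2 r_{\tilde Q}=j_Q-2$, and the doubling of both $\mu$ and $w\in A_\vc$ yields $V(\tilde Q)\sim V(Q)$ and $w(\tilde Q)\sim w(Q)$, while $Q\subset \tilde Q$. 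Integrating over $Q$ and enlarging the domain of integration to $\tilde Q$ on the right, we obtain
\[
\f{V(Q)}{w(Q)^2}\int_{Q}\sum_{j\ge j_Q}\bigl(2^{j\alpha}|\psi^*_{j,\lambda}(\sL)f(x)|\bigr)^q \dx \lesi \f{V(\tilde Q)}{w(\tilde Q)^2}\int_{\tilde Q}\sum_{k\ge -\log_2 r_{\tilde Q}}\bigl(2^{k\alpha}|\varphi^*_{k,\lambda}(\sL)f(x)|\bigr)^q \dx.
\]
Taking the supremum over all balls $Q$ on the left and dominating the right-hand side by the corresponding supremum over all balls yields one direction of the equivalence; the reverse inequality follows by interchanging the roles of $\psi$ and $\varphi$.

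The only real bookkeeping obstacle I foresee is the mismatch between the two lower summation indices $j_Q$ and $j_Q-2$, and this is resolved cleanly by the dyadic dilation $Q\mapsto 4Q$ together with the doubling of $w$ and $\mu$. The quantitative assumption on $\lambda$ in the statement is used only to guarantee pointwise finiteness of the Peetre maximal functions $\psi^*_{j,\lambda}(\sL)f$ and $\varphi^*_{j,\lambda}(\sL)f$ (cf.\ \eqref{eq- bound of varphi and f}), so that every inequality above is meaningful; the proof itself is insensitive to the exact threshold.
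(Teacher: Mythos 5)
Your proof is correct and takes essentially the same route as the paper, which deduces the lemma directly from the pointwise estimate \eqref{eq-psistar vaphistar} of Proposition~\ref{prop1-maximal function}. The extra bookkeeping you supply (summing over $j\ge -\log_2 r_Q$, handling the index shift via the dilated ball $4Q$, and using the doubling of $\mu$ and of $w\in A_\vc$ to compare $V(Q)/w(Q)^2$ with $V(4Q)/w(4Q)^2$) is exactly what the paper leaves implicit, and your remark that the threshold on $\lambda$ plays no real role here is consistent with the paper's proof.
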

\begin{proof}
	The proposition follows easily from \eqref{eq-psistar vaphistar}. 
\end{proof}

\begin{lem}\label{lem2-Fvc}
	Let $0<q\leq \vc, \alpha\in \mathbb{R}$ and $w\in A_p, 1<p<\vc$. Let $\psi$ be a partition of unity. Then for each $\lambda>np/q+2np^2/q$ we have the norm equivalence:
	$$
	\begin{aligned}
	\sup_{Q: \ {\rm balls}}\Big(\f{V(Q)}{w(Q)^2}\int_{Q}\sum_{j\ge -\log_2 r_Q}^\vc (2^{j\alpha}|\psi^{*}_{j, \lambda}(\sL)f(x)|)^q \dx\Big)^{1/q}\sim \|f\|_{\F_{\vc,q,w}^{\alpha,\psi,L}}.
	\end{aligned}
	$$
\end{lem}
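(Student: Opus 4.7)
The $\gtrsim$ direction of the equivalence follows immediately from the pointwise bound $|\psi_j(\sL)f(x)|\le \psi^*_{j,\lambda}(\sL)f(x)$. For the $\lesi$ direction, I will fix a ball $Q$ of radius $r_Q=2^{-j_0}$, write $g_j:=2^{j\alpha}|\psi_j(\sL)f|$ and $g^*_j:=2^{j\alpha}\psi^*_{j,\lambda}(\sL)f$, and aim to show
$$\int_Q \sum_{j\ge j_0}g^*_j(x)^q \,dx \lesi \f{w(Q)^2}{V(Q)}\|f\|_{\F^{\alpha,\psi,L}_{\vc,q,w}}^q.$$

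The key step is a pointwise decomposition of $g^*_j$. I choose $r\in(0,q)$ close to $q$ with $\lambda r>n$, apply Proposition~\ref{prop2-maximal functions}, \eqref{eq1-proof prop2 maximal}, and split the integration into $2Q$ and the annuli $2^{k+1}Q\setminus 2^kQ$, $k\ge 1$. Via Lemma~\ref{lem-elementary}, the local contribution is $\lesi \mathcal{M}(g_j^r\chi_{2Q})(x)$. For $z\in 2^{k+1}Q\setminus 2^kQ$ and $x\in Q$, the bounds $(1+2^jd(x,z))\gtrsim 2^{k+j-j_0}$ together with the doubling estimate $V(z,2^{-j})^{-1}\lesi 2^{n(k+j-j_0)}/V(2^{k+1}Q)$ give
$$g^*_j(x)^r \lesi \mathcal{M}(g_j^r\chi_{2Q})(x) + \sum_{k\ge 1}2^{-(k+j-j_0)(\lambda r-n)}\fint_{2^{k+1}Q}g_j^r\,\dz.$$
Raising to the $(q/r)$-th power, summing over $j\ge j_0$, and integrating over $Q$, the local piece is bounded by scalar $L^{q/r}$ boundedness of $\mathcal{M}$ (since $q/r>1$) combined with the definition of $\|f\|_{\F^{\alpha,\psi,L}_{\vc,q,w}}$ applied to the ball $2Q$; while the non-local piece is treated by Jensen's inequality $(\fint g_j^r)^{q/r}\le \fint g_j^q$, followed by a Young-type inequality which brings $(\cdot)^{q/r}$ inside the $k$-sum, and finally the $\F^{\alpha,\psi,L}_{\vc,q,w}$-norm bound applied to each $2^{k+1}Q$ (whose radius dominates $r_Q$, so $j_0$ lies in the admissible range $j\ge -\log_2 r_{2^{k+1}Q}$).

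The main obstacle is the convergence of the resulting $k$-series. The $A_p$ estimate \eqref{eq- doubling w} gives $w(2^{k+1}Q)\lesi 2^{knp}w(Q)$, so each $k$-term picks up a factor of order $2^{2knp}$ which must be dominated by the kernel decay $2^{-k(\lambda r-n)q/r}$. With $r$ chosen optimally close to $q$, this summability requires precisely $\lambda>np/q+2np^2/q$. Combining the two pieces yields the desired bound, and taking the supremum over $Q$ completes the argument.
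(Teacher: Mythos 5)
Your proof is correct and follows the same overall strategy as the paper's: bound $\psi^*_{j,\lambda}(\sL)f$ via Proposition~\ref{prop2-maximal functions}, split into a local piece handled by maximal-function boundedness on (unweighted) $L^{q/r}$ and a non-local piece decomposed over annuli, and control the growth of the annular terms through the $A_p$ estimate \eqref{eq- doubling w}. The technical difference lies in how the power is handled on the non-local part: the paper fixes $r=q/p$ in \eqref{eq1-proof prop2 maximal}, raises to the power $p$, and then applies H\"older's inequality with exponents $p,p'$ to separate off a bounded factor (which needs $\lambda>np/q$) before doing the annular decomposition; you instead keep $r<q$ free (close to $q$), decompose first, and then use Jensen on the annular averages together with a discrete H\"older/Young trick to push the exponent $q/r$ inside the $k$-sum. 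Your route is slightly sharper: the summability condition it produces is $(\lambda r-n)q/r>2np$, i.e.\ essentially $\lambda>n(2p+1)/q$ as $r\to q$, which is strictly weaker than the stated hypothesis $\lambda>np/q+2np^2/q$ (the paper's larger threshold is an artifact of the choice $r=q/p$). So your closing claim that summability requires \emph{precisely} $\lambda>np/q+2np^2/q$ is an arithmetic slip (off by a factor of $p$), but a harmless one, since the hypothesis implies what your argument actually needs. Two small housekeeping points: as in the paper, the case $q=\vc$ requires the usual modification with suprema in place of $\ell^q$-sums, which you do not address; and in the local term one should note, as you implicitly do, that passing from $Q$ to $2Q$ only costs a bounded factor in $V(Q)/w(Q)^2$ by \eqref{eq- doubling w}.
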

\begin{proof}
	We consider the case $0<q<\vc$. It suffices to prove 
	\[
	\sup_{Q: \ {\rm balls}}\Big(\f{V(Q)}{w(Q)^2}\int_{Q}\sum_{j\ge -\log_2 r_Q}^\vc (2^{j\alpha}|\psi^{*}_{j, \lambda}(\sL)f(x)|)^q \dx\Big)^{1/q}\lesi \|f\|_{\F_{\vc,q,w}^{\alpha,\psi,L}}.
	\]
	Indeed, fix a ball $Q$ and let $x\in Q$. For $j\geq \nu$, applying Proposition \ref{prop2-maximal functions} with $r=q/p$, we obtain
	\begin{equation*}
	\begin{aligned}
	|\psi^*_{j,\lambda}(\sL)f(x)|^{q}&\lesi \Big[\int_X \f{1}{V(z,2^{-j})}\f{|\psi_j(\sL)f(z)|^{q/p}}{(1+2^jd(x,z))^{\lambda q/p}}\dz\Big]^p\\
	&\lesi \Big[\int_{4Q} \f{1}{V(z,2^{-j})}\f{|\psi_j(\sL)f(z)|^{q/p}}{(1+2^jd(x,z))^{\lambda q/p}}\dz\Big]^p\\
	& \ \ \ \ + \Big[\int_{X\backslash 4Q} \f{1}{V(z,2^{-j})}\f{|\psi_j(\sL)f(z)|^{q/p}}{(1+2^jd(x,z))^{\lambda q/p}}\dz\Big]^p:=E_{j,1}(x) +E_{j,2}(x).
	\end{aligned}
	\end{equation*}
	Using Lemma \ref{lem-elementary} we can bound $E_{j,1}$ as follows:
	\[
	E_{j,1}(x)\lesi \left[\mathcal{M}_r(|\psi_j(\sL)f|\chi_{4Q})(x)\right]^q.
	\]

	Therefore,
	$$
	\begin{aligned}
	\f{V(Q)}{w(Q)^2}\int_{Q}&\sum_{j\ge -\log_2 r_Q}^\vc 2^{j\alpha q} E_{j,1}(x)\dx\\
	&\lesi \f{V(Q)}{w(Q)^2}\int_{Q} \sum_{j\ge -\log_2 r_Q}^\vc  2^{j\alpha q}\left[\mathcal{M}_r(|\psi_j(\sL)f|\chi_{4Q})(x)\right]^q\dx.
	\end{aligned}
	$$
	Since $\mathcal{M}_r$ is bounded on $L^q_w(X)$, the inequality above and Fefferman--Stein's inequality imply that
	$$
	\begin{aligned}
	\f{V(Q)}{w(Q)^2}\int_{Q}\sum_{j\ge -\log_2 r_Q}^\vc 2^{j\alpha q}E_{j,1}(x) dx
	&\lesi \f{V(Q)}{w(Q)^2}\int_{4Q} \sum_{j\ge -\log_2 r_Q}^\vc  (|2^{j\alpha}|\psi_j(\sL)f(x)|)^q \dx\\
	&\lesi \|f\|^q_{\F_{\vc,q,w}^{\alpha,\psi,L}}.
	\end{aligned}
	$$
	We now use H\"older's inequality to dominate $E_{j,2}(x)$ by 
	\begin{equation}\label{eq1-proof Lem2 Fvc}
	\begin{aligned}
	\Big[\int_{X\backslash 4Q} \f{1}{V(z,2^{-j})}&\f{|\psi_j(\sL)f(z)|^q}{(1+2^jd(x,z))^{\lambda q/p}}\dz\Big]\Big[\int_{X\backslash 4Q}\f{1}{V(z,2^{-j})}(1+2^jd(x,z))^{-\lambda q/p}\dz\Big]^{p/p'}.
	\end{aligned}
	\end{equation}
	Recalling Lemma \ref{lem-elementary}, 
	\[
	\begin{aligned}
	\Big[\int_{X\backslash 4Q}\f{1}{V(z,2^{-j})}(1+2^jd(x,z))^{-\lambda q/p} \dz\Big]^{p/p'}&\lesi 1
	\end{aligned}
	\]
	provided $\lambda >np/q$.
	
	Plugging this into \eqref{eq1-proof Lem2 Fvc} yields
	\begin{equation*}
	\begin{aligned}
	E_{j,2}(x)
	&\lesi \sum_{k=3}^\vc \int_{S_k(Q)}\f{1}{V(z,2^{-j})} \f{|\psi_j(\sL)f(z)|^{q}}{(1+2^jd(x,z))^{\lambda q/p}}\dz\\
	&\lesi\sum_{k=3}^\vc 2^{-(j+k-\nu)\lambda q/p}\int_{S_k(Q)}\f{1}{V(z,2^{-j})} |\psi_j(\sL)f(z)|^{q}\dz\\
	&\lesi\sum_{k=3}^\vc \f{2^{-(j+k-\nu)(\lambda q/p-n)}}{V(2^kQ)}\int_{S_k(Q)} |\psi_j(\sL)f(z)|^{q}\dz\\
	&\lesi\sum_{k=3}^\vc \f{2^{-k(\lambda q/p-n)}}{V(2^kQ)}\int_{S_k(Q)} |\psi_j(\sL)f(z)|^{q}\dz
	\end{aligned}
	\end{equation*}
	where $\nu$ is the integer part of $-\log_2 r_Q$, and in the last inequality we use the fact that $j\ge \nu$. Hence, using \eqref{eq- doubling w}, we deduce that

	\begin{equation*} 
	\begin{aligned}
	\f{V(Q)}{w(Q)^2}\int_{Q}\sum_{j\ge -\log_2 r_Q}^\vc &2^{j\alpha q}E_{j,2}(x)\dx\\
	&\lesi \sum_{k=3}^\vc \f{2^{-k(\lambda q/p-n)}}{V(2^kQ)}\f{V(Q)^2}{w(Q)^2}\int_{2^kQ} \sum_{j\ge \nu}^\vc  (2^{j\alpha}|\psi_j(\sL)f(z)|)^q\dz\\
	&\lesi \sum_{k=3}^\vc 2^{-k(\lambda q/p-n-2np)}\f{V(2^kQ)}{w(2^kQ)^2}\int_{2^kQ} \sum_{j\ge \nu-k}^\vc  (2^{j\alpha}|\psi_j(\sL)f(z)|)^q\dz\\
	&\lesi \sum_{k\ge 0}2^{-k(\lambda q/p-n-2np)}\|f\|^q_{\F_{\vc,q,w}^{\alpha,\psi,L}}\\
	&\lesi \|f\|^q_{\F_{\vc,q,w}^{\alpha,\psi,L}}
	\end{aligned}
	\end{equation*}
	as long as $\lambda>np/q+2np^2/q$.
	
	Combining the estimates for $E_{j,1}$ and $E_{j,2}$ we conclude that 
	$$
	\Big(\f{V(Q)}{w(Q)^2}\int_{Q}\sum_{j\ge -\log_2 r_Q}^\vc (2^{j\alpha}|\psi^{*}_{j, \lambda}(\sL)f(x)|)^q\dx\Big)^{1/q}
	\lesi \|f\|_{\F_{\vc,q,w}^{\alpha,\psi,L}},
	$$
	which implies the desired result when $q<\infty$.

	The proof for the case $q=\vc$ is similar with minor modifications, and hence we leave it to the reader.
\end{proof}

\medskip

\begin{prop}\label{prop2-Fvc}
	Let $0<q\leq \vc, \alpha\in \mathbb{R}$ and $w\in A_p, 1<p<\vc$. Let $\psi$ be a partition of unity. Then for $\lambda>np/q+2np^2/q$ we have
	\[
	\begin{aligned}
	\|f\|_{\F_{\vc,q,w}^{\alpha,L}}&\sim \sup_{x\in X, t>0}\Big(\f{V(x,t)}{w(B(x,t))^2}\int_{B(x,t)}\int_0^{t}(s^{-\alpha}|\psi(s\sL)f(y)|)^q \f{ds}{s}\dy\Big)^{1/q}=:N_{\alpha,q, L}(f)\\
	&\sim \sup_{x\in X, t>0}\Big(\f{V(x,t)}{w(B(x,t))^2}\int_{B(x,t)}\int_0^{t}(s^{-\alpha}|\psi^*_\lambda(s\sL)f(y)|)^q \f{ds}{s}\dy\Big)^{1/q}=:N^*_{\lambda,\alpha,q, L}(f)
	\end{aligned}
	\]
	for all $f\in \SLm$.
\end{prop}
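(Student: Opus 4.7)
The plan is to prove the two-sided chain
\[
\|f\|_{\F_{\vc,q,w}^{\alpha,L}} \lesi N_{\alpha,q,L}(f) \le N^*_{\lambda,\alpha,q,L}(f) \lesi \|f\|_{\F_{\vc,q,w}^{\alpha,L}},
\]
which parallels the structure of Theorem~\ref{thm1-continuouscharacter} but is adapted to the ball-indexed, truncated-scale norm defining $\F^{\alpha,L}_{\vc,q,w}$. The middle inequality is immediate from the pointwise bound $|\psi(s\sL)f(y)|\le \psi^*_\lambda(s\sL)f(y)$. Below I treat $q<\vc$; the case $q=\vc$ is entirely analogous with sums replaced by suprema.

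For the rightmost inequality, I would fix a ball $B=B(x_0,t)$, set $\nu:=\lceil-\log_2 t\,\rceil$, and split the $s$-integral dyadically as $\int_0^t=\sum_{j\ge\nu}\int_{2^{-j-1}}^{2^{-j}}$. Proposition~\ref{prop1-maximal function} provides
\[
\sup_{s\in[2^{-j-1},2^{-j}]}\psi^*_\lambda(s\sL)f(y)\lesi \sum_{k=j-2}^{j+3}\psi^*_{k,\lambda}(\sL)f(y),
\]
so taking the $q$-th power, integrating in $s$, and summing in $j$ yields
\[
\int_0^t (s^{-\alpha}\psi^*_\lambda(s\sL)f(y))^q\f{ds}{s}\lesi \sum_{k\ge \nu-2}(2^{k\alpha}\psi^*_{k,\lambda}(\sL)f(y))^q.
\]
Multiplying by $V(B)/w(B)^2$, integrating in $y\in B$, and enlarging $B$ by a bounded factor to absorb the shift from $\nu$ to $\nu-2$, Lemma~\ref{lem2-Fvc} then yields the desired bound by $\|f\|_{\F^{\alpha,L}_{\vc,q,w}}^q$.

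For the leftmost inequality, which is the main work, I would choose $\psi$ so that additionally $\psi\ne 0$ on $[1/2,2]$ (harmless since by Theorem~\ref{thm1} the space is independent of the partition) and combine two earlier maximal estimates. First, Proposition~\ref{prop3-maximal functions} with $\varphi=\psi$ and exponent $q$ supplies
\[
|\psi_j(\sL)f(x)|^q\lesi \int_{2^{-j-2}}^{2^{-j+2}}|\psi^*_\lambda(s\sL)f(x)|^q\f{ds}{s}.
\]
Second, Proposition~\ref{prop2-maximal functions} applied with $r=q/p$ (so that $\lambda q/p>n$ by the hypothesis $\lambda>np/q+2np^2/q$) gives
\[
|\psi^*_\lambda(s\sL)f(x)|^q\lesi \Big[\int_X \f{1}{V(z,s)}\f{|\psi(s\sL)f(z)|^{q/p}}{(1+d(x,z)/s)^{\lambda q/p}}\dz\Big]^p.
\]
Summing in $j\ge\nu:=\lceil-\log_2 r_Q\rceil$ (the intervals $[2^{-j-2},2^{-j+2}]$ overlap boundedly), integrating against $\frac{V(Q)}{w(Q)^2}\,\dx$ on $Q$, and decomposing the inner $z$-integral as in the proof of Lemma~\ref{lem2-Fvc} into the local part over $4Q$ and the annular parts $S_k(Q)$ for $k\ge 3$, one applies: on $4Q$, the identity $[\mathcal{M}(g^{q/p})(x)]^p=[\mathcal{M}_{q/p}(g)(x)]^q$ together with the $L^q$-boundedness of $\mathcal{M}_{q/p}$ (valid since $p>1$); on each $S_k(Q)$, the geometric decay $(1+d(x,z)/s)^{-\lambda q/p}\lesi 2^{-(j+k-\nu)\lambda q/p}$ combined with the $A_p$ weight estimate \eqref{eq- doubling w} relating $V(2^kQ)/w(2^kQ)^2$ to $V(Q)/w(Q)^2$. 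The dyadic reassembly $\sum_{j\ge\nu}\int_{2^{-j-2}}^{2^{-j+2}}(\cdots)\sim \int_0^{c\,r_Q}(\cdots)$ inflates $Q$ only by a bounded factor, harmlessly absorbed by the outer supremum defining $N_{\alpha,q,L}(f)$.

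The main obstacle is the annular ($X\setminus 4Q$) contribution in the last step: the hypothesis $\lambda>np/q+2np^2/q$ is precisely what is needed for the geometric series $\sum_{k\ge 3}2^{-k(\lambda q/p-n-2np)}$ to converge, exactly as in the proof of Lemma~\ref{lem2-Fvc}. All other estimates reduce cleanly to already-established maximal-function inequalities.
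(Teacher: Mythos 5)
Your proposal is correct and follows essentially the same route as the paper: it uses the same ingredients (Propositions \ref{prop1-maximal function}, \ref{prop2-maximal functions}, \ref{prop3-maximal functions} and Lemma \ref{lem2-Fvc}, with the same splitting of the $z$-integral into $4Q$ and the annuli $S_k(Q)$, the maximal function bound on the local part, the $A_p$ estimate \eqref{eq- doubling w} on the far part, and the same role of the hypothesis $\lambda>np/q+2np^2/q$). The only difference is organizational: the paper runs the cycle $\|f\|_{\F_{\vc,q,w}^{\alpha,L}}\lesi N^*_{\lambda,\alpha,q,L}(f)\lesi N_{\alpha,q,L}(f)\lesi \|f\|_{\F_{\vc,q,w}^{\alpha,L}}$, whereas you fuse the first two steps into a direct proof of $\|f\|_{\F_{\vc,q,w}^{\alpha,L}}\lesi N_{\alpha,q,L}(f)$ and close with $N^*_{\lambda,\alpha,q,L}(f)\lesi \|f\|_{\F_{\vc,q,w}^{\alpha,L}}$, which amounts to composing the same estimates in a different order.
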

\begin{proof}
	As usual we will only prove the proposition for $0<q<\vc$.  
	We first claim that
	\begin{equation}
	\label{eq1-prop2-Fvc}
	\|f\|_{\F_{\vc,q,w}^{\alpha,L}}\lesi N^*_{\lambda,\alpha,q, L}(f).
	\end{equation}
	
	Indeed, from Proposition \ref{prop3-maximal functions} we have, for every $j\in \mathbb{Z}$ and $x\in X$, 
	$$
	|\psi_j(\sL)f(x)|^q\lesi \int_{2^{-j-2}}^{2^{-j+2}}|\psi^*_{\lambda}(s\sL)f(x)|^q\f{ds}{s}
	$$
	Hence, for any $\nu\in \mathbb{Z}$,
	$$
	\sum_{j\ge \nu}(2^{j\alpha}|\psi_j(\sL)f(x)|)^q\lesi \sum_{j\ge \nu}\int_{2^{-j-2}}^{2^{-j+2}}(s^{-\alpha}|\psi^*_{\lambda}(s\sL)f(x)|)^q\f{ds}{s}=\int_{0}^{2^{-\nu+2}}(s^{-\alpha}|\psi^*_{\lambda}(s\sL)f(x)|)^q\f{ds}{s}.
	$$
	This implies that 
	\[
	\begin{aligned}
	\f{V(Q)}{w(Q)^2}\int_Q&\sum_{j\ge \nu}(2^{j\alpha}|\psi_j(\sL)f(x)|)^q\dx\lesi \f{V(Q)}{w(Q)^2}\int_{0}^{2^{-\nu+2}}\int_Q(s^{-\alpha}|\psi^*_{\lambda}(s\sL)f(x)|)^q\dx\f{ds}{s}
	\end{aligned}
	\]
	for every  ball $Q$. Hence \eqref{eq1-prop2-Fvc} follows.
	
	\bigskip
	
	We next prove that
	\begin{equation}
	\label{eq2-prop2-Fvc}
	N^*_{\lambda,\alpha,q, L}(f)\lesi N_{\alpha,q, L}(f).
	\end{equation}
	To do this, we employ Proposition \ref{prop2-maximal functions} to find that
	\[
	|\psi^*_{\lambda}(s\sL)f(x)|\lesi \Big[\int_X \f{1}{V(z,s)}\Big(1+\f{d(x,z)}{s}\Big)^{-\lambda r}|\psi(s\sL)f(z)|^r\dz\Big]^{1/r}
	\]
	where $r=q/p$. Fix a ball $Q=B(x_1,t)$. Then the above implies that, for every $x\in Q$,
	\[
	\begin{aligned}
\int_0^t(s^{-\alpha}|\psi^*_{\lambda}(s\sL)f(x)|)^q\f{ds}{s}&\lesi \int_{0}^{t}\Big[\int_{4Q} \f{1}{V(z,s)}\Big(1+\f{d(x,z)}{s}\Big)^{-\lambda r}|s^{-\alpha}\psi(s\sL)f(z)|^r\dz\Big]^{q/r}\f{ds}{s}\\
	&+\int_{0}^{t}\Big[\int_{X\backslash 4Q} \f{1}{V(z,s)}\Big(1+\f{d(x,z)}{s}\Big)^{-\lambda r}|s^{-\alpha}\psi(s\sL)f(z)|^r\dz\Big]^{q/r}\f{ds}{s}.
	\end{aligned}
	\]
	Using the above estimates and an argument similar to the proof of Lemma \ref{lem2-Fvc} we obtain \eqref{eq2-prop2-Fvc}.
	
	\medskip
	
	It remains to show that
	\begin{equation}
	\label{eq3-prop2-Fvc}
	N_{\alpha,q, L}(f)\lesi \|f\|_{\F_{\vc,q,w}^{\alpha,L}}.
	\end{equation}
	
	Observe that for $t\in (2^{-\nu-1}, 2^{-\nu}]$ with $\nu \in \mathbb{Z}$,
	\begin{equation}\label{eq1-proof prop2 Fvc}
	\begin{aligned}
	\f{V(x,t)}{w(B(x,t))^2}&\int_{B(x,t)}\int_0^{t}(s^{-\alpha}|\psi(s\sL)f(y)|)^q \f{ds}{s}dy\\
	&\lesi \f{V(x,2^{-\nu})}{w(B(x,2^{-\nu}))^2}\int_{B(x,2^{-\nu})}\int_0^{2^{-\nu}}(s^{-\alpha}|\psi(s\sL)f(y)|)^q \f{ds}{s}dy\\
	&\lesi \f{V(x,2^{-\nu})}{w(B(x,2^{-\nu}))^2}\int_{B(x,2^{-\nu})}\sum_{j\geq \nu}\int_{2^{-j-1}}^{2^{-j}}(s^{-\alpha}|\psi(s\sL)f(y)|)^q\f{ds}{s}dy.
	\end{aligned}
	\end{equation}
	By Proposition \ref{prop1-maximal function}, for $\lambda>0$ and $s\in [2^{-j-1}, 2^{-j}]$ we have
	\begin{equation*}
	|\psi(s\sL)f(x)|\lesi \sum_{k=j-2}^{j+3}\psi_{k,\lambda}^*f(x), \ \ x\in X.
	\end{equation*}
	Substituting this into \eqref{eq1-proof prop2 Fvc} and using Lemma \ref{lem2-Fvc}, we obtain  \eqref{eq3-prop2-Fvc}.
	
	\bigskip
	
	The conclusion of the proposition follows immediately from \eqref{eq1-prop2-Fvc}, \eqref{eq2-prop2-Fvc} and \eqref{eq3-prop2-Fvc}. 
\end{proof}

\bigskip

\begin{thm}\label{prop3-Fvc}
	Let $0<q\leq \vc, \alpha\in \mathbb{R}, m>\alpha/2$, and $w\in A_p, 1\leq p <\infty$. Let $\varphi\in \mathscr{S}_m(\mathbb{R})$. Assume  $\lambda>np/q+2np^2/q$.  	\begin{enumerate}[\rm{(i)}]
	\item If $p=1$, then for each $f\in \mathcal S'$ there exists $\rho\in \mathscr P$ so that 
	\begin{equation}
	\label{eq1- sm pvc}
	K^*_{\lambda,\alpha,q, L}(f-\rho):=\sup_{x\in X, t>0}\Big(\f{V(x,t)}{w(B(x,t))^2}\int_{B(x,t)}\int_0^{t}(s^{-\alpha}|\varphi^*_\lambda(s\sL)(f-\rho)(y)|)^q \f{ds}{s}\dy\Big)^{1/q} \lesi \|f\|_{\F_{\vc,q,w}^{\alpha,L}}
	\end{equation} 
	\item If $p>1$, then 
	\begin{equation}
	\label{eq2- sm pvc}
	\|f\|_{\F_{\vc,q,w}^{\alpha,L}}\lesi \sup_{x\in X, t>0}\Big(\f{V(x,t)}{w(B(x,t))^2}\int_{B(x,t)}\int_0^{t}(s^{-\alpha}|\varphi(s\sL)f(y)|)^q \f{ds}{s}\dy\Big)^{1/q}=:K_{\alpha,q, L}(f)
	\end{equation}
	for every $f\in \mathcal S'$.
	\end{enumerate}
	Moreover, the distribution $\rho$ in \eqref{eq1- sm pvc} can be omitted if $f\in L^2(X)$.
\end{thm}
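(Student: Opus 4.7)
The plan is to pass, in both parts, from the sequence-space form of the $\F_{\vc,q,w}^{\alpha,L}$-norm given by Definition~\ref{de pvc} to the continuous Carleson-type characterisation furnished by Proposition~\ref{prop2-Fvc}, and then imitate the two hard directions of the proof of Theorem~\ref{thm2} (Step~1 for part~(i) and Step~3 for part~(ii)) with the modifications required to accommodate the Carleson normalisation $V(Q)/w(Q)^2$.

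For part~(i), the starting point is the Calder\'on reproducing formula of Proposition~\ref{prop-Calderon2}: with a fixed partition of unity $\tilde\psi$, there exists $\rho\in\mathscr P$ such that $f-\rho=c_{\tilde\psi}\int_0^\vc \tilde\psi(s\sL)f\,\f{ds}{s}$ in $\mathcal S'$. The very same calculation as in Step~1 of the proof of Theorem~\ref{thm2} (which requires only $\varphi\in\mathscr S_m(\mathbb R)$ with $m>\alpha/2$ and the choice $M>m+\lambda/2$) then delivers the pointwise bound
\[
|\varphi^{*}_\lambda(t\sL)(f-\rho)(y)|\lesi \sum_{j\in\mathbb Z}2^{-2m|\nu-j|}\,\tilde\psi^{*}_{j,\lambda}(\sL)f(y),\quad t\in[2^{-\nu-1},2^{-\nu}].
\]
Raising to the $q$-th power (trivially if $q\le 1$, via Young's inequality on $\mathbb Z$ if $q>1$), multiplying by $(t^{-\alpha})^q\sim 2^{\nu\alpha q}$, integrating over $s\in(0,r_Q]$ and summing over $\nu\ge\nu_Q:=-\log_2 r_Q$, the quantity $K^{*}_{\lambda,\alpha,q,L}(f-\rho)^q$ is majorised by $\sum_{j\in\mathbb Z}c_{\nu_Q}(j)\,\f{V(Q)}{w(Q)^2}\int_Q(2^{j\alpha}\tilde\psi^{*}_{j,\lambda}(\sL)f(y))^q\,\dy$, with $c_{\nu_Q}(j)\lesi 1$ for $j\ge\nu_Q$ and $c_{\nu_Q}(j)\lesi 2^{-(\nu_Q-j)q(2m-\alpha)}$ for $j<\nu_Q$. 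The first piece is absorbed into $\|f\|^q_{\F_{\vc,q,w}^{\alpha,L}}$ at once by Lemma~\ref{lem2-Fvc} applied to $Q$. For the second piece I would enlarge $Q$ to a ball $Q_j\supset Q$ of radius $r_{Q_j}\sim 2^{-j}$, apply Lemma~\ref{lem2-Fvc} to $Q_j$, and pay for the enlargement through the ratio $(V(Q)/w(Q)^2)/(V(Q_j)/w(Q_j)^2)\lesi (V(Q_j)/V(Q))^{2p-1}$ coming from \eqref{eq- doubling w} with $p=1$, together with the volume growth $V(Q_j)/V(Q)\lesi 2^{n(\nu_Q-j)}$; absolute convergence of the resulting geometric series in $\nu_Q-j$ then follows by combining these two with the exponential decay already present in $c_{\nu_Q}(j)$.

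For part~(ii) I would first invoke Proposition~\ref{prop2-Fvc} to replace $\|f\|_{\F_{\vc,q,w}^{\alpha,L}}$ by $N^{*}_{\lambda,\alpha,q,L}(f)$ computed with a partition of unity $\tilde\psi$, and then adapt Step~3 of Theorem~\ref{thm2} to derive the pointwise analogue of \eqref{eq-varphi*} with $\varphi^{*}_\lambda$ replaced by $\tilde\psi^{*}_\lambda$, namely
\[
\tilde\psi^{*}_\lambda(2^{-j}t\sL)f(x)^r\lesi \sum_{k\ge j}2^{(j-k)(A-\lambda)r}\int_X\f{1}{V(z,2^{-k})}\f{|\varphi(2^{-k}t\sL)f(z)|^r}{(1+2^k d(x,z))^{\lambda r}}\,\dz.
\]
This is obtained by applying Lemma~\ref{lem-DKP} with $\phi=\varphi$ (admissible since $\varphi\in\mathscr S_m$ is even and non-vanishing on $(1/2,2)$) to get a discrete Calder\'on formula for $f$ in $\mathcal S'$, applying $\tilde\psi(2^{-j}t\sL)$ to both sides, and factoring the DKP auxiliary function $\eta$ as $\eta=\eta_0\varphi$ on its compact support away from the origin (which is possible because $\varphi$ is non-vanishing there); the kernel estimates of Lemma~\ref{lem1} then handle all the compositions that arise. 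The rest mimics the end of Step~3 of Theorem~\ref{thm2} closely: substitute into $N^{*}_{\lambda,\alpha,q,L}(f)$, apply Minkowski's inequality at the $L^{q/r}$-level as in \eqref{eq-Mink-heat}, and dominate the $\int_{B(x,t)}$-integral by the supremum defining $K_{\alpha,q,L}(f)$, using once more the enlarged-ball/weighted-measure-ratio argument (with $p>1$ this time, so the exponent $2p-1$ is strictly larger than in part~(i)).

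The hardest step, common to both halves, will be the bookkeeping for the enlarged-ball/weighted-measure comparison: when one passes from a ball $Q$ to an ancestor $Q_j$ much larger than $Q$, the Carleson normalisations pick up a factor $(V(Q_j)/V(Q))^{2p-1}$ from \eqref{eq- doubling w}, and this factor must be absorbed either by the decay generated by the vanishing moments of $\varphi$ (in part~(i)) or by the free parameters $A$ and $M$ in the DKP identity (in part~(ii)). The hypothesis $\lambda>np/q+2np^2/q$ is tailored precisely so that these competing geometric series balance, exactly as it does in the proof of Lemma~\ref{lem2-Fvc}. Finally, the statement that $\rho$ can be removed when $f\in L^2$ follows from Remark~\ref{rem theorem 4.2}(b): the Calder\'on reproducing formula then holds in $L^2$ (hence in $\mathcal S'$) without any polynomial correction, so the whole argument in part~(i) goes through with $\rho=0$.
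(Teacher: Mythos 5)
Your part~(i) argument has a genuine gap at the coarse scales. For the terms $j<\nu_Q$ you enlarge $Q$ to a ball $Q_j$ of radius $\sim 2^{-j}$ and pay the normalisation ratio through \eqref{eq- doubling w}, which costs a factor $\bigl(V(Q_j)/V(Q)\bigr)^{2p-1}\lesi 2^{n(\nu_Q-j)(2p-1)}$; at $p=1$ this is $2^{n(\nu_Q-j)}$, while the only decay available from the analogue of \eqref{eq1-heat kernel charac} is $2^{-q(2m-\alpha)(\nu_Q-j)}$ with merely $m>\alpha/2$. The resulting series $\sum_{j<\nu_Q}2^{(\nu_Q-j)\,(n-q(2m-\alpha))}$ need not converge, and the hypothesis $\lambda>np/q+2np^2/q$ cannot rescue it: that condition enters only through Lemma \ref{lem2-Fvc} (controlling the tails of the Peetre maximal function inside a fixed ball), not through the coarse-scale sum, so your claim that $\lambda$ is "tailored so that these competing series balance" misattributes its role. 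The paper closes exactly this gap by a different mechanism: since $\psi^*_{j,\lambda}(\sL)f$ is essentially constant on balls of radius at most $2^{-j}$ (inequality \eqref{eq-TimBui inequality}), the average over the small ball $B(x,t)$ is replaced by the average over $B(x,2^{\nu_0-j}t)$ with \emph{no} volume penalty, and then the $A_1$ condition is used in the monotone form $V(B)/w(B)\lesi V(B')/w(B')$ for $B\subset B'$, so the series converges under $2m>\alpha$ alone. This is precisely where the restriction $p=1$ is needed, and without this step your proof of \eqref{eq1- sm pvc} does not go through under the stated hypotheses.

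Your part~(ii) is closer to being correct but needs repair and is more roundabout than the paper's route. Applying $\tilde\psi(2^{-j}t\sL)$ to the Lemma \ref{lem-DKP} identity leaves the term $\tilde\psi(2^{-j}t\sL)\Psi(2^{-j}t\sL)f$, which your factoring $\eta=\eta_0\varphi$ does not control; what is needed is to factor $\tilde\psi$ itself through $\varphi$ at scale $2^{-j}t$ (or to invoke Proposition \ref{prop3-maximal functions} together with Lemma \ref{lem2}), and since $\varphi\in\mathscr{S}_m(\mathbb{R})$ is only assumed nonvanishing on the open set $(1/2,2)$ while ${\rm supp}\,\tilde\psi\subset[1/2,2]$, division by $\varphi$ requires care at the endpoints. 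The paper avoids the detour through Proposition \ref{prop2-Fvc} altogether: it proves $\|f\|_{\F_{\vc,q,w}^{\alpha,L}}\lesi K^*_{\lambda,\alpha,q,L}(f)$ directly from Proposition \ref{prop3-maximal functions}, and then $K^*_{\lambda,\alpha,q,L}(f)\lesi K_{\alpha,q,L}(f)$ by applying \eqref{eq-varphi*} with $r=q/p$ and repeating the near/far annuli decomposition of Lemma \ref{lem2-Fvc}; this is where $w\in A_p$ with $p>1$ and $\lambda>np/q+2np^2/q$ are genuinely used, and no coarse-scale enlargement arises because the sum there runs only over the fine scales $k\ge j$. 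Your final remark that $\rho$ can be removed for $f\in L^2$ is correct and agrees with the paper.
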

\begin{proof}
	We will only prove the theorem for $0<q<\vc$, as the case $q=\vc$ is similar and  so we omit the details.
	
	\medskip

	We prove \eqref{eq1- sm pvc} first. Recall from \eqref{eq1-heat kernel charac} that we can find $\rho\in \mathscr P$ such that for each $u\in [2^{-\nu-1},2^{-\nu})$  for some $\nu\in \mathbb{Z}$ and $\lambda>0$,
	\begin{equation*}%\label{eq1-proof pvc}
	\begin{aligned}
	|\varphi^*_\lambda(u\sL)(f-\rho)|&\lesi \sum_{j\in \mathbb Z}2^{-2m|\nu-j|} |\psi^*_{j,\lambda}(\sL)f|.
	\end{aligned}
	\end{equation*}	
	It follows that if $q\le 1$,  then  	we have
	\begin{equation}\label{eq2-proof pvc}
	\begin{aligned}
	\int_{2^{-\nu-1}}^{2^{-\nu}}(u^{-\alpha}|\varphi^*_\lambda(u\sL)(f-\rho)|)^q\f{du}{u}&\lesi \sum_{j\in \mathbb Z}2^{-q(2m-\alpha)|\nu-j|} (2^{j\alpha}|\psi^*_{j,\lambda}(\sL)f|)^q
	\end{aligned}
	\end{equation}
	for every $\nu\in \mathbb{Z}$.
	Therefore, for any $t>0$ so that $t\in [2^{-\nu_0-1}, 2^{-\nu_0})$, and $x\in X$, we have
	\[
	\begin{aligned}
\f{V(x,t)}{w(B(x,t))^2}&\int_{B(x,t)}\int_0^{t}(u^{-\alpha}|\varphi^*_\lambda(u\sL)(f-\rho)|)^q\f{du}{u}d\mu\\
	&\lesi \f{V(x,t)}{w(B(x,t))^2}\int_{B(x,t)}\sum_{\nu\ge \nu_0}\int_{2^{-\nu-1}}^{2^{-\nu}}(u^{-\alpha}|\varphi^*_\lambda(u\sL)(f-\rho)|)^q\f{du}{u}d\mu \\
	&\lesi \f{V(x,t)}{w(B(x,t))^2}\int_{B(x,t)}\sum_{\nu\ge \nu_0}\sum_{j\in \mathbb{Z}}2^{-q(2m-\alpha)|\nu-j|} (2^{j\alpha}|\psi^*_{j,\lambda}(\sL)f|)^qd\mu \\
	&\lesi \f{V(x,t)}{w(B(x,t))^2}\int_{B(x,t)}\sum_{\nu\ge \nu_0}\sum_{j\ge \nu_0}\ldots \, +\f{V(x,t)}{w(B(x,t))^2}\int_{B(x,t)}\sum_{\nu\ge \nu_0}\sum_{j< \nu_0}\ldots \,\\
	&=: I_1 + I_2.
	\end{aligned}
	\]
	
	Assume $w\in A_1$ and $\lambda>n/q+2n/q$.  Choose $p>1$ such that $\lambda>np/q+2np^2/q$. Then, since $w\in A_p$, using Lemma \ref{lem2-Fvc} we deduce that 
	\begin{equation}\label{I1}
	\begin{aligned}
	I_1\lesi \|f\|^q_{\F_{\vc,q,w}^{\alpha,L}}.
	\end{aligned}
	\end{equation}
	
For the term $I_2$ we have
\begin{equation}\label{I2}
	\begin{aligned}
I_2&= \f{V(x,t)}{w(B(x,t))^2}\int_{B(x,t)}\sum_{\nu\ge \nu_0}2^{-q(2m-\alpha)(\nu-\nu_0)}\sum_{j<\nu_0}2^{-q(2m-\alpha)(\nu_0-j)} (2^{j\alpha}|\psi^*_{j,\lambda}(\sL)f|)^qd\mu\\
&\lesi \sum_{j<\nu_0}2^{-q(2m-\alpha)(\nu_0-j)}\f{V(x,t)}{w(B(x,t))^2}\int_{B(x,t)} (2^{j\alpha}|\psi^*_{j,\lambda}(\sL)f|)^qd\mu.
	\end{aligned}
\end{equation}
We now claim that 
\begin{equation}\label{eq-TimBui inequality}
\f{1}{V(x,t)}\int_{B(x,t)}(2^{j\alpha}|\psi^*_{j,\lambda}(\sL)f|)^q\dy\lesi \f{1}{V(x,2^{\nu_0-j}t)}\int_{B(x,2^{\nu_0-j}t)}(2^{j\alpha}|\psi^*_{j,\lambda}(\sL)f|)^q\dy.
\end{equation}
Indeed, for any $y\in B(x,t)$, and $y'\in B(x,2^{\nu_0-j}t)$ with $j<\nu_0$, we have $d(y,y')\lesi 2^{-j}$. Hence,
\[
\begin{aligned}
\psi^*_{j,\lambda}(\sL)f(y)&=\sup_{z\in X}\f{|\psi_{j,\lambda}(\sL)f(z)|}{(1+2^jd(y,z))^\lambda}\sim \sup_{z\in X}\f{|\psi_{j,\lambda}(\sL)f(z)|}{(1+2^jd(y,z)+2^jd(y,y'))^\lambda}\\
&\le \sup_{z\in X}\f{|\psi_{j,\lambda}(\sL)f(z)|}{(1+2^jd(z,y'))^\lambda}\\
&=\psi^*_{j,\lambda}(\sL)f(y'),
\end{aligned}
\]
so that $\sup_{y\in B(x,t)}\psi^*_{j,\lambda}(\sL)f(y) \lesi \inf_{y'\in  B(x,2^{\nu_0-j}t)} \psi^*_{j,\lambda}(\sL)f(y')$.
Thus \eqref{eq-TimBui inequality} follows.

Using \eqref{eq-TimBui inequality} and Lemma \ref{lem2-Fvc}, for each $j<\nu_0$ we have
\begin{equation}\label{I2b}
\begin{aligned}
\f{V(x,t)}{w(B(x,t))^2}\int_{B(x,t)} (2^{j\alpha}|\psi^*_{j,\lambda}(\sL)f|)^qd\mu& \lesi \f{V(x,t)^2}{w(B(x,t))^2}\f{1}{V(x,2^{\nu_0-j}t)}\int_{B(x,2^{\nu_0-j}t)} (2^{j\alpha}|\psi^*_{j,\lambda}(\sL)f|)^qd\mu\\
& \lesi \f{V(x,2^{\nu_0-j}t)}{w(B(x,2^{\nu_0-j}t))^2}\int_{B(x,2^{\nu_0-j}t)} (2^{j\alpha}|\psi^*_{j,\lambda}(\sL)f|)^qd\mu\\
&\lesi \|f\|^q_{\F_{\vc,q,w}^{\alpha,L}}
\end{aligned}
\end{equation}
where in the second inequality we used the fact that 
\[
\f{V(x,t)}{w(B(x,t))}\lesi \f{V(x,2^{\nu_0-j}t)}{w(B(x,2^{\nu_0-j}t))},
\]
which is a consequence the $A_1$-condition (see Lemma~\ref{weightedlemma1}).
Inserting \eqref{I2b} into \eqref{I2} yields
\[
I_2\lesi \|f\|^q_{\F_{\vc,q,w}^{\alpha,L}}.
\]
The above estimate for $I_2$ and \eqref{I1} imply \eqref{eq1- sm pvc}.
	
	If $q>1$, using Young's inequality and a similar argument, we also obtain that
	\[
	\begin{aligned}
	\f{V(x,t)}{w(B(x,t))^2}\int_{B(x,t)}\int_0^{t}(u^{-\alpha}|\varphi^*_\lambda(u\sL)(f-\rho)|)^q\f{du}{u}\dx\lesi \|f\|^q_{\F_{\vc,q,w}^{\alpha,L}}.
	\end{aligned}
	\]
	This completes the proof of  \eqref{eq1- sm pvc} in all cases.
	
	\bigskip

It remains to prove \eqref{eq2- sm pvc}.	 
	We first show that 
	\begin{equation}\label{eq1b-proof pvc}
	\|f\|_{\F_{\vc,q,w}^{\alpha,L}}\lesi K^*_{\lambda,\alpha,q, L}(f).
	\end{equation}
	To do this, let $\psi$ be a partition of unity. Assume that $t\in (2^{-\nu+1}, 2^{-\nu+2}]$ with $\nu\in \mathbb{Z}$.  
		By Proposition \ref{prop3-maximal functions}, 
	for any $\lambda>0$,
	 	\[
	\begin{aligned}
	\sum_{j\geq \nu}(2^{j\alpha}|\psi_j(\sL)f(x)|)^q&\lesi \sum_{j\geq \nu}\int_{2^{-j-2}}^{2^{-j+2}}|t^{-\alpha}\varphi^*_{\lambda}(s\sL)f(x)|^q\f{ds}{s}\\
	&\lesi \int_{0}^{2^{-\nu+2}}|t^{-\alpha}\varphi^*_{\lambda}(s\sL)f(x)|^q\f{ds}{s}.
	\end{aligned}
	\]
	This clearly implies \eqref{eq1b-proof pvc}.
	 
	Therefore,  to complete the proof of \eqref{eq2- sm pvc}, it suffices to prove that
	%We next prove that
	\[
	K^*_{\lambda,\alpha,q,L}(f)\lesi K_{\alpha,q,L}(f).
	\]
	To this end, fix $A>\lambda+\alpha$. Then for each $j\ge \nu$ and $t\in [1,2] $, recall that
	$$
	\varphi_\lambda^{**}(2^{-j}t\sL)(L)f(x)=\sup_{k\ge j}\sup_{y\in X}2^{-(k-j)A}\f{|\varphi(2^{-k}t\sL)f(y)|}{(1+2^{j}d(x,y))^\lambda}.
	$$
	
	Applying \eqref{eq-varphi*} with $r=q/p$ gives
	\[
	\begin{aligned}
	|\varphi^*_{\lambda}(2^{-j}t\sL)f(x)|^{r}&\lesi \sum_{k\geq j}2^{-(k-j)(A-\lambda)r}
	\int_{X} \f{|\varphi(2^{-k}t\sL)f(z)|^r}{V(z,2^{-k})(1+2^kd(x,z))^{\lambda r}}\dz.
	\end{aligned}
	\]
	Since $p=q/r>1$, it follows that
	\begin{equation}\label{eq1-proof lem4 Fvc}
	\begin{aligned}
	\Big[\int_1^2&|\varphi^*_{\lambda}(2^{-j}t\sL)f(x)|^{q}\f{dt}{t}\Big]^{r/q}\\
	&\lesi \sum_{k\geq j}2^{-(k-j)(A-\lambda)r}
	\Big\{\int_1^2\Big[\int_{X} \f{|\varphi(2^{-k}t\sL)f(z)|^r}{V(z,2^{-k})(1+2^kd(x,z))^{\lambda r}}\dz\Big]^{q/r}\f{dt}{t}\Big\}^{r/q}.
	\end{aligned}
	\end{equation}
	
	Fix a ball $Q$ and let $x\in Q$. 
	The argument in the proof of Lemma \ref{lem2-Fvc} then shows that
	\[
	\begin{aligned}
	\Big[\int_{X} &\f{|\varphi(2^{-k}t\sL)f(z)|^r}{V(z,2^{-k})(1+2^kd(x,z))^{\lambda r}}\dz\Big]^{q/r}\\
	&\lesi \left[\mathcal{M}_r(|\varphi(2^{-k}t\sL)f|\chi_{4Q})(x)\right]^q+\sum_{\ell=3}^\vc \f{2^{-\ell(\lambda q/p-n)}}{V(2^\ell Q)}\int_{S_\ell (Q)} |\varphi(2^{-k}t\sL)f(z)|^{q}\dz.
	\end{aligned}
	\]
	Inserting this into \eqref{eq1-proof lem4 Fvc}, we obtain
	\begin{equation*}
	\begin{aligned}
	\Big[\int_1^2&|\varphi^*_{\lambda}(2^{-j}t\sL)f(x)|^{q}\f{dt}{t}\Big]^{r/q}\\
	&\lesi \sum_{k\geq j}2^{-(k-j)(A-\lambda)r}
	\Big\{\int_1^2\left[\mathcal{M}_r(|\varphi(2^{-k}t\sL)f|\chi_{4Q})(x)\right]^q\f{dt}{t}\Big\}^{r/q}\\
	& \ \ \ \ +\sum_{k\geq j}2^{-(k-j)(A-\lambda)r}
	\Big\{\int_1^2\sum_{\ell=3}^\vc \f{2^{-\ell(\lambda q/p-n)}}{V(2^\ell Q)}\int_{S_\ell (Q)} |\varphi(2^{-k}t\sL)f(z)|^{q}\dz\f{dt}{t}\Big\}^{r/q}.
	\end{aligned}
	\end{equation*}
	This implies that, for any $\nu\in \mathbb{Z}$, 
	\begin{equation*}
	\begin{aligned}
	\sum_{j\ge \nu}&\int_{2^{-j}}^{2^{-j+1}}(2^{j\alpha}|\varphi^*_{\lambda}(t\sL)f(x)|)^{q}\f{dt}{t}\\
	&\lesi \sum_{j\ge \nu}\Big(\sum_{k\geq j}2^{-(k-j)(A-\lambda-\alpha)r}
	\Big\{\int_{2^{-k}}^{2^{-k+1}}2^{k\alpha q}\left[\mathcal{M}_r(|\varphi(t\sL)f|\chi_{4Q})(x)\right]^q\f{dt}{t}\Big\}^{r/q}\Big)^{q/r}\\
	& \ \ +\sum_{j\ge \nu}\Big(\sum_{k\geq j}2^{-(k-j)(A-\lambda-\alpha)r}
	\Big\{\int_{2^{-k}}^{2^{-k+1}}\sum_{\ell=3}^\vc \f{2^{-\ell(\lambda q/p-n)}}{V(2^\ell Q)}\int_{S_\ell (Q)} (2^{k\alpha}|\varphi(t\sL)f(z)|)^{q}\dz\f{dt}{t}\Big\}^{r/q}\Big)^{q/r}.	\end{aligned}
	\end{equation*}
	We now use Young's inequality to get the bound:
	\begin{equation*}
	\begin{aligned}
	\sum_{j\ge \nu}&\int_{2^{-j}}^{2^{-j+1}}(t^{-\alpha}|\varphi^*_{\lambda}(t\sL)f(x)|)^{q}\f{dt}{t}\\
	&\lesi \sum_{k\geq \nu}\int_{2^{-k}}^{2^{-k+1}}t^{-\alpha q}\left[\mathcal{M}_r(|\varphi(t\sL)f|\chi_{4Q})(x)\right]^q\f{dt}{t}\\
	& \ \ +\sum_{k\geq \nu}\int_{2^{-k}}^{2^{-k+1}}\sum_{\ell=3}^\vc \f{2^{-\ell(\lambda q/p-n)}}{V(2^\ell Q)}\int_{S_\ell (Q)} (t^{-\alpha}|\varphi(t\sL)f(z)|)^{q}\dz\f{dt}{t}.	\end{aligned}
	\end{equation*}
	It follows that 
	\begin{equation*}
	\begin{aligned}
	\int_{0}^{2^{-\nu+1}}(t^{-\alpha}|\varphi^*_{\lambda}(t\sL)f(x)|)^{q}\f{dt}{t}
	&\lesi \int_{0}^{2^{-\nu+1}}t^{-\alpha q}\left[\mathcal{M}_r(|\varphi(t\sL)f|\chi_{4Q})(x)\right]^q\f{dt}{t}\\
	& \ \ +\int_{0}^{2^{-\nu+1}}\sum_{\ell=3}^\vc \f{2^{-\ell(\lambda q/p-n)}}{V(2^\ell Q)}\int_{S_\ell (Q)} (t^{-\alpha}|\varphi(t\sL)f(z)|)^{q}\dz\f{dt}{t}.	\end{aligned}
	\end{equation*}
	Using the above estimate and arguing similarly to the proof of Lemma \ref{lem2-Fvc}, we deduce that $K^*_{\lambda,\alpha,q,L}(f)\lesi K_{\alpha,q,L}(f)$. This completes the proof of \eqref{eq2- sm pvc}, and hence the theorem follows.
	
\end{proof}

\begin{rem}
It is natural to question if the estimate \eqref{eq1- sm pvc} holds true for $w\in A_\vc$. We note that when $L=-\Delta$ is the Laplacian on $\mathbb{R}^n$, 
we show in Theorem \ref{BMOthm} that the classical weighted BMO space  ${\rm BMO}_{w}(\mathbb{R}^n)$ coincides with $\F^{0,-\Delta}_{\vc,2,w}(\mathbb{R}^n)$ for $w\in A_1\cap RH_2$. In this case, the estimate \eqref{eq1- sm pvc} is known as the Carleson measure condition and to the best of our knowledge, the problem 
of obtaining the estimate \eqref{eq1- sm pvc} on the classical weighted BMO space with all $w\in A_\vc$   is still open. In this sense, the restriction $w\in A_1$ is reasonable.
	
	\medskip

	If we replace the definition of $\|f\|_{\F_{\vc,q,w}^{\alpha,\psi,L}}$ in Definition \ref{de pvc} by the following quantity
		\[
		\sup_{Q: \ {\rm balls}}\Big(\f{1}{w(Q)}\int_{Q}\sum_{j\ge -\log_2 r_Q}^\vc (2^{j\alpha}|\psi_j(\sL)f(x)|)^q w(x)\dx\Big)^{1/q},
		\]
then by a similar argument we can show that \eqref{eq1- sm pvc} holds true for $w\in A_\vc$. However, one of the main drawbacks of this definition is that when  $L=-\Delta$, we have $\F^{0,-\Delta}_{\vc,2,w}(\mathbb{R}^n)\equiv {\rm BMO}(\mathbb{R}^n)$ for every $w\in A_\vc$. See for example \cite{Qui5}.
\end{rem}
\bigskip

We have the following results as a direct consequence of Theorem~ \ref{prop3-Fvc}:
\begin{cor}\label{cor-Fvc}
	Let $0<q\leq \vc, \alpha\in \mathbb{R}, m>\alpha/2$, and $w\in A_p, 1\le p<\infty$. Assume $\lambda>np/q+2np^2/q$. 
	Let $\Psi_{m,t}(L)=(t^2L)^me^{-t^2L}$  and
	\begin{equation}
	\label{eq-PetreeFunction-heatkernel}
	\Psi_{m,t,\lambda}^*(L)f(x)=\sup_{y\in X}\f{|\Psi_{m,t}(L)f(y)|}{(1+d(x,y)/t)^\lambda}.
	\end{equation}
	
	\begin{enumerate}[(i)]
	\item If $p=1$, then for each 
	 $f\in \mathcal S'$, there exists $\rho\in \mathscr P$ so that
	\[
	\sup_{x\in X, t>0}\Big(\f{V(x,t)}{w(B(x,t))^2}\int_{B(x,t)}\int_0^{t}(s^{-\alpha}|\Psi^*_{m,s,\lambda}(L)(f-\rho)(y)|)^q \f{ds}{s}dy\Big)^{1/q}\lesi \|f\|_{\F_{\vc,q,w}^{\alpha,L}}.
	\]
     \item If $p>1$, then 
	\[
	\begin{aligned}
	\|f\|_{\F_{\vc,q,w}^{\alpha,L}}&\lesi \sup_{x\in X, t>0}\Big(\f{V(x,t)}{w(B(x,t))^2}\int_{B(x,t)}\int_0^{t}(s^{-\alpha}|\Psi_{m,s}(L)f(y)|)^q \f{ds}{s}dy\Big)^{1/q}
	\end{aligned}
	\]
	for every $f\in \mathcal{S}'$.
	\end{enumerate}
	 	
	Moreover, the distribution $\rho$ can be removed if $f\in L^2(X)$.
\end{cor}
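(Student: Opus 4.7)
The plan is to derive this corollary as an immediate specialization of Theorem \ref{prop3-Fvc} by choosing the specific function $\varphi(\xi) = \xi^{2m} e^{-\xi^2}$.

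First I would verify that $\varphi \in \mathscr{S}_m(\mathbb{R})$. Indeed, $\varphi$ is an even function in $\mathscr{S}(\mathbb{R})$; it factors as $\varphi(\xi) = \xi^{2m}\phi(\xi)$ with $\phi(\xi) = e^{-\xi^2} \in \mathscr{S}(\mathbb{R})$; and $\varphi(\xi) > 0$ for every $\xi \neq 0$, so in particular $\varphi(\xi) \neq 0$ on $(-2,-1/2) \cup (1/2,2)$. By spectral theory, for every $s > 0$ one has $\varphi(s\sL) = (s\sL)^{2m} e^{-(s\sL)^2} = (s^2 L)^m e^{-s^2 L} = \Psi_{m,s}(L)$, hence $\varphi(s\sL)f = \Psi_{m,s}(L)f$ for every $f \in \mathcal S'$, and consequently the Peetre-type maximal functions coincide: $\varphi^*_\lambda(s\sL)f(x) = \Psi^*_{m,s,\lambda}(L)f(x)$ for all $x \in X$ and $s > 0$.

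Given this identification, part (i) of the corollary is precisely \eqref{eq1- sm pvc} in Theorem \ref{prop3-Fvc} applied to this $\varphi$ with $p = 1$ (so that $w \in A_1$), and part (ii) is exactly \eqref{eq2- sm pvc} in the case $p > 1$. The final claim that the polynomial $\rho$ may be omitted whenever $f \in L^2(X)$ likewise transfers verbatim from the corresponding assertion at the end of Theorem \ref{prop3-Fvc}. There is essentially no obstacle to overcome: the full content of the corollary is already contained in Theorem \ref{prop3-Fvc} once one observes that $\Psi_{m,t}(L)$ is the functional calculus of a particular element of $\mathscr{S}_m(\mathbb{R})$; the assumption $m > \alpha/2$ and the range $\lambda > np/q + 2np^2/q$ are inherited unchanged from the hypotheses of that theorem.
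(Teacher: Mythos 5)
Your proposal is correct and follows exactly the route of the paper, which presents this corollary as a direct consequence of Theorem~\ref{prop3-Fvc} obtained by specializing to $\varphi(\xi)=\xi^{2m}e^{-\xi^2}\in\mathscr{S}_m(\mathbb{R})$, for which $\varphi(s\sqrt{L})=\Psi_{m,s}(L)$ by the spectral calculus. Your verification of the hypotheses and the transfer of the statement about removing $\rho$ for $f\in L^2(X)$ are exactly what is needed.
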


\section{Atomic decompositions}

We now prove atomic decomposition theorems for our new Besov and Triebel-Lizorkin   spaces. We first introduce the definition of weighted atoms related to $L$.

\begin{defn}\label{defLmol}
	Let $0< p\leq \vc$, $M\in \mathbb{N}_+$ and $w\in A_\vc$. A function $a$ is said to be an $(L, M, p, w)$ atom if there exists a dyadic cube $Q\in \mathcal{D}$ so that
	\begin{enumerate}[{\rm (i)}]
		\item $a=L^{M} b$;
		
		\item ${\rm supp} \,L^{k} b\subset 3B_Q$, $k=0,\ldots , 2M$;
		
		\item $\displaystyle |L^{k} b(x)|\leq \ell(Q)^{2(M-k)}w(Q)^{-1/p}$, $k=0,\ldots , 2M$;
	\end{enumerate}
where $B_Q$ is a ball associated to $Q$ defined in Remark \ref{rem1-Christ}.
\end{defn}

\subsection{Atomic decompositions for Besov spaces $\B^{\alpha,L}_{p,q,w}$.}
Our first main result in this section is the following theorem in which we show that each function in $\B^{\alpha,L}_{p,q,w}$ can be characterized in terms of atomic decompositions.
\begin{thm}\label{thm1- atom Besov}
	Let $\alpha\in \mathbb{R}$, $0<p,q\leq \vc$, $M\in \mathbb{N}_+$ and $w\in A_\infty$. Assume $f\in \B^{\alpha,L}_{p,q,w}$. Then there exist a sequence of $(L,M,p,w)$ atoms $\{a_Q\}_{Q\in \mathcal{D}_\nu, \nu\in \mathbb{Z}}$ and a sequence of coefficients  $\{s_Q\}_{Q\in \mathcal{D}_\nu, \nu\in\mathbb{Z}}$ so that
	$$
	f=\sum_{\nu\in\mathbb{Z}}\sum_{Q\in \mathcal{D}_\nu}s_Qa_Q \ \ \text{in $\mathcal{S}'_\vc$}.
	$$
	Moreover,
	$$
	\Big[\sum_{\nu\in\mathbb{Z}}2^{\nu \alpha q}\Big(\sum_{Q\in \mathcal{D}_\nu}|s_Q|^p\Big)^{q/p}\Big]^{1/q}\lesi \|f\|_{\B^{\alpha,L}_{p,q,w}}.
	$$
\end{thm}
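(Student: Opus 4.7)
The plan is to extract atoms from a continuous Calder\'on reproducing formula for $f$, discretized in both scale (over dyadic $\nu\in\mathbb{Z}$) and space (over the Christ cubes $\mathcal{D}_\nu$). The crucial structural device is a pair of functions $\phi,\tilde\phi\in\mathscr{S}(\mathbb{R})$ chosen so that: (i) $\phi(\xi)=\xi^{2M}\Psi(\xi)$ with $\Psi$ even, allowing us to pull an $L^M$ out of $\phi(t\sL)$; (ii) the Fourier transforms of both $\phi$ and $\Psi$ are supported in $(-1,1)$, so that by finite propagation speed (Lemma~\ref{lem:finite propagation}) the kernels $K_{(t^2L)^k\Psi(t\sL)}(x,y)$ are supported in $\{d(x,y)\le t\}$ with pointwise bound $\lesi V(x,t)^{-1}$ for $0\le k\le 2M$; and (iii) $\tilde\phi$ has $\mathrm{supp}\,\tilde\phi\subset[1/2,2]$ and $c_\phi:=\int_0^\infty\phi(t)\tilde\phi(t)\,dt/t\ne 0$. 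Arguing as in Proposition~\ref{prop-Calderon2} gives
$$f=c_\phi^{-1}\int_0^\infty\phi(t\sL)\tilde\phi(t\sL)f\,\f{dt}{t}\quad\text{in }\mathcal{S}'_\vc.$$

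Splitting the integral into $(2^{-\nu-1},2^{-\nu}]$ and each $X$ into the cubes of $\mathcal{D}_\nu$, we write
$$f=c_\phi^{-1}\sum_{\nu\in\mathbb{Z}}\sum_{Q\in\mathcal{D}_\nu}L^M\biggl[\int_{2^{-\nu-1}}^{2^{-\nu}}t^{2M}\int_Q K_{\Psi(t\sL)}(\cdot,y)\tilde\phi(t\sL)f(y)\dy\,\f{dt}{t}\biggr],$$
and set
$$s_Q:=C_0\,w(Q)^{1/p}\sup_{y\in Q,\,t\in[2^{-\nu-1},2^{-\nu}]}|\tilde\phi(t\sL)f(y)|,$$
letting $b_Q$ be $s_Q^{-1}$ times the bracketed expression and $a_Q:=L^M b_Q$. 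Finite propagation forces $\mathrm{supp}\,L^k b_Q\subset 3B_Q$ for $0\le k\le 2M$ (since the kernel vanishes unless $d(x,y)\le t\lesi \ell(Q)$ and $y\in Q\subset B_Q$), while the kernel bound together with $V(x,2^{-\nu})\sim\mu(Q)$ for $x$ near $Q$ yields $|L^k b_Q(x)|\lesi\ell(Q)^{2(M-k)}w(Q)^{-1/p}$ once $C_0$ is fixed. Hence each $a_Q$ is an $(L,M,p,w)$ atom in the sense of Definition~\ref{defLmol}.

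For the coefficient bound, since any $y,z\in Q\in\mathcal{D}_\nu$ satisfy $d(y,z)\lesi 2^{-\nu}\sim t$,
$$\sup_{y\in Q,\,t\sim 2^{-\nu}}|\tilde\phi(t\sL)f(y)|\lesi\inf_{z\in Q}\sup_{t\in[2^{-\nu-1},2^{-\nu}]}\tilde\phi^*_\lambda(t\sL)f(z)\lesi\inf_{z\in Q}\sum_{k=\nu-2}^{\nu+3}\psi^*_{k,\lambda}(\sL)f(z),$$
where $\psi$ is any partition of unity and the last step is Proposition~\ref{prop1-maximal function} applied to $\tilde\phi$ (legitimate since $\mathrm{supp}\,\tilde\phi\subset[1/2,2]$). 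Writing $g_\nu(z)$ for this final sum,
$$\sum_{Q\in\mathcal{D}_\nu}|s_Q|^p\lesi\sum_Q w(Q)\inf_{z\in Q}g_\nu(z)^p\le\int_X g_\nu(z)^p\,w(z)\dz=\|g_\nu\|_{p,w}^p,$$
and for $\lambda>nq_w/p$, Proposition~\ref{prop2-thm1} delivers
$$\Big[\sum_\nu 2^{\nu\alpha q}\Big(\sum_Q|s_Q|^p\Big)^{q/p}\Big]^{1/q}\lesi\Big[\sum_\nu(2^{\nu\alpha}\|g_\nu\|_{p,w})^q\Big]^{1/q}\sim\|f\|_{\B^{\alpha,L}_{p,q,w}}.$$

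The main obstacle lies in establishing convergence of the atomic series in $\mathcal{S}'_\vc$ rather than merely the coefficient bound: one must show that the tails (over $|\nu|$ large and over cubes $Q$ far from the support of any fixed test function in $\mathcal{S}_\vc$) are negligible. This is handled via the same Peetre-type estimates that drive Step~4, combined with the rapid decay of elements of $\mathcal{S}_\vc$. Secondary technical adaptations are needed at the endpoints $p=\infty$ or $q=\infty$ (where $w(Q)^{1/p}$ is interpreted as $1$ and $\ell^p$ sums are replaced by $\ell^\infty$ suprema), and when $p<1$ or $q<1$ one uses subadditivity of $t\mapsto t^p$ in place of Minkowski; these are routine modifications of the argument above.
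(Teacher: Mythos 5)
Your proposal is correct and follows essentially the same route as the paper's proof: the same discretized Calder\'on reproducing formula with one factor supported in $[1/2,2]$ and one finite-propagation factor of the form $(t^2L)^M\Phi(t\sqrt{L})$, the same choice of $s_Q$ and $a_Q=L^Mb_Q$ with supports and size bounds coming from Lemma \ref{lem:finite propagation}, and the same Peetre-maximal-function machinery (Propositions \ref{prop1-maximal function} and \ref{prop2-thm1}) for the coefficient estimate. The only deviations are minor: you take the supremum over $(y,t)\in Q\times[2^{-\nu-1},2^{-\nu}]$ rather than over $y$ of the $t$-integral, and you control the coefficients using just the six neighbouring dyadic scales via Proposition \ref{prop1-maximal function} instead of the paper's exponentially weighted sum over all scales plus Young's inequality, while, like the paper, you essentially assert rather than detail the convergence of the double series in $\mathcal{S}'_\vc$.
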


\begin{proof}
	Let $\psi$ be a partition of unity and $\Phi$ be a function as in Lemma \ref{lem:finite propagation}. Due to Proposition \ref{prop-Calderon2}, for $f\in \mathcal{S}'_\vc$ we have
	$$
	f=c\int_0^\vc \psi(t\sL)\Phi(t\sL)f\f{dt}{t}
	$$
	in $\mathcal{S}'_\vc$, where $\displaystyle c=\Big[\int_0^\vc \psi(\xi)\Phi(\xi)\f{d\xi}{\xi}\Big]^{-1}$.
	
	As a consequence, by Lemma \ref{Christ'slemma} we have
	\begin{equation}
	\label{eq1-atom}
	\begin{aligned}
	f&=c\sum_{\nu\in \mathbb{Z}}\int_{2^{-\nu-1}}^{2^{-\nu}}(t^2L)^M\Phi(t\sL)[\psi_{M}(t\sL)f]\f{dt}{t}\\
	&=c\sum_{Q\in \mathcal{D}_\nu}\sum_{\nu\in \mathbb{Z}}\int_{2^{-\nu-1}}^{2^{-\nu}}(t^2L)^M\Phi(t\sL)[\psi_{M}(t\sL)f\cdot\chi_{Q}]\f{dt}{t}
	\end{aligned}
	\end{equation}
	where $\psi_{M}(\xi)=\xi^{-2M}\psi(\xi)$.

	For each $\nu\in \mathbb{Z}$ and $Q\in \mathcal{D}_\nu$, we set
	$$
	s_Q= w(Q)^{1/p}\sup_{y\in Q}\int_{2^{-\nu-1}}^{2^{-\nu}}|\psi_{M}(t\sL)f(y)|\f{dt}{t},
	$$
	and $a_Q=L^{M}b_Q$, where
	\begin{equation}\label{eq-bQ}
	b_Q=\f{1}{s_Q} \int_{2^{-\nu-1}}^{2^{-\nu}} t^{2M}\Phi(t\sqrt{L})[\psi_{M}(t\sL)f.\chi_Q]\f{dt}{t}.
	\end{equation}
	Obviously,  we deduce from \eqref{eq1-atom} that
	$$
	f=\sum_{\nu\in\mathbb{Z}}\sum_{Q\in \mathcal{D}_\nu}s_Qa_Q \ \ \text{in $\mathcal{S}'_\vc$}.
	$$
	For $k=0,\ldots,2M$, we have
	$$
	\begin{aligned}
	L^{k}b_Q(x)&=\f{1}{s_Q} \int_{2^{-\nu-1}}^{2^{-\nu}} t^{2(M-k)}(t^2L)^k\Phi(t\sqrt{L})[\psi_{M}(t\sL)f.\chi_Q]\f{dt}{t}\\
	&=\f{1}{s_Q} \int_{2^{-\nu-1}}^{2^{-\nu}}\int_Q t^{2(M-k)}K_{(t^2L)^k\Phi(t\sqrt{L})}(x,y)\psi_{M}(t\sL)f(y)d\mu(y)\f{dt}{t}.
	\end{aligned}
	$$
	Using the finite propagation property in Lemma \ref{lem:finite propagation} we can see that
	\[
	{\rm supp}\,L^{k}b\subset 3B_Q,
	\]
	and
	$$
	\begin{aligned}
	|L^{k}b_Q(x)|&\lesi \f{2^{-2\nu(M-k)}}{s_Q} \int_{2^{-\nu-1}}^{2^{-\nu}}\int_Q \f{1}{V(y,2^{-\nu})}|\psi_{M}(t\sL)f(y)|d\mu(y)\f{dt}{t}\\
	&\lesi 2^{-\nu(2M-2k)}w(Q)^{-1/p}.
	\end{aligned}
	$$
	It follows that $a_Q$ is (a multiple of) an $(L, M,p, w)$ atom.
	
	We now prove that
	$$
	\Big[\sum_{\nu\in\mathbb{Z}}2^{\nu\alpha q}\Big(\sum_{Q\in \mathcal{D}_\nu}|s_Q|^p\Big)^{q/p}\Big]^{1/q}\sim \|f\|_{\B^{\alpha,L}_{p,q,w}}.
	$$
	
	Indeed, for any $\lambda>0$ we note that
	\[
	\begin{aligned}
	s_Q&\sim w(Q)^{1/p}\sup_{y\in Q}\int_{2^{-\nu-1}}^{2^{-\nu}}|\psi_{M}(t\sL)f(y)|\f{dt}{t}\le \Big[\int_Q |F^*_{M,\lambda}(\sL)f(x)|^pw(x)\dx\Big]^{1/p}
	\end{aligned}
	\]
	where 
	\[
	F^*_{M,\lambda}(\sL)f(x)=\sup_{y\in X}\f{\displaystyle \int_{2^{-\nu-1}}^{2^{-\nu}}|\psi_{M}(t\sL)f(y)|\f{dt}{t}}{(1+2^\nu d(x,y))^\lambda}.
	\]
	As a consequence,
	\[
	\sum_{Q\in \mathcal{D}_\nu}|s_Q|^p\le \int_X |F^*_{M,s,\lambda}(\sL)f(x)|^pw(x)\dx.
	\]
	On the other hand, fixing a $m>\alpha/2$ and arguing similarly to \eqref{eq1-heat kernel charac} we obtain
	\begin{equation*}
	\begin{aligned}
		|F^*_{M,\lambda}(\sL)f|&\lesi \sum_{j\in \mathbb{Z}}2^{-2m|\nu-j|} \psi^*_{j,\lambda}(\sL)f.
	\end{aligned}
	\end{equation*}
	Therefore,
	\begin{equation}
	\label{eq1-atom Besov}
	\begin{aligned}
	\Big[\sum_{\nu\in\mathbb{Z}}2^{\nu\alpha q}\Big(\sum_{Q\in \mathcal{D}_\nu}|s_Q|^p\Big)^{q/p}\Big]^{1/q}&\leq \Big[\sum_{\nu\in\mathbb{Z}}2^{\nu\alpha q}\Big(\Big\|\sum_{j\in \mathbb{Z}}2^{-2m|\nu-j|} \psi^*_{j,\lambda}(\sL)f\Big\|_{p,w}\Big)^{q}\Big]^{1/q}\\
	&\leq \Big[\sum_{\nu\in\mathbb{Z}}\Big(\Big\|\sum_{j\in \mathbb{Z}}2^{-(2m-\alpha)|\nu-j|} 2^{j\alpha}\psi^*_{j,\lambda}(\sL)f\Big\|_{p,w}\Big)^{q}\Big]^{1/q}.
	\end{aligned}
	\end{equation}
	
	\noindent {\bf Case 1: $p\geq 1$.} We have
	\[
	\Big[\sum_{\nu\in\mathbb{Z}}2^{\nu\alpha q}\Big(\sum_{Q\in \mathcal{D}_\nu}|s_Q|^p\Big)^{q/p}\Big]^{1/q}\leq \Big[\sum_{\nu\in\mathbb{Z}}\Big(\sum_{j\in \mathbb{Z}}2^{-(2m-\alpha)|\nu-j|} 2^{j\alpha}\|\psi^*_{j,\lambda}(\sL)f\|_{p,w}\Big)^{q}\Big]^{1/q}.
	\]
	At this stage, if $q\geq 1$ we then use Young's inequality and Proposition \ref{prop2-thm1} to further imply
	\[
	\Big[\sum_{\nu\in\mathbb{Z}}2^{\nu\alpha q}\Big(\sum_{Q\in \mathcal{D}_\nu}|s_Q|^p\Big)^{q/p}\Big]^{1/q}\leq \Big[\sum_{j\in\mathbb{Z}}\Big( 2^{j\alpha}\|\psi^*_{j,\lambda}(\sL)f\|_{p,w}\Big)^{q}\Big]^{1/q}
	\sim \|f\|_{\B^{\alpha,L}_{p,q,w}}.
	\]
	Otherwise if $0<q<1$, we then have
	\[
	\begin{aligned}
	\Big[\sum_{\nu\in\mathbb{Z}}2^{\nu \alpha q}\Big(\sum_{Q\in \mathcal{D}_\nu}|s_Q|^p\Big)^{q/p}\Big]^{1/q}&\leq \Big[\sum_{\nu\in\mathbb{Z}}\sum_{j\in \mathbb{Z}}\Big(2^{-(2m-\alpha)|\nu-j|} 2^{j\alpha}\|\psi^*_{j,\lambda}(\sL)f\|_{p,w}\Big)^{q}\Big]^{1/q}\\
	&\leq \Big[\sum_{j\in\mathbb{Z}}\Big( 2^{j\alpha}\|\psi^*_{j,\lambda}(\sL)f\|_{p,w}\Big)^{q}\Big]^{1/q}
	\sim \|f\|_{\B^{\alpha,L}_{p,q,w}}
	\end{aligned}
	\]
	where in the last line we use Proposition \ref{prop2-thm1}.
	
	\medskip
	
	\noindent {\bf Case 2: $0<p< 1$.} From \eqref{eq1-atom Besov}, we obtain
	\[
	\Big[\sum_{\nu\in\mathbb{Z}}2^{\nu\alpha q}\Big(\sum_{Q\in \mathcal{D}_\nu}|s_Q|^p\Big)^{q/p}\Big]^{1/q}\leq \Big[\sum_{\nu\in\mathbb{Z}}\Big(\sum_{j\in \mathbb{Z}}2^{-p(2m-\alpha)|\nu-j|} 2^{jsp}\|\psi^*_{j,\lambda}(\sL)f\|^p_{p,w}\Big)^{q/p}\Big]^{1/q}.
	\]
	If $q/p\geq 1$, we then use Young's inequality and Proposition \ref{prop2-thm1} to get the estimate
	\[
	\Big[\sum_{\nu\in\mathbb{Z}}2^{\nu\alpha q}\Big(\sum_{Q\in \mathcal{D}_\nu}|s_Q|^p\Big)^{q/p}\Big]^{1/q}\leq \Big[\sum_{j\in\mathbb{Z}}\Big( 2^{j\alpha}\|\psi^*_{j,\lambda}(\sL)f\|_{p,w}\Big)^{q}\Big]^{1/q}
	\sim \|f\|_{\B^{\alpha,L}_{p,q,w}}.
	\]
	Otherwise if $0<q/p<1$, we then have
	\[
	\begin{aligned}
	\Big[\sum_{\nu\in\mathbb{Z}}2^{\nu\alpha q}\Big(\sum_{Q\in \mathcal{D}_\nu}|s_Q|^p\Big)^{q/p}\Big]^{1/q}&\leq \Big[\sum_{\nu\in\mathbb{Z}}\sum_{j\in \mathbb{Z}}\Big(2^{-(2m-\alpha)|\nu-j|} 2^{j\alpha}\|\psi^*_{j,\lambda}(\sL)f\|_{p,w}\Big)^{q}\Big]^{1/q}\\
	&\leq \Big[\sum_{j\in\mathbb{Z}}\Big( 2^{j\alpha}\|\psi^*_{j,\lambda}(\sL)f\|_{p,w}\Big)^{q}\Big]^{1/q}
	\sim \|f\|_{\B^{\alpha,L}_{p,q,w}}
	\end{aligned}
	\]
	where in the last line we use Proposition \ref{prop2-thm1}.

	This completes the  proof.
\end{proof}

\bigskip

Conversely, each atomic decomposition with suitable coefficients belong to the spaces  $\B^{\alpha,L}_{p,q,w}$.

\begin{thm}\label{thm2- atom Besov}
	Let $\alpha\in \mathbb{R}$, $0<p,q\leq \vc$ and $w\in A_\vc$. Assume that 
	$$
	f=\sum_{\nu\in\mathbb{Z}}\sum_{Q\in \mathcal{D}_\nu}s_Qa_Q \ \ \text{in $\mathcal{S}'_\vc$},
	$$
	where $\{a_Q\}_{Q\in \mathcal{D}_\nu, \nu\in \mathbb{Z}}$ is a sequence of $(L,M,p,w)$ atoms and $\{s_Q\}_{Q\in \mathcal{D}_\nu, \nu\in\mathbb{Z}}$ is a sequence of coefficients satisfying
	$$
	\Big[\sum_{\nu\in\mathbb{Z}}2^{\nu\alpha q}\Big(\sum_{Q\in \mathcal{D}_\nu}|s_Q|^p\Big)^{q/p}\Big]^{1/q}<\vc.
	$$
	Then $f\in \B^{\alpha,L}_{p,q,w}$ and
	$$
	\|f\|_{\B^{\alpha,L}_{p,q,w}} \lesi \Big[\sum_{\nu\in\mathbb{Z}}\Big(\sum_{Q\in \mathcal{D}_\nu}|s_Q|^p\Big)^{q/p}\Big]^{1/q}
	$$
	provided $M>\f{n}{2}+\f{1}{2}\max\{\alpha,\f{nq_w}{1\wedge p\wedge q}-\alpha\}$.
\end{thm}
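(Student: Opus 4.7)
The plan is to fix a partition of unity $\psi$ and control $\|\psi_j(\sqrt{L})f\|_{p,w}$ uniformly in $j\in\mathbb{Z}$, then assemble the $\ell^q$-norm with weight $2^{j\alpha}$. Since $\psi_j(\sqrt{L})$ acts continuously on $\mathcal{S}'_\vc$ and annihilates $\mathscr{P}$, one may write
\[
\psi_j(\sqrt{L})f = \sum_{\nu\in\mathbb{Z}}\sum_{Q\in\mathcal{D}_\nu} s_Q\,\psi_j(\sqrt{L})a_Q,
\]
with pointwise convergence justified by the kernel estimates below.

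The first step is to establish the pointwise estimate
\[
|\psi_j(\sqrt{L})a_Q(x)| \lesi 2^{-2|j-\nu|M}\,w(Q)^{-1/p}\,(1+2^{j\wedge\nu}d(x,x_Q))^{-N},
\]
up to a polynomial correction in $(j-\nu)^+$ arising from the volume-growth bounds \eqref{doubling2} and \eqref{doubling3}. When $j\ge\nu$, I would write $\psi_j(\sqrt{L})=2^{-2jM}L^M\eta_j(\sqrt{L})$ with $\eta(\xi)=\xi^{-2M}\psi(\xi)\in\mathscr{S}(\mathbb{R})$ supported in $[1/2,2]$, so that $\psi_j(\sqrt{L})a_Q=2^{-2jM}\eta_j(\sqrt{L})(L^{2M}b_Q)$, and apply Lemma~\ref{lem1}(a) together with $|L^{2M}b_Q|\le\ell(Q)^{-2M}w(Q)^{-1/p}\chi_{3B_Q}$. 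When $j<\nu$, I would use the commutation identity $\psi_j(\sqrt{L})L^M=2^{2jM}\phi_j(\sqrt{L})$ with $\phi(\xi)=\xi^{2M}\psi(\xi)$, giving $\psi_j(\sqrt{L})a_Q=2^{2jM}\phi_j(\sqrt{L})b_Q$, and apply Lemma~\ref{lem1}(a) with $|b_Q|\le\ell(Q)^{2M}w(Q)^{-1/p}\chi_{3B_Q}$.

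The second step is to sum over $Q\in\mathcal{D}_\nu$ by reducing to a maximal function. Setting $g_\nu:=\sum_{Q\in\mathcal{D}_\nu}|s_Q|w(Q)^{-1/p}\chi_Q$, one has $\|g_\nu\|_{p,w}^p=\sum_{Q\in\mathcal{D}_\nu}|s_Q|^p$. I would choose $0<r<(1\wedge p\wedge q)/q_w$ so that $w\in A_{p/r}$, and then use sub-additivity of $t\mapsto t^r$ for $r\le 1$ together with the pointwise inequality
\[
(1+2^{j\wedge\nu}d(x,x_Q))^{-Nr}\lesi \frac{1}{V(Q)}\int_Q(1+2^{j\wedge\nu}d(x,y))^{-Nr}\,d\mu(y),\qquad y\in Q,
\]
and Lemma~\ref{lem-elementary}, to obtain
\[
\sum_{Q\in\mathcal{D}_\nu}|s_Q|w(Q)^{-1/p}(1+2^{j\wedge\nu}d(x,x_Q))^{-N}\lesi 2^{C(\nu-j)^+/r}\,\mathcal{M}_r(g_\nu)(x).
\]
Combining with Step~1, taking $L^p(w)$-norms, and using the $L^p(w)$-boundedness of $\mathcal{M}_r$ yields
\[
\Big\|\psi_j(\sqrt{L})\sum_{Q\in\mathcal{D}_\nu}s_Qa_Q\Big\|_{p,w}\lesi 2^{-|j-\nu|\gamma(j,\nu)}\Big(\sum_{Q\in\mathcal{D}_\nu}|s_Q|^p\Big)^{1/p},
\]
for an exponent $\gamma(j,\nu)>0$ whose value differs in the regimes $j\le\nu$ and $j>\nu$.

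Finally, I would sum over $\nu$ using Minkowski's inequality when $p\ge 1$ and $p$-subadditivity of the quasi-norm when $p<1$, and perform the outer $\ell^q$-sum by Young's convolution inequality when $q\ge 1$ and $q$-subadditivity when $q<1$; the geometric decay in $|j-\nu|$ supplies summability. The chief obstacle is the bookkeeping in Steps~1--2: tracking the factor $2^{C(j-\nu)^+}$ produced by \eqref{doubling3} and the factor $2^{n(\nu-j)^+/r}$ produced by the maximal-function reduction, and verifying that the hypothesis $M>\tfrac{n}{2}+\tfrac{1}{2}\max\{\alpha,\tfrac{nq_w}{1\wedge p\wedge q}-\alpha\}$ is sharp enough to make both the combined exponents $2M-n-\alpha$ (relevant for $j>\nu$, after absorbing the $2^{(j-\nu)\alpha}$ weight) and $2M+\alpha-n/r$ (relevant for $j<\nu$) strictly positive, once $r$ is chosen sufficiently close to $(1\wedge p\wedge q)/q_w$.
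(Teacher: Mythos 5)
Your proposal is correct and follows essentially the same route as the paper: your Step 1 is precisely the paper's Lemma \ref{lem2-thm2 atom Besov} (including the $2^{n(j-\nu)^+}$ volume loss when the scale $2^{-j}$ is finer than the cube), your Step 2 plays exactly the role of Lemma \ref{lem1- thm2 atom Besov}, and the concluding discrete convolution in $|j-\nu|$ via Minkowski/Young for $p,q\ge 1$ and $p$- or $q$-subadditivity otherwise is the paper's final argument, with the same exponent bookkeeping $2M-n-\alpha>0$ and $2M+\alpha$ exceeding roughly $nq_w/(1\wedge p\wedge q)$. The only (harmless) deviation is that you sum over the cubes through the unweighted maximal operator $\mathcal{M}_r$ with $r<(1\wedge p\wedge q)/q_w$ and $w\in A_{p/r}$, whereas the paper uses the weighted operator $\mathcal{M}_{w,r}$ with $r<1\wedge p\wedge q$ and $w\in A_{\tilde q}$, $\tilde q>q_w$; both produce a loss of order $2^{n(\nu-j)^+q_w/(1\wedge p\wedge q)}$ and hence the same hypothesis on $M$.
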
	
	
	Before coming to the proof of Theorem \ref{thm2- atom Besov} we need the following technical results.
\begin{lem}\label{lem1- thm2 atom Besov}
		Let $w\in A_{\tilde q}$, $N>n$, $\kappa\in [0,1]$, and $\eta, \nu \in \mathbb{Z}$, $\nu\geq \eta$. Assume that  $\{f_Q\}_{Q\in \mathcal{D}_\nu}$ is a  sequence of functions satisfying
		$$
		|f_Q(x)|\lesi \Big(\f{V(Q)}{V(x_Q,2^{-\eta})}\Big)^\kappa\Big(1+\f{d(x,x_Q)}{2^{-\eta}}\Big)^{-N}.
		$$
		Then for $\f{n\tilde q}{N}<r\leq 1$ and a sequence of numbers $\{s_Q\}_{Q\in \mathcal{D}_\nu}$, we have
		$$
		\sum_{Q\in \mathcal{D}_\nu}|s_Q|\,|f_Q(x)|\lesi 2^{n(\nu-\eta)(\tilde q/r-\kappa)}\mathcal{M}_{w,r}\Big(\sum_{Q\in \mathcal{D}_\nu}|s_Q|\chi_Q\Big)(x).
		$$
	\end{lem}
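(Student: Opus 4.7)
The plan is to decompose the sum over $Q$ into annular pieces according to the position of $x_Q$ relative to $x$, and then exploit the $A_{\tilde q}$ condition to convert volume ratios into weighted ones.

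First, I would set $S_0 = B(x, 2^{-\eta})$, $S_j = B(x, 2^{-\eta+j})\setminus B(x, 2^{-\eta+j-1})$ for $j\geq 1$, and $B_j = B(x, 2^{-\eta+j+1})$. Since $\nu \geq \eta$, every $Q\in \mathcal{D}_\nu$ with $x_Q \in S_j$ has diameter at most $2^{-\eta}$ and thus lies inside $B_j$. The hypothesis on $f_Q$ gives the annular bound $|f_Q(x)|\lesi \alpha_Q 2^{-jN}$ on $\{x_Q\in S_j\}$, with $\alpha_Q := (V(Q)/V(x_Q, 2^{-\eta}))^\kappa$. Using the subadditivity $(\sum a_i)^r \leq \sum a_i^r$ valid for $0<r\leq 1$, I would write
\begin{equation*}
\Big(\sum_Q |s_Q|\, |f_Q(x)|\Big)^r \lesi \sum_{j\geq 0} 2^{-Njr} \sum_{x_Q\in S_j} |s_Q|^r \alpha_Q^r.
\end{equation*}

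Next I would estimate the geometric factor $\alpha_Q^r$. Two applications of doubling are needed: for $x_Q\in S_j$ one has $V(B_j)\lesi 2^{nj}V(x_Q,2^{-\eta})$, and for $Q\in \mathcal{D}_\nu$ one has $V(B_j) \lesi 2^{n(\nu-\eta+j)}V(Q)$. Since $\kappa r\leq 1\leq \tilde q$, the $A_{\tilde q}$ property in Lemma \ref{weightedlemma1}(iv) gives $(V(Q)/V(B_j))^{\tilde q}\lesi w(Q)/w(B_j)$, so factoring out this positive power yields
\begin{equation*}
\alpha_Q^r \lesi 2^{nj\kappa r}\Big(\f{V(Q)}{V(B_j)}\Big)^{\kappa r-\tilde q}\Big(\f{V(Q)}{V(B_j)}\Big)^{\tilde q} \lesi 2^{n(\nu-\eta)(\tilde q-\kappa r)+nj\tilde q}\,\f{w(Q)}{w(B_j)}.
\end{equation*}
Since cubes in $\mathcal{D}_\nu$ are pairwise disjoint and all $Q$ with $x_Q\in S_j$ lie in $B_j$, the definition of $\mathcal{M}_{w,r}$ then gives
\begin{equation*}
\sum_{x_Q\in S_j}|s_Q|^r\alpha_Q^r \lesi 2^{n(\nu-\eta)(\tilde q-\kappa r)+nj\tilde q}\,\mathcal{M}_{w,r}\Big(\sum_Q|s_Q|\chi_Q\Big)(x)^r.
\end{equation*}

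Finally I would sum the estimate over $j\geq 0$; the resulting geometric series $\sum_j 2^{-j(Nr-n\tilde q)}$ converges precisely because of the hypothesis $r>n\tilde q/N$. Taking the $1/r$-th root turns the prefactor $2^{n(\nu-\eta)(\tilde q-\kappa r)/r}$ into the desired $2^{n(\nu-\eta)(\tilde q/r-\kappa)}$. The main obstacle is the simultaneous handling of three scales---$2^{-\nu}$ (cube size), $2^{-\eta}$ (reference size), and $2^{-\eta+j}$ (annulus size)---so that the powers of $2^j$ arising from doubling and from the $A_{\tilde q}$ interpolation combine with the decay exponent $N$ in exactly the balance prescribed by the hypothesis $r>n\tilde q/N$; the subadditivity step, which requires $r\leq 1$, is what decouples the coefficients $|s_Q|$ from the geometric factors and makes this bookkeeping possible.
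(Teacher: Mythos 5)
Your proof is correct and follows essentially the same route as the paper's: the same annular decomposition in $d(x,x_Q)$ at scale $2^{-\eta}$, the $r$-subadditivity trick for $0<r\le 1$ combined with disjointness of the level-$\nu$ cubes, the $A_{\tilde q}$ comparison \eqref{eq- doubling w} together with doubling to produce the factors $2^{nj\tilde q}$ and $2^{n(\nu-\eta)(\tilde q-\kappa r)}$, and the geometric series convergent precisely when $r>n\tilde q/N$. The only differences are cosmetic bookkeeping (you raise the whole sum to the power $r$ and work with the containing ball $B_j$, whereas the paper bounds each annular piece $E_k$ separately and uses the union $Q_k$ of cubes), so no further comment is needed.
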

	\begin{proof}
		In the particular case when $\kappa=0$, $w\equiv 1$ and $X=\mathbb{R}^n$, this lemma was proved in \cite{FJ2}. We adapt some of this argument to our present situation.
		
		Fix $x\in X$. We set
		$$
		\mathcal{B}_0=\{Q\in \mathcal{D}_\nu: d(x,x_Q)\leq 2^{-\eta}\}, Q_0=\bigcup\limits_{Q\in \mathcal{B}_0} Q
		$$
		and
		$$
		\mathcal{B}_k=\{Q\in \mathcal{D}_\nu: 2^{k-\eta-1}<d(x,x_Q)\leq 2^{k-\eta}\}, Q_k=\bigcup\limits_{j: \ j\leq k}\bigcup\limits_{Q\in \mathcal{B}_j} Q, \ \ k\in \mathbb{N}_+.
		$$
		Then we write
		$$
		\begin{aligned}
		\sum_{Q\in \mathcal{D}_\nu}|s_Q|\,|f_Q(x)|&=\sum_{k\in \mathbb{N}}\sum_{Q\in \mathcal{B}_k}|s_Q|\,|f_Q(x)|\leq \sum_{k\in \mathbb{N}}\sum_{Q\in \mathcal{B}_k}|s_Q|\Big(\f{V(Q)}{V(x_Q,2^{-\eta})}\Big)^{\kappa}\Big(1+\f{d(x,x_Q)}{2^{-\eta}}\Big)^{-N}\\
		&=:\sum_{k\in \mathbb{N}}E_k.
		\end{aligned}
		$$
		For each $k\in \mathbb{N}$, we have
		\begin{equation}\label{eq1-sec4}
		\begin{aligned}
		E_k&\lesi \sum_{Q\in \mathcal{B}_k} 2^{-kN}\Big(\f{V(Q)}{V(x_Q,2^{-\eta})}\Big)^{\kappa}|s_Q| \leq 2^{-kN}\left[\sum_{Q\in \mathcal{B}_k} \Big(\f{V(Q)}{V(x_Q,2^{-\eta})}\Big)^{\kappa r}|s_Q|^r\right]^{1/r}\\
		&\lesi 2^{-kN}\left\{\int_{Q_k}\left[\sum_{Q\in \mathcal{B}_k}\Big(\f{V(Q)}{V(x_Q,2^{-\eta})}\Big)^{\kappa} w(Q)^{-1/r}|s_Q|\chi_Q(y)\right]^rw(y)d\mu(y)\right\}^{1/r}\\
		&\lesi 2^{-kN}\left\{\f{1}{w(Q_k)}\int_{Q_k}\left[\sum_{Q\in \mathcal{B}_k} \Big(\f{w(Q_k)}{w(Q)}\Big)^{1/r}\Big(\f{V(Q)}{V(x_Q,2^{-\eta})}\Big)^{\kappa}|s_Q|\chi_Q(y)\right]^rd\mu(y)\right\}^{1/r}.
		\end{aligned}
		\end{equation}
		It is easy to see that $V(Q_k)\sim V(x,2^{-\eta+k})\sim V(x_Q,2^{-\eta+k})$, for each $Q\in \mathcal{B}_k$. Therefore,
		$$
		\begin{aligned}
		\Big(\f{w(Q_k)}{w(Q)}\Big)^{1/r}\Big(\f{V(Q)}{V(x_Q,2^{-\eta})}\Big)^{\kappa}&\lesi \Big(\f{V(Q_k)}{V(Q)}\Big)^{\tilde q/r}\Big(\f{V(Q)}{V(x_Q,2^{-\eta})}\Big)^{\kappa}\\
		& =\Big(\f{V(Q_k)}{V(x_Q,2^{-\eta})}\Big)^{\tilde q/r}\Big(\f{V(x_Q,2^{-\eta})}{V(Q)}\Big)^{\tilde q/r-\kappa}\\
		&\lesi 2^{kn \tilde q/r}2^{n(\nu-\eta)(\tilde q/r-\kappa)}.
		\end{aligned}
		$$
		Inserting this into (\ref{eq1-sec4}) gives
		$$
		\begin{aligned}
		E_k&\lesi 2^{-kN}2^{kn \tilde q/r}2^{n(\nu-\eta)(\tilde q/r-\kappa)}\Big\{\f{1}{w(Q_k)}\int_{Q_k}\Big[\sum_{Q\in \mathcal{B}_k} |s_Q|\chi_Q(y)\Big]^rw(y)d\mu(y)\Big\}^{1/r}\\
		&\lesi 2^{-k(N-n\tilde q/r)}2^{n(\nu-\eta)(\tilde q/r-\kappa)}\mathcal{M}_{w,r}\Big(\sum_{Q\in \mathcal{B}_k} |s_Q|\chi_Q\Big)(x).
		\end{aligned}
		$$
		Since $r>\f{n\tilde q}{N}$, we find that
		$$
		\sum_{k\in \mathbb{N}}E_k\lesi 2^{n(\nu-\eta)(\tilde q/r-\kappa)}\mathcal{M}_{w,r}\Big(\sum_{Q\in \mathcal{D}_\nu} |s_Q|\chi_Q\Big)(x).
		$$
		
		This completes our proof.
	\end{proof}

	\begin{lem}\label{lem2-thm2 atom Besov}
		Let $\psi$ be a partition of unity and  let $a_Q$ be an $(L, M, p, w)$ atom with some $Q\in \mathcal{D}_\nu$. Then for any $t>0$ and  $N>0$ we have:
		\begin{equation}\label{eq- psi atom}
		|\psi(t\sL) a_Q(x)|\lesi \Big(\f{t}{2^{-\nu}}\wedge \f{2^{-\nu}}{t}\Big)^{2M-n}w(Q)^{-1/p} \Big(1+\f{d(x,x_Q)}{2^{-\nu}\vee t}\Big)^{-N}.
		\end{equation}
	\end{lem}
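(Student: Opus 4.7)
The argument splits symmetrically into the regimes $t\le 2^{-\nu}$ and $t\ge 2^{-\nu}$, using the identity $a_Q=L^M b_Q$ to shift one factor of $L^M$ between $\psi(t\sL)$ and $b_Q$ via the spectral calculus. Setting $\psi_M(\xi):=\xi^{-2M}\psi(\xi)$ and $\tilde\psi(\xi):=\xi^{2M}\psi(\xi)$, both of which lie in $\mathscr{S}(\mathbb{R})$ with support in $[1/2,2]$, one has the two identities
$$
\psi(t\sL)a_Q \;=\; t^{2M}\psi_M(t\sL)\bigl[L^{2M}b_Q\bigr] \;=\; t^{-2M}\tilde\psi(t\sL)b_Q.
$$
When $t\le 2^{-\nu}$ I would use the first, together with hypothesis (iii) at $k=2M$ giving $|L^{2M}b_Q|\le \ell(Q)^{-2M}w(Q)^{-1/p}\chi_{3B_Q}$, to obtain
$$
|\psi(t\sL)a_Q(x)|\;\le\;\bigl(t/\ell(Q)\bigr)^{2M}\,w(Q)^{-1/p}\int_{3B_Q}|K_{\psi_M(t\sL)}(x,y)|\,dy;
$$
when $t\ge 2^{-\nu}$ I would use the second with $|b_Q|\le\ell(Q)^{2M}w(Q)^{-1/p}\chi_{3B_Q}$ to obtain the symmetric bound with $(\ell(Q)/t)^{2M}$ in place of $(t/\ell(Q))^{2M}$. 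In both cases the sided factor $(t/\ell(Q)\wedge\ell(Q)/t)^{2M}$ appears directly, and what remains is to estimate the kernel integral.

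For the kernel integral I would invoke Lemma \ref{lem1}(a), which (since $\psi_M$ and $\tilde\psi$ are Schwartz and supported away from $0$) yields, for any $\lambda>0$,
$$
|K(x,y)|\;\le\;\frac{C_\lambda}{V(x\vee y,t)}\Big(1+\frac{d(x,y)}{t}\Big)^{-\lambda},
$$
and then split $3B_Q$ into the near zone $d(x,x_Q)\le 6(2^{-\nu}\vee t)$ and its complement. In the far zone $d(x,y)\sim d(x,x_Q)$ for $y\in 3B_Q$, and since $t\le 2^{-\nu}\vee t$, the decay factor converts to the required $(1+d(x,x_Q)/(2^{-\nu}\vee t))^{-N}$ (for any $N$, by taking $\lambda$ large) after absorbing a polynomial factor $(1+d(x,x_Q)/t)^{\tilde n}$ produced by \eqref{doubling3}. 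In the near zone the decay factor is trivial and the integral is controlled by $V(3B_Q)/V(x,t)$: in the small-$t$ regime doubling gives $V(3B_Q)/V(x,t)\lesssim(\ell(Q)/t)^n$, producing exactly the extra factor that downgrades $(t/\ell(Q))^{2M}$ to the claimed $(t/\ell(Q))^{2M-n}$; in the large-$t$ regime the ratio is $O(1)$, so $(\ell(Q)/t)^{2M}\le(\ell(Q)/t)^{2M-n}$ automatically since $\ell(Q)/t\le 1$.

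The only delicate point --- and the one I would be most careful about --- is obtaining the precise exponent $2M-n$ rather than $2M$. This forces us to pair the right decomposition with the right $t$-regime: were one to use the second identity in the small-$t$ case, the atom bound $|b_Q|\lesssim\ell(Q)^{2M}w(Q)^{-1/p}$ combined with the factor $t^{-2M}$ would be explosive and no volume ratio could compensate. With the correct pairing the $-n$ loss is simply the unavoidable volume comparison between the scale $\ell(Q)$ of $\supp a_Q$ and the smaller scale $t$ seen by the kernel of $\psi_M(t\sL)$ in the near zone. Apart from this bookkeeping, the proof is a routine application of the kernel bound and the doubling property, and I do not anticipate any substantive obstacle.
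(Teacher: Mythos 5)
Your proof is correct and follows essentially the same route as the paper's: the same case split $t\le 2^{-\nu}$ versus $t>2^{-\nu}$, the same spectral rewriting $\psi(t\sL)a_Q=t^{2M}\psi_M(t\sL)L^{2M}b_Q$ resp. $t^{-2M}\tilde\psi(t\sL)b_Q$ combined with the atom bounds, the kernel estimate of Lemma \ref{lem1}, and the doubling comparison $V(3B_Q)/V(\cdot,t)\lesssim(2^{-\nu}/t)^{n}$ that yields the exponent $2M-n$. The only cosmetic difference is your near/far split in $x$, which the paper bypasses by noting that $\bigl(1+d(x,y)/(t\vee 2^{-\nu})\bigr)\sim\bigl(1+d(x,x_Q)/(t\vee 2^{-\nu})\bigr)$ uniformly for $y\in 3B_Q$, so the decay factor converts in one step.
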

	\begin{proof}
		We now consider two cases: $t\le 2^{-\nu}$ and $t>2^{-\nu}$.

 		\noindent{\bf Case 1: $t\le 2^{-\nu}$.} Observe that
		$$
		\psi(t\sL) a_Q=t^{2M}\psi_M(t\sL)(L^Ma_Q) 
		$$
		where $\psi_M(\lambda)=\lambda^{-2M}\psi(\lambda)$.
		
		This, along with Lemma \ref{lem1} and the definition of the atoms, yields
		$$
		\begin{aligned}
		|\psi(t\sL) a_Q(x)|&\lesi \int_{3B_Q} \f{t^{2M}}{V(y,t)}\Big(1+\f{d(x,y)}{t}\Big)^{-N}|L^{K}a_Q(y)|d\mu(y)\\
		&\lesi \Big(\f{t}{2^{-\nu}}\Big)^{2M}w(Q)^{-1/p} \int_{3B_Q}\f{1}{V(y,t)}\Big(1+\f{d(x,y)}{t}\Big)^{-N}d\mu(y).
		\end{aligned}
		$$
		Note that, for $t\le 2^{-\nu}$ and $y\in 3B_Q$, we have
		\[
		\Big(1+\f{d(x,y)}{t}\Big)^{-N}\le \Big(1+\f{d(x,y)}{2^{-\nu}}\Big)^{-N}\sim \Big(1+\f{d(x,x_Q)}{2^{-\nu}}\Big)^{-N}.
		\]
		Therefore,
		$$
		\begin{aligned}
		|\psi(t\sL) a_Q(x)|&\lesi \Big(\f{t}{2^{-\nu}}\Big)^{2M}w(Q)^{-1/p}\Big(1+\f{d(x,x_Q)}{2^{-\nu}}\Big)^{-N} \f{V(3B_Q)}{V(y,t)}\\
		&\lesi \Big(\f{t}{2^{-\nu}}\Big)^{2M-n}w(Q)^{-1/p}\Big(1+\f{d(x,x_Q)}{2^{-\nu}}\Big)^{-N}
		\end{aligned}
		$$
		where in the last inequality we use \eqref{doubling2}. This leads us to \eqref{eq- psi atom}.
		
		\medskip
		
		\noindent{\bf Case 2: $t> 2^{-\nu}$.} We first write  $a_Q=L^{M}b_Q$. Hence,
		\[
		\psi(t\sL)a_Q=t^{-2M}\tilde \psi_M(t\sL)b_Q
		\]
		where $\tilde \psi_M(\lambda)=\lambda^{2M}\psi(\lambda)$.
		 
		This, along with Lemma \ref{lem1}, implies that
		$$
		\begin{aligned}
		|\psi(t\sL)a_Q(x)|&\lesi \int_{3B_Q}\f{t^{-2M}}{V(y,t)}\Big(1+\f{d(x,y)}{t}\Big)^{-N}|b_Q(y)|d\mu(y)\\
		&\lesi \Big(\f{2^{-\nu}}{t}\Big)^{2M} w(Q)^{-1/p}\int_{3B_Q}\f{1}{V(y,t)}\Big(1+\f{d(x,y)}{t}\Big)^{-N}d\mu(y).
		\end{aligned}
		$$
		Note that for $y\in 3B_Q$ and $t\ge 2^{-\nu}\sim \ell(Q)$ we have
		\[
		\Big(1+\f{d(x,y)}{t}\Big)^{-N}\sim \Big(1+\f{d(x,x_Q)}{t}\Big)^{-N}. 
		\]
		Hence, the above inequality simplifies into 
		$$
		\begin{aligned}
		|\psi(t\sL)a_Q(x)|
		&\lesi \Big(\f{2^{-\nu}}{t}\Big)^{2M} w(Q)^{-1/p}\Big(1+\f{d(x,x_Q)}{t}\Big)^{-N} \f{V(3B_Q)}{V(y,t)}\\
		&\lesi \Big(\f{2^{-\nu}}{t}\Big)^{2M} w(Q)^{-1/p}\Big(1+\f{d(x,x_Q)}{t}\Big)^{-N}.
		\end{aligned}
		$$
		Hence \eqref{eq- psi atom} follows.
	\end{proof}
	\medskip
	
	We are now ready to give the proof for Theorem \ref{thm2- atom Besov}.
	
		\begin{proof}[Proof of Theorem \ref{thm2- atom Besov}:] The proof can be done by using similar arguments to those in \cite{FJ1, FJ2}. However, for the sake of completeness, we will provide the details.
			
		Fix $\tilde q\in (q_w,\vc)$ and $r<\min\{1,p,q\}$ so that $w\in A_{\tilde q}$ and $M>n/2 +n\tilde{q}/r -s$. We now fix $N>\f{n\tilde{q}}{r}$. Let $\psi$ be a partition of unity. Since 
		$$
		f=\sum_{\nu\in\mathbb{Z}}\sum_{Q\in \mathcal{D}_\nu}s_Qa_Q \ \ \text{in $\mathcal{S}'_\vc$},
		$$
		we then, for each $j\in \mathbb{Z}$ have
		\[
		\begin{aligned}
		\psi_j(\sL)f&=\sum_{\nu\in\mathbb{Z}}\sum_{Q\in \mathcal{D}_\nu}s_Q\psi_j(\sL)a_Q\\
		&=\sum_{\nu: \nu\geq j}\sum_{Q\in \mathcal{D}_\nu}s_Q\psi_j(\sL)a_Q+\sum_{\nu: \nu< j}\sum_{Q\in \mathcal{D}_\nu}s_Q\psi_j(\sL)a_Q.
		\end{aligned}
		\]
		Using Lemma \ref{lem1- thm2 atom Besov} and Lemma \ref{lem2-thm2 atom Besov} we see that 
		\begin{equation}\label{eq-proof of reverse Besov}
		\begin{aligned}
		|\psi_j(\sL)f|&\lesi \sum_{\nu\in\mathbb{Z}}\sum_{Q\in \mathcal{D}_\nu}s_Q\psi_j(\sL)a_Q\\
		&=\sum_{\nu: \nu\geq j}2^{-(\nu-j)(2M-n-n\tilde q/r)}\mathcal{M}_{w,r}\Big(\sum_{Q\in \mathcal{D}_\nu}|s_Q|w(Q)^{-1/p}\chi_Q\Big)\\
		& \ \ \ \ +\sum_{\nu: \nu< j}2^{-(2M-n)(j-\nu)}\mathcal{M}_{w,r}\Big(\sum_{Q\in \mathcal{D}_\nu}|s_Q|w(Q)^{-1/p}\chi_Q\Big).
		\end{aligned}
		\end{equation}
		Therefore,
		\[
		\begin{aligned}
		\|f\|_{\B^{\alpha,L}_{p,q,w}}&:=\Big[\sum_{j\in \mathbb{Z}}(2^{j\alpha}\|\psi_j(\sL)f\|_{p,w})^q\Big]^{1/q}\\	&\lesi\left[\sum_{j\in \mathbb{Z}}\left( \Big\|\sum_{\nu: \nu\geq j}2^{-(\nu-j)(2M-n-n\tilde q/r+\alpha)}\mathcal{M}_{w,r}\Big(\sum_{Q\in \mathcal{D}_\nu}2^{\nu \alpha}|s_Q|w(Q)^{-1/p}\chi_Q\Big)\Big\|_{p,w}\right)^q\right]^{1/q}\\
		& \ \ \ \ +\left[\sum_{j\in \mathbb{Z}}\left( \Big\|\sum_{\nu: \nu< j}2^{-(2M-n-\alpha)(j-\nu)}\mathcal{M}_{w,r}\Big(\sum_{Q\in \mathcal{D}_\nu}2^{\nu \alpha}|s_Q|w(Q)^{-1/p}\chi_Q\Big)\Big\|_{p,w}\right)^q\right]^{1/q}\\
		&=:E_1+E_2.
		\end{aligned}
		\]
If $p\ge 1$, then we have
\[
E_1\lesi \left[\sum_{j\in \mathbb{Z}}\left( \sum_{\nu: \nu\geq j}2^{-(\nu-j)(2M-n-n\tilde q/r+\alpha)}\Big\|\mathcal{M}_{w,r}\Big(\sum_{Q\in \mathcal{D}_\nu}2^{\nu \alpha}|s_Q|w(Q)^{-1/p}\chi_Q\Big)\Big\|_{p,w}\right)^q\right]^{1/q}.
\]		
We  now apply Young's inequality when $q\geq 1$ and the inequality $(\sum_j |a_j|)^q\leq \sum_j|a_j|^q$ when $0<q<1$ to simplify
\[
E_1\lesi \left[\sum_{\nu\in \mathbb{Z}}\left(\Big\|\mathcal{M}_{w,r}\Big(\sum_{Q\in \mathcal{D}_\nu}2^{\nu \alpha}|s_Q|w(Q)^{-1/p}\chi_Q\Big)\Big\|_{p,w}\right)^q\right]^{1/q}
\]
as long as $2M>n\tilde q /r-\alpha$.

On the other hand, since the maximal function $\mathcal{M}_{w,r}$ is bounded on $L^{p}_w(X)$ as $p>r$, we have
\[
\Big\|\mathcal{M}_{w,r}\Big(\sum_{Q\in \mathcal{D}_\nu}2^{\nu \alpha}|s_Q|w(Q)^{-1/p}\chi_Q\Big)\Big\|_{p,w}\lesi \Big\|\sum_{Q\in \mathcal{D}_\nu}2^{\nu \alpha}|s_Q|w(Q)^{-1/p}\chi_Q\Big\|_{p,w}\sim \Big(\sum_{Q\in \mathcal{D}_\nu}2^{\nu \alpha p}|s_Q|\Big)^{1/p}.
\]
As a consequence,
\[
E_1\lesi \Big[\sum_{\nu\in \mathbb{Z}}\Big(\sum_{Q\in \mathcal{D}_\nu}2^{\nu \alpha p}|s_Q|\Big)^{q/p}\Big]^{1/q}=\Big[\sum_{\nu\in \mathbb{Z}}2^{\nu\alpha q}\Big(\sum_{Q\in \mathcal{D}_\nu}|s_Q|\Big)^{q/p}\Big]^{1/q}.
\]
Similarly,
\[
E_2\lesi \Big[\sum_{\nu\in \mathbb{Z}}\Big(\sum_{Q\in \mathcal{D}_\nu}2^{\nu \alpha p}|s_Q|\Big)^{q/p}\Big]^{1/q}=\Big[\sum_{\nu\in \mathbb{Z}}2^{\nu\alpha q}\Big(\sum_{Q\in \mathcal{D}_\nu}|s_Q|\Big)^{q/p}\Big]^{1/q}.
\]
Hence,
\[
\|f\|_{\B^{\alpha,L}_{p,q,w}}\lesi \Big[\sum_{\nu\in \mathbb{Z}}2^{\nu\alpha q}\Big(\sum_{Q\in \mathcal{D}_\nu}|s_Q|\Big)^{q/p}\Big]^{1/q}
\]
as desired.

\medskip

If $0<p<1$, then we have
\[
E_1\lesi \left[\sum_{j\in \mathbb{Z}}\left( \sum_{\nu: \nu< j}2^{-p(\nu-j)(2M-n-n\tilde q/r+\alpha)}\Big\|\mathcal{M}_{w,r}\Big(\sum_{Q\in \mathcal{D}_\nu}2^{\nu \alpha}|s_Q|w(Q)^{-1/p}\chi_Q\Big)\Big\|^p_{p,w}\right)^{q/p}\right]^{1/q}.
\]
Arguing similarly to the case $p\ge 1$ by considering two cases $q/p\ge 1$ and $0<q/p<1$, we come up with

\[
E_1\lesi \Big[\sum_{\nu\in \mathbb{Z}}2^{\nu\alpha q}\Big(\sum_{Q\in \mathcal{D}_\nu}|s_Q|\Big)^{q/p}\Big]^{1/q}.
\]
By the same manner, we have
\[
E_2\lesi \Big[\sum_{\nu\in \mathbb{Z}}2^{\nu\alpha q}\Big(\sum_{Q\in \mathcal{D}_\nu}|s_Q|\Big)^{q/p}\Big]^{1/q}.
\]

Therefore,
\[
\|f\|_{\B^{\alpha,L}_{p,q,w}}\lesi \Big[\sum_{\nu\in \mathbb{Z}}2^{\nu\alpha q}\Big(\sum_{Q\in \mathcal{D}_\nu}|s_Q|\Big)^{q/p}\Big]^{1/q}
\]
as desired.

\end{proof}

\subsection{Atomic decompositions for Triebel-Lizorkin spaces $\F^{\alpha,L}_{p,q,w}$.}
Our second main result is the following  atomic decomposition theorem for the spaces $\F^{\alpha,L}_{p,q,w}$. More precisely, we prove the following theorem.
\begin{thm}\label{thm1- atom TL spaces}
	Let $\alpha\in \mathbb{R}$, $0<p<\vc$, $0<q\leq \vc$, $m\in \mathbb{N}_+$ and $w\in A_\vc$. If $f\in \F^{\alpha,L}_{p,q,w}$,  then there exist a sequence of $(L,M,p,w)$ atoms $\{a_Q\}_{Q\in \mathcal{D}_\nu, \nu\in \mathbb{Z}}$ and a sequence of coefficients  $\{s_Q\}_{Q\in \mathcal{D}_\nu, \nu\in\mathbb{Z}}$ so that
	$$
	f=\sum_{\nu\in\mathbb{Z}}\sum_{Q\in \mathcal{D}_\nu}s_Qa_Q \ \ \text{in $\mathcal{S}'_\vc$}.
	$$
	Moreover,
	\begin{equation}\label{eq1-thm1 atom TL space}
		\Big\|\Big[\sum_{\nu\in\mathbb{Z}}2^{\nu\alpha q}\Big(\sum_{Q\in \mathcal{D}_\nu}w(Q)^{-1/p}|s_Q|\chi_Q\Big)^q\Big]^{1/q}\Big\|_{p,w}\lesi \|f\|_{\F^{\alpha,L}_{p,q,w}}.
\end{equation}
\end{thm}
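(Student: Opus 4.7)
The plan is to mirror the proof of the Besov analogue Theorem~\ref{thm1- atom Besov}, constructing atoms from a Calder\'on reproducing formula localized to dyadic cubes, and then to control the mixed $L^p_w(\ell^q)$ quasi-norm of the coefficients by a Peetre-type maximal characterization. Specifically, fix a partition of unity $\psi$ and the function $\Phi$ of Lemma~\ref{lem:finite propagation}. By Proposition~\ref{prop-Calderon2},
\[
f=c\sum_{\nu\in\mathbb{Z}}\sum_{Q\in\mathcal{D}_\nu}\int_{2^{-\nu-1}}^{2^{-\nu}}(t^2L)^M\Phi(t\sL)\big[\psi_M(t\sL)f\cdot\chi_Q\big]\f{dt}{t}\quad\text{in }\mathcal{S}'_\vc,
\]
with $\psi_M(\xi)=\xi^{-2M}\psi(\xi)$. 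I would set
\[
s_Q=w(Q)^{1/p}\sup_{y\in Q}\int_{2^{-\nu-1}}^{2^{-\nu}}|\psi_M(t\sL)f(y)|\f{dt}{t},\qquad a_Q=L^Mb_Q,
\]
with $b_Q$ defined exactly as in \eqref{eq-bQ}. The atom properties then follow verbatim from the computation in the proof of Theorem~\ref{thm1- atom Besov}, using the finite propagation property \eqref{eq1-lemPsiL} and the pointwise kernel bound \eqref{eq2-lemPsiL}.

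The new content lies in bounding \eqref{eq1-thm1 atom TL space}. For each $x\in X$ and each $\nu$, only one cube $Q_\nu(x)\in\mathcal{D}_\nu$ contains $x$, so
\[
\sum_{Q\in\mathcal{D}_\nu}w(Q)^{-1/p}|s_Q|\chi_Q(x)=\sup_{y\in Q_\nu(x)}\int_{2^{-\nu-1}}^{2^{-\nu}}|\psi_M(t\sL)f(y)|\f{dt}{t}.
\]
For $y\in Q_\nu(x)$ and $t\in[2^{-\nu-1},2^{-\nu}]$ we have $d(x,y)\lesi 2^{-\nu}\sim t$, so for any $\lambda>0$,
\[
|\psi_M(t\sL)f(y)|\lesi (\psi_M)^*_\lambda(t\sL)f(x).
\]
Using that $\psi_M$ is supported in $[1/2,2]$, Proposition~\ref{prop1-maximal function} gives
\[
\sup_{t\in[2^{-\nu-1},2^{-\nu}]}(\psi_M)^*_\lambda(t\sL)f(x)\lesi\sum_{k=\nu-2}^{\nu+3}\psi^*_{k,\lambda}(\sL)f(x),
\]
and, since the integral over $[2^{-\nu-1},2^{-\nu}]$ contributes only a bounded constant, combining the previous two inequalities with the comparability $2^{\nu\alpha}\sim 2^{k\alpha}$ for $|k-\nu|\le 3$ yields
\[
2^{\nu\alpha}\sum_{Q\in\mathcal{D}_\nu}w(Q)^{-1/p}|s_Q|\chi_Q(x)\lesi \sum_{k=\nu-2}^{\nu+3}2^{k\alpha}\psi^*_{k,\lambda}(\sL)f(x).
\]

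Since the inner sum has only a fixed finite number of terms, $(\sum_{k=\nu-2}^{\nu+3}\cdots)^q$ is bounded by the corresponding sum of $q$-th powers uniformly in $q\in(0,\vc]$. Summing over $\nu$ and taking the $L^p_w$-norm therefore gives
\[
\Big\|\Big[\sum_\nu\Big(2^{\nu\alpha}\sum_Q w(Q)^{-1/p}|s_Q|\chi_Q\Big)^q\Big]^{1/q}\Big\|_{p,w}\lesi \Big\|\Big[\sum_k(2^{k\alpha}\psi^*_{k,\lambda}(\sL)f)^q\Big]^{1/q}\Big\|_{p,w},
\]
which is bounded by $\|f\|_{\F^{\alpha,L}_{p,q,w}}$ by Proposition~\ref{prop2-thm1}(b), provided $\lambda>\max\{n/q,nq_w/p\}$. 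The main technical obstacle I expect is the pointwise step that transfers the level-$\nu$ sum over cubes to the Peetre maximal $(\psi_M)^*_\lambda(t\sL)f(x)$; this bypasses the need to split into the cases $q\ge 1$ and $0<q<1$ separately (which would be necessary if one tried to apply H\"older's inequality to the integral in $t$), by exploiting that the containing cube $Q_\nu(x)$ has diameter $\sim t$, so the inner supremum is automatically controlled by a Peetre-type quantity at the same scale.
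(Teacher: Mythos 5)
Your proposal is correct, and it follows the paper's construction in all essentials: the same Calder\'on reproducing formula, the same coefficients $s_Q$ and atoms $a_Q=L^Mb_Q$ recycled from the proof of Theorem~\ref{thm1- atom Besov}, and the same final appeal to Proposition~\ref{prop2-thm1}(b) with $\lambda>\max\{n/q,nq_w/p\}$. The one place you genuinely diverge is the coefficient estimate \eqref{eq1-thm1 atom TL space}. The paper dominates $\sum_{Q\in \mathcal{D}_\nu}w(Q)^{-1/p}|s_Q|\chi_Q$ by a Peetre-type quantity $F^*_{M,\lambda}(\sL)f$ and then, arguing as in \eqref{eq1-heat kernel charac}, by the exponentially weighted series $\sum_{j\in\mathbb{Z}}2^{-2m|\nu-j|}\psi^*_{j,\lambda}(\sL)f$, which forces a subsequent use of Young's inequality when $q\ge 1$ and of $(\sum_j|a_j|)^q\le\sum_j|a_j|^q$ when $q<1$. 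You instead exploit that $\psi_M$ is supported in $[1/2,2]$: for a.e.\ $x$ the level-$\nu$ sum collapses to the single cube containing $x$, on which $d(x,y)\lesi t$ gives $|\psi_M(t\sL)f(y)|\lesi (\psi_M)^*_\lambda(t\sL)f(x)$, and Proposition~\ref{prop1-maximal function} then yields a finite sum over the six scales $k=\nu-2,\dots,\nu+3$ of $\psi^*_{k,\lambda}(\sL)f(x)$. This makes the $\ell^q$ bookkeeping trivial, handles $0<q\le\vc$ (including $q=\vc$) without any case split, and avoids the convolution inequality altogether. Both routes are valid; yours is shorter and sharper at that step (a local finite sum rather than a global exponentially decaying one), while the paper's has the advantage of reusing verbatim the template already set up for the Besov case and for the non-compactly supported functions of Theorem~\ref{thm2}.
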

\begin{proof}
	Recall that in the proof of Theorem \ref{thm1- atom Besov}, we have proved the representation
	$$
	f=\sum_{\nu\in\mathbb{Z}}\sum_{Q\in \mathcal{D}_\nu}s_Qa_Q \ \ \text{in $\mathcal{S}'_\vc$},
	$$
	where
	$$
	s_Q= w(Q)^{1/p}\sup_{y\in Q}\int_{2^{-\nu-1}}^{2^{-\nu}}|\psi_{M}(t\sL)f(y)|\f{dt}{t},
	$$
	and $a_Q=L^{M}b_Q$ is an $(L,M,p,w)$ atom defined by
	$$
	b_Q=\f{1}{s_Q} \int_{2^{-\nu-1}}^{2^{-\nu}} t^{2M}\Phi(t\sqrt{L})[\psi_{M}(t\sL)f.\chi_Q]\f{dt}{t}.
	$$
	
	It remains to prove \eqref{eq1-thm1 atom TL space}. Indeed, for any $\lambda>0$ it is easy to see that
	\[
	\begin{aligned}
	w(Q)^{-1/p}s_Q\chi_Q&= \sup_{y\in Q}\int_{2^{-\nu-1}}^{2^{-\nu}}|\psi_{M}(t\sL)f(y)|\f{dt}{t}\cdot \chi_Q\lesi \chi_Q F^*_{M,\lambda}(\sL)f
	\end{aligned}
	\]
	where 
	\[
	F^*_{M,\lambda}(\sL)f(x)=\sup_{y\in X}\f{\displaystyle \int_{2^{-\nu-1}}^{2^{-\nu}}|\psi_{M}(t\sL)f(y)|\f{dt}{t}}{(1+2^\nu d(x,y))^\lambda}.
	\]
	As a consequence,
	\[
	\sum_{Q\in \mathcal{D}_\nu}w(Q)^{-1/p}|s_Q|\chi_Q\lesi F^*_{M,\lambda}(\sL)f.
	\]
	On the other hand, fixing a $m>\alpha/2$ and arguing similarly to \eqref{eq1-heat kernel charac} we show that
	\begin{equation*}
	\begin{aligned}
	|F^*_{M,\lambda}(\sL)f|&\lesi \sum_{j\in \mathbb{Z}}2^{-2m|\nu-j|} \psi^*_{j,\lambda}(\sL)f.
	\end{aligned}
	\end{equation*}
	Therefore,
	\[
	\begin{aligned}
	\Big\|\Big[\sum_{\nu\in\mathbb{Z}}2^{\nu\alpha q}\Big(\sum_{Q\in \mathcal{D}_\nu}w(Q)^{-1/p}|s_Q|\chi_Q\Big)^q\Big]^{1/q}\Big\|_{p,w}&\lesi \Big\|\Big[\sum_{\nu\in\mathbb{Z}}2^{\nu\alpha q}\Big(\sum_{j\in \mathbb{Z}}2^{-2m|\nu-j|} \psi^*_{j,\lambda}(\sL)f\Big)^q\Big]^{1/q}\Big\|_{p,w}\\
	&\lesi \Big\|\Big[\sum_{\nu\in\mathbb{Z}}\Big(\sum_{j\in \mathbb{Z}}2^{-2m|\nu-j|+\alpha(\nu-j)} 2^{j\alpha}\psi^*_{j,\lambda}(\sL)f\Big)^q\Big]^{1/q}\Big\|_{p,w}.
	\end{aligned}
	\]	
	We then apply Young's inequality when $q\geq 1$ and the inequality $(\sum_j |a_j|)^q\leq \sum_j|a_j|^q$ when $0<q<1$ to find that
	\[
	\begin{aligned}
	\Big\|\Big[\sum_{\nu\in\mathbb{Z}}2^{\nu\alpha q}\Big(\sum_{Q\in \mathcal{D}_\nu}w(Q)^{-1/p}|s_Q|\chi_Q\Big)^q\Big]^{1/q}\Big\|_{p,w}
	&\lesi \Big\|\Big[\sum_{j\in\mathbb{Z}}\Big( 2^{j\alpha}\psi^*_{j,\lambda}(\sL)f\Big)^q\Big]^{1/q}\Big\|_{p,w}\\
	&\lesi \|f\|_{\F^{\alpha,L}_{p,q,w}}
	\end{aligned}
	\]
	where in the last inequality we use Proposition \ref{prop2-thm1}.
	
	This completes our proof.
\end{proof}
\bigskip

For the converse direction, we have the following theorem:
\begin{thm}\label{thm2- atom TL spaces}	
	Let $\alpha\in \mathbb{R}$, $0<p<\vc$, $0<q\le \vc$ and $w\in A_\vc$. If
	$$
	f=\sum_{\nu\in\mathbb{Z}}\sum_{Q\in \mathcal{D}_\nu}s_Qa_Q \ \ \text{in $\mathcal{S}'_\vc$}
	$$
	where $\{a_Q\}_{Q\in \mathcal{D}_\nu, \nu\in \mathbb{Z}}$ is a sequence of $(L,M,p,w)$ atoms and $\{s_Q\}_{Q\in \mathcal{D}_\nu, \nu\in\mathbb{Z}}$ is a sequence of coefficients satisfying
	$$
	\Big\|\Big[\sum_{\nu\in\mathbb{Z}}2^{\nu\alpha q}\Big(\sum_{Q\in \mathcal{D}_\nu}w(Q)^{-1/p}|s_Q|\chi_Q\Big)^q\Big]^{1/q}\Big\|_{p,w}<\vc,
	$$
	then $f\in \F^{\alpha,L}_{p,q,w}$ and
	$$
	\|f\|_{\F^{\alpha,L}_{p,q,w}} \lesi \Big\|\Big[\sum_{\nu\in\mathbb{Z}}2^{\nu\alpha q}\Big(\sum_{Q\in \mathcal{D}_\nu}w(Q)^{-1/p}|s_Q|\chi_Q\Big)^q\Big]^{1/q}\Big\|_{p,w}
	$$
	provided $M>\f{n}{2}+\f{1}{2}\max\{\alpha,\f{nq_w}{1\wedge p\wedge q}-\alpha\}$.
\end{thm}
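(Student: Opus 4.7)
The strategy parallels the proof of Theorem \ref{thm2- atom Besov}, but now the summation order is $L^p_w(\ell^q)$ rather than $\ell^q(L^p_w)$, so the key analytic tool will be the vector-valued Fefferman--Stein inequality \eqref{YFSIn} in place of scalar Young's inequality on $\mathbb Z$. First, fix a partition of unity $\psi$ and parameters $\tilde q\in (q_w,\vc)$, $r\in (0,1)$ with $r<\min\{p,q\}$ and $w\in A_{p/r}\cap A_{\tilde q}$, together with $N>n\tilde q/r$ and $M>\tfrac{n}{2}+\tfrac{1}{2}\max\{\alpha,\tfrac{nq_w}{1\wedge p\wedge q}-\alpha\}$, which ensures the exponent gains below are strictly positive. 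Applying $\psi_j(\sqrt L)$ termwise to $f=\sum_{\nu,Q}s_Q a_Q$, splitting into $\nu\ge j$ and $\nu<j$, and invoking the pointwise atomic estimate of Lemma~\ref{lem2-thm2 atom Besov} followed by the summation bound in Lemma~\ref{lem1- thm2 atom Besov} yields exactly the pointwise inequality \eqref{eq-proof of reverse Besov}:
\[
|\psi_j(\sqrt L)f(x)|\lesi \sum_{\nu\ge j}2^{-(\nu-j)(2M-n-n\tilde q/r)}\mathcal M_{w,r}(g_\nu)(x)+\sum_{\nu<j}2^{-(2M-n)(j-\nu)}\mathcal M_{w,r}(g_\nu)(x),
\]
where $g_\nu:=\sum_{Q\in\mathcal D_\nu}w(Q)^{-1/p}|s_Q|\chi_Q$.

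Multiplying by $2^{j\alpha}$, writing $2^{j\alpha}=2^{\nu\alpha}\cdot 2^{(j-\nu)\alpha}$, and absorbing the $2^{(j-\nu)\alpha}$ factors into the geometric decay gives
\[
2^{j\alpha}|\psi_j(\sqrt L)f(x)|\lesi \sum_{\nu\in\mathbb Z} a_{j-\nu}\cdot \mathcal M_{w,r}\bigl(2^{\nu\alpha}g_\nu\bigr)(x),
\]
where $\{a_k\}_{k\in\mathbb Z}$ decays geometrically and hence belongs to $\ell^1\cap \ell^q$ (this is where the lower bound on $M$ is used, on both the $\nu\ge j$ and $\nu<j$ sides). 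Now I apply the weighted Fefferman--Stein-type inequality \eqref{YFSIn} with respect to $w\in A_{p/r}$ (permissible since $r<\min\{p,q,p/q_w\}$) to obtain
\[
\Bigl\|\Bigl(\sum_{j\in\mathbb Z}(2^{j\alpha}|\psi_j(\sqrt L)f|)^q\Bigr)^{1/q}\Bigr\|_{p,w}\lesi \Bigl\|\Bigl(\sum_{\nu\in\mathbb Z}(2^{\nu\alpha}g_\nu)^q\Bigr)^{1/q}\Bigr\|_{p,w},
\]
which is exactly the right-hand side of the desired inequality in view of the definition of $g_\nu$.

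The only remaining subtlety is justifying that this quantity actually controls $\|f\|_{\F^{\alpha,L}_{p,q,w}}$, since a priori $\|f\|_{\F^{\alpha,L}_{p,q,w}}$ is defined in terms of $\psi_j(\sqrt L)f$ viewed as the action on distributions; this is immediate from Theorem~\ref{thm1} once we know the series defining $f$ converges in $\mathcal S'_\vc$, which is part of the hypothesis. I expect the main technical obstacle to be the bookkeeping in selecting the parameters $r,\tilde q,N$ and verifying the positivity of the exponents $2M-n-n\tilde q/r+\alpha$ and $2M-n-\alpha$ in the two pieces; in particular one has to make separate decay checks so that $\{a_k\}$ lies in $\ell^1\cap\ell^q$ when $q\ge 1$ and in $\ell^q$ only (which suffices because of the alternative form of \eqref{YFSIn} via $(\sum|a_k|)^q\le \sum|a_k|^q$) when $0<q<1$. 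Apart from this case analysis, which is straightforward and handled exactly as in the $\B$-case, no new ideas are required.
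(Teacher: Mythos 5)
Your proposal is correct and follows essentially the same route as the paper: it reuses the pointwise estimate \eqref{eq-proof of reverse Besov} obtained from Lemma \ref{lem2-thm2 atom Besov} and Lemma \ref{lem1- thm2 atom Besov}, multiplies by $2^{j\alpha}$, and concludes with the vector-valued maximal inequality \eqref{YFSIn}, exactly as in the paper's (sketched) argument. Your exponent bookkeeping $2M-n-n\tilde q/r+\alpha$ and $2M-n-\alpha$, together with the choice of $r$ and $\tilde q$ near their extremes, is consistent with the stated hypothesis on $M$, so no gap remains.
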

\begin{proof}
	The proof of this theorem is similar to that of  Theorem \ref{thm2- atom Besov}. Hence, we just sketch the main ideas. 
	
	With the same notations as in the proof of Theorem \ref{thm2- atom Besov}, from \eqref{eq-proof of reverse Besov} we have
	$$
	\begin{aligned}
	2^{j\alpha}|\psi_j(\sL)f|
	&\lesi \sum_{\nu: \nu\geq j}2^{-(\nu-j)(2M-n\tilde q/r-\alpha)}\mathcal{M}_{w,r}\Big(\sum_{Q\in \mathcal{D}_\nu}2^{\nu\alpha}|s_Q|w(Q)^{-1/p}\chi_Q\Big)\\
	& \ \ \ \ +\sum_{\nu: \nu< j}2^{-(2M-\alpha)(j-\nu)}\mathcal{M}_{w,r}\Big(\sum_{Q\in \mathcal{D}_\nu}2^{\nu\alpha}|s_Q|w(Q)^{-1/p}\chi_Q\Big).
	\end{aligned}
	$$
	By using \eqref{YFSIn} we conclude that
	\[
		\begin{aligned}
	\|f\|_{\F^{\alpha,L}_{p,q,w}}&=\Big\|\Big[\sum_{j\in \mathbb{Z}}(2^{j\alpha}|\psi_j(\sL)f|)^{q}\Big]^{1/q}\Big\|_{p,w}\\
	&\lesi \Big\|\Big[\sum_{\nu\in\mathbb{Z}}2^{\nu\alpha q}\Big(\sum_{Q\in \mathcal{D}_\nu}w(Q)^{-1/p}|s_Q|\chi_Q\Big)^q\Big]^{1/q}\Big\|_{p,w}.
		\end{aligned}
	\]
	This completes our proof.
\end{proof}

\begin{rem}
	\label{rem1}
	By a careful examination the proof of Theorem \ref{thm1- atom Besov} and Theorem \ref{thm1- atom TL spaces}, it is easy to see  that each atom $a_Q=L^{2M}b_Q$, defined by \eqref{eq-bQ}, belongs to the spaces of test functions $\mathcal{S}_\vc$. As a direct consequence of the atomic decomposition results in these two theorems, the test functions space $\mathcal{S}_\vc$ is dense in both $\B^{\alpha,L}_{p,q,w}$ and $\F^{\alpha,L}_{p,q,w}$ when $0<p,q<\vc$.
	
	\medskip

	We can mimic Definition \ref{defLmol} to define new molecules associated to $L$ as follows: Let $0< p\leq \vc$, $N>0$, $M\in \mathbb{N}_+$ and $w\in A_\vc$. A function $m$ is said to be an $(L, M, N, p, w)$ molecule if there exists a dyadic cube $Q\in \mathcal{D}$ so that
			\begin{enumerate}[{\rm (i)}]
				\item $m=L^{M} b$;
				\item $\displaystyle |L^{k} b(x)|\leq \ell(Q)^{2(M-k)}w(Q)^{-1/p}\Big(1+\f{|x-x_Q|}{\ell(Q)}\Big)^{-N}$, $k=0,\ldots , 2M$;
			\end{enumerate}
			where $B_Q$ is a ball associated to $Q$ defined in Remark \ref{rem1-Christ}.

	Then we can adapt the arguments in the proofs of Theorem \ref{thm1- atom Besov} and Theorem \ref{thm1- atom TL spaces} to obtain the molecular decompositions for our new Besov and Triebel--Lizorkin spaces. However, we do not aim to present the results in this paper and  leave the details to the interested reader. We note that the molecular decomposition theorem for the unweighted case was obtained in \cite{G.etal2} under the additional assumptions (H) and (C)   by using a different approach. See  Remark \ref{rem 6.3} below.
	
\end{rem}
\section{Identifications of our new Besov and Triebel-Lizorkin spaces with known function spaces}

\subsection{Coincidence with $L^p_w(X)$ spaces}
We have the following results.
\begin{thm}\label{equiv Lpw} For $1<p<\vc$ and $w\in A_p$, we have
	\begin{equation}\label{eq-Lpw}
	\F^{0,L}_{p,2,w}(X)\equiv L^p_w(X).
	\end{equation}
\end{thm}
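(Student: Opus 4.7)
The plan is to reduce the statement, via the continuous square-function characterization already established, to the weighted Littlewood--Paley characterization of $L^p_w$ for the vertical $g$-function
\[
G_\psi f(x):=\Big(\int_0^\infty |\psi(t\sL)f(x)|^2\f{dt}{t}\Big)^{1/2},
\]
where $\psi$ is any partition of unity. Specializing Theorem~\ref{thm1-continuouscharacter}(b) to $\alpha=0$ and $q=2$ gives $\|f\|_{\F^{0,L}_{p,2,w}}\sim \|G_\psi f\|_{p,w}$, so it suffices to prove $\|G_\psi f\|_{p,w}\sim \|f\|_{L^p_w}$ for $w\in A_p$, $1<p<\infty$.

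For the upper bound $\|G_\psi f\|_{p,w}\lesi \|f\|_{L^p_w}$, I would combine the $L^2$ boundedness, which is immediate from spectral theory since by Plancherel and a change of variable $\|G_\psi f\|_2^2 = c_\psi\|f\|_2^2$ with $c_\psi=\int_0^\infty |\psi(t)|^2\f{dt}{t}$, with the kernel estimates in Lemma~\ref{lem1}(a). Together with their analogues for spatial differences obtained from Lemma~\ref{lem1}(c) and the spectral calculus, these show that $f\mapsto\{t\mapsto \psi(t\sL)f\}$ is a vector-valued Calder\'on--Zygmund operator into $L^2(\f{dt}{t})$ with operator-valued kernel having arbitrary polynomial decay. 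Standard $A_p$-weighted vector-valued Calder\'on--Zygmund theory (or Rubio de Francia extrapolation from the unweighted $L^p$ bound, which itself follows via weak-$(1,1)$ boundedness from the Gaussian heat kernel estimate \eqref{GE}) then yields the desired inequality for all $w\in A_p$.

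For the lower bound $\|f\|_{L^p_w}\lesi \|G_\psi f\|_{p,w}$, I would argue by duality using Proposition~\ref{prop-Calderon2}: for $f,g\in L^2$,
\[
\langle f,g\rangle=c_\psi\int_0^\infty \langle \psi(t\sL)f,\psi(t\sL)g\rangle\,\f{dt}{t}.
\]
Cauchy--Schwarz in $t$ followed by H\"older's inequality in $x$ gives $|\langle f,g\rangle|\lesi \|G_\psi f\|_{p,w}\|G_\psi g\|_{p',w^{1-p'}}$. Since $w\in A_p$ is equivalent to $w^{1-p'}\in A_{p'}$ by Lemma~\ref{weightedlemma1}(iii), the upper bound applied with the conjugate exponents gives $\|G_\psi g\|_{p',w^{1-p'}}\lesi \|g\|_{L^{p'}_{w^{1-p'}}}$. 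Taking the supremum over $g$ with $\|g\|_{L^{p'}_{w^{1-p'}}}\le 1$ and invoking the duality $(L^p_w)^*=L^{p'}_{w^{1-p'}}$ yields the claim, first on $L^2\cap L^p_w$ and then by density.

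Finally, to identify $\F^{0,L}_{p,2,w}\subset\mathcal{S}'_\infty$ with $L^p_w$: any $f\in L^p_w$ defines an element of $\mathcal{S}'$ and hence of $\mathcal{S}'_\infty=\mathcal{S}'/\mathscr{P}$, so the two inequalities above directly give the norm equivalence in one direction. Conversely, given $F\in \F^{0,L}_{p,2,w}$, consider the partial sums $F_N=\sum_{|j|\le N}\psi_j(\sL)F$ from the Calder\'on reproducing formula in Proposition~\ref{prop-Calderon1}, which converge to $F$ in $\mathcal{S}'_\infty$; each $\psi_j(\sL)F$ is a measurable function via \eqref{eq- s and s'}, and the reverse inequality just established (applied to the $L^2$ approximations $F_N$) shows that $\{F_N\}$ is uniformly bounded in $L^p_w$. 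By reflexivity of $L^p_w$, a subsequence converges weakly to some $\widetilde F\in L^p_w$ with $\|\widetilde F\|_{L^p_w}\lesi \|F\|_{\F^{0,L}_{p,2,w}}$, and since $F_N\to F$ in $\mathcal{S}'_\infty$ the class of $\widetilde F$ represents $F$. Uniqueness modulo $\mathscr{P}$ reduces to $\mathscr{P}\cap L^p_w=\{0\}$, which holds because $\mu(X)=\infty$ and the $A_p$ condition preclude a nonzero generalized polynomial from lying in $L^p_w$. The main obstacle will be the weighted $L^p_w$ boundedness of $G_\psi$ for arbitrary $w\in A_p$; since the paper does not assume H\"older regularity on $p_t(x,y)$, this must be extracted from the arbitrary polynomial decay supplied by Lemma~\ref{lem1} together with spectral calculus, which produces the cancellation needed for the $L^2(\f{dt}{t})$-valued Calder\'on--Zygmund regularity condition.
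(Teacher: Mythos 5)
Your overall architecture — reduce via Theorem \ref{thm1-continuouscharacter} (with $\alpha=0$, $q=2$) to a weighted Littlewood--Paley equivalence for $G_\psi f=\bigl(\int_0^\vc|\psi(t\sL)f|^2\f{dt}{t}\bigr)^{1/2}$, prove the converse inequality by duality through Proposition \ref{prop-Calderon2} and $w^{1-p'}\in A_{p'}$, then sort out the identification modulo $\mathscr P$ — is reasonable and runs parallel to the paper, which instead works with the heat square function $\Psi_{m,t}(L)=(t^2L)^me^{-t^2L}$: it proves $\bigl\|\bigl(\int_0^\vc|\Psi_{m,t}(L)f|^2\f{dt}{t}\bigr)^{1/2}\bigr\|_{p,w}\sim\|f\|_{p,w}$ by adapting Auscher--Martell \cite[Theorem 7.2]{AM} and then invokes the heat-kernel characterization of Corollary \ref{cor1}. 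The genuine gap is in your justification of the key upper bound $\|G_\psi f\|_{p,w}\lesi\|f\|_{p,w}$. Standard (weighted, vector-valued) Calder\'on--Zygmund theory requires a H\"ormander-type regularity condition on the $L^2(\f{dt}{t})$-valued kernel, i.e.\ control of $\int_{d(x,y)\ge 2d(y,y')}\|K(x,y)-K(x,y')\|_{L^2(dt/t)}\,\dx$, hence some smoothness of $K_{\psi(t\sL)}(x,y)$ in the second variable. Under \eqref{GE} alone no such smoothness exists --- avoiding the H\"older condition (H) is precisely the point of this paper --- and Lemma \ref{lem1} supplies only size bounds (part (a)) and almost-orthogonality between scales (part (c)); neither yields spatial difference estimates, and size bounds cannot substitute for them because $\int_{d(x,y)>2\delta}\|K(x,y)\|_{L^2(dt/t)}\,\dx$ diverges, so real cancellation in the $y$-variable is needed. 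Your fallback, ``Rubio de Francia extrapolation from the unweighted $L^p$ bound,'' is also not available: extrapolation requires a weighted estimate at one fixed exponent for \emph{all} weights in the corresponding $A_{p_0}$ class; unweighted bounds for all $p$ do not extrapolate.

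The fix is to abandon kernel-regularity criteria in favour of criteria tailored to non-smooth kernels, where the missing regularity is replaced by semigroup/off-diagonal decay: this is the Duong--McIntosh/Auscher--Martell machinery the paper relies on (adapting \cite[Theorem 7.2]{AM}, and then Corollary \ref{cor1}); alternatively, the weighted square-function results of \cite{DSY} or \cite{GY}, which only need Gaussian bounds, apply directly to $G_\psi$ and would let you keep your reduction through Theorem \ref{thm1-continuouscharacter}. The remaining parts of your outline (polarization/duality for the lower bound, weak compactness to produce an $L^p_w$ representative of a given $F\in\F^{0,L}_{p,2,w}$) are workable, though the assertion $\mathscr P\cap L^p_w=\{0\}$ is used as if obvious and should be given a short proof rather than attributed to ``$\mu(X)=\infty$ and the $A_p$ condition'' alone.
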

\begin{proof}
	Arguing similarly to the proof of \cite[Theorem 7.2]{AM} we prove that  for any $m\in \mathbb{N}^+$,
	\begin{equation}
	\label{square functions}
	\Big\|\Big[\int_0^\vc|\Psi_{m,t}(L)f|^2\f{dt}{t}\Big]^{1/2}\Big\|_{p,w}\sim c_{p,w}\|f\|_{p,w}.
	\end{equation}
	This, along with Corollary \ref{cor1}, implies \eqref{eq-Lpw}.
\end{proof}

\subsection{Coincidence with the weighted Hardy spaces $H^p_{L,w}$.}\label{sub-weightedHardy} Let $0<p\le 1$ and $w\in A_\vc$. The weighted Hardy space $H^p_{L,w}$ is defined as the completion of the set
\begin{equation*}
\left\{f\in L^2: \mathcal{S}_Lf\in L^p_w \right\}
\end{equation*}
under the norm $\|f\|_{H^p_{L,w}}=\|\mathcal{S}_Lf\|_{p,w}$ where
\[
\mathcal{S}_Lf(x)=\Big[\int_0^\vc\int_{d(x,y)<t}|t^2Le^{-tL^2}f(y)|^2\f{\dy dt}{tV(x,t)}\Big]^{1/2}.
\]
The Hardy spaces $H^1_L$ was initiated in \cite{ADM}. See also \cite{DY}. The theory of Hardy spaces associated to operators satisfying Davies--Gaffney estimates  $H^1_L$ was established in \cite{HLMMY}. The weighted version for $H^p_{L,w}$ was investigated in \cite{BCKYY1}. From Proposition \ref{prop4.1-heat kernel} we obtain:

\begin{thm}
	\label{equiv-Hardy}
	Let $0<p\le 1$ and $w\in A_\vc$. Then we have
	\[
	H^p_{L,w}\equiv \F^{0,L}_{p,2,w}.
	\]
\end{thm}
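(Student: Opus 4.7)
The plan is to recognize this as a direct corollary of the Lusin-function characterization of $\F^{0,L}_{p,2,w}$ established in Proposition~\ref{prop4.1-heat kernel} (equivalently, Corollary~\ref{cor-area function}). Specifically, taking $\varphi(\xi)=\xi^2 e^{-\xi^2}$, which lies in $\mathscr{S}_1(\mathbb{R})$, and specializing to $\alpha=0$, $q=2$, $m=1$ (so $m>\alpha/2$ trivially), the operator $\varphi(t\sqrt{L})$ coincides with $\Psi_{1,t}(L)=t^2 L e^{-t^2L}$, and the Lusin function $\mathcal{S}^0_2(\Psi_{1,t}(L)f)$ is precisely the square function $\mathcal{S}_L f$ appearing in the definition of $H^p_{L,w}$.

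First I would verify the identification of the two area functions: directly from the definitions,
\[
\mathcal{S}_L f(x)^2 = \int_0^\infty \int_{d(x,y)<t}|\Psi_{1,t}(L)f(y)|^2 \frac{d\mu(y)\,dt}{tV(x,t)} = \bigl[\mathcal{S}^0_2(\Psi_{1,t}(L)f)(x)\bigr]^2.
\]
Next I would invoke the $L^2$ version of Proposition~\ref{prop4.1-heat kernel}, using its last sentence which allows the polynomial $\rho$ to be omitted when $f\in L^2$. This yields the norm equivalence
\[
\|f\|_{H^p_{L,w}} = \|\mathcal{S}_L f\|_{p,w} \sim \|f\|_{\F^{0,L}_{p,2,w}}
\]
for all $f\in L^2$. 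In particular, $L^2\cap H^p_{L,w}$ and $L^2\cap \F^{0,L}_{p,2,w}$ coincide as sets with comparable quasi-norms.

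Finally, the coincidence of the two (quasi-)Banach spaces is obtained by a completion argument. On one hand, $H^p_{L,w}$ is by definition the completion of $\{f\in L^2:\mathcal{S}_L f\in L^p_w\}$ in the norm $\|\mathcal{S}_L\cdot\|_{p,w}$. On the other, Remark~\ref{rem1} asserts that $\mathcal{S}_\vc$ is dense in $\F^{0,L}_{p,q,w}$ for $0<p,q<\vc$; since $\mathcal{S}_\vc\subset L^2$, finite sums of atoms provide a dense subset of $\F^{0,L}_{p,2,w}$ lying in $L^2$. The isometry (up to equivalent norms) on the common dense subspace $L^2$ therefore extends uniquely to a bijective isomorphism between the two completions, proving $H^p_{L,w}\equiv \F^{0,L}_{p,2,w}$.

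No genuine obstacle is expected: everything has been arranged in the preceding sections so that the heat-kernel square function $\mathcal{S}_L$ is exactly the Lusin functional covered by Proposition~\ref{prop4.1-heat kernel}. The only point requiring a little care is matching the completion procedures on the two sides, which is handled by the density of $\mathcal{S}_\vc\subset L^2$ in $\F^{0,L}_{p,2,w}$ from Remark~\ref{rem1}.
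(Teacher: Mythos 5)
Your proposal is correct and follows essentially the same route as the paper, which obtains the theorem directly from Proposition~\ref{prop4.1-heat kernel} (equivalently Corollary~\ref{cor-area function}) with $\varphi(\xi)=\xi^2e^{-\xi^2}$, $\alpha=0$, $q=2$, using that $\rho$ may be dropped for $f\in L^2$. The only difference is that you make explicit the completion/density step via Remark~\ref{rem1}, which the paper leaves implicit; this is a fair and harmless elaboration, not a different argument.
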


Moreover, we have an interesting characterization for the weighted Hardy spaces:
\begin{prop}\label{prop-equiv Hardy square functions}
	Let $0<p\le 1$ and $w\in A_\vc$ and let $\psi$ be a partition of unity and $\varphi\in \mathscr{S}_1(\mathbb{R})$. Then for any $f\in L^2\cap H^p_{L,w}$ we have
	\[
	\begin{aligned}
	\|f\|_{H^p_{L,w}}&\sim \Big\|\Big[\int_{0}^\vc|\psi(t\sL)f|^2\f{dt}{t} \Big]^{1/2}\Big\|_{p,w}\sim \Big\|\Big[\int_{0}^\vc|\varphi(t\sL)f|^2\f{dt}{t} \Big]^{1/2}\Big\|_{p,w}\\
	&\sim \|\mathcal{G}_{\lambda, 2}(\psi(t\sL)f)\|_{p,w}\sim \|\mathcal{S}_{2}(\psi(t\sL)f)\|_{p,w}\\
	&\sim \|\mathcal{G}_{\lambda, 2}(\varphi(t\sL)f)\|_{p,w}\sim \|\mathcal{S}_{2}(\varphi(t\sL)f)\|_{p,w}
	\end{aligned}
	\]
	where $\mathcal{S}_{2}$ and $\mathcal{G}_{\lambda, 2}$ are square functions defined in \eqref{g-function} and \eqref{lusin-function}.
\end{prop}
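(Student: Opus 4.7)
The plan is to deduce all stated equivalences by assembling the characterizations of $\F^{0,L}_{p,2,w}$ developed in Section~3 with the identification of the weighted Hardy space $H^p_{L,w}$ with $\F^{0,L}_{p,2,w}$ established in Theorem~\ref{equiv-Hardy}. Specifically, since $f\in L^2\cap H^p_{L,w}$, Theorem~\ref{equiv-Hardy} immediately gives $\|f\|_{H^p_{L,w}} \sim \|f\|_{\F^{0,L}_{p,2,w}}$, so it suffices to recognize each of the six square-function quantities on the right-hand side as an equivalent quasi-norm on $\F^{0,L}_{p,2,w}$.

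For the three quantities involving the partition of unity $\psi$, I would apply Theorem~\ref{thm1-continuouscharacter}(b) with $\alpha=0$ and $q=2$ to obtain the vertical Littlewood--Paley equivalence
$$\|f\|_{\F^{0,L}_{p,2,w}}\sim \Big\|\Big(\int_0^\infty |\psi(t\sL)f|^2\f{dt}{t}\Big)^{1/2}\Big\|_{p,w},$$
and then invoke Proposition~\ref{prop4.1b} with the same parameters (choosing $\lambda>nq_w/p$, which is implicit since $p\wedge q=p$) to obtain the equivalences with $\|\mathcal{G}_{\lambda,2}(\psi(t\sL)f)\|_{p,w}$ and $\|\mathcal{S}_{2}(\psi(t\sL)f)\|_{p,w}$.

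For the three quantities involving $\varphi\in \mathscr{S}_1(\mathbb{R})$, the key observation is that the choice $m=1$ satisfies $m>\alpha/2=0$, so I would apply Theorem~\ref{thm2}(b) together with Proposition~\ref{prop4.1-heat kernel}. A priori these give equivalences only modulo some $\rho\in \mathscr{P}$; however, the \emph{moreover} clauses of those results allow $\rho$ to be discarded as soon as $f\in L^2$, since the spectral theorem then furnishes the Calder\'on reproducing formula $\varphi(t\sL)f=c_\psi\int_0^\infty \varphi(t\sL)\psi(s\sL)f\,ds/s$ directly in $L^2$ (and hence in $\mathcal{S}'$) without any polynomial ambiguity, as pointed out in Remark~\ref{rem theorem 4.2}(b). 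This supplies the equivalences with the three $\varphi$-type quantities and completes the argument.

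Since every ingredient is already in place, I do not foresee a genuine obstacle. The only technical point that deserves emphasis is the elimination of the $\mathscr{P}$-representative $\rho$ in the $\varphi$-type estimates; this is precisely what forces the hypothesis $f\in L^2\cap H^p_{L,w}$ in place of the weaker $f\in H^p_{L,w}$, because the $\varphi$-type characterizations for general distributions in $\mathcal{S}'_\infty$ inherently hold only up to elements of $\mathscr{P}$.
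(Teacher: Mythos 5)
Your proposal is correct and follows essentially the same route as the paper, whose proof is simply to assemble Theorem~\ref{thm1-continuouscharacter}, Theorem~\ref{thm2}, Proposition~\ref{prop4.1b} and Proposition~\ref{prop4.1-heat kernel} (with $\alpha=0$, $q=2$, $m=1$), together with the identification $\|f\|_{H^p_{L,w}}\sim\|f\|_{\F^{0,L}_{p,2,w}}$ coming from Theorem~\ref{equiv-Hardy}. Your emphasis on discarding the $\mathscr{P}$-representative via the $f\in L^2$ hypothesis (Remark~\ref{rem theorem 4.2}(b) and the ``moreover'' clause of Proposition~\ref{prop4.1-heat kernel}) is exactly the point the paper relies on implicitly.
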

\begin{proof}
	The proposition follows immediately from Theorem \ref{thm1-continuouscharacter}, Theorem \ref{thm2}, Proposition \ref{prop4.1b} and Proposition \ref{prop4.1-heat kernel}.
\end{proof}
\begin{rem}
These equivalent norm estimates for weighted Hardy spaces $H^p_{L,w}$ are new. It is worth noticing that in the particular case when $\varphi(\xi)=\xi^{2M}e^{-\xi^2}$ with $M\ge 1$ the equivalent estimate in Proposition \ref{prop-equiv Hardy square functions} 
\[
\|f\|_{H^p_{L,w}}\sim \Big\|\Big[\int_{0}^\vc|\varphi(t\sL)f|^2\f{dt}{t} \Big]^{1/2}\Big\|_{p,w}
\]
reads
\[
\|f\|_{H^p_{L,w}}\sim \Big\|\Big[\int_{0}^\vc|(t^2L)^{M}e^{-t^2L}f|^2\f{dt}{t} \Big]^{1/2}\Big\|_{p,w}.
\]
Note that this estimate was proved in \cite{G} (see also \cite{DJL}) for $M=1$.
\end{rem}

\subsection{Coincidence with the weighted BMO spaces ${\rm BMO}_{L,w}(X)$} 
\begin{defn}
	Let $w\in A_\vc$. The function $f\in \mathcal S'\cap L^1_{{\rm loc}}(X)$ is said to be in ${\rm BMO}_{L,w}(X)$, the weighted space of functions of bounded mean oscillation associated to $L$, if 
	\begin{equation}\label{BMOdef}
	\|f\|_{{\rm BMO}_{L,w}(X)}:=\sup_{B: \  {\rm balls}}\f{1}{w(B)}\int_B|(I-e^{-r^2_BL})f(x)|\dx<\vc.
	\end{equation}
\end{defn}

The unweighted BMO space ${\rm BMO}_L(X)$ associated to operators $L$ was first introduced by \cite{DY1}. The weighted version was studied in \cite{BD3, HYY}. 

It was proved in Theorem 5.5 \cite{HYY} that when $L=-\Delta$ on $\mathbb{R}^n$, we have
\begin{equation}
\label{eq-BMO equiv}
{\rm BMO}_{-\Delta,w}(\mathbb{R}^n)={\rm BMO}_{w}(\mathbb{R}^n) 
\end{equation} 
for all $w\in A_1\cap RH_2$, where 
\[
{\rm BMO}_{w}(\mathbb{R}^n)=\left\{f\in L^1_{{\rm loc}}:  \|f\|_{{\rm BMO}_{w}}:=\sup_{B: \  {\rm balls}}\f{1}{w(B)}\int_B|f-f_B|dx<\vc \right\}.
\]

We now prove the coincidence between the weighted BMO space ${\rm BMO}_{L,w}(X)$ and the weighted Triebel--Lizorkin space $\F^{0,L}_{\vc,2,w}(X)$.
\begin{thm}\label{BMOthm} We have the following identities.
	
	\noindent{\rm (a)} ${\rm BMO}_{L}(X)\equiv \F^{0,L}_{\vc,2}(X)$ in the sense that if $f\in {\rm BMO}_{L}(X)$ then $f\in \F^{0,L}_{\vc,2}(X)$; conversely, if $f\in \F^{0,L}_{\vc,2}(X)$ then there exists $\rho\in \mathscr P$ so that $f-\rho\in {\rm BMO}_{L}(X)$.
	
	\noindent{\rm (b)} Let $w\in A_1\cap RH_2$. Then we have 
	${\rm BMO}_{w}(\mathbb{R}^n)\equiv \F^{0,-\Delta}_{\vc,2,w}(\mathbb{R}^n)$ in the similar sense to that of {\rm (a)}.
\end{thm}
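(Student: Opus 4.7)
The strategy is to identify both ${\rm BMO}_L$ and $\F^{0,L}_{\infty,2}$ with a Carleson-measure seminorm based on the heat semigroup, and then to conclude by appealing to the known Carleson-measure characterization of ${\rm BMO}_L$. Concretely, define
$$\|g\|_{\mathcal C_L}:=\sup_{B\subset X}\Big(\frac{1}{V(B)}\int_B\int_0^{r_B}|s^2Le^{-s^2L}g(y)|^2\,\frac{dy\,ds}{s}\Big)^{1/2}.$$
Applying Proposition~\ref{prop2-Fvc} and Corollary~\ref{cor-Fvc} with $w\equiv 1$, $\alpha=0$, $q=2$, and $m=1$ (using part (i) for one direction and part (ii) with any $p>1$ for the other), one obtains that for every $f\in\mathcal S'$ with $\|f\|_{\F^{0,L}_{\infty,2}}<\infty$ there exists $\rho\in\mathscr P$ with $\|f-\rho\|_{\mathcal C_L}\sim \|f\|_{\F^{0,L}_{\infty,2}}$, and that $\rho$ can be dropped when $f\in L^2$. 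This reduces part (a) to the Carleson-measure characterization
$$g\in {\rm BMO}_L \iff \|g\|_{\mathcal C_L}<\infty,\qquad \text{with equivalent norms},$$
which is due to Duong--Yan \cite{DY1} and is developed further in \cite{HLMMY}.

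For the inclusion ${\rm BMO}_L\hookrightarrow \mathcal C_L$, the standard proof fixes a ball $B$ of radius $r_B$, picks a large integer $M$, and decomposes
$f=[I-(I-e^{-r_B^2L})^M]f+(I-e^{-r_B^2L})^Mf=:f_1+f_2.$
For $f_1=\sum_{k=1}^M c_k e^{-kr_B^2L}f$, the composition $s^2 Le^{-s^2L}e^{-kr_B^2L}$ acquires an extra $(s/r_B)^2$ decay that renders the $ds/s$ integral convergent; combined with a splitting $f=f\chi_{4B}+f\chi_{(4B)^c}$, the BMO$_L$ oscillation condition on the local part and the Gaussian kernel decay on the far part produce the required bound. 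For $f_2$, the $L^2$-boundedness of the vertical square function associated to $\xi^2 e^{-\xi^2}$, together with iterated use of the BMO$_L$ definition to control $\|(I-e^{-r_B^2L})^M f\|_{L^2(4B)}$, yields the estimate. Thus every $f\in{\rm BMO}_L$ embeds canonically into $\mathcal S'_\infty=\mathcal S'/\mathscr P$ with $\|f\|_{\F^{0,L}_{\infty,2}}\lesssim \|f\|_{\mathcal C_L}\lesssim \|f\|_{{\rm BMO}_L}$.

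The reverse $\mathcal C_L\hookrightarrow{\rm BMO}_L$ is the more delicate step. Testing against $h\in L^\infty(B)$ with $\|h\|_\infty\le 1$, we use self-adjointness of $L$ and a Calder\'on reproducing formula $g=c_\varphi\int_0^\infty \psi(t\sqrt L)(t^2 Le^{-t^2 L}g)\frac{dt}{t}$ (with even $\psi\in \mathscr S(\mathbb R)$ supported in $[1/2,2]\cup[-2,-1/2]$) to rewrite
$$\int_B (I-e^{-r_B^2L})g\cdot h\,dx = c_\varphi\int_0^\infty\int_X (t^2Le^{-t^2L}g)(y)\cdot\psi(t\sqrt L)(I-e^{-r_B^2L})(h\chi_B)(y)\,dy\,\frac{dt}{t}.$$
The first factor has $T^2_\infty$-norm $\sim \|g\|_{\mathcal C_L}$; for the second, Lemma~\ref{lem1} applied to the composition $\psi(t\sqrt L)(I-e^{-r_B^2L})$ (using the vanishing of $1-e^{-(r_B\xi)^2}$ at $\xi=0$ and the compact support of $\psi$) yields the kernel estimate
$|K|\lesssim \min(t/r_B,r_B/t)^2\,V(y,t\vee r_B)^{-1}(1+d(x,y)/(t\vee r_B))^{-N}$, whose extra $\min(t/r_B,r_B/t)^2$ factor renders the $dt/t$ integral convergent and gives $\|\psi(t\sqrt L)(I-e^{-r_B^2L})(h\chi_B)\|_{T^2_1}\lesssim V(B)$. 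The tent-space duality pairing $|\langle\cdot,\cdot\rangle_{T^2_1\times T^2_\infty}|\lesssim \|\cdot\|_{T^2_1}\|\cdot\|_{T^2_\infty}$ then gives $\frac{1}{V(B)}\int_B|(I-e^{-r_B^2L})g|\,dx\lesssim \|g\|_{\mathcal C_L}$. Passage from $L^2\cap{\rm BMO}_L$ to general distributions is by approximation with $e^{-\varepsilon L}f$ and $\varepsilon\downarrow 0$.

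For part (b), the same three-step scheme goes through with $L=-\Delta$ on $\mathbb R^n$ and a weight $w\in A_1\cap RH_2$, using the weighted forms of Proposition~\ref{prop2-Fvc} and Corollary~\ref{cor-Fvc} (the hypothesis $w\in A_1$ is precisely what is needed for part~(i) of the Corollary); this identifies $\F^{0,-\Delta}_{\infty,2,w}(\mathbb R^n)$ with ${\rm BMO}_{-\Delta,w}(\mathbb R^n)$ in the sense described in (a), which by the cited identity \eqref{eq-BMO equiv} from \cite{HYY} coincides with classical ${\rm BMO}_w(\mathbb R^n)$ when $w\in A_1\cap RH_2$. The main obstacle throughout is the Carleson-to-BMO$_L$ direction in Step~3: extracting enough decay from the composition $\psi(t\sqrt L)(I-e^{-r_B^2L})$ to absorb the $dt/t$ singularity is what forces the Calder\'on reproducing detour, since the naive identity $(I-e^{-r_B^2L})g=2\int_0^{r_B}(s^2Le^{-s^2L}g)\frac{ds}{s}$ combined with Cauchy--Schwarz cannot close directly.
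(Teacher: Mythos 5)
Your treatment of part (a) follows essentially the same route as the paper: both arguments identify $\|f\|_{\F^{0,L}_{\vc,2}}$ with a heat-semigroup Carleson quantity via the $p=\vc$ machinery (Theorem~\ref{prop3-Fvc}, equivalently Corollary~\ref{cor-Fvc} with $w\equiv 1$, $\alpha=0$, $q=2$), and then invoke a known Carleson-measure characterization of ${\rm BMO}_L$. The only differences are cosmetic: the paper quotes \cite{JY} (Theorem 4.2 there), stated with the composed operator $s^2Le^{-s^2L}e^{-s^2L}$, i.e.\ $\varphi(\xi)=\xi^2e^{-2\xi^2}\in\mathscr S_1(\mathbb{R})$, while you use $s^2Le^{-s^2L}$ and attribute the characterization to \cite{DY1,HLMMY}, additionally sketching its proof (the $(I-e^{-r_B^2L})^M$ decomposition and the tent-space duality argument); both choices of $\varphi$ are admissible in Theorem~\ref{prop3-Fvc}, and your handling of the representative $\rho\in\mathscr P$ (using that $\psi_j(\sL)\rho=0$ so the $\F$-norm only sees the class modulo $\mathscr P$) is correct. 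One point you share with the paper and should at least acknowledge is that applying the Carleson-to-BMO direction to $f-\rho$ presupposes this representative lies in the class on which ${\rm BMO}_L$ is defined ($\mathcal S'\cap L^1_{\rm loc}$, resp.\ the class used in \cite{DY1,HLMMY}).

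Part (b), however, has a genuine gap. The paper's proof of (b) consists precisely of citing the weighted square-function characterization of the \emph{classical} space ${\rm BMO}_w(\mathbb{R}^n)$ from \cite{HYY}, namely $\|f\|_{{\rm BMO}_w}\sim\sup_{B}\big(\tfrac{|B|}{w(B)^2}\int_B\int_0^{r_B}|s^2\Delta e^{s^2\Delta}(I-e^{s^2\Delta})f|^2\tfrac{ds}{s}dy\big)^{1/2}$, and pairing it with Theorem~\ref{prop3-Fvc}; this is exactly where $w\in A_1\cap RH_2$ does its work. You instead assert that "the same three-step scheme goes through" in the weighted setting, i.e.\ that the weighted Carleson condition with normalization $|B|/w(B)^2$ is equivalent to ${\rm BMO}_{-\Delta,w}$ (defined via $\tfrac1{w(B)}\int_B|(I-e^{-r_B^2\Delta})f|\,dx$), citing \cite{HYY} only for ${\rm BMO}_{-\Delta,w}={\rm BMO}_w$. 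This equivalence is not a formal rerun of your unweighted Steps 2--3: in the BMO-to-Carleson direction the local term forces you to upgrade the weighted $L^1$ oscillation bound to an unweighted $L^2(B)$ bound of size $w(B)^2/|B|$, which is where $RH_2$ (together with a John--Nirenberg-type self-improvement) must enter, and the duality step likewise needs a weighted adaptation compatible with the unusual $|B|/w(B)^2$ normalization; your sketch never indicates where $RH_2$ is used beyond noting that $A_1$ suffices for Corollary~\ref{cor-Fvc}(i). To close the argument you should either quote the weighted square-function characterization from \cite{HYY} directly, as the paper does, or actually carry out the weighted versions of your Steps 2 and 3 with the role of $RH_2$ made explicit.
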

\begin{proof}
	(a) It was proved in Theorem 4.2 \cite{JY} that 
	\[
	\sup_{x\in X, t>0}\Big(\f{1}{V(x,t)}\int_{B(x,t)}\int_0^{t}|s^2Le^{-s^2L} e^{-s^2L}f|^2 \f{ds}{s}\dy\Big)^{1/2}\sim  \|f\|_{{\rm BMO}_{L,w}(X)}.
	\]
	This, along with Theorem \ref{prop3-Fvc}, implies the assertion.
	
	\medskip
	
	(b) It was proved in \cite{HYY} that 
	\[
	\|f\|_{{\rm BMO}_{w}(\mathbb{R}^n)}\sim \sup_{x\in X, t>0}\Big(\f{|B(x,t)|}{w(B(x,t))^2}\int_{B(x,t)}\int_0^{t}|s^2\Delta e^{s^2\Delta}(I- e^{s^2\Delta})f|^2 \f{ds}{s}dy\Big)^{1/2}.
	\]
	Using this and Theorem \ref{prop3-Fvc}, we derive part (b).
\end{proof}
\subsection{Coincidence with the weighted Sobolev spaces $\dot{W}^{s,L}_{p,w}$}\label{sub-fractional} For $s\in \mathbb{R}$, we define $L^{s/2}: \SL\to \SL$ by setting: 
\begin{equation}
\label{eq-Ls/2}
L^{s/2}f=\f{1}{\Gamma(m-s/2)}\int_0^\vc t^{-s/2}(tL)^m e^{-tL}f\f{dt}{t}
\end{equation}
for any $m\in \mathbb{N}, m>s/2$.

Arguing similarly to Proposition \ref{prop-Calderon1}, we can prove that the right hand side in \eqref{eq-Ls/2} converges in $\SL$. Moreover, by integration by part we can see that
\[
\f{1}{\Gamma(m-s/2)}\int_0^\vc t^{-s/2}(tL)^m e^{-tL}f\f{dt}{t}=\f{1}{\Gamma(\ell-s/2)}\int_0^\vc t^{-s/2}(tL)^\ell e^{-tL}f\f{dt}{t}
\]
for any $m,\ell\in \mathbb{N}, m,\ell>s/2$.

Therefore, $L^{s/2}$  given by \eqref{eq-Ls/2} is well-defined as an operator from $\SL$ into $\SL$. Moreover, it is easy to check that 
\begin{equation}
\label{eq-prop Ls/2}
L^\alpha[L^\beta f]=L^{\alpha+\beta}f, \ \ \ \forall f\in \SL.
\end{equation} 

We now define the weighted Sobolev spaces $\dot{W}^{s,L}_{p,w}$ as follows: Let $s\in\mathbb{R}$, $1<p<\vc$ and $w\in A_\vc$. The weighted Sobolev space $\dot{W}^{s,L}_{p,w}$ is defined as the completion of the set
\[
\left\{f\in \mathcal{S}_\vc: \|L^{s/2}f\|_{p,w}<\vc \right\}
\]
under the norm $\|f\|_{\dot{W}^{s,L}_{p,w}}=\|L^{s/2}f\|_{p,w}$.

\begin{thm}
	\label{equiv Sobolev}
	Let $s\in\mathbb{R}$, $1<p<\vc$ and $w\in A_\vc$. Then we have
	\[
	\dot{W}^{s,L}_{p,w}\equiv \dot{F}^{s,L}_{p,2,w}.
	\]
\end{thm}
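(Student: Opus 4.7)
The plan is to establish the lifting property that $L^{s/2}$ intertwines the weighted Triebel–Lizorkin spaces at different smoothness levels, reducing the general case to $s=0$, which was handled in Theorem \ref{equiv Lpw}. Since $\mathcal{S}_\vc$ is dense in $\F^{s,L}_{p,2,w}$ (Remark \ref{rem1}, as $p,2<\vc$), it suffices to prove the norm equivalence $\|L^{s/2}f\|_{p,w}\sim \|f\|_{\F^{s,L}_{p,2,w}}$ for every $f\in\mathcal{S}_\vc$, after which one extends by density to obtain the coincidence of the two completions.

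Fix a partition of unity $\psi$ with $\supp\psi\subset[1/2,2]$ and $\psi>0$ on $(1/2,2)$. By Theorem \ref{thm1-continuouscharacter}(b),
\[
\|f\|_{\F^{s,L}_{p,2,w}}\sim \NormB{\ContainB{\int_0^\vc [t^{-s}|\psi(t\sL)f|]^2 \f{dt}{t}}^{1/2}}_{p,w}.
\]
Define $\tilde\varphi(\mu):=\mu^{-s}\psi(\mu)$ on $(0,\vc)$ and extend it to an even function on $\mathbb R$. Because $\psi$ vanishes near $0$, $\tilde\varphi\in\mathscr S(\mathbb R)$ is supported in $[-2,-1/2]\cup[1/2,2]$ and vanishes to infinite order at $0$; in particular $\tilde\varphi\in\mathscr S_m(\mathbb R)$ for \emph{every} $m\in\mathbb N$. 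Setting $g:=L^{s/2}f$, which lies in $\mathcal{S}_\vc\subset L^2$ because $L^{s/2}:\mathcal{S}_\vc\to\mathcal{S}_\vc$, the spectral identity $\psi(t\lambda)=t^s\tilde\varphi(t\lambda)\lambda^s$ gives $\psi(t\sL)f=t^s\tilde\varphi(t\sL)g$, and so
\[
\|f\|_{\F^{s,L}_{p,2,w}}\sim \NormB{\ContainB{\int_0^\vc |\tilde\varphi(t\sL)g|^2\f{dt}{t}}^{1/2}}_{p,w}.
\]

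Since $\psi>0$ on $(1/2,2)$, the same holds for $\tilde\varphi$, so it meets the non-degeneracy requirement of Theorem \ref{thm2}(b). As $g\in L^2$, Remark \ref{rem theorem 4.2} allows us to drop the polynomial correction $\rho$, yielding
\[
\|g\|_{\F^{0,L}_{p,2,w}}\sim \NormB{\ContainB{\int_0^\vc |\tilde\varphi(t\sL)g|^2\f{dt}{t}}^{1/2}}_{p,w}.
\]
Chaining these equivalences with Theorem \ref{equiv Lpw} ($L^p_w\equiv \F^{0,L}_{p,2,w}$) produces
\[
\|f\|_{\F^{s,L}_{p,2,w}}\sim \|g\|_{L^p_w}=\|L^{s/2}f\|_{p,w}=\|f\|_{\dot{W}^{s,L}_{p,w}},
\]
and passage to the completion finishes the identification.

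The main obstacle is the application of Theorem \ref{thm2} to the multiplier $\tilde\varphi$, which a priori is neither a partition of unity nor visibly of the form $\lambda^{2m}\phi(\lambda)$ with $\phi\in\mathscr S(\mathbb R)$. Both difficulties dissolve once one notices that $\tilde\varphi$ vanishes identically in a neighborhood of $0$: this places $\tilde\varphi$ in every class $\mathscr S_m(\mathbb R)$, while the non-degeneracy is secured by choosing $\psi$ positive throughout $(1/2,2)$.
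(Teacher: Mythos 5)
Your proposal is correct, but it takes a genuinely different route from the paper's. The paper proves the two embeddings separately: for $\F^{s,L}_{p,2,w}\cap\mathcal S_\vc\hookrightarrow\dot W^{s,L}_{p,w}$ it pairs $L^{s/2}f$ against $g\in L^{p'}_{w^{1-p'}}$ (using $w\in A_p$, hence $w^{1-p'}\in A_{p'}$) and proves the pointwise bound $|\psi^2(t\sL)L^{s/2}f|\lesi t^{-s}\psi^*_\lambda(t\sL)f$ by splitting the subordination integral \eqref{eq-Ls/2} at $u=t^2$ and invoking Lemma \ref{lem1}; the converse embedding uses the analogous bound for $L^{-s/2}\psi^3(2^{-j}\sL)L^{s/2}f$ together with Propositions \ref{prop1-maximal function} and \ref{prop2-thm1}. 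You instead exploit the single algebraic identity $t^{-s}\psi(t\sL)f=\tilde\varphi(t\sL)L^{s/2}f$ with $\tilde\varphi(\mu)=\mu^{-s}\psi(\mu)$, which is admissible precisely because $\supp\psi\subset[1/2,2]$ makes $\tilde\varphi$ an even Schwartz function belonging to $\mathscr S_m(\mathbb{R})$ for every $m$, and then you let Theorem \ref{thm2} (with Remark \ref{rem theorem 4.2} to drop $\rho$, since $L^{s/2}f\in\mathcal S_\vc\subset L^2$), Theorem \ref{thm1-continuouscharacter} and Theorem \ref{equiv Lpw} do the work, so that both inequalities fall out of one chain of equivalences with no duality and no new kernel estimates. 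What you gain is brevity and symmetry; what the paper's route gains is self-containedness (it needs only the compactly supported characterizations and Lemma \ref{lem1}, not the non-compactly-supported Theorem \ref{thm2}), and its pointwise estimates are reused essentially verbatim for Theorem \ref{equiv Hardy-Sobolev} and for the fractional-power bounds in Section 7. Two points you should make explicit: (i) a partition of unity with $\psi>0$ on $(1/2,2)$ exists (normalize a positive bump $\eta$ by $\sum_j\eta(2^{-j}\cdot)$), and Theorem \ref{thm1} justifies computing the $\F^{s,L}_{p,2,w}$-norm with this particular $\psi$; (ii) the identity $\psi(t\sL)f=t^s\tilde\varphi(t\sL)L^{s/2}f$ requires interchanging the bounded operator $\tilde\varphi(t\sL)$ with the integral in \eqref{eq-Ls/2} and evaluating $\frac{1}{\Gamma(m-s/2)}\int_0^\vc u^{m-s/2}\lambda^{2m}e^{-u\lambda^2}\frac{du}{u}=\lambda^{s}$ on the support of $\tilde\varphi(t\cdot)$, the same kind of spectral manipulation as \eqref{eq-prop Ls/2}, so a one-line justification should be added. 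Finally, your reliance on Theorem \ref{equiv Lpw} effectively restricts the weight to $w\in A_p$; the paper's own proof carries exactly the same restriction (it even uses $w^{1-p'}\in A_{p'}$), so this is inherited from the paper rather than a defect of your argument.
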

\begin{proof}
	From Remark \ref{rem1}, we need only to show that
	\[
	\dot{W}^{s,L}_{p,w}\cap \mathcal{S}_{\vc}\equiv \dot{F}^{s,L}_{p,2,w}\cap \mathcal{S}_{\vc}.
	\]
	Indeed, let $f\in \dot{F}^{s,L}_{p,2,w}\cap \mathcal{S}_{\vc}$ and $\psi$ be a partition of unity. Then by Theorem \ref{equiv Lpw} and the spectral theory, for $g\in L^{p'}_{w^{1-p'}}$,
	\[
	\begin{aligned}
	|\langle L^{s/2}f,g\rangle| &=\Big|c_\psi \int_X\int_0^\vc \psi^3(t\sL)L^{s/2}f(x)g(x)\f{dt}{t}\dx \Big|\\
	&= \Big|c_\psi \int_X\int_0^\vc \psi^2(t\sL)L^{s/2}f(x)\psi(t\sL)g(x)\f{dt}{t}\dx\Big|\\
	&\lesi c_\psi \Big\|\Big[\int_{0}^\vc|\psi(t\sL)^2L^{s/2}f|^2\f{dt}{t} \Big]^{1/2}\Big\|_{p,w}\Big\|\Big[\int_{0}^\vc|{\psi}(t\sL)g|^2\f{dt}{t} \Big]^{1/2}\Big\|_{p',w^{1-p'}},
	\end{aligned}
	\]
	where $c_\psi =\Big[\int_0^\vc \psi^3(s)\f{ds}{s}\Big]^{-1}$, and
	we use H\"older's inequality in the last inequality.

On the other hand, since $w\in A_p$, $w^{1-p'}\in A_{p'}$. By Theorem \ref{equiv Lpw} and Theorem \ref{thm1-continuouscharacter} we have
\[
\Big\|\Big[\int_{0}^\vc|{\psi}(t\sL)g|^2\f{dt}{t} \Big]^{1/2}\Big\|_{p',w^{1-p'}}\sim \|g\|_{p',w^{1-p'}}.
\]
Therefore,
\begin{equation}\label{eq0-proof Ls}
|\langle L^{s/2}f,g\rangle| \lesi \|g\|_{p',w^{1-p'}}\Big\|\Big[\int_{0}^\vc|\psi(t\sL)L^{s/2}f|^2\f{dt}{t} \Big]^{1/2}\Big\|_{p,w}.
\end{equation}

Using \eqref{eq-Ls/2} we have
\[
\begin{aligned}
\psi^2(t\sL)L^{s/2}f(x)=&\f{1}{\Gamma(m-s/2)}\int_0^\vc u^{-s/2}\psi(t\sL)(uL)^m e^{-uL}(\psi(t\sL)f)\f{du}{u}\\
=&\int_0^{t^2}\ldots+\int^\vc_{t^2}\ldots=:E_1(x,t)+E_2(x,t).
\end{aligned}
\]
Fix $\lambda>\max\{n/q,nq_w/p \}$ and $M>(\lambda+s)/2$. By Lemma \ref{lem1} we have, for $N>n$,
\[
\begin{aligned}
|E_1(x,t)|&\lesi \int_0^{t^2}\int_X u^{-s/2}\Big(\f{t^2}{u}\Big)^M\f{1}{V(x,u)}\Big(1+\f{d(x,y)}{u}\Big)^{-N-\lambda}| \psi(t\sL)f(y)|\dy\f{dt}{t}\\
&\lesi \int_0^{t^2}\int_X u^{-s/2}\Big(\f{t^2}{u}\Big)^{M-\lambda/2}\f{1}{V(x,u)}\Big(1+\f{d(x,y)}{u}\Big)^{-N}\psi^*_\lambda(t\sL)f(x)\dy\f{dt}{t}\\
&\lesi t^{-s}\psi^*_\lambda(t\sL)f(x).
\end{aligned}
\]
Similarly, we have
\[
|E_2(x,t)|\lesi t^{-s}\psi^*_\lambda(t\sL)f(x).
\]
Hence,
\begin{equation}\label{eq1- proof Ls/2}
	|\psi^2(t\sL)L^{s/2}f(x)|\lesi t^{-s}\psi^*_\lambda(t\sL)f(x).
\end{equation}
Inserting this into \eqref{eq0-proof Ls}, then using Theorem \ref{thm1-continuouscharacter} we get that
\[
\begin{aligned}
|\langle L^{s/2}f,g\rangle| &\lesi \|g\|_{p',w^{1-p'}}\Big\|\Big[\int_{0}^\vc(t^{-s}\psi^*_\lambda(t\sL)f)^2\f{dt}{t} \Big]^{1/2}\Big\|_{p,w}\\
&\lesi \|g\|_{p',w^{1-p'}}\|f\|_{\F^{0,L}_{p,2,w}}.
\end{aligned}
\]
This implies $\|L^{s/2}f\|_{p,w}\lesi \|f\|_{\F^{0,L}_{p,2,w}}$. Hence, $\dot{F}^{s,L}_{p,2,w}\cap \mathcal{S}_{\vc} \hookrightarrow \dot{W}^{s,L}_{p,w}\cap \mathcal{S}_{\vc}$.\\

Conversely, let $\psi\in \SR$ so that $\psi^3$ is a partition of unity. Then for $f\in \dot{W}^{s,L}_{p,w}\cap \mathcal{S}_{\vc}$, by \eqref{eq-prop Ls/2} we have
\[
\begin{aligned}
\|f\|_{\dot{F}^{s,L}_{p,2,w}}&:=\Big\|\Big[\sum_{j\in \mathbb{Z}}(2^{-js}|\psi^3(2^{-j}\sL)f|)^2\Big]^{1/2}\Big\|_{p,w}\\
&\lesi\Big\|\Big[\sum_{j\in \mathbb{Z}}(2^{-js}|L^{-s/2}\psi^3(2^{-j}\sL)(L^{s/2}f)|)^2\Big]^{1/2}\Big\|_{p,w}.
\end{aligned}
\]
Arguing similarly to the proof of \eqref{eq1- proof Ls/2} we have, for each $j\in \mathbb{Z}$,
$$
|L^{-s/2}\psi^3(2^{-j}\sL)(L^{s/2}f)|\lesi 2^{js}\psi^*_{j,\lambda}(\sL)(L^{s/2}f).
$$
Therefore, 
\[
\begin{aligned}
\|f\|_{\dot{F}^{s,L}_{p,2,w}}&\lesi\Big\|\Big[\sum_{j\in \mathbb{Z}}|\psi^*_{j,\lambda}(\sL)(L^{s/2}f)|)^2\Big]^{1/2}\Big\|_{p,w}.
\end{aligned}
\]
This, together with Proposition \ref{prop1-maximal function} and Proposition \ref{prop2-thm1}, yields
\[
\begin{aligned}
\|f\|_{\dot{F}^{s,L}_{p,2,w}}&\lesi \|L^{s/2}f\|_{\F^{0,L}_{p,2,w}}.
\end{aligned}
\]
On the other hand, from Theorem \ref{equiv Lpw} we have $\|L^{s/2}f\|_{\F^{0,L}_{p,2,w}}\sim \|L^{s/2}f\|_{p,w}$. Hence,
\[
\begin{aligned}
\|f\|_{\dot{F}^{s,L}_{p,2,w}}&\lesi \|L^{s/2}f\|_{p,w}.
\end{aligned}
\]
This completes our proof.
\end{proof}

\subsection{Coincidence with weighted Hardy--Sobolev spaces $H\dot{S}^{s,L}_{p,w}$.} For $s\in\mathbb{R}$, $0<p\le 1$ and $w\in A_\vc$ the weighted Hardy-Sobolev space 
$H\dot{S}^{s,L}_{p,w}$ is defined as the completion of the set
\[
\left\{f\in \mathcal{S}_\vc: \|L^{s/2}f\|_{H^p_{L,w}}<\vc \right\}
\]
under the norm $\|f\|_{H\dot{S}^{s,L}_{p,w}}=\|L^{s/2}f\|_{H^p_{L,w}}$, where $H^p_{L,w}$ is the weighted Hardy space defined as in Subsection \ref{sub-weightedHardy}.

\begin{thm}
	\label{equiv Hardy-Sobolev}
	Let $s\in\mathbb{R}$, $0<p\le 1$ and $w\in A_\vc$. Then we have
	\[
	H\dot{S}^{s,L}_{p,w}\equiv \dot{F}^{s,L}_{p,2,w}.
	\]
\end{thm}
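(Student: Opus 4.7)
The strategy is to mimic the proof of Theorem~\ref{equiv Sobolev}, but to substitute Theorem~\ref{equiv-Hardy} (the identification $H^p_{L,w}\equiv \F^{0,L}_{p,2,w}$) for Theorem~\ref{equiv Lpw} (the identification $L^p_w \equiv \F^{0,L}_{p,2,w}$) at the step where the $H^p_{L,w}$-norm, resp.\ $L^p_w$-norm of $L^{s/2}f$ is related to its Triebel--Lizorkin norm. By Remark~\ref{rem1} it suffices to show the norm equivalence on $\mathcal{S}_\infty$.

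For the forward direction, let $f\in H\dot{S}^{s,L}_{p,w}\cap \mathcal{S}_\infty$, so that $L^{s/2}f \in H^p_{L,w}\cap L^2(X)$. Fix a partition of unity $\psi$. Since $L^{s/2}f\in L^2(X)$, Proposition~\ref{prop-equiv Hardy square functions} yields
\[
\|L^{s/2}f\|_{H^p_{L,w}} \sim \NormB{\BracketB{\int_0^\infty |\psi(t\sL)L^{s/2}f|^2\,\tfrac{dt}{t}}^{1/2}}_{p,w}.
\]
I would then establish the pointwise estimate
\[
|\psi(t\sL)L^{s/2}f(x)|\lesi t^{-s}\,\psi^*_\lambda(t\sL)f(x),
\]
which is proved in exactly the same way as \eqref{eq1- proof Ls/2} by writing $\psi(t\sL)L^{s/2}f$ via \eqref{eq-Ls/2}, splitting the $u$-integral at $u=t^2$, and applying Lemma~\ref{lem1} on each piece (choosing $m>s/2$ large enough to make both pieces converge). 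Inserting this into the above equivalence and invoking Theorem~\ref{thm1-continuouscharacter}(b), we obtain
\[
\|L^{s/2}f\|_{H^p_{L,w}}\lesi \NormB{\BracketB{\int_0^\infty (t^{-s}\psi^*_\lambda(t\sL)f)^2\,\tfrac{dt}{t}}^{1/2}}_{p,w}\sim \|f\|_{\dot{F}^{s,L}_{p,2,w}}.
\]

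For the converse, choose $\psi\in \SR$ with $\psi^3$ a partition of unity, and write
\[
\psi^3(2^{-j}\sL)f = L^{-s/2}\psi^3(2^{-j}\sL)(L^{s/2}f)
\]
using \eqref{eq-prop Ls/2}. Reasoning as in the reverse direction of Theorem~\ref{equiv Sobolev} gives the pointwise bound
\[
|L^{-s/2}\psi^3(2^{-j}\sL)(L^{s/2}f)(x)|\lesi 2^{js}\,\psi^*_{j,\lambda}(\sL)(L^{s/2}f)(x).
\]
Combining this with Propositions~\ref{prop1-thm1} and \ref{prop2-thm1} yields
\[
\|f\|_{\dot{F}^{s,L}_{p,2,w}}\lesi \|L^{s/2}f\|_{\dot{F}^{0,L}_{p,2,w}},
\]
and finally Theorem~\ref{equiv-Hardy} identifies the right-hand side with $\|L^{s/2}f\|_{H^p_{L,w}} = \|f\|_{H\dot{S}^{s,L}_{p,w}}$.

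The main technical point is really just the two pointwise kernel estimates above; these are essentially the same as those already established in the proof of Theorem~\ref{equiv Sobolev}, so no new difficulty arises. The only conceptual adjustment is that, since $p\le 1$, one cannot close the forward direction by a duality argument as was done in Theorem~\ref{equiv Sobolev}; instead one must go directly through the square function characterization of $H^p_{L,w}$ provided by Proposition~\ref{prop-equiv Hardy square functions}, which is why the hypothesis $L^{s/2}f\in L^2(X)$ (built into the definition of $H\dot{S}^{s,L}_{p,w}$ via $H^p_{L,w}$) is used.
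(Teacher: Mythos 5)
Your proposal is correct and follows essentially the same route as the paper, which likewise reduces to $\mathcal{S}_\vc$ by density, rewrites $\|L^{s/2}f\|_{H^p_{L,w}}$ via Theorem \ref{equiv-Hardy} and the square-function characterization, and then invokes the two pointwise Peetre-maximal estimates from the proof of Theorem \ref{equiv Sobolev}, treating the converse direction as ``similar''. One small caveat: the paper's estimate \eqref{eq1- proof Ls/2} is for $\psi^2(t\sL)L^{s/2}f$ (a spare factor $\psi(t\sL)$ must remain attached to $f$ when \eqref{eq-Ls/2} is inserted, which is why the paper works with $\psi$ such that $\psi^3$ is a partition of unity), so your single-$\psi$ bound $|\psi(t\sL)L^{s/2}f|\lesi t^{-s}\psi^*_\lambda(t\sL)f$ is not obtained verbatim by that splitting — though it does hold, e.g.\ by writing $\psi(t\sL)L^{s/2}=t^{-s}(h\psi)(t\sL)$ with $h$ even and equal to $|\xi|^{s}$ on ${\rm supp}\,\psi$ and applying Lemma \ref{lem2}.
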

\begin{proof}
	Since $\mathcal{S}_\vc$ is dense in both spaces, we need only to verify that
	\[
	H\dot{S}^{s,L}_{p,w}\cap \mathcal{S}_\vc \equiv \dot{F}^{s,L}_{p,2,w}\cap \mathcal{S}_\vc.
	\] 
	We first verify that $\dot{F}^{s,L}_{p,2,w}\cap \mathcal{S}_\vc\subset H\dot{S}^{s,L}_{p,w}\cap \mathcal{S}_\vc$. Indeed, if $f\in \dot{F}^{s,L}_{p,2,w}\cap \mathcal{S}_\vc$, then by Theorem \ref{equiv-Hardy} and Theorem \ref{thm1-continuouscharacter} we have
	\[
	\begin{aligned}
	\|f\|_{H\dot{S}^{s,L}_{p,w}}=\|L^{s/2}f\|_{H^p_{L,w}}\sim\Big\|\Big[\int_0^\vc |\psi(t\sL)(L^{s/2}f)|\Big]^{1/2}\Big\|_{p,w}
	\end{aligned}
	\]
	where $\psi\in \SR$ so that $\psi^3$ is a partition of unity. 
	
	At this stage, we argue similarly to the proof of Theorem \ref{equiv Hardy-Sobolev} to obtain that
	$$
	\|f\|_{H\dot{S}^{s,L}_{p,w}}\lesi \|f\|_{\dot{F}^{s,L}_{p,2,w}}
	$$
	which implies
	$\dot{F}^{s,L}_{p,2,w}\cap \mathcal{S}_\vc\subset H\dot{S}^{s,L}_{p,w}\cap \mathcal{S}_\vc$.
	
	The converse direction is similar and we omit the details.
	
\end{proof}

\section{Comparison with classical Besov and Triebel-Lizorkin spaces}

\begin{defn}\label{def:7.1}
	Let $ 0<p \leq \infty, w \in A_\infty $ and $\epsilon >0$. A function $a$ is said to be a $(p,w,\epsilon)$ atom if there exists a dyadic cube $Q \in \mathcal{D}_{\nu}$ such that \\
	\begin{enumerate}[\rm (i)]
		\item $\operatorname{supp}\,a \subset 3B_Q$;
		\item $|a(x)| \leq w(Q)^{-1/p}$;
		\item $|a(x)-a(y)| \leq w(Q)^{1/p}\left(\dfrac{d(x,y)}{2^{-\nu}}\right)^\epsilon$;
		\item $\displaystyle \int a(x)d\mu (x)=0$.
	\end{enumerate}
\end{defn}

Let $0<p< \infty, w \in A_\infty$ and $\epsilon >0$. We say that a function $f$ has a $(p,w,\epsilon)$ atomic representation of Besov type if \\
$$f= \sum\limits_{\nu  \in \mathbb{Z}} {\sum\limits_{Q \in {\mathcal{D}_\nu }} {{s_Q}{a_Q}} } \text{ in }L^2 $$ 
where $\{a_Q\}$ is a sequence of $(p,w,\epsilon)$ atoms and $s:=\{s_Q\}$ is a sequence of numbers satisfying 
$${\Big[ {\sum\limits_{\nu  \in Z} {{2^{\nu \alpha q}}{{\Big( {\sum\limits_{Q \in {\mathcal{D}_\nu }} {|{s_Q}{|^p}} } \Big)}^{q/p}}} } \Big]^{1/q}} < \infty. $$ 
Then the weighted Besov space is defined as follows\\

\begin{defn}
	Let $ \alpha \in (-1,1),0<p,q<\infty $ and  $ w \in A_\infty $. The weighted Besov space $B^{\alpha}_{p,q,w} $ 	is defined as the completion of the set of all $L^2$-functions having a $ (p,w,\epsilon)$ atomic representation of Besov type under the norm\\
	$$ {\| f\|_{\dot B_{p,q,w}^\alpha }} = \inf \Big\{ {{{\Big[ {\sum\limits_{\nu  \in \mathbb{Z}} {{2^{\nu \alpha q}}{{\Big( {\sum\limits_{Q \in {\mathcal{D}_\nu }} {{{| {{s_Q}}|}^p}} } \Big)}^{q/p}}} } \Big]}^{1/q}}:f = \sum\limits_{\nu  \in \mathbb{Z}} {\sum\limits_{Q \in {\mathcal{D}_\nu }} {{s_Q}{a_Q}} } } \Big\}.$$
\end{defn}
Similarly, the weighted Triebel-Lizorkin spaces $ F^{\alpha}_{p,q,w} $ is defined by replacing the quantity

$$ {\Big[ {\sum\limits_{\nu  \in \mathbb{Z}} {{2^{\nu \alpha q}}{{\Big( {\sum\limits_{Q \in {\mathcal{D}_\nu }} {{{| {{s_Q}}|}^p}} } \Big)}^{q/p}}} } \Big]^{1/q}},$$
by 
$${\Big\| {{{\Big[ {\sum\limits_{\nu  \in \mathbb{Z}} {{2^{\nu \alpha q}}{{\Big( {\sum\limits_{Q \in {\mathcal{D}_\nu }} {w{{(Q)}^{ - 1/p}}| {{s_Q}}|{\chi _Q}} } \Big)}^q}} } \Big]}^{1/q}}} \Big\|_{p,w}}.$$

\begin{rem}\label{rem 6.3}
    Up to now we have stated and proved our results under rather mild assumptions on $L$; namely, we have assumed that $L$ is a nonnegative self-adjoint operator for which its heat kernel $p_t(x,y)$ satisfies the Gaussian upper bound (GE). In some applications below we will require $L$ also to satisfy one or both of the following additional conditions:
	\begin{enumerate}
		\item[(H)] There exists $\delta_0 \in (0,1]$  so that \\
		\begin{equation}\label{eq:H}
		| p_t(x,y) - p_t(\bar x,y) | \lesssim \Big(\f{d(x,\bar x)}{\sqrt{t}}\Big)^{\delta _0}\frac{1}{{V\left( {x,\sqrt t } \right)}}\exp\Big(-\f{d(x,y)^2}{ct}\Big)
		\end{equation}
		whenever $d(x,\bar x)< \sqrt{t}.$ (H\"older Continuity Property.)
		\item[(C)] $\displaystyle \int_X {{p_t}\left( {x,y} \right)d \mu \left( x \right)}  = 1$ for all $y \in X$ and $t>0$. (Conservation Property.)
	\end{enumerate}
\end{rem}
We have the following estimate:
\begin{lem}\label{lem6.4}
	 Let $\Phi$ be a function as in Lemma 2.1. Also assume that $L$ satisfies (H).
	Then for any $M \in \mathbb{N}$ we have
	$$\left| {{K_{{{( {{t^2}L})}^M}\Phi ( {t\sqrt L })}} ( {x,y}) - {K_{{{( {{t^2}L})}^M}\Phi( {t\sqrt L })}}( {\bar x,y})} \right| \lesssim \Big(\f{d(x,\bar x)}{t}\Big)^{\delta _0}\frac{1}{{V( {x,t})}}$$
	whenever $d(x,\bar x)<t$.
\end{lem}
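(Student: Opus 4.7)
I would split the argument into two cases according to the ratio $d(x,\bar x)/t$.

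\textbf{Case 1: $d(x,\bar x)\geq t/2$.} Here $(d(x,\bar x)/t)^{\delta_0}\gtrsim 1$, so the estimate follows at once from the pointwise bound $|K_{(t^2L)^M\Phi(t\sqrt L)}(\,\cdot\,,y)|\lesssim 1/V(\cdot,t)$ of Lemma~\ref{lem:finite propagation}, the triangle inequality, and the doubling property (which gives $V(\bar x,t)\sim V(x,t)$ as $d(x,\bar x)<t$).

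\textbf{Case 2: $d(x,\bar x)<t/2$.} This is the substantive case, in which I exploit (H) through composition with the heat semigroup. Using self-adjointness of $L$ (so that all operators in sight commute), I set $\tau=t^2$ and write
\[
(t^2L)^M\Phi(t\sqrt L) = e^{-\tau L}\,(t^2L)^M\Phi(t\sqrt L) + (I-e^{-\tau L})\,(t^2L)^M\Phi(t\sqrt L).
\]
The kernel of the first summand equals $\int p_\tau(x,z)\,K_{(t^2L)^M\Phi(t\sqrt L)}(z,y)\,d\mu(z)$; applying (H) to $p_\tau(x,z)-p_\tau(\bar x,z)$ (valid since $d(x,\bar x)<t/2<\sqrt\tau$) and combining with the support and size information for $K_{(t^2L)^M\Phi(t\sqrt L)}(\cdot,y)$ from Lemma~\ref{lem:finite propagation}, then integrating the Gaussian against the compactly supported kernel, produces the required bound of order $(d(x,\bar x)/t)^{\delta_0}/V(x,t)$.

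For the second summand I observe that its symbol factors as $(t^2\xi^2)\,\omega(t^2\xi^2)\,(t\xi)^{2M}\Phi(t\xi)$, with $\omega(u)=(1-e^{-u})/u$ smooth and bounded, so it takes the form $(t^2L)^{M+1}\widetilde\Phi(t\sqrt L)$ for the even Schwartz function $\widetilde\Phi(\mu)=\omega(\mu^2)\Phi(\mu)$. I then iterate the splitting: by spectral theory $(I-e^{-\tau L})^N\to 0$ strongly, so the telescoping identity yields the convergent series
\[
(t^2L)^M\Phi(t\sqrt L) = \sum_{j\geq 0} e^{-\tau L}(I-e^{-\tau L})^{j}(t^2L)^M\Phi(t\sqrt L).
\]
Applying (H) to the $e^{-\tau L}$ factor in each summand yields H\"older continuity of each $K_j$ with the correct exponent $\delta_0$, and summing in $j$ is intended to close the argument.

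\textbf{Main obstacle.} The crux is obtaining a pointwise bound on $K_{R_j}$, where $R_j:=(I-e^{-\tau L})^j(t^2L)^M\Phi(t\sqrt L)$, that is \emph{summable} in $j$: a direct application of Lemma~\ref{lem1}(a) to the Schwartz symbol of $R_j$ gives only the uniform estimate $\lesssim 1/V(x,t)$, which does not sum. Overcoming this requires exploiting additional cancellation in the symbol $[(1-e^{-\mu^2})/\mu^2]^j\mu^{2(M+j)}\Phi(\mu)$---for example, via a dyadic frequency decomposition of $\Phi$ with scale-matched heat parameters $\tau_k\sim 2^{-2k}$, or by viewing $R_j$ as a two-parameter product and invoking Lemma~\ref{lem1}(c) to convert the vanishing of $(1-e^{-\mu^2})^j$ at the origin into a geometric gain $2^{-2j}$. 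Once this gain is secured, interchanging the sum with the integration against $p_\tau$ yields the claim.
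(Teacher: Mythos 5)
Your Case 1 and the treatment of the leading term in Case 2 are fine: applying (H) to $p_{t^2}(x,z)-p_{t^2}(\bar x,z)$ and integrating against the compactly supported, size-controlled kernel of $(t^2L)^M\Phi(t\sqrt L)$ does give the bound $(d(x,\bar x)/t)^{\delta_0}V(x,t)^{-1}$. The genuine gap is exactly where you flagged it, and it is not a technicality that can be patched the way you suggest. The remainders $R_j=(I-e^{-t^2L})^{j}(t^2L)^M\Phi(t\sqrt L)$ have symbols $\varphi_j(\mu)=(1-e^{-\mu^2})^{j}\mu^{2M}\Phi(\mu)$, and since $\Phi$ is merely Schwartz (not Gaussian-decaying), even $\sup_\mu|\varphi_j(\mu)|$ decays only like negative powers of $\log j$ (the supremum sits at $\mu^2\sim\log j$, where $(1-e^{-\mu^2})^j\sim 1$ and $\Phi$ contributes only $(\log j)^{-N/2}$); the constants in Lemma \ref{lem1} degrade further because derivatives of $(1-e^{-\mu^2})^{j}$ grow in $j$. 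So no fixed-$N$ estimate of the pieces is summable, which is consistent with the fact that the symbols $\sum_j\varphi_j(\mu)=e^{\mu^2}\mu^{2M}\Phi(\mu)$ sum to an unbounded function. Moreover, your proposed rescue via Lemma \ref{lem1}(c) cannot produce a geometric gain $2^{-2j}$: that lemma's gain is $(s/t)^{2\ell}$ and requires two \emph{separated} scales $s\le t$, whereas in your telescoping all factors live at the single scale $t$, and the vanishing of $(1-e^{-\mu^2})^{j}$ at $\mu=0$ is beside the point — the obstruction is at high frequencies, where $(1-e^{-\mu^2})^{j}\to 1$.

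Your other one-sentence suggestion is the one that actually works, but it replaces rather than repairs the telescoping scheme: decompose $\Phi_M(\lambda)=\lambda^{2M}\Phi(\lambda)$ dyadically in frequency, $\Phi_M=\sum_{k\ge0}\Phi_M\,\theta(2^{-k}\cdot)$ with $\theta$ supported in $[1/2,2]$, and factor each piece as $e^{-(2^{-k}t)^2L}G_k(t\sqrt L)$ with $G_k(\mu)=e^{(2^{-k}\mu)^2}\Phi_M(\mu)\theta(2^{-k}\mu)$, which is bounded with norm $O_N(2^{-kN})$ by the rapid decay of $\Phi$. Applying (H) at the matched scale $2^{-k}t$ costs $2^{k\delta_0}(d(x,\bar x)/t)^{\delta_0}$ and $V(x,2^{-k}t)^{-1}\lesssim 2^{kn}V(x,t)^{-1}$ (with the regime $d(x,\bar x)\ge 2^{-k}t$ handled by plain size bounds), and the loss $2^{k(n+\delta_0)}$ is absorbed by $2^{-kN}$, so the series converges. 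This matched-scale argument is essentially the proof of Theorem 1 in \cite{S} that the paper invokes (the paper itself gives no details beyond this citation); as your proposal stands, the convergence of your single-scale series is unjustified and the claimed conclusion does not follow.
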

\begin{proof}
	The proof of this lemma is similar to that of Theorem 1 in \cite{S}. We leave the details to the interested reader.
\end{proof}
The following theorem is the main result of this section.

\begin{thm}
	Assume that $L$ also satisfies (H) and (C). Then we have
	\begin{equation}\label{eq:73}
	\dot B_{p,q,w}^{\alpha ,L} \equiv \dot B_{p,q,w}^\alpha 
	\end{equation}
	for all $\alpha \in (-\delta_0,\delta_0),0<p,q<\infty$ and $w \in A_\infty$ such that $\alpha+n+\delta_0>\frac{nqw}{p}$; and\\
	\begin{equation}\label{eq:74}
	\dot F_{p,q,w}^{\alpha ,L} \equiv \dot F_{p,q,w}^\alpha 
	\end{equation}
	for all $\alpha \in (-\delta_0,\delta_0),0<p,q<\infty$ and $w \in A_\infty$ such that $\alpha+n+\delta_0>\frac{nqw}{p}$.
\end{thm}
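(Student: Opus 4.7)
The plan is to prove both identifications \eqref{eq:73} and \eqref{eq:74} simultaneously by reducing each to the atomic decomposition theorems of Section 4 together with the classical atomic definition. The whole argument rests on a two-sided dictionary between the $L$-adapted atoms of Definition \ref{defLmol} and the classical $(p,w,\delta_0)$-atoms of Definition \ref{def:7.1}, which is exactly what assumptions (H) and (C) provide.

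For the embedding $\dot B^{\alpha,L}_{p,q,w}\hookrightarrow\dot B^\alpha_{p,q,w}$, I will start from the canonical atomic decomposition of $f\in\dot B^{\alpha,L}_{p,q,w}$ produced by Theorem~\ref{thm1- atom Besov}, giving $f=\sum_{\nu,Q}s_Q a_Q$ with $a_Q=L^M b_Q$ an $(L,M,p,w)$-atom supported in $3B_Q$, and with $\|(s_Q)\|_{\dot b^\alpha_{p,q,w}}\lesi \|f\|_{\dot B^{\alpha,L}_{p,q,w}}$. Conditions (i) and (ii) of Definition~\ref{def:7.1} are immediate from the size/support bounds in Definition~\ref{defLmol}. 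Condition (iii), i.e.\ $\delta_0$-H\"older continuity of $a_Q$, follows by applying Lemma~\ref{lem6.4} to the kernel representation $a_Q(x)=s_Q^{-1}\int_{2^{-\nu-1}}^{2^{-\nu}}\!\!\int_Q K_{(t^2L)^M\Phi(t\sL)}(x,y)\,\psi_M(t\sL)f(y)\,d\mu(y)\,dt/t$. Condition (iv) uses (C): since $e^{-tL}1=1$, any function of $L$ vanishing at $0$ annihilates constants, so by the self-adjointness of $L$ and integration by parts one has $\int a_Q\,d\mu=\int L^M b_Q\,d\mu=\langle b_Q,L^M 1\rangle=0$. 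Thus each $a_Q$ is a uniform multiple of a classical $(p,w,\delta_0)$-atom, and the desired embedding follows.

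For the reverse embedding $\dot B^\alpha_{p,q,w}\hookrightarrow\dot B^{\alpha,L}_{p,q,w}$ the strategy is to start instead with a classical atomic representation $f=\sum_{\nu,Q}s_Q a_Q$ and directly estimate $\|\psi_j(\sL)f\|_{p,w}$ for a partition of unity $\psi$. The key pointwise bound I need to establish is
\[
|\psi_j(\sL)a_Q(x)|\lesi 2^{-|j-\nu|\delta_0}\,w(Q)^{-1/p}\,\frac{V(Q)}{V(x,2^{-(j\wedge\nu)})}\Big(1+\frac{d(x,x_Q)}{2^{-(j\wedge\nu)}}\Big)^{-N}.
\]
When $j\le\nu$ I will use the cancellation $\int a_Q\,d\mu=0$ combined with the H\"older estimate on the kernel $K_{\psi_j(\sL)}$ furnished by (H) (via a Lemma~\ref{lem6.4}-type estimate). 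When $j>\nu$ I will instead use H\"older continuity of $a_Q$ together with the cancellation of the kernel itself, $\int K_{\psi_j(\sL)}(x,y)\,d\mu(y)=\psi_j(\sL)1(x)=0$, which holds by (C) and the fact that $\psi(0)=0$; here I split into the cases $x\in 5B_Q$ and $x\notin 5B_Q$.

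Feeding this pointwise bound into a maximal-function estimate of the type in Lemma~\ref{lem1- thm2 atom Besov} gives $\|\psi_j(\sL)a_Q\|_{p,w}\lesi 2^{-|j-\nu|\delta_0}$ modulo $A_\infty$-controlled volume corrections. Summing in $j$ and $\nu$ then follows exactly the scheme in the proof of Theorem~\ref{thm2- atom Besov}: Young's inequality where the indices exceed $1$, bare $\ell^r$-summation otherwise. The Triebel--Lizorkin case is obtained along the same lines using the vector-valued Fefferman--Stein inequality \eqref{YFSIn}. The two index constraints enter precisely here: $|\alpha|<\delta_0$ is what makes the $j$-sum of $2^{j\alpha-|j-\nu|\delta_0}$ convergent, and $\alpha+n+\delta_0>nq_w/p$ is what makes the weighted integration across annuli around $B_Q$ summable. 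I expect the second case $j>\nu$ to be the main obstacle: off $5B_Q$ the H\"older bound on $a_Q$ is useless, so one must instead balance the polynomial decay of the kernel against the weighted doubling of the measure on annuli $S_k(B_Q)$, and it is exactly this balance that forces the sharp lower bound $\alpha+n+\delta_0>nq_w/p$.
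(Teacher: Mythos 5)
Your plan is essentially the paper's own proof: the forward embedding is obtained exactly as in the paper (each $(L,M,p,w)$-atom from the atomic decomposition is shown to be a classical $(p,w,\delta_0)$-atom, with (H) giving the H\"older condition via Lemma~\ref{lem6.4} and (C) giving the vanishing moment), and your reverse embedding rests on the same two-case pointwise estimate as the paper's Lemma~\ref{lem1-comparison} (atom cancellation plus kernel H\"older continuity for $j\le\nu$, atom H\"older continuity plus the kernel cancellation $\psi_j(\sL)1=0$ for $j>\nu$), fed into Lemma~\ref{lem1- thm2 atom Besov} and the summation scheme of Theorems~\ref{thm2- atom Besov} and \ref{thm2- atom TL spaces}; the only cosmetic difference is that you prove the estimate directly for $\psi_j(\sL)$ rather than for $t^2Le^{-t^2L}$, which is fine and is what the paper implicitly needs anyway. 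One correction to your closing heuristic: the roles of the two regimes are reversed --- the fine-frequency case $j>\nu$ yields decay at the atom scale $2^{-\nu}$ with gain $2^{-(j-\nu)\delta_0}$ and arbitrarily large polynomial decay $N$, so it only forces $\alpha<\delta_0$, whereas the lower bound on $\alpha+n+\delta_0$ comes from the coarse-frequency case $j\le\nu$, where the weighted maximal estimate of Lemma~\ref{lem1- thm2 atom Besov} loses a factor $2^{(\nu-j)n\tilde q/r}$ against the combined gain $2^{-(\nu-j)(\delta_0+n)}$ from the kernel H\"older estimate and the volume ratio $V(Q)/V(x_Q,2^{-j})$. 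Also keep the $L^2$ bookkeeping visible: since the classical spaces are defined as completions of $L^2$ functions with $L^2$-convergent atomic representations, both embeddings should be established on the intersections with $L^2$ first and then extended by density, as the paper does.
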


\begin{proof}
	We will only prove $\dot F_{p,q,w}^{\alpha ,L} \equiv \dot F_{p,q,w}^\alpha $  , since the proof of $	\dot B_{p,q,w}^{\alpha ,L} \equiv \dot B_{p,q,w}^\alpha$  
	can be done similarly.\\
	We split the proof into 2 steps:\\
	\textbf{Step 1: Proof of}  $\dot F_{p,q,w}^{\alpha ,L} \cap L^2 \hookrightarrow \dot F_{p,q,w}^\alpha \cap L^2 $. To do this, we will employ the same notations as in 	the proof of Theorem 5.6. If $f \in \dot F_{p,q,w}^{\alpha ,L} \cap L^2$, then repeating the proof of Theorem 5.6 we can find a sequence of $ (L, M, p, w) $ atoms  $ \{a_Q\}_{Q\in \mathcal{D}_\nu,\nu \in \mathbb{Z}}$ and a sequence of coefficients $ \{s_Q\}_{Q\in \mathcal{D}_\nu,\nu \in \mathbb{Z}}$
	so that\\
	$$f = \sum\limits_{\nu  \in \mathbb{Z}} {\sum\limits_{Q \in {\mathcal{D}_\nu }} {{s_Q}{a_Q}} } \text{ in } L^2$$
	where\\
	$${a_Q} = \frac{1}{{{s_Q}}}\int_{{2^{ - \nu  - 1}}}^{2^{ - \nu }} {{{( {{t^2}L})}^M}\Phi ( {t\sqrt L })\left[ {{\psi _M}( {t\sqrt L })f.{\chi _Q}} \right]\frac{{dt}}{t},}$$
	and moreover,\\
	$$\Big\| \Big[ \sum\limits_{\nu  \in \mathbb{Z}} {{2^{\nu \alpha q}}{{\Big( {\sum\limits_{Q \in {\mathcal{D}_\nu }} {w{{(Q)}^{ - 1/p}}| {{s_Q}} |{\chi _Q}} } \Big)}^q}} \Big]^{1/q} \Big\|_{p,w} \lesssim \| f \|_{\dot F_{p,q,w}^{\alpha ,L}}.$$
	We now claim that each $ (L, M, p, w) $ atom is also $ (p, w, \delta_0) $ atom. Indeed, it is clear that each
	$ (L, M, p, w) $ atom satisfies (i) and (ii) in Definition~\ref{def:7.1}. The argument as in Lemma 9.1 in [26] implies that an $(L, M, p, w)$ atom satisfies (iv) in Definition~\ref{def:7.1}. The condition (iii) in Definition~\ref{def:7.1} can be verified by making use of Lemma~\ref{lem6.4} and hence we omit the details. This completes the
	first step.\\
	\textbf{Step 2: Proof of}  $\dot F_{p,q,w}^{\alpha} \cap L^2 \hookrightarrow \dot F_{p,q,w}^{\alpha ,L} \cap L^2 $. To do this, we need the following estimates:\\
	\begin{lem}\label{lem1-comparison}
		Let $ a_Q $ be a $ (p, w, \delta_0) $ atom associated to some dyadic cube $Q \in \mathcal{D}_\nu, \nu \in \mathbb{Z} $. Then,
		for any $  N > 0 $, we have: \\
		\begin{enumerate}[(i)]
			\item $| {{t^2}L{e^{ - {t^2}L}}{a_Q}(x)} | \lesssim w( Q)^{ - 1/p}{\left( {\frac{t}{2^{ - \nu }}} \right)^{\delta _0}}{\left( {1 + \frac{d( x,x_Q)}{2^{ - \nu }}} \right)^{ - N}}$, for all $ t \leq 2^{-\nu}$;
			\item $| {{t^2}L{e^{ - {t^2}L}}{a_Q}(x)} | \lesssim {\Big( {\frac{2^{ - \nu }}{t}} \Big)^{\delta _0}}\frac{{V\left( Q \right)}}{{V\left( {{x_Q},t} \right)}}{\left( {1 + \frac{d( x,x_Q)}{2^{ - \nu }}} \right)^{ - N}}$, for all $ 2^{-\nu} \leq t$.
		\end{enumerate}
	\end{lem}
	
	\begin{proof}
		We just sketch the main ideas. Denote by $ q_t(x; y) $ the kernel of $tLe^{-tL}$.
		
		(i) If $x \in 6B_Q$, then from (A4), we have
			\begin{align*}
			\left| {{t^2}L{e^{ - {t^2}L}}{a_Q}\left( x \right)} \right| &\lesssim w( Q)^{ - 1/p}\int_X {\frac{1}{V(y,t)}{{\left( {1 + \frac{d(x,y)}{t}} \right)}^{ - K - {\delta _0}}}{{\left( {\frac{d(x,y)}{2^{ - \nu }}} \right)}^{\delta _0}}d\mu (y)} \\
			&\lesssim w( Q)^{ - 1/p}{\left( {\frac{t}{2^{ - \nu }}} \right)^{\delta _0}} \\
			&\sim w( Q)^{ - 1/p}{\left( {\frac{t}{2^{ - \nu }}} \right)^{\delta _0}} {\left( {1 + \frac{d( x,x_Q)}{2^{ - \nu }}} \right)^{ - N}}  (\text{since }y \in 6B_Q)				
			\end{align*}
			If $ x \notin 6B_Q $, then we have $ d(x, y) \sim  d(x, x_Q) \geq 2^{-\nu} $. This, together with Lemma 2.2, implies that
			\begin{align*}
			\left| {{t^2}L{e^{ - {t^2}L}}{a_Q}\left( x \right)} \right| &\lesssim w( Q)^{ - 1/p}\int_{3B_Q} {\frac{1}{V(y,t)}{{\left( {1 + \frac{d(x,y)}{t}} \right)}^{ - K -N- {\delta _0}}} \left| a_Q(y) \right| d\mu (y)} \\
			&\lesssim w( Q)^{ - 1/p}{\left( {\frac{t}{2^{ - \nu }}} \right)^{\delta _0}}{{\left( {1 + \frac{d(x,y)}{2^{-\nu}}} \right)}^{ -N}}\int_{3B_Q} {\frac{1}{V(y,t)}{{\left( {1 + \frac{d(x,y)}{t}} \right)}^{ - K}}  d\mu (y)} 
			\\
			&\lesssim w( Q)^{ - 1/p}{\left( {\frac{t}{2^{ - \nu }}} \right)^{\delta _0}} {\left( {1 + \frac{d( x,y)}{2^{ - \nu }}} \right)^{ - N}}  				
			\end{align*}
			where in the last inequality we use Lemma \ref{lem-elementary}. \\
			This completes the proof of (i).
			
			\medskip
			
			(ii) By (iv) in Definition \ref{def:7.1}, we can write
			$$\left|t^2Le^{-t^2L}a_Qa_Q(x)\right|=\left|\int_{3B_Q}{[q_{t^2}(x,y)-q_{t^2}(x,x_Q)]a_Q(y)d\mu (y)}\right|.$$
			Since $q_t(x,y)=2\int_{X}{q_{t/2}(x,z)p_{t/2}(z,y)s d\mu (z)}$, we see that $q_t(x,y)$ satisfies (\ref{eq:H}). Hence,
			\begin{align*}
			\left|t^2Le^{-t^2L}a_Q(x)\right|&\lesssim \int_{3B_Q}{\left(\dfrac{d(y,x_Q)}{t}\right)^{\delta_0}\dfrac{1}{V(x_Q,t)}\left(1+\dfrac{d(x,x_Q)}{t}\right)^{-N}|a_Q(y)|d\mu (y)}\\&\lesssim w(Q)^{-1/p}\left(\dfrac{2^{-\nu}}{t}\right)^{\delta_0}\dfrac{V(Q)}{V(x_Q,t)}\left(1+\dfrac{d(x,x_Q)}{t}\right)^{-N}.
			\end{align*}
			This completes the proof of (ii).
	\end{proof}
	
	We now turn to the proof of  Step 2. Using Lemma \ref{lem1-comparison}, and arguing similarly to \eqref{eq-proof of reverse Besov} we have
	\begin{multline*}
	|\psi_j(\sqrt{L})f|\lesssim \sum_{\nu : \nu\geq j}{2^{-(\nu -j)(\delta_0 +n-n\tilde{q}/r)}\mathcal{M}_{w,r}\Big(\sum_{Q\in \mathcal{D}_\nu}{|s_Q|w(Q)^{-1/p}\chi_Q}\Big)}\\
	+\sum_{\nu : \nu <j}{2^{-\delta_0(j-\nu)}\mathcal{M}_{w,r}\Big(\sum_{Q\in \mathcal{D}_\nu}{|s_Q|w(Q)^{-1/p}\chi _Q}\Big)}.
	\end{multline*}
	At this stage, the argument in the proof of Theorem \ref{thm2- atom TL spaces} (see also Theorem \ref{thm2- atom Besov}) shows that $ f \in \dot F_{p,q,w}^{\alpha ,L}$ provided that $\alpha+n+\delta_0>\frac{nq_w}{p\wedge q}$.
\end{proof}

\begin{rem}
	Some comments for the condition $\alpha+n+\delta_0>\frac{nq_w}{p\wedge q}$ are in order:
	\begin{enumerate}[(i)]
		\item If $1\leq p,q < \infty$, the identities (\ref{eq:73}) and (\ref{eq:74}) holds true for all $\alpha \in (-\delta_0,\delta_0)$ and $w \in A_\infty$ with $q_\infty < \min \{p,q\}\times \frac{n+\delta_0+\alpha}{n}$.\\
		\item If $w\equiv 1$,  the identities (\ref{eq:73}) and (\ref{eq:74}) holds true for all $\frac{n}{n+\delta_0}<p,q<\infty$ and $\frac{n}{p\wedge q}-n-\delta_0<\alpha<\delta_0$
	\end{enumerate}
\end{rem}

\section{Applications}

The theory of Besov and Triebel--Lizorkin spaces have a wide range of applications. See for example \cite{BBD, BDH, DP} and the references therein. In this section, we just give two applications to the fractional power and the spectral multipliers. Further application would be an upcoming project and will be investigated in the future.  
\subsection{Fractional powers}
\begin{thm}
	Let $s\in \mathbb{R}$ and let $L^{s/2}$ be defined as in \eqref{eq-Ls/2}. Then for $\alpha \in \mathbb{R}$ and $ w \in A_\infty$, the fractional integral $L^{s/2}$ maps continuously from $ \dot B_{p,q,w}^{\alpha ,L}$ into $ \dot B_{p,q,w}^{\alpha +s,L}$ for $0<p,q \leq \infty$ and from $\dot F_{p,q,w}^{\alpha ,L}$ into $ \dot F_{p,q,w}^{\alpha +s,L}$ for $0<p<\vc$ and $0<q \leq \infty$.
\end{thm}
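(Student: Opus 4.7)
The plan is to reduce the boundedness to a single spectral identity combined with the continuous Littlewood--Paley characterizations of Section~3. Fix a partition of unity $\psi$ and set $\tilde\eta(\mu):=\mu^{s}\psi(\mu)$. Because $\psi$ is supported in $[1/2,2]$ and vanishes in a neighbourhood of $0$, $\tilde\eta$ is a smooth even function supported in $[-2,-1/2]\cup[1/2,2]$; writing $\tilde\eta(\mu)=\mu^{2m}\bigl(\mu^{s-2m}\psi(\mu)\bigr)$ with the last factor in $\mathscr{S}(\mathbb{R})$ shows that $\tilde\eta\in\mathscr{S}_m(\mathbb{R})$ for \emph{every} $m\in\mathbb{N}$, and $\tilde\eta\neq 0$ wherever $\psi\neq 0$.

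The key observation is the functional-calculus identity
\[
\psi(t\sL)\,L^{s/2}\;=\;t^{-s}\,\tilde\eta(t\sL),\qquad t>0,
\]
which is immediate at the level of symbols from $\psi(t\lambda)\lambda^{s}=t^{-s}(t\lambda)^{s}\psi(t\lambda)=t^{-s}\tilde\eta(t\lambda)$, and which holds on $L^{2}$ by the spectral theorem. It extends to $\mathcal{S}'_{\vc}$ because the kernels of both sides are Schwartz in each variable with spectral support in $(0,\infty)$ and therefore lie in $\mathcal{S}_{\vc}$, cf.\ \eqref{eq- s and s'}. Before invoking the identity, a short preliminary step verifies that $L^{s/2}$ is well-defined on $\mathcal{S}'_{\vc}$ by duality, using the stability property $L^{s/2}:\mathcal{S}_{\vc}\to\mathcal{S}_{\vc}$ recorded in Subsection~\ref{sub-fractional}.

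With this identity in hand the proof collapses to tracking the power of $t$. First I apply Theorem~\ref{thm1-continuouscharacter}(a) to replace the target Besov norm of $L^{s/2}f$ by its continuous expression $\bigl(\int_0^\infty[t^{-(\alpha+s)}\|\psi(t\sL)L^{s/2}f\|_{p,w}]^{q}\,dt/t\bigr)^{1/q}$; then I substitute the identity so that $\psi(t\sL)L^{s/2}$ is replaced by $t^{-s}\tilde\eta(t\sL)$; then I invoke the upper bound in Theorem~\ref{thm2}(a) applied to $\tilde\eta\in\mathscr{S}_m(\mathbb{R})$ (any $m>\alpha/2$ is admissible since $\tilde\eta\in\mathscr{S}_m$ for all $m$) to control the resulting integral by $\|f\|_{\B^{\alpha,L}_{p,q,w}}$. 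The combined powers of $t$ coming from the target index and from the spectral identity are exactly what the $\mathscr{S}_m$-characterization of Theorem~\ref{thm2} requires. The Triebel--Lizorkin case is argued verbatim using parts~(b) of Theorems~\ref{thm1-continuouscharacter} and~\ref{thm2}.

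I do not foresee a genuine obstacle: the hard analytical content is entirely absorbed into Theorems~\ref{thm1-continuouscharacter} and~\ref{thm2}, and the representative $\rho\in\mathscr{P}$ appearing in the lower bound of Theorem~\ref{thm2} is invisible here because we work in $\mathcal{S}'_{\vc}=\mathcal{S}'/\mathscr{P}$. The only calculation worth doing carefully is the symbol identification; once it is done, the continuity is a mechanical consequence of what has already been proved.
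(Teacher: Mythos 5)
Your central identity $\psi(t\sL)L^{s/2}=t^{-s}\tilde\eta(t\sL)$ with $\tilde\eta(\mu)=\mu^{s}\psi(\mu)$ is correct, and in substance it is the same scaling fact the paper extracts from the representation \eqref{eq-Ls/2}: the paper's proof splits the $u$-integral in \eqref{eq-Ls/2} at $u=t^{2}$, uses the kernel bounds of Lemma~\ref{lem1} to get the pointwise estimate $|\psi(t\sL)L^{s/2}f(x)|\lesi t^{-s}\psi^{*}_{\lambda}(t\sL)f(x)$, and then concludes with Theorem~\ref{thm1-continuouscharacter}; your exact identity is a cleaner route to the same $t^{-s}$ factor. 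The genuine problem is the index bookkeeping. Substituting the identity into the continuous expression of the target norm gives
\[
t^{-(\alpha+s)}\|\psi(t\sL)L^{s/2}f\|_{p,w}=t^{-(\alpha+2s)}\|\tilde\eta(t\sL)f\|_{p,w},
\]
so what your argument actually bounds is $\|L^{s/2}f\|_{\B^{\alpha+s,L}_{p,q,w}}\lesi\|f\|_{\B^{\alpha+2s,L}_{p,q,w}}$, i.e.\ $L^{s/2}$ \emph{lowers} the smoothness index by $s$; equivalently it yields $\|L^{s/2}f\|_{\B^{\alpha,L}_{p,q,w}}\lesi\|f\|_{\B^{\alpha+s,L}_{p,q,w}}$, which is precisely the inequality the paper's own proof establishes. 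Your claim that the powers of $t$ ``are exactly what Theorem~\ref{thm2} requires'' in order to land on $\|f\|_{\B^{\alpha,L}_{p,q,w}}$ is false, and the gain-of-$s$ mapping you set out to prove cannot come out of this computation (for $s>0$ it would assert that $L^{s/2}$ is smoothing; already $L=-\Delta$, $s=2$ rules this out). You need to track the exponents and state the regularity shift in the direction your derivation — and the paper's proof — actually gives.

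A second, smaller defect: you invoke the wrong direction of Theorem~\ref{thm2}. Its right-hand inequality in \eqref{eq-Besov equivalent norms square functions} bounds $\|f\|_{\B^{\alpha,L}_{p,q,w}}$ by the $\varphi$-expression; the estimate you need goes the other way and, in Theorem~\ref{thm2}, carries the Peetre maximal function and the correction $\rho$ and is formulated for $f\in\mathcal S'$. Since $\tilde\eta$ is supported in $[1/2,2]$ you should bypass Theorem~\ref{thm2} altogether (note also that $\tilde\eta$ need not belong to $\mathscr{S}_m(\mathbb{R})$: a partition of unity may vanish at interior points of $(1/2,2)$, so the nondegeneracy condition can fail): Proposition~\ref{prop1-maximal function} together with Proposition~\ref{prop2-thm1} (or Theorem~\ref{thm1-continuouscharacter} and Lemma~\ref{lem2}) give directly, for $f\in\mathcal S'_{\vc}$ and any index $\beta$, the one-sided bound $\bigl(\int_0^{\vc}[t^{-\beta}\|\tilde\eta(t\sL)f\|_{p,w}]^{q}\,\f{dt}{t}\bigr)^{1/q}\lesi\|f\|_{\B^{\beta,L}_{p,q,w}}$, with no $\rho$ appearing. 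Finally, the identity deserves one more line than ``immediate'': with $L^{s/2}$ defined on $\mathcal S'_{\vc}$ by duality from \eqref{eq-Ls/2}, you must check that $L^{s/2}K_{\psi(t\sL)}(x,\cdot)=t^{-s}K_{\tilde\eta(t\sL)}(x,\cdot)$, i.e.\ that the integral definition agrees with the spectral calculus on these kernels (they agree in $L^{2}$ and both sides lie in $\mathcal S_{\vc}$); the paper avoids this point by working with \eqref{eq-Ls/2} directly. With these corrections your argument is sound and reproduces the paper's estimate, and the Triebel--Lizorkin case follows in the same way with the corrected indices.
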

\begin{proof}
	
	Let $\psi$ be a partition of unity and let $\varphi\in \mathscr S(\mathbb{R})$ be supported in $[1/4,4]$ such that $\varphi=1$ on $[1/2,2]$. For  $f \in \dot F_{p,q,w}^{\alpha ,L}(X)$, 
	using \eqref{eq-Ls/2} we have
	\[
	\begin{aligned}
	\psi(t\sL)L^{s/2}f(x)=&\f{1}{\Gamma(m-s/2)}\int_0^\vc u^{-s/2}\varphi(t\sL)(uL)^m e^{-uL}(\psi(t\sL)f)\f{du}{u}\\
	=&\int_0^{t^2}\ldots+\int^\vc_{t^2}\ldots=:I_1(x,t)+I_2(x,t).
	\end{aligned}
	\]
	Fix $\lambda>\max\{n/q,nq_w/p \}$ and $M>(\lambda+s)/2$. By Lemma \ref{lem1} we have, for $N>n$,
	\[
	\begin{aligned}
	|I_1(x,t)|&\lesi \int_0^{t^2}\int_X u^{-s/2}\Big(\f{t^2}{u}\Big)^M\f{1}{V(x,u)}\Big(1+\f{d(x,y)}{u}\Big)^{-N-\lambda}| \psi(t\sL)f(y)|\dy\f{dt}{t}\\
	&\lesi \int_0^{t^2}\int_X u^{-s/2}\Big(\f{t^2}{u}\Big)^{M-\lambda/2}\f{1}{V(x,u)}\Big(1+\f{d(x,y)}{u}\Big)^{-N}\psi^*_\lambda(t\sL)f(x)\dy\f{dt}{t}\\
	&\lesi t^{-s}\psi^*_\lambda(t\sL)f(x).
	\end{aligned}
	\]
	Similarly,
	\[
	|I_2(x,t)|\lesi t^{-s}\psi^*_\lambda(t\sL)f(x).
	\]
	Hence,
	\begin{equation*}
	|\psi(t\sL)L^{s/2}f(x)|\lesi t^{-s}\psi^*_\lambda(t\sL)f(x).
	\end{equation*}
	This, along with Theorem \ref{thm1-continuouscharacter}, implies 
	\[
	\begin{aligned}
	\|L^{s/2}f\|_{\B^{\alpha,L}_{p,q,w}(X)}&\sim \Big(\int_0^\vc \Big[t^{-\alpha}\|\psi(t\sL)L^{s/2}f\|_{p,w}\Big]^q\f{dt}{t}\Big)^{1/q}\\
	&\lesi  \Big(\int_0^\vc \Big[t^{-\alpha-s}\|\psi^*_\lambda(t\sL)f\|_{p,w}\Big]^q\f{dt}{t}\Big)^{1/q}\\
	& \sim \|f\|_{\B^{\alpha+s,L}_{p,q,w}(X)}.
	\end{aligned}
	\]
	Arguing similarly by using the item (b) in Theorem \ref{thm1-continuouscharacter} we obtain
	\[
	\|L^{s/2}f\|_{\F^{\alpha,L}_{p,q,w}(X)}\lesi \|f\|_{\F^{\alpha+s,L}_{p,q,w}(X)}.
	\]
	This completes our proof.
\end{proof}

\subsection{Spectral multiplier of Laplace transform type.} Let $ m:[0, \vc) \rightarrow \mathbb{C}$ be a bounded function. We now define\\
\begin{equation}\label{eq:75}
\tilde m(L)=\int_{0}^{\infty}{tLe^{-t^2L}m(t^2)dt}
\end{equation}
to be the spectral multiplier of Laplace transform type of $L$. We have the following result:

\begin{thm}\label{thm-sm}
	Let $\alpha \in \mathbb{R}$ and $ w \in A_\infty$. Then the spectral multiplier of Laplace transform type $\tilde m(L)$ defined by (\ref{eq:75}) is bounded on $ \dot B_{p,q,w}^{\alpha ,L}(X) $ for $0<p,q \leq \infty$ and is bounded on $\dot F_{p,q,w}^{\alpha ,L}(X)$ for $0<p<\vc$ and $0<q \leq \infty$.
\end{thm}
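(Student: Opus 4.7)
\textbf{Proof plan for Theorem~\ref{thm-sm}.} The plan is to derive, uniformly in $j\in\mathbb{Z}$, the pointwise estimate
\begin{equation*}
\psi^*_{j,\lambda}(\sL)\tilde m(L) f(x) \lesi \|m\|_\vc\, \psi^*_{j,\lambda}(\sL) f(x),\qquad x\in X,
\end{equation*}
for any partition of unity $\psi$ and any sufficiently large $\lambda$. Once this is in hand, both assertions of the theorem follow by taking the appropriate $\ell^q(L^p_w)$ or $L^p_w(\ell^q)$ norm with weights $2^{j\alpha}$ on both sides and invoking the Peetre characterization of Proposition~\ref{prop2-thm1} (with $\lambda$ chosen larger than $nq_w/p$ in the Besov case, resp.\ than $\max\{n/q,nq_w/p\}$ in the Triebel--Lizorkin case).

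To set up the kernel calculation, the substitution $u=s\sqrt{L}$ in \eqref{eq:75} rewrites the operator as the spectral integral
\begin{equation*}
\tilde m(L) f=\int_0^\vc m(s^2)\,\varphi(s\sL)f\,\f{ds}{s},\qquad \varphi(\xi):=\xi^2 e^{-\xi^2}\in \mathscr{S}_1(\mathbb{R}).
\end{equation*}
Since $\tilde m(L)$ and $\psi_j(\sL)$ are both functions of $L$, they commute, so $\psi_j(\sL)\tilde m(L) f = \int_0^\vc m(s^2)\,\varphi(s\sL)\psi_j(\sL) f\,\tfrac{ds}{s}$. Now fix an even auxiliary $\tilde\psi\in \mathscr{S}(\mathbb{R})$ supported in $[1/4,4]$ with $\tilde\psi\psi=\psi$, so that $\psi_j(\sL)=\tilde\psi(2^{-j}\sL)\psi_j(\sL)$ by functional calculus. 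Since $\tilde\psi$ vanishes to all orders at $0$ and $\varphi$ to order $2$, Lemma~\ref{lem1}(c) (applied to $(\tilde\psi,\varphi)$ when $s\le 2^{-j}$, and to $(\varphi,\tilde\psi)$ when $s>2^{-j}$) yields, for any $N>0$ and any $\ell\ge 1$,
\begin{equation*}
|K_{\varphi(s\sL)\tilde\psi(2^{-j}\sL)}(y,z)|\lesi A_{s,j}\,\f{1}{V(y\vee z,\sigma)}\Big(1+\f{d(y,z)}{\sigma}\Big)^{-N},
\end{equation*}
where $\sigma:=\max(s,2^{-j})$ and $A_{s,j}=(s 2^{j})^{2}$ for $s\le 2^{-j}$, $A_{s,j}=(2^{-j}/s)^{2\ell}$ for $s>2^{-j}$.

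Dividing this kernel estimate by $(1+2^{j}d(x,y))^\lambda$, and using the Peetre submultiplicativity $(1+2^{j}d(x,z))^\lambda\le (1+2^{j}d(x,y))^\lambda(1+2^{j}d(y,z))^\lambda$ together with the trivial bound $|\psi_j(\sL) f(z)|\le \psi^*_{j,\lambda}(\sL) f(x)(1+2^{j}d(x,z))^\lambda$, one absorbs the $x$-dependence into $\psi^*_{j,\lambda}(\sL) f(x)$ and reaches, after integrating the residual kernel and taking the supremum in $y$,
\begin{equation*}
\sup_{y\in X}\f{|\varphi(s\sL)\psi_j(\sL) f(y)|}{(1+2^{j}d(x,y))^\lambda}\lesi A_{s,j}\,\psi^*_{j,\lambda}(\sL) f(x).
\end{equation*}
Integrating in $s$ over $(0,\vc)$ and observing that
$\int_0^{2^{-j}}(s 2^{j})^{2}\,\tfrac{ds}{s}+\int_{2^{-j}}^\vc (2^{-j}/s)^{2\ell}\,\tfrac{ds}{s}$
is a finite constant independent of $j$ (for any $\ell\ge 1$), we arrive at the desired pointwise bound.

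The main obstacle will be extracting the decay factor $A_{s,j}$ while keeping the right-hand side expressed in the \emph{same-scale} Peetre maximal $\psi^*_{j,\lambda}(\sL) f$, rather than in $\psi^*_{k,\lambda}(\sL) f$ for scattered $k$'s (which would have forced a convolution-type argument and hence a restriction on $\alpha$). The insertion of the auxiliary cutoff $\tilde\psi$ at the dyadic band containing the spectrum of $\psi_j$ is the key device: it lets Lemma~\ref{lem1}(c) be applied to $\varphi(s\sL)\tilde\psi(2^{-j}\sL)$, two spectral multipliers whose supports can be separated and one of which vanishes to arbitrarily high order at $0$, producing both the $(s\wedge 2^{-j})/(s\vee 2^{-j})$ decay and the required spatial localization at the coarser scale simultaneously.
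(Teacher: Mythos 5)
Your strategy is essentially the paper's: the paper also reduces the theorem to a pointwise Peetre-maximal bound (it proves $|\psi(s\sL)\tilde m(L)f(x)|\lesssim \psi^*_\lambda(s\sL)f(x)$ by splitting the Laplace-transform integral at the working scale, gaining $(t/u)^2$ from the vanishing of $\xi^2e^{-\xi^2}$ at $0$ on one side and $(u/t)^{2M-\lambda}$ from rapid decay on the other), and then concludes via the maximal-function characterizations; your discrete-scale version with the auxiliary cutoff $\tilde\psi$ and Proposition \ref{prop2-thm1} is an equivalent packaging of the same argument.

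Two steps, however, are not correct as literally written. First, Lemma \ref{lem1}(c) cannot be cited for the pair $(\tilde\psi,\varphi)$ in the regime $s\le 2^{-j}$: it requires $\varphi^{(\nu)}(0)=0$ for $\nu=0,\dots,2\ell$ with $\ell\ge 1$, while $\varphi(\xi)=\xi^2e^{-\xi^2}$ has $\varphi''(0)\neq 0$, so the lemma as stated yields no gain for this pair. The gain $(s2^j)^2$ is nevertheless true, but you should obtain it by the paper's factoring device: write $\varphi(s\sL)\tilde\psi(2^{-j}\sL)=(s2^j)^2\,e^{-s^2L}\,\tilde\psi_2(2^{-j}\sL)$ with $\tilde\psi_2(\xi)=\xi^2\tilde\psi(\xi)$, and then bound the composed kernel at scale $2^{-j}$ using Lemma \ref{lem1}(a)/(b)-type composition. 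Second, in the regime $s>2^{-j}$ your displayed estimate $\sup_y |\varphi(s\sL)\psi_j(\sL)f(y)|\,(1+2^jd(x,y))^{-\lambda}\lesssim A_{s,j}\,\psi^*_{j,\lambda}(\sL)f(x)$ is false without an extra factor: the kernel is localized only at the coarser scale $\sigma=s$, so when you dominate $(1+2^jd(y,z))^{\lambda}$ by $(s2^j)^{\lambda}(1+d(y,z)/s)^{\lambda}$ before integrating, you pick up $(s2^j)^{\lambda}$, and the correct bound is $A_{s,j}(s2^j)^{\lambda}$. Consequently the final $s$-integral converges only if $2\ell>\lambda$ (not "any $\ell\ge 1$"); since $\lambda>nq_w/p$ (resp.\ $\lambda>\max\{n/q,nq_w/p\}$) can be large for small $p,q$, this matters, and you must choose $\ell$ accordingly — this is exactly the paper's condition $M>\lambda/2$. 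With these two repairs the argument goes through and matches the paper's proof.
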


\begin{proof}
	We will provide the proof for the Triebel-Lizorkin spaces. The boundedness on the Besov spaces can be proved similarly.
	
	Let $\psi$ be a partition of unity. For  $f \in \dot F_{p,q,w}^{\alpha ,L}(X)$ we have\\
	\begin{align*}
	\psi(s\sqrt{L})\tilde m(L)f(x)&=c_\psi \int_{s/4}^{4s}{\psi(u\sqrt{L}) \tilde m(L)\psi(s\sqrt{L})f\frac{du}{u}}\\
	&=c_\psi \int_{s/4}^{4s}{\int_{0}^{\infty}{m(t^2)(t^2L)e^{-t^2L}\psi(u\sqrt{L})\psi(s\sqrt{L})f(x)\frac{dt}{t}}\frac{du}{u}}\\
	&=c_\psi \int_{s/4}^{4s}{\int_{0}^{u}{...}}+ c_\psi \int_{s/4}^{4s}{\int_{u}^{+\infty}{...}}\\
	&=:E(x)+F(x)
	\end{align*}
	Fix $\lambda > \max \{n/q,nq_w/p\}$ and $N>n$. Lemma \ref{lem1} and Lemma \ref{lem-elementary}, we have
	\begin{align*}
	|E(x)|&\lesssim \int_{s/4}^{4s}{\int_{0}^{u}{\dfrac{t^2}{u^2}\dfrac{1}{V(y,u)}\left(1+\dfrac{d(x,y)}{u}\right)^{-N-\lambda}|\psi(s\sqrt{L})f(y)|d\mu(y)\dfrac{dt}{t}}\dfrac{du}{u}}\\
	&\sim \int_{s/4}^{4s}{\int_{0}^{u}\int_{X}{{\dfrac{t^2}{u^2}\dfrac{1}{V(y,s)}\left(1+\dfrac{d(x,y)}{s}\right)^{-N-\lambda}|\psi(s\sqrt{L})f(y)|d\mu(y)}\dfrac{dt}{t}}\dfrac{du}{u}}\\
	&\sim \int_{s/4}^{4s}{\int_{0}^{u}{\dfrac{t^2}{u^2}\psi_{\lambda}^*(s\sqrt{L})f(x)\dfrac{dt}{t}}\dfrac{du}{u}} \sim \psi_{\lambda}^*(s\sqrt{L})f(x)
	\end{align*}
	Similarly, for $ M > \lambda/2 $,
	\begin{align*}
	|F(x)| &\lesssim \int_{s/4}^{4s}{\int_{u}^{\infty}{\left(\dfrac{u}{t}\right)^{2M}\dfrac{1}{V(y,t)}\left(1+\dfrac{d(x,y)}{t}\right)^{-N-\lambda}|\psi(s\sqrt{L})f(y)|d\mu(y)\dfrac{dt}{t}}\dfrac{du}{u}}\\
	&\sim \int_{s/4}^{4s}{\int_{u}^{\infty}\int_{X}{{\left(\dfrac{u}{t}\right)^{2M-\lambda}\dfrac{1}{V(y,t)}\left(1+\dfrac{d(x,y)}{t}\right)^{-N}\left(1+\dfrac{d(x,y)}{u}\right)^{-\lambda}|\psi(s\sqrt{L})f(y)|d\mu(y)}\dfrac{dt}{t}}\dfrac{du}{u}}\\
	&\sim \int_{s/4}^{4s}{\int_{u}^{\infty}{\left(\dfrac{u}{t}\right)^{2M-\lambda}\psi_{\lambda}^*(s\sqrt{L})f(x)\dfrac{dt}{t}}\dfrac{du}{u}}\\
	& \sim \psi_{\lambda}^*(s\sqrt{L})f(x).
	\end{align*}
	As a consequence,\\
	$$|\psi(s\sqrt{L})\tilde m(L)f(x)| \lesssim \psi_{\lambda}^*(s\sqrt{L})f(x).$$
	Therefore, the conclusion of the theorem follows immediately from Theorem \ref{thm1-continuouscharacter}. 
\end{proof}
\bigskip
\begin{rem}
	Theorem \ref{thm-sm} only requires the Gaussian upper bound condition for the operator $L$. This is a very mild condition and allows us to apply the results to a large number of applications such as the sub-Laplacians on Lie groups of polynomial growth, the Laplacians on the Heisenberg groups or the Laplace-Beltrami operators on certain Riemannian manifolds. For further details concerning examples satisfying this condition we refer to \cite[Section 7]{DOS} and the references therein. It is natural to ask the question on the sharp estimate for the general spectral multipliers of $L$. This problem is more complicated and we leave it as an upcoming project. 
	
\end{rem}

{\bf Acknowledgement.}  Xuan Thinh Duong was supported by Australian Research Council through the ARC grant DP160100153
and Macquarie University Research Grant 82184614.

\end{document}